\documentclass[openany,10pt]{mt}
\usepackage{amssymb,amsmath,amsthm}
\usepackage{color}

\usepackage{enumerate}
\usepackage{todonotes}
\usepackage[all]{xy}
\usepackage[margin=1.3in]{geometry} 

\newcommand{\fe}{\mathfrak{e}}
\renewcommand{\preceq}{\preccurlyeq}
\renewcommand{\succeq}{\succcurlyeq}
\newcommand{\supp}{\operatorname{supp}}
\newcommand{\trunc}[2]{#1|_{#2}}
\newcommand{\cha}{\operatorname{char}}
\newcommand{\truncof}{\triangleleft}
\newcommand{\trunceq}{\trianglelefteq}

\def \<{\langle}
\def \>{\rangle}
\def \d{\operatorname{d}}
\def \h{\operatorname{h}}
\def \div{\operatorname{div}}
\def \ex{\operatorname{e}}
\def \smallexp{\operatorname{exp_s}}
\def \Li{\operatorname{Li}}
\def \No{\mathbf{No}}

%valued field structures
\newcommand{\bigO}{\mathcal{O}}
\newcommand{\smallo}{{\scriptstyle\mathcal{O}}}
\newcommand{\smallk}{{\mathbf{k}}}
\newcommand{\ser}[1]{\sum_{\gamma} #1_\gamma t^\gamma}

\newcommand{\upper}{\hspace{-0.12cm}\uparrow}
\newcommand{\downer}{\hspace{-0.15cm}\downarrow}

\newcommand{\inv}{^{-1}}

%Theorem environments
\newtheorem{theorem}{Theorem}[chapter]
\newtheorem{lemma}[theorem]{Lemma}
\newtheorem{corollary}[theorem]{Corollary}
\newtheorem{proposition}[theorem]{Proposition}

\theoremstyle{definition}
\newtheorem*{example}{Example}

%section numbering
\numberwithin{section}{chapter}
%\numberwithin{subsection}{section}

%Blackboard and operatorname Letters

\newcommand{\C}{\mathbb{C}}
\newcommand{\E}{\operatorname{E}}
\newcommand{\LE}{\operatorname{LE}}
\newcommand{\N}{\mathbb{N}}
\newcommand{\Q}{\mathbb{Q}}
\newcommand{\R}{\mathbb{R}}
\newcommand{\T}{\mathbb{T}}
\newcommand{\Z}{\mathbb{Z}}

\newcommand{\Texp}{\mathbb{T}_{\exp}}

%Caligraphic Letters

\newcommand{\cF}{\mathcal{F}}

\newcommand{\cM}{\mathcal{M}}
\newcommand{\cP}{\mathcal{P}}

\newcommand{\cU}{\mathcal{U}}

%Fraktur upper-case Letters

\newcommand{\fG}{\mathfrak{G}}
\newcommand{\fH}{\mathfrak{H}}
\newcommand{\fM}{\mathfrak{M}}
\newcommand{\fN}{\mathfrak{N}}

%Fraktur lower-case Letters
\newcommand{\fd}{\mathfrak{d}}

\newcommand{\fg}{\mathfrak{g}}
\newcommand{\fh}{\mathfrak{h}}
\newcommand{\fm}{\mathfrak{m}}
\newcommand{\fn}{\mathfrak{n}}
\newcommand{\fp}{\mathfrak{p}}
\newcommand{\fq}{\mathfrak{q}}

%Logical classification
\newcommand{\IP}{\operatorname{IP}}
\newcommand{\TP}{\operatorname{TP2}}
\newcommand{\SOP}{\operatorname{SOP}}

% Definition of symbol \der for derivation:
\DeclareFontFamily{U}{fsy}{}
\DeclareFontShape{U}{fsy}{m}{n}{<->s*[.9]psyr}{}
\DeclareSymbolFont{der@m}{U}{fsy}{m}{n}
\DeclareMathSymbol{\der}{\mathord}{der@m}{182}

\DeclareSymbolFont{der@m}{U}{fsy}{m}{n}
\DeclareMathSymbol{\derdelta}{\mathord}{der@m}{100}

\title{Truncation in Differential Hahn Fields}
\author{Santiago Camacho}

\linespread{1.5} 

\usepackage{fancyhdr}   
\usepackage{lipsum}

\pagestyle{fancy}
\fancyhead{}% clear headers
\fancyfoot{}% clear footers
% eliminate horizontal line
\fancyfoot[C]{\thepage}

\begin{document}
\frontmatter

\begin{abstract}
Being closed under truncation for subsets of generalized series fields is a robust property in the sense that it is preserved under various algebraic and transcendental extension procedures. Nevertheless, in Chapter 4 of this dissertation we show that generalized series fields with truncation as an extra primitive yields undecidability
in several settings. Our main results, however, concern the robustness of being truncation closed in generalized series fields equipped with a derivation, and under extension procedures that involve this derivation. In the last chapter we study this in the ambient field $\T$ of logarithmic-exponential transseries. It leads there to a theorem saying that under a natural ``splitting''  condition the Liouville closure of a truncation closed differential subfield of $\T$ is again truncation closed.

%t the end this leas to  conditions that ensure that a truncation closed set extends naturally to a truncation closed differential ring, and a truncation closed differential field has a truncation closed Liouville closure. In particular, we introduce the notion of IL-closedness in unions of Hahn fields in order to determine that this condition is sufficient to preserve truncation in those two settings for constructions such as the field of logarithmic-exponential transseries $\mathbb{T}$.
\end{abstract}

\maketitle

\pagebreak

This page is intentionally left blank

\tableofcontents

\mainmatter
\chapter{Introduction}

\noindent Consider the Taylor expansion
\[\log(1-t)\ =\ -t-\frac{t^2}{2} - \frac{t^3}{3} -\frac{t^4}{4}-\cdots\ =\ \sum_{n=1}^{\infty}\frac{-t^n}{n}.\]

\noindent
In a typical calculus course we use the Taylor polynomials $0$, $- t$, $- t - t^2/2,\ldots $ to approximate the value of $\log(1-t) $ for $t$ close to zero. These Taylor polynomials correspond to truncations of the Taylor series for $\log(1-t)$. Intuitively speaking, truncations of a series correspond to omitting smaller terms, and can be used to approximate the original series. There is an interest in embedding various classes of functions into series rings and fields. 
A classical instance is  given by associating Taylor series to analytic functions. 
In the case of complex meromorphic functions the association is made with Laurent series, and the corresponding truncations would correspond to Laurent polynomials, that is,
 elements of $\C[t,t^{- 1}]$. For other classes of functions, Laurent series are not enough, and a more general version is required. 
Given an ordered abelian group $\Gamma$ and a field $\smallk$, the Hahn series field $\smallk((t^\Gamma))$ over $\smallk$ consists of all formal series $\sum_\gamma f_\gamma t^\gamma$ where all $f_\gamma \in \smallk$ and the set of $\gamma\in \Gamma$ such that $f_\gamma$ is nonzero is a well ordered subset of $\Gamma$. For more details on $\smallk((t^\Gamma))$, see Section \ref{HS}. This is in fact a generalization since for $\smallk = \C$ and $\Gamma = \Z$ this gives the usual field $\C((t))$ of Laurent series over $\C$.
Truncation in this general setting is a lot more subtle than in the case of Laurent series. One result in this dissertation is that joining the notion of truncation to the valued field structure of a Hahn field results in undecidability; see Chapter \ref{TU}. Despite this ``negative'' feature of truncation, one can prove many ``positive'' things about it as we will
show in this dissertation.

\medskip\noindent
For various reasons (notational and traditional) we prefer to consider asymptotics at infinity, using a large variable $x$ rather than a small variable $t=x^{-1}$.  Thus for $x>1$,
\[\log(1-\frac{1}{x})\ =\ -\frac{1}{x} - \frac{1}{2x^2} - \frac{1}{3x^3} - \cdots\ =\ \sum_{n=1}^{\infty} \frac{-x^{-n}}{n}\]
and the truncations of $\log(1-\frac{1}{x})$ will be $0,\ -\frac{1}{x},-\frac{1}{x} -\frac{1}{2x^2},\ldots$ mimicking the concept of omitting ``small" terms. %We here make more precise what we mean by the asymptotic behavior at infinity. Let $f$ and $g$ be continuous real valued functions whose domain is a subset of the reals containing an interval of the form $(a,\infty)$. We say that $f$ and $g$ have the same germ (at infinity) if there is a real number $r$ such that $f(x) = g(x)$ for all real $x$ larger than $r$. ``Having the same germ" is an equivalence relation on the set of such real functions, and we refer to the equivalence class of such a function as its germ. We denote the set of germs as $\cC$, and we give $\cC$ the ring structure inherited by pointwise addition and multiplication on these functions. A Hausdorff field is a subfield of $\cC$. A Hardy field is a Hausdorff field whose germs come from continuously differentiable functions, and the field itself is closed under taking derivatives, more precisely, germs of derivatives. 

%\medskip\noindent
%One of the main interests of Hardy fields for model theory comes from the study of o-minimal structures. An order-minimal, or  o-minimal structure $\cR = (R;<\ldots)$ is a structure with a distinguished (strict) total ordering $<$, such that the definable subsets of $R$ are finite unions of points and open intervals. Typical examples include \textit{dense linear orders}, \textit{divisible ordered abelian groups}, \textit{real closed fields}, the real field with exponentiation $\R_{\exp}$, and the real field with exponentiation and restricted analytic functions $\R_{\operatorname{an},\exp}$. It is well known that the germs of definable functions of any o-minimal structure expanding the real field form a Hardy field. Conversely, if the germs of definable unary functions of an expansion of the real field make up a Hardy field, then the structure is o-minimal. A nice short proof of this fact can be found in \cite[Section 3]{M}. Based on this result there have been many attempts to use Hardy fields to answer o-minimality questions, some successful, such as the proof of o-minimality of $\R_{\operatorname{an},\exp}$ \cite{DM}, but others not so much. For example, it is well known that there are Hardy fields that admit transexponential functions, functions that grow faster than any iteration of the exponential function, but it is not known yet whether or not there exists any o-minimal expansion of the real field with a transexponential definable function.

\medskip\noindent
Truncation first appears in a model theoretic setting when Mourgues and Ressayre \cite{MR} proved that every real closed field is isomorphic to a truncation closed
subfield of a Hahn field over $\R$. They use this result to prove that every real closed field $R$ has an \textit{integer part}, that is, a subring $Z$ of $R$ such that for every $x\in R$ there is a unique $z\in Z$ such that $z\le x < z+1$.   
This goes as follows: Let $R$ be a truncation closed subfield of $\R((t^\Gamma))$, and
let $\cU$ be the additive subgroup of $\R((t^\Gamma))$ consisting of the series $\sum_{\gamma<0} f_\gamma t^\gamma$. 
Then an integer part of $R$ is given by $(R\cap\cU) \oplus \Z$. 
Fornasiero~\cite{F} generalized the truncation result of Mourgues and Ressayre by showing that every henselian valued field of characteristic zero is isomorphic to
a truncation closed subfield of a Hahn field over its residue field \cite{F}. 
%Fornasiero, Kuhlmann, and Kuhlmann gave a second order characterization of valued fields with a factor that can have a truncation closed embedding into Hahn fields \cite{FKK}. 
Van den Dries, Marker and Macintyre used the preservation of being truncation closed under various extension procedures to prove undefinability of certain functions in the o-minimal expansion $\R_{\operatorname{an},\exp}$ of the real field \cite{DMM}. Van den Dries gathered many of these results and expanded on them in~\cite{D}. 
Another potential role of truncation, prominent in \cite{Re}, is that it enables transfinite 
induction on the (well-ordered) support of series. This is a new
 tool in valuation theory, not fully exploited so far.  We do use this tool at various places. 

\medskip\noindent
There is increasing interest in Hahn fields $\smallk((t^\Gamma))$ with a `good'
derivation, and being truncation closed is significant 
in that context for similar reasons.  Investigating this is one of the goals of this dissertation.
The `good' derivations are introduced in Chapter~\ref{derha}. With a `good' derivation on $\smallk((t^\Gamma))$,
it is easy to show that the differential subfield of $\smallk((t^\Gamma))$ generated by a truncation 
closed subset is again truncation closed: see Section~\ref{extderco}. It is a more difficult to show that
adjoining the solutions in $\smallk((t^\Gamma))$ of first-order linear 
differential equations over a truncation closed differential subfield of 
$\smallk((t^\Gamma))$ to this subfield
preserves the property of being truncation closed: this is basically
the content of Theorem~\ref{thmA}. 

The next goal is to apply this to the differential
field $\mathbb{T}$ of (logarithmic-exponential) transseries, and 
related transseries 
fields. Now $\mathbb{T}$ is not a Hahn-field-with-good-derivation as above, 
but its definition does involve iterating a construction where at 
each step
one builds a Hahn-field-with-good-derivation whose coefficient field
is the previously constructed Hahn-field-with-good-derivation. 
In this way we have been able to prove our main result to the effect that
 under some mild assumption
on a truncation closed differential subfield of $\mathbb{T}$ that its
Liouville closure in $\mathbb{T}$ is also truncation closed,
where the Liouville closure of a differential subfield of $\mathbb{T}$ 
is the smallest differential subfield that is closed under exponentiation and 
taking antiderivatives. 

\medskip\noindent
We now turn to a more detailed description of the various chapters. 
Chapter 2 contains the preliminaries for the rest of the dissertation. We cover the basics about  ordered abelian groups, Hahn series, valued fields, and differential fields that are used later. In Chapter 3 we consider the operation of truncating Hahn series. We recall known results to the effect that
the property (for subsets of Hahn fields) of being truncation closed is preserved under several extension procedures. We also introduce the notion of an infinite part $\cU$ for a valued field, and we develop the notions of truncation with respect to $\cU$ and support of an element with respect to $\cU$. This can be thought of as an attempt to
define truncation in the first-order setting of valued fields. Chapter 4 contains undecidability results for valued fields with a monomial group and an infinite part: using some facts
about monadic second-order logic we show that Hahn fields equipped with a monomial group and truncation have an undecidable theory. We apply this to show that the exponential field of transseries with a predicate for its group of transmonomials is undecidable. We also prove a variant of the well-known fact that the differential field of Laurent series is undecidable. In Chapter 5 we introduce ``supported operators'' and prove some basic facts about these as a tool to be used in the
later chapters.  In Chapter 6  we consider derivations on Hahn fields for which the logarithmic derivative of every monomial  lies in the coefficient field. We show that for such a derivation, the differential field generated by a truncation closed subset  is again truncation closed, and that being truncation closed is preserved under adjoining solutions of certain types of differential equations of the form $Y'+bY+c=0$. In Chapter 7 we apply this to the  differential field $\Texp$ of exponential transseries. Although $\Texp$ is not a Hahn field,  it is an increasing union of (differential) Hahn fields where the notion of truncation still makes sense and is of interest. But for positive results in this setting about preserving the property of being truncation closed we need to introduce a sort of analogue of this property for sets of monomials, which we call  splitting. In Chapter 8 we consider the field $\T$ of logarithmic-exponential transseries as a union of copies of $\Texp$ and prove our main result. We conclude by briefly discussing some possibilities 
of expanding on  the results in this dissertation. 

\chapter{Preliminaries}

\noindent
In this chapter we summarize basic facts about valued fields, especially Hahn fields, and valued differential fields, as needed
in later chapters. We try to be self contained as to the various algebraic notions involved, and thus we include also some preliminaries on valuation theory and differential algebra. 
Almost all of this can be found in more detail and with proofs in \cite[Chapters 2, 3, 4]{ADH}. 

\medskip\noindent
We let $m,n$ range over $\mathbb{N}=\{0,1,2,\ldots\}$, the set of natural numbers.  Rings will have a multiplicative identity $1$. We identify $\mathbb{Z}$ in the usual (unique) way with a subring of any ring of characteristic zero. We let $K^\times$ denote the multiplicative group of a field $K$. For a field $K$ of characteristic zero, we identify $\Q$ with a subfield of $K$ in the usual way.

\section{Ordered Sets and Groups}

\subsection*{Ordered sets}
An \textbf{ordered set} $S$ is a set equipped with a distinguished {\em total\/} ordering $\leq$. % That is, a \textit{ reflexive, transitive} and \textit{antisymmetric} binary relation on $S$. 
If $S$ is an ordered set with ordering $\leq$ we let $\geq, <,$ and $>$ have the usual meaning. 
Subsets of $S$ will be considered as ordered sets with the ordering induced by $\leq$. 

\medskip \noindent
For an ordered set $S$ and a subset $B$ of $S$ we set $S^{>B}:= \{s\in S: s>b \text{ for all } b \in B\}$, and $S^{>s}:= S^{>\{s\}}$ for an element $s\in S$. Similarly we define $S^{<B}, S^{\leq B}, S^{\geq B}$. %This is not to be confused with the notation $S^{<\N}$ which stands for the finite length sequences of elements in $S$. 
We set $S_\infty := S\cup \{\infty\}$ and extend the ordering of $S$ to an ordering on $S_\infty$ by $s< \infty$ for every $s\in S$.  

\medskip\noindent
Let $S$ be an ordered set. Then $S$ is  said to be \textbf{well-ordered} if every non-empty subset of $S$ has a minimum element with respect to $\leq$.
A subset $B$ of $S$ is said to be \textbf{convex in } $S$ if for all $a,b,c\in S$ with $a<b<c$ and $a,c\in B$ we have $b\in B$. The \textbf{convex hull} of $B\subseteq S$ is the smallest subset of $S$, under inclusion, that is convex and contains $B$.

\subsection*{Ordered abelian groups} Let $\Gamma$ be an additively written \textbf{ordered abelian group}, that is, an abelian group with an ordering such that for all $\alpha,\beta,\gamma\in \Gamma$ 
\[\alpha<\beta\ \Rightarrow\ \alpha+\gamma<\beta+\gamma\]

\medskip\noindent
If $\Gamma$ is an ordered abelian group we write $\Gamma^>$ instead of $\Gamma^{>0}$; similarly with $\Gamma^{<}, \Gamma^{\leq}, \Gamma^{\geq}$. Additionally we set $\Gamma^{\neq} = \Gamma \setminus \{0\}$ 
Given $\gamma \in \Gamma$ we set $|\gamma| = \begin{cases} \gamma \text{ if } \gamma \geq 0\\ -\gamma \text{ if } \gamma<0. \end{cases}$\\
Ordered abelian groups will be considered as $\Z$-modules in the usual way. 

\medskip\noindent
We consider the archimedean equivalence relation on an ordered abelian group $\Gamma$ where the archimedean class of an element $\gamma_0 \in \Gamma$ is given by 
\[ [\gamma_0] \ : =\ \{\gamma \in \Gamma \ :\ |\gamma_0| \leq n|\gamma| \text{ and } |\gamma|\leq n|\gamma_0| \text{ for some } n\ >\ 0\}.\]
We will consider the set of archimedean classes as an ordered set where 
\[[\gamma]\ \leq\ [\delta] \text{ iff } |\gamma|\ \leq\ n |\delta| \text{ for some } n  .\]
Thus $[0]=\{0\}$, and $[0] \leq [\gamma]$ for all $\gamma \in \Gamma$, with equality iff $\gamma = 0$; moreover 
\[[\gamma]\ <\ [\delta] \text{ iff } n|\gamma|\ <\  |\delta| \text{ for all } n  .\]
The \textbf{rank} of $\Gamma$ is the number of archimedean classes of $\Gamma^{\neq}$;  it equals the number of nontrivial convex subgroups of $\Gamma$ if the rank is finite. An ordered abelian group of rank 1 is also said to be \textbf{archimedean}. 
%Let $G$ be an ordered abelian group. We define an equivalence relation on $G$ by setting two elements $f,g\in G$ to be equivalent if there is  Archimedean

\begin{lemma}\label{fGfN1} If $\Delta, \Theta \subseteq \Gamma$ are well-ordered, then $\Delta +\Theta$ is well-ordered, and for every $\gamma\in \Gamma$ there are only finitely many $(\delta,\theta)\in \Delta\times\Theta$ with $\delta+\theta=\gamma$.
\end{lemma}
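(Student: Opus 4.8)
The plan is to prove both assertions simultaneously by exploiting well-orderedness. First I would recall the standard fact (which follows from the axiom of choice, or from Ramsey-type arguments) that an ordered set $S$ is well-ordered if and only if it has no infinite strictly decreasing sequence and no infinite antichain; since $\Gamma$ is totally ordered, the antichain condition is vacuous, so well-orderedness is equivalent to the absence of infinite strictly decreasing sequences. The whole proof will be organized around producing such a decreasing sequence from a hypothetical bad configuration.

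\medskip\noindent
For the first claim, that $\Delta+\Theta$ is well-ordered: suppose toward a contradiction that $(\delta_n+\theta_n)_{n\in\N}$ is a strictly decreasing sequence in $\Delta+\Theta$ with $\delta_n\in\Delta$, $\theta_n\in\Theta$. Since $\Delta$ is well-ordered, pass to an infinite subsequence on which the $\delta_n$ are weakly increasing (e.g.\ repeatedly extract: the set of indices achieving the current minimum of the remaining $\delta$'s is used to start, then recurse; more cleanly, any sequence in a well-ordered set has a weakly increasing subsequence). Along that subsequence, strict decrease of $\delta_n+\theta_n$ together with $\delta_n\le\delta_{n+1}$ forces $\theta_n>\theta_{n+1}$, giving an infinite strictly decreasing sequence in $\Theta$, contradicting that $\Theta$ is well-ordered.

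\medskip\noindent
For the second claim, fix $\gamma\in\Gamma$ and suppose there are infinitely many pairs $(\delta_n,\theta_n)\in\Delta\times\Theta$, pairwise distinct, with $\delta_n+\theta_n=\gamma$. Again extract an infinite subsequence on which $(\delta_n)$ is weakly increasing. If $\delta_n=\delta_{n+1}$ then $\theta_n=\theta_{n+1}$ and the pairs coincide, contradicting distinctness; hence along the subsequence $\delta_n<\delta_{n+1}$ strictly, and then $\theta_n=\gamma-\delta_n>\gamma-\delta_{n+1}=\theta_{n+1}$, so $(\theta_n)$ is strictly decreasing in $\Theta$ — again contradicting well-orderedness of $\Theta$.

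\medskip\noindent
The only real point requiring care, and hence the main obstacle, is the extraction of a weakly increasing subsequence from an arbitrary sequence in a well-ordered set, and the equivalence of well-orderedness with the descending chain condition in the totally ordered setting; both are routine but should be invoked cleanly. Once that combinatorial lemma is in hand, both parts of the statement fall out by the same short argument, with the sum being constant ($=\gamma$) in the second part playing the role of the strict decrease in the first.
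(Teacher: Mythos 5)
Your proof is correct: extracting a weakly increasing subsequence of the $\delta$'s from well-orderedness of $\Delta$ and then forcing a strictly decreasing sequence in $\Theta$ (using cancellation in the ordered abelian group) is the standard argument for this classical lemma. The paper itself states Lemma~\ref{fGfN1} without proof, treating it as a known fact (it is proved in the references, e.g.\ \cite{ADH}), and your argument is essentially that standard proof, so there is nothing substantive to compare.
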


\begin{lemma}\label{fGstar1}
Let $G\subseteq \Gamma^>$ be a well ordered set. 
Then 
\[G^*\ :=\ \sum G\ :=\ \{g_1+\cdots +g_n\ :\ g_1,\ldots,g_n\in G\}\]
is well ordered and for every $\gamma\in G^*$ there are finitely many tuples $(n,g_1,\ldots,g_n)$, with $g_1,\ldots, g_n$ elements in $G$ such that $\sum_{i=1}^n g_i = \gamma$.
\end{lemma}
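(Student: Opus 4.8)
The plan is to deduce this from Lemma \ref{fGfN1} by an induction that keeps track of the number of summands. First I would observe that since $G \subseteq \Gamma^>$ is well-ordered, each $n$-fold sumset $G^{(n)} := \underbrace{G + \cdots + G}_{n}$ is well-ordered by $n-1$ applications of Lemma \ref{fGfN1}, and $G^* = \bigcup_{n \geq 1} G^{(n)}$. The subtlety is that an infinite union of well-ordered sets need not be well-ordered, so the key point is that the $G^{(n)}$ are \emph{cofinal upward} in a controlled way: if $g_0 := \min G > 0$, then every element of $G^{(n)}$ is $\geq n g_0$, and $n g_0 \to \infty$ in the sense that $n g_0 > \gamma$ eventually for each fixed $\gamma \in \Gamma$ (because $[g_0]$ is an archimedean class with $g_0 > 0$, so $n g_0$ exceeds any fixed $\gamma$ once $n$ is large). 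Hence for fixed $\gamma \in \Gamma$, only finitely many $n$ satisfy $\gamma \geq n g_0$, so $\gamma$ can lie in $G^{(n)}$ for only finitely many $n$.

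With that in hand, I would prove well-orderedness of $G^*$ as follows: given a nonempty $S \subseteq G^*$, pick any $\gamma \in S$; then $S^{\leq \gamma} = S \cap \Gamma^{\leq \gamma}$ is contained in $\bigcup_{n \leq N} G^{(n)}$ for $N$ large enough that $(N+1)g_0 > \gamma$, and a finite union of well-ordered sets is well-ordered, so $S^{\leq \gamma}$ has a minimum, which is also the minimum of $S$. For the finiteness statement, fix $\gamma \in G^*$. Any representation $\gamma = g_1 + \cdots + g_n$ forces $n g_0 \leq \gamma$, so $n \leq N$ for the same $N$ as above; thus there are only finitely many admissible values of $n$. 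For each such fixed $n$, the number of tuples $(g_1,\dots,g_n) \in G^n$ with $g_1 + \cdots + g_n = \gamma$ is finite: this follows by induction on $n$ from the second clause of Lemma \ref{fGfN1}, splitting $\gamma = (g_1 + \cdots + g_{n-1}) + g_n$, noting that $g_1 + \cdots + g_{n-1}$ ranges over the well-ordered set $G^{(n-1)}$ and applying Lemma \ref{fGfN1} to the pair $(G^{(n-1)}, G)$ to bound the number of splittings, then invoking the inductive hypothesis for each value of $g_1 + \cdots + g_{n-1}$. Summing the finitely many finite counts over the finitely many admissible $n$ gives the result.

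The main obstacle is the passage from "each $G^{(n)}$ is well-ordered" to "$G^*$ is well-ordered", i.e. ruling out an infinite strictly decreasing sequence in $G^*$ whose terms have unbounded numbers of summands; everything hinges on the observation that $n g_0$ is eventually larger than any fixed group element, which confines elements below a given bound to finitely many of the $G^{(n)}$. Once that is established the rest is a routine double induction using Lemma \ref{fGfN1}.
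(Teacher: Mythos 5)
There is a genuine gap, and it sits exactly at the step you yourself flag as the crux. The claim that $ng_0 > \gamma$ for all sufficiently large $n$, for \emph{every} fixed $\gamma\in\Gamma$, is precisely the assertion $[\gamma]\leq [g_0]$, and it fails whenever $\gamma$ lies in a higher archimedean class than $g_0$; the parenthetical justification via archimedean classes only yields cofinality of $(ng_0)_n$ within the class $[g_0]$, not in $\Gamma$. Here $\Gamma$ is an arbitrary ordered abelian group, not an archimedean one. Concretely, take $\Gamma=\Z\times\Z$ ordered lexicographically, $G=\{(0,1),(1,0)\}$, so $g_0=(0,1)$, and $\gamma=(1,0)$: then $ng_0=(0,n)<\gamma$ for every $n$, so no $N$ with $(N+1)g_0>\gamma$ exists. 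Consequently your containment $S^{\leq\gamma}\subseteq\bigcup_{n\leq N}G^{(n)}$ breaks down (already $G^*\cap\Gamma^{\leq\gamma}$ meets $G^{(n)}$ for every $n$, since $(0,n)\in G^{(n)}$), so the well-orderedness argument does not go through; and in the finiteness part, the inference from $ng_0\leq\gamma$ to $n\leq N$ likewise has no $N$ to appeal to. The pieces that are fine are the per-$n$ statements: each $G^{(n)}$ is well-ordered by iterating Lemma~\ref{fGfN1}, and for fixed $n$ the number of $n$-tuples summing to $\gamma$ is finite by your induction using Lemma~\ref{fGfN1} applied to $(G^{(n-1)},G)$. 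What is missing is the uniformity across $n$, which is the whole content of the lemma in the non-archimedean case.

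The lemma is true, but crossing from ``each $G^{(n)}$ is well-ordered'' to ``$G^*$ is well-ordered, with only finitely many representations'' needs an idea that does not rely on $ng_0$ being cofinal in $\Gamma$. The classical route (Neumann) is a minimal/bad-sequence argument: assume a strictly decreasing sequence in $G^*$ (or infinitely many representations of some $\gamma$), write each element as a sum of elements of $G$ and extract, using the well-ordering of $G$, a subsequence along which the least summands are nondecreasing; subtracting them produces a ``smaller'' bad configuration and a contradiction. Alternatively one argues by transfinite induction on the order type of $G$. The paper states this lemma without proof (it is the additive counterpart of Lemma~\ref{fGstar}), so there is no in-text argument to compare with, but as written your proof only covers the situation where all of $G^*$ below a fixed $\gamma$ is confined to finitely many of the $G^{(n)}$, i.e.\ essentially the archimedean case.
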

%\todo{add proof}

\subsection*{Valuations on abelian groups}\label{VAG}
A valuation on an (additively written) abelian group $\Gamma$ is a surjective map $v:\Gamma \rightarrow S_\infty$, where $S$ is a linearly ordered set, such that for $\alpha, \beta \in \Gamma$
\begin{itemize}
\item[(VG1)] $v(\alpha)= \infty \iff \alpha =0$,
\item[(VG2)]$v(\alpha) = v(-\alpha)$,
\item[(VG3)] $v(\alpha + \beta) \geq \min\{v(\alpha),v(\beta)\}$.
\end{itemize}

\noindent
A \textbf{valued abelian group} is a triple $(\Gamma, S, v)$ where $\Gamma$ is an ordered abelian group, $S$ is an ordered set and $v:\Gamma \rightarrow S_\infty$. It is easy to show that if $v(\alpha)<v(\beta)$ then  $v(\alpha + \beta) = v(\alpha)$; indeed
\[v(\alpha)\ =\ v(\alpha+\beta-\beta)\ \geq\ \min(v(\alpha+\beta),v(\beta))\ \geq\ v(\alpha).\]
We note that given any element $s \in S$, the subsets 
\[B(s):=\{\gamma\ :\ v(\gamma) > s \} \text{ and } \overline{B}(s):=\{\gamma:v(\gamma) \geq s\}\] of $\Gamma$ form subgroups with the group operation inherited from $\Gamma$. We denote their quotient by $G(s):= \overline{B}(s)/B(s)$.   

\subsection*{Hahn products}

Let $(G_s)_s\in S$ be a family of nontrivial ordered abelian groups indexed by a totally ordered set $S$. Consider the product group $\prod_{s\in S} G_s$ whose elements are the sequences $g=(g_s)_{s\in S}$ such that $g_s\in G_s$ for each $s\in S$. For such $g$, we define the support of $g$ as the set $\{s\in S: g_s \neq 0\}$.  

\medskip\noindent
We define the \textbf{Hahn product} of the family to be the subgroup $H[(G_s)_s]$ of $\prod_{s\in S} G_s$ with well ordered support. We can construe $G:= H[(G_s)_s]$ as an ordered valued abelian group, by equipping it with the \textbf{Hahn valuation} $v:G\rightarrow S_\infty$ given by $v(g) = \min(\supp(g))$ for $g\in G\neq$ and $v(0) = \infty$, and the order given by $0<g$ iff $0<g_{s_0}$ for $s_0 = v(g)$.

\section{Hahn Series}\label{HS}

\noindent
By a {\bf monomial set\/} we mean an ordered set $\fM$ whose
elements are thought of as monomials and whose ordering is
denoted by $\preceq$ (or $\preceq_{\fM}$ if we need to
indicate the dependence on $\fM$); the corresponding strict ordering, reverse ordering, and strict reverse ordering are then
denoted by $\prec$, $\succeq$, and $\succ$, respectively. A subset of the monomial set $\fM$ is said to be {\bf well-based\/} if it is well-ordered with respect to the  reverse ordering $\succeq$, equivalently, there is no infinite sequence $\fm_0 \prec \fm_1 \prec \fm_2 \prec \dots$ in the subset.

 Let $\fM$ be a monomial set, let $\fm$ and $\fn$ range over $\fM$, and let $C$ be an (additively written) abelian group. By a 
{\bf Hahn series with coefficients in $C$ and monomials in $\fM$\/}
 we mean a function $f: \fM \to C$ whose support  
 \[\supp f\ :=\ \{\fm\in \fM:\ f(\fm)\ne 0\}\]
 is well-based as a subset of $\fM$.
We often write such $f$ as a series $\sum_{\fm} f_{\fm}\fm$,
where $f_{\fm}:=f(\fm)$. 
We define the {\bf Hahn space\/} $C[[\fM]]$ to be the (additive) abelian group whose elements are
the Hahn series with coefficients in $C$ and monomials in $\fM$,
with addition as suggested by the series notation.

Let $(f_i)_{i\in I}\in C[[\fM]]^I$. We say that $(f_i)$ 
is \textbf{summable} (or $\sum_i f_i$ exists) if 
\begin{itemize}
	\item $\bigcup_i \supp(f_i)$ is well-based; 
    \item for each $\fm\in \fM$ there are only finitely many $i$ with $\fm\in \supp(f_i)$.
\end{itemize}
If $(f_i)_{i\in I}$ is summable, we define its sum $f=\sum_{i\in I} f_i\in  C[[\fM]]$ by $f_{\fm}=\sum_{i}f_{i,\fm}$. If $I$ is finite, then $(f_i)$ is summable and $\sum_i f_i$ is the usual finite sum.
If $(f_i)$ and $(g_i)\in C[[\fM]]^I$ are summable, then so is $(f_i+g_i)$, and $\sum_i f_i+g_i=\sum_i f_i + \sum_i g_i$. If $I$ and 
$J$ are disjoint sets and $(f_i)\in C[[\fM]]^I$ and $(f_j)\in C[[\fM]]^J$ are summable, then $(f_k)_{k\in I\cup J}$ is summable
and $\sum_{k\in I\cup J}f_k=\sum_{i\in I} f_i + \sum_{j\in J} f_j$. 
Finally, if the
family $(f_{i,j})_{(i,j)\in I\times J}$ of elements $f_{i,j}\in C[[\fM]]$ is summable, then 
$\sum_j f_{i,j}$ exists for every $i$, and $\sum_i(\sum_j f_{i,j})$ exists, and
likewise $\sum_j(\sum_i f_{i,j})$ exists, and
\[\sum_{i,j} f_{i,j}\ =\ \sum_i\big(\sum_j f_{i,j}\big)\ =\ \sum_j\big(\sum_i f_{i,j}\big).\]

\medskip\noindent
It is convenient to augment $\fM$ by an element $0\notin \fM$ and
extend $\preceq$ to an ordering of $\fM\cup \{0\}$, also denoted by $\preceq$, by declaring 
$0\preceq \fm$ (and thus $0\prec \fm$) for all $\fm$. 
For $f\in C[[\fM]]$ we define its \textbf{leading monomial} $\fd(f)\in \fM\cup\{0\}$ by
\[\fd(f):= \max \supp f\ \text{ if }f\ne 0, \qquad 
\fd(f)=0\ \text{ if }f=0.\]
This allows us to introduce a transitive and reflexive binary relation $\preceq$ on $C[[\fM]]$ by $f\preceq g$ if and only if $\fd(f)\preceq \fd(g)$; replacing in this equivalence $\preceq$ by $\prec$, $\succeq$, and $\succ$, respectively, we introduce likewise transitive
binary relations $\prec$, $\succeq$, and $\succ$ on $C[[\fM]]$. 
We also define the equivalence relation $\asymp$ on $C[[\fM]]$ by
$f\asymp g:\Leftrightarrow\ \fd(f)=\fd(g)$.  

\subsection*{Hahn rings} By a {\bf monomial group\/} we mean a multiplicative ordered abelian group whose elements are 
thought of as monomials. Let $\fM$ be a monomial group; we use the same notations as for monomial sets; in particular, the
ordering is denoted by $\preceq$, and we let $\fm$ and $\fn$
range over $\fM$.  
We recall the following basic fact about well-based subsets of $\fM$:

\begin{lemma}\label{fGfN} If $\fG, \fH\subseteq \fM$ are well-based, then $\fG\fH$ is well-based, and for every $\fm$ there are only finitely many $(\fg,\fh)\in \fG\times\fH$ with $\fg\fh=\fm$.
\end{lemma}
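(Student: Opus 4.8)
The plan is to deduce Lemma~\ref{fGfN} from its additive counterpart, Lemma~\ref{fGfN1}, by reversing the ordering of $\fM$. Since a monomial group is by definition a multiplicative ordered abelian group, equipping the underlying group of $\fM$ with the reverse ordering $\succeq$ gives another ordered abelian group: if $\fm\succ\fn$ then $\fm\fn\inv\succ 1$, so $\fm\fp\succ\fn\fp$ for every $\fp$. Write $\fM^{\mathrm{r}}$ for $\fM$ with this reversed ordering. By the very definition of \emph{well-based}, a subset of $\fM$ is well-based if and only if it is well-ordered as a subset of $\fM^{\mathrm{r}}$, and the product set $\fG\fH\subseteq\fM$ is exactly the ``sum set'' of $\fG$ and $\fH$ formed inside $\fM^{\mathrm{r}}$.

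With this dictionary, Lemma~\ref{fGfN} is just Lemma~\ref{fGfN1} applied with $\Gamma:=\fM^{\mathrm{r}}$, $\Delta:=\fG$, $\Theta:=\fH$: the assertion ``$\Delta+\Theta$ is well-ordered'' translates to ``$\fG\fH$ is well-based'', and the finiteness of the fibers $\{(\delta,\theta)\in\Delta\times\Theta:\delta+\theta=\gamma\}$ translates verbatim to the finiteness of $\{(\fg,\fh)\in\fG\times\fH:\fg\fh=\fm\}$ for each $\fm$.

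If a self-contained argument is wanted instead, one runs the proof directly, using the elementary fact that every sequence in a well-based set has a subsequence $\fm_{i_0}\succeq\fm_{i_1}\succeq\cdots$ (pass to a $\preceq$-monotone subsequence; well-basedness forces a $\preceq$-nondecreasing one to stabilize, so in every case a subsequence of this form appears). For the fiber claim: from infinitely many distinct pairs $(\fg_i,\fh_i)$ with $\fg_i\fh_i=\fm$ the $\fg_i$ are distinct, so after extraction $\fg_{i_0}\succ\fg_{i_1}\succ\cdots$; inverting and multiplying by the fixed $\fm$ gives $\fh_{i_0}\prec\fh_{i_1}\prec\cdots$ in $\fH$, contradicting that $\fH$ is well-based. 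For well-basedness of $\fG\fH$: from a hypothetical infinite $\prec$-increasing sequence $\fg_i\fh_i$, first extract so that $\fg_0\succeq\fg_1\succeq\cdots$, then extract further so that $\fh_0\succeq\fh_1\succeq\cdots$; then $\fg_i\fh_i\succeq\fg_{i+1}\fh_i\succeq\fg_{i+1}\fh_{i+1}$, so the sequence is weakly $\succeq$-decreasing, contradicting that it was $\prec$-increasing. Either way the argument is short, and there is no substantial obstacle: the only point needing care is the interplay of the two order reversals — the one built into ``well-based'' and the one coming from passing to inverses — and once that is sorted out, all the combinatorial content sits in Lemma~\ref{fGfN1}.
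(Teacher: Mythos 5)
Your proposal is correct. The paper in fact gives no proof of Lemma~\ref{fGfN} (it is stated as a recalled basic fact, just like its additive counterpart Lemma~\ref{fGfN1}), and your reduction via the reversed ordering is precisely the identification the paper itself licenses when it says one may pretend any monomial group $\fM$ is of the form $t^\Gamma$ with $\gamma\mapsto t^\gamma$ order-reversing; your self-contained argument (extracting $\succeq$-nonincreasing subsequences, and inverting to turn a decreasing sequence of $\fg$'s into an increasing sequence of $\fh$'s for the fiber claim) is also sound and fills in the combinatorial content that the paper leaves to the reader in both lemmas.
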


\noindent
Next, let $C$ be
a commutative ring with $1\ne 0$. Lemma~\ref{fGfN} allows us to define
for $f,g\in C[[\fM]]$ their product $fg\in C[[\fM]]$ by
$(fg)_{\fh}:=\sum_{\fm\fn=\fh}f_{\fm}g_{\fn}$, so
\[\supp fg\ \subseteq\ (\supp f)(\supp g).\] With this product operation $C[[\fM]]$ is a commutative ring with subrings $C[[\fM^{\preceq 1}]]$ and $C[[\fM^{\succeq 1}]]$. We have the ring embedding of $C$ into $C[[\fM]]$ sending $c\in C$ to $f\in C[[\fM]]$ with $f_1=c$ and $f_{\fm}=0$
for $\fm\ne 1$; we identify $C$ with a subring 
of $C[[\fM]]$ via this embedding. We also have the group embedding of 
$\fM$ into the
multiplicative group of units of $C[[\fM]]$ sending $\fm$ to
$f\in C[[\fM]]$ with $f_{\fm}=1$, and $f_{\fn}=0$ for 
$\fn\ne \fm$; we identify $\fM$ with a subgroup of this multiplicative group of units via this embedding, and identify the element $0$ of $\fM\cup \{0\}$ with the zero element of the ring $C[[\fM]]$. This has the effect that restricting the binary relation
$\preceq$ on $C[[\fM]]$ to $\fM\cup \{0\}$ gives back the originally given ordering $\preceq$ on $\fM\cup\{0\}$, and
likewise with $\prec$, $\succeq$, and $\succ$. Note that $C[[\fM]]^{\prec 1}=C[[\fM^{\prec 1}]]$.  

Here is another key fact about well-based subsets of $\fM$:

\begin{lemma}\label{fGstar} Let $\fG$ be a well-based subset of $\fM^{\prec 1}$. Then:
\begin{enumerate}%{\roman{enumi}}
	\item[\rm{(i)}] $\fG^*:=\bigcup_n \fG^n$ is well-based;
	\item[\rm{(ii)}] for every $\fm$ there are only finitely many
tuples $(n,\fg_1,\dots, \fg_n)$ such that $\fg_1,\dots, \fg_n\in \fG$ and $\fm=\fg_1\cdots \fg_n$. 
\end{enumerate}
\end{lemma}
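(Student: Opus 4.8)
The plan is to deduce both parts simultaneously from the two lemmas on well-ordered subsets of ordered abelian groups already at hand, namely Lemma~\ref{fGfN1} and Lemma~\ref{fGstar1}, by transporting the multiplicative situation in $\fM$ to an additive one. First I would fix a well-based $\fG \subseteq \fM^{\prec 1}$ and consider the subgroup $\fN$ of $\fM$ generated by $\fG$; since $\fM$ is an ordered abelian group written multiplicatively, there is a canonical isomorphism of ordered groups between $(\fM,\cdot,\preceq)$ and an additively written ordered abelian group $(\Gamma,+,\leq)$, under which $\fm \prec 1$ corresponds to the negative elements. Let $\Delta \subseteq \Gamma$ be the image of $\fG$ under this isomorphism; then $\Delta \subseteq \Gamma^{<}$, and ``well-based'' for $\fG$ (well-ordered under $\succeq$) translates precisely to: $-\Delta \subseteq \Gamma^{>}$ is well-ordered under $\leq$. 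This is the hypothesis of Lemma~\ref{fGstar1} applied to $G := -\Delta$.

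Next I would match up $\fG^* = \bigcup_n \fG^n$ with the additive object $G^* = \sum G$ from Lemma~\ref{fGstar1}. A product $\fg_1 \cdots \fg_n$ with each $\fg_i \in \fG$ corresponds under the isomorphism to a sum $\delta_1 + \cdots + \delta_n$ with each $\delta_i \in \Delta$, i.e. to $-(g_1 + \cdots + g_n)$ with each $g_i \in -\Delta = G$. Hence $\fG^*$ corresponds to $-G^* = \{-\gamma : \gamma \in G^*\}$. By Lemma~\ref{fGstar1}, $G^*$ is well-ordered under $\leq$, so $-G^*$ is well-ordered under the reverse ordering, which is exactly the statement that $\fG^*$ is well-based; this gives (i). For (ii), fix $\fm$; if $\fm \notin \fG^*$ there is nothing to prove, and otherwise $\fm$ corresponds to some $\gamma \in G^*$, and the tuples $(n,\fg_1,\dots,\fg_n)$ with $\fg_1\cdots\fg_n = \fm$ correspond bijectively to the tuples $(n,g_1,\dots,g_n)$ with $g_i \in G$ and $\sum_{i=1}^n g_i = \gamma$, of which there are only finitely many by the second assertion of Lemma~\ref{fGstar1}.

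The only genuinely delicate point — the ``main obstacle'', though it is mild — is making sure the passage to the additive picture is set up cleanly: one must check that the order isomorphism $\fM \to \Gamma$ sends $\fM^{\prec 1}$ exactly onto $\Gamma^{<}$ (immediate, since group isomorphisms respecting the order respect the identity and hence the sign), and that ``well-based'' and ``well-ordered under the reverse order'' are literally the same condition, which is just the definition given for well-based subsets of a monomial set. Everything else is bookkeeping: the correspondence of $n$-fold products with $n$-fold sums is the defining property of a group homomorphism, and the finiteness in (ii) is inherited verbatim. No new combinatorial input beyond Lemma~\ref{fGstar1} is needed, so the proof is short once the dictionary between $\fM$ and $\Gamma$ is in place.
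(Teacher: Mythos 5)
Your reduction is correct and is exactly the route the paper intends: the paper states Lemma~\ref{fGstar} without proof as a recalled fact, but it explicitly sets up the dictionary ``pretend $\fM = t^\Gamma$ via an order-reversing isomorphism'' precisely so that multiplicative statements like this one become the already-recorded additive Lemma~\ref{fGstar1}. Your only (harmless) slip is a sign: since $\fG^*$ corresponds to $-G^*$, a monomial $\fm\in\fG^*$ corresponds to $-\gamma$ for some $\gamma\in G^*$, not to $\gamma$ itself; the bijection of tuples and the finiteness conclusion are unaffected.
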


\noindent
For $\varepsilon\in C[[\fM]]^{\prec 1}$ the family $(\varepsilon^n)$ is summable and $(1-\varepsilon)\sum_n\varepsilon^n=1$, by Lemma~\ref{fGstar}. More generally, 
let $t=(t_1,\dots, t_n)$ be a tuple of distinct variables and let
\[F\ =\ F(t)\ =\ \sum_{\nu}c_{\nu}t^{\nu}\in C[[t]]\ :=\ C[[t_1,\dots, t_n]]\]
be a formal power series over $C$; here the sum ranges over all multiindices $\nu=(\nu_1,\dots, \nu_n)\in \N^n$, and
$c_{\nu}\in C$, $t^{\nu}:= t_1^{\nu_1}\cdots t_n^{\nu_n}$. For any tuple $\varepsilon=(\varepsilon_1,\dots, \varepsilon_n)$ of
elements of $C[[\fM]]^{\prec 1}$ the family $(c_{\nu}\epsilon^{\nu})$ is summable, where $\varepsilon^{\nu}:= \varepsilon_1^{\nu_1}\cdots \varepsilon_n^{\nu_n}$. Put
\[F(\varepsilon)\ :=\ \ \sum_{\nu}c_{\nu}\varepsilon^{\nu}\in C[[\fM]]^{\preceq 1}\ =\ C[[\fM^{\preceq 1}]].\]
Fixing $\varepsilon$ and varying $F$ we obtain a $C$-algebra morphism
\[F \mapsto F(\varepsilon)\ :\ C[[t]] \to C[[\fM]].\] 
Assume that $C$ contains $\mathbb{Q}$ as a subring.
Then we have the formal power series
\[ \exp(t)\ :=\ \sum_{i=0}^\infty t^i/i!\in \mathbb{Q}[[t]], \qquad \log (1+t)\ :=\ \sum_{j=1}^\infty (-1)^{j-1}t^j/j\in \mathbb{Q}[[t]]\]
with $\exp(t_1+t_2)=\exp(t_1)\exp(t_2)$ and $\log(1+t_1+t_2+t_1t_2) =\log(1+t_1)+\log(1+t_2)$
in  $\mathbb{Q}[[t_1,t_2]]\subseteq C[[t_1,t_2]]$. Also
$\log\big(\exp(t)\big) =t$ and 
$\exp\big(\log(1+t))= 1+t$ 
in $\mathbb{Q}[[t]]\subseteq C[[t]]$. Substituting elements of $C[[\fM]]^{\prec 1}$ in these identities yields that 
\[\delta\ \mapsto\ \exp(\delta)\ =\ \sum_{i=0}^\infty \delta^i/i!\ :\ C[[\fM]]^{\prec 1} \to 1+C[[\fM]]^{\prec 1},\]
is an isomorphism of the additive subgroup $C[[\fM]]^{\prec 1}$ of $C[[\fM]]$
onto the multiplicative subgroup $1+C[[\fM]]^{\prec 1}$ of $C[[\fM]]^\times$, with inverse
\[1+\varepsilon\ \mapsto\ \log (1+\varepsilon)\ =\ \sum_{j=1}^\infty (-1)^{j-1}\varepsilon^j/j\ :\ 1+C[[\fM]]^{\prec 1}\to C[[\fM]]^{\prec 1}.\]

\subsection*{Hahn fields} We now consider a coefficient field 
$\smallk$ instead of just a coefficient ring $C$ as before.
 For
nonzero $f\in \smallk[[\fM]]$ we have $f=a\fd(f)(1-\varepsilon)$ with $a\in \smallk^\times$ and $\varepsilon \prec 1$, so $f$ has a multiplicative inverse in $\smallk[[\fM]]$, namely $a^{-1}\fd(f)^{-1}\sum_n \varepsilon^n$. In particular,
$\smallk[[\fM]]$ is again a field with $\smallk$ as a subfield and
$\fM$ as a subgroup of its multiplicative group 
$\smallk[[\fM]]^\times$.  

Suppose $\smallk$ is an ordered field. Then we construe
$\smallk[[\fM]]$ as an ordered field extension of $\smallk$ by requiring that for nonzero $f\in \smallk[[\fM]]$ with $\fm=\fd(f)$ we have $f>0 \Leftrightarrow f_{\fm}>0$;
in this role as ordered field we call $\smallk[[\fM]]$ an \textbf{ordered Hahn field} (over $\smallk$).

\medskip\noindent
Often we prefer
additive notation. To explain this, let
$\Gamma$ be an additively written ordered abelian group (with zero element $0_\Gamma$ if we need to indicate the dependence on $\Gamma$); then $\le$ rather than $\preceq$
denotes the ordering of $\Gamma$.  When $\Gamma$ is clear from the context, we let $\alpha, \beta, \gamma$ range over $\Gamma$. 
We now have the monomial group 
\[\fM\ =\ t^\Gamma\]
where $t$ is just a symbol, and
$\gamma\mapsto t^\gamma: \Gamma\to t^\Gamma=\fM$ is an {\bf order-reversing\/} group
isomorphism of $\Gamma$ onto $\fM$. With this $\fM$ we denote $\smallk[[\fM]]$
also by $\smallk((t^\Gamma))$, and write 
$f=\sum_{\fm} f_{\fm}\fm\in \smallk[[\fM]]$ as
$f=\sum_\gamma f_{\gamma}t^\gamma$, with $f_\gamma:= f_{\fm}$ for $\fm=t^\gamma$.
In this situation we prefer to take $\supp f$ as a subset of $\Gamma$ 
rather than of $\fM=t^\Gamma$: $\supp f =\{\gamma:\ f_{\gamma}\ne 0\}$, and the
well-based requirement turns into the requirement that
$\supp f$ is a well-ordered subset of $\Gamma$. The order type of the support of $f$ is by definition the unique ordinal isomorphic to $\supp(f)$ with respect to the ordering induced by $\Gamma$, and will be denoted by $o(f)$. Of course, all this is only 
a matter of notation, and we shall freely apply results for
Hahn fields $\smallk((t^\Gamma))$ to Hahn fields $\smallk[[\fM]]$, since we 
can pretend that any monomial group $\fM$ is of the form 
$t^\Gamma$ for a $\Gamma$ as above.  Thus for $f$ in a Hahn field $\smallk[[\fM]]$, the ordinal $o(f)$ is isomorphic to $\supp(f)\subseteq \fM$ with respect
to the ordering induced by the {\em reverse\/} ordering on $\fM$.

\section{Valued Fields}
 
 \subsection*{Valuation rings and valued fields}
 A \textbf{valuation ring} is an integral domain in which the set of ideals is totally ordered by inclusion. Here are a few characterizations of valuation rings:
 
\begin{lemma} Let $\bigO$ be an integral domain. The following are equivalent:
\begin{enumerate}
\item[\rm{(i)}] The ring $\bigO$ is a valuation ring.
\item[\rm{(ii)}] For every element $x\neq 0$ in the field of fractions $K$ of $\bigO$, either $x$ or $x\inv$ is in $\bigO$. 
\item[\rm{(iii)}] For all $a,b \in \bigO$, either $b\in a\bigO$ or $a\in b\bigO$.
\end{enumerate}
\end{lemma}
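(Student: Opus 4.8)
The plan is to prove the three conditions equivalent by establishing the cycle (i) $\Rightarrow$ (iii) $\Rightarrow$ (ii) $\Rightarrow$ (i); each implication is short. For (i) $\Rightarrow$ (iii), I would note that for $a,b\in\bigO$ the principal ideals $a\bigO$ and $b\bigO$ are ideals of $\bigO$, hence comparable under the assumed total ordering by inclusion; whichever of $a\bigO\subseteq b\bigO$ or $b\bigO\subseteq a\bigO$ holds gives $a\in b\bigO$ or $b\in a\bigO$ respectively, since $a\in a\bigO$ and $b\in b\bigO$.

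For (iii) $\Rightarrow$ (ii), given a nonzero $x$ in the fraction field $K$, write $x=a/b$ with $a,b\in\bigO$, $b\ne 0$, and note $a\ne 0$ since $x\ne 0$. Applying (iii) to the pair $(a,b)$: if $a=bc$ with $c\in\bigO$ then $x=c\in\bigO$, and if $b=ad$ with $d\in\bigO$ then $x\inv=d\in\bigO$. For (ii) $\Rightarrow$ (i), it suffices to check that any two ideals $I,J$ of $\bigO$ are comparable. Assuming $I\not\subseteq J$, fix $a\in I\setminus J$ (so $a\ne 0$), and show $J\subseteq I$: for $b\in J$ with $b\ne 0$ (the case $b=0$ being trivial), apply (ii) to $a/b\in K$; the possibility $a/b\in\bigO$ would give $a=(a/b)b\in J$, contradicting $a\notin J$, so instead $b/a\in\bigO$, and then $b=(b/a)a\in a\bigO\subseteq I$.

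I do not expect a genuine obstacle here: the argument is entirely formal once the definition of valuation ring (an integral domain whose ideals form a chain under inclusion) is in hand. The only points that need a little attention are the degenerate cases — elements equal to $0$, and the fact that an integral domain has $1\ne 0$ so that $\bigO$ and $K$ behave as expected — and, in the last implication, being careful to extract the contradiction from the correct branch of the dichotomy provided by (ii) rather than the other one.
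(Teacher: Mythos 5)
Your argument is correct: each of the three implications (i) $\Rightarrow$ (iii) $\Rightarrow$ (ii) $\Rightarrow$ (i) is carried out properly, including the only delicate point, namely using the correct branch of the dichotomy in (ii) $\Rightarrow$ (i) to get $J\subseteq I$. The paper states this lemma without proof, as a standard characterization of valuation rings, and your cyclic argument is exactly the standard one, so there is nothing to reconcile.
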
 

\medskip\noindent The set of non-units of a  valuation ring is an ideal. In fact, it is the unique maximal ideal of the valuation ring. Thus any valuation ring $\bigO$ is a local ring.

\medskip\noindent A valued field is a field $K$ equipped with a valuation ring $\bigO$ such that $K$ is the field of fractions of $\bigO$. 
%Note that fields can have more than one valuation ring. 
%Whenever it is understood by context we just write $K$ for the valued field $(K,\bigO)$. 
The following are examples of valued fields:
\begin{itemize}
\item Any field $K$ with $K$ itself as the valuation ring. In this case we say that the valuation of $K$ is trivial.
\item The field of Laurent series in $x\inv$ over $\mathbb{C}$ with valuation ring 
\[\bigO\ :=\ \left\{f=\sum_{n\in \mathbb{N}}f_n x^{-n}\right\}\]
\item In fact, any Hahn field $\smallk((t^\Gamma))$ equipped with 
\[\bigO\ :=\ \{f\in \smallk((t^\Gamma)) :\ \supp(f) \subseteq \Gamma^{\geq}\}\].
\item Any Hausdorff field $K$ (a field of germs at $+\infty$ of continuous real functions) together with 
\[\bigO\ :=\ \left\{f\in K:\ f \text{ is eventually bounded}\right\}\]
% \lim_{x\to \infty} f(x) \in \R \right\}\]
\item The field $\Q$, equipped with the valuation ring
\[\bigO\ :=\ \left\{\frac{a}{b}:\ a,b\in \Z, \text{ and } b\notin p\Z\right\}\]
where $p$ is a fixed prime number. 
\end{itemize}

\subsection*{Residue fields} Given a local ring $R$ with maximal ideal $\smallo$ we define the \textbf{residue field of} $R$ to be the quotient $\smallk = \bigO/\smallo$. We call the quotient map $\bigO \rightarrow \smallk$ the residue map. For a valued field $K$, we define the \textbf{residue field of} $K$ to be the residue field of its valuation ring. The residue fields corresponding to the valued fields mentioned above are naturally isomorphic to $K$, $\C$, $\smallk$, $\R$, and $\Z/p\Z$ respectively. 

\medskip\noindent It is sometimes useful to distinguish valued fields by the characteristic of both itself and its residue field. We say that $K$ is of \textbf{equicharacteristic zero}, \textbf{equicharacteristic} $p$, or of \textbf{mixed characteristic} if $(\cha(K),\cha(\smallk))$ is $(0,0)$, $(p,p)$, or $(0,p)$, respectively, where $\smallk$ is the residue field of $K$ and $p$ is a prime number.   In this dissertation we are mainly concerned with the equicharacteristic zero case, especially when there is also a derivation in play.

\medskip\noindent We say that the residue field $\smallk$ of $K$ lifts to $K$ if there is a field embedding $\iota:\smallk\rightarrow K$. All the previously mentioned residue fields lift to their respective valued fields except for $\Z/p\Z$ to $\mathbb{Q}$. In fact there is no hope of lifting the residue field in the mixed characteristic case.

\subsection*{Dominance relations}
A \textbf{dominance relation} on a field $K$ is a binary relation $\preceq$ on $K$ such that for all $f,g,h\in K$:
\begin{itemize}
	\item[(DR1)] $1\not\preceq 0$,
    \item[(DR2)] $f\preceq f$,
    \item[(DR3)] $f\preceq g$,  $g\preceq h$ $\Rightarrow$ $f\preceq h$,
    \item[(DR4)] $f\preceq g$ or $g\preceq f$,
    \item[(DR5)] $f\preceq g$ $\Rightarrow $ $hf\preceq hg$,
    \item[(DR6)] $f\preceq h$, $g\preceq h$ $\Rightarrow$ $f+g\preceq h$.
\end{itemize}

\medskip\noindent
Let $\preceq$ be a dominance relation on the field $K$.  We define some further (asymptotic) binary  relations
$\succeq, \prec, \succ, \asymp, \sim$ on $K$ in terms of $\preceq$ as follows:
\begin{align*} f \succeq g &:\iff g\preceq f; \qquad f \prec g :\iff f\preceq g \text{ and }g\not\preceq f; \qquad f\succ g :\iff g\prec f;\\
 f\asymp g &:\iff f\preceq g \text { and }g\preceq f; \qquad f\sim g :\iff f-g\prec f.
 \end{align*}
We also say that $f$ \textbf{dominates} $g$ if $f\succeq g$, that $f$ \textbf{strictly dominates} $g$ if $f\succ g$, and that $f$ is \textbf{asymptotic} to $g$ if $f\asymp g$. The relation $\asymp$ is an equivalence relation on $K$; reflexivity and symmetry are straightforward from the definition and transitivity follows from DR3 and symmetry. 
Alternatively we note that by DR2 and DR3 we have that $\preceq$ is a preorder on $K$. If $f\sim g$, then $f,g\ne 0$. It is easy to check that
$\sim$ is an equivalence relation on $K^\times$. 
%We further note that $K/\asymp$ has a natural ordering inherited from $\preceq$ (which we still denote by $\preceq$) and that by DR5 $(K^\times/\asymp, \cdot ,\preceq|_{(K^\times/\asymp)^2})$ is an ordered abelian group with multiplication given by $[f][g]=[fg]$ where $[f]$, and $[g]$ are the equivalence classes of $f$ and $g$respectively, for $f,g\in K^\times$.

\subsection*{Valuations}
A \textbf{valuation} on a field $K$ is a map $v:K^\times \rightarrow \Gamma$ onto an ordered abelian group $\Gamma$ such that for all $x,y\in K^\times$,
\begin{itemize}
	\item[(VF1)] $v(x+y)\geq \min\{v(x),v(y)\}$ for $x+y\neq 0$, and
    \item[(VF2)] $v(xy) = v(x) + v(y)$.
\end{itemize}

\noindent
As for valued abelian groups it follows that if $v(x)< v(y)$, then $v(x+y) = v(x)$. We extend $+$ and $<$ to a binary operation $+$ on $\Gamma_\infty= \Gamma \cup \{\infty\}$ and a (total) ordering $<$ on $\Gamma_\infty$ so that for all $\gamma\in \Gamma$
\begin{itemize}
	\item $\infty > \gamma$ ,
    \item $\infty + \gamma = \gamma + \infty =  \infty$, and
    \item $\infty + \infty = \infty$,
\end{itemize} 
an extend $v$ to $v:K\rightarrow \Gamma_\infty$ by $v(0)=\infty$ all of $K$ by setting $v(0)= \infty$ so that $v$ is a valuation on the additive group of $K$ as defined on Section \ref{VAG}.

\begin{proposition}\label{propeqval} The following are equivalent for a subring $\bigO$ of a field $K$:
\begin{enumerate}%[(i)]
	\item[\rm{(i)}] $K$ equipped with $\bigO$ is a valued field; 
    \item[\rm{(ii)}] the equivalence $f\preceq g :\iff f\in \bigO g$ defines a dominance relation $\preceq$ on $K$;
    \item[\rm{(iii)}] there is a valuation $v:K\rightarrow \Gamma_{\infty}$ on $K$ such that $ \bigO=\{f\in K:\ v(f)\ge 0\}$.
\end{enumerate}
\end{proposition}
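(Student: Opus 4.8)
The plan is to prove the cycle (i) $\Rightarrow$ (ii) $\Rightarrow$ (iii) $\Rightarrow$ (i). For the implication (i) $\Rightarrow$ (ii), assuming $\bigO$ is a valuation ring whose field of fractions is $K$, I would simply check the axioms DR1--DR6 for the relation $f\preceq g :\iff f\in \bigO g$. All of DR1, DR2, DR3, DR5, DR6 are immediate from $1\in\bigO$, $\bigO\cdot\bigO\subseteq\bigO$, and closure of $\bigO$ under addition (for example DR6: if $f=ah$ and $g=bh$ with $a,b\in\bigO$, then $f+g=(a+b)h$). The only axiom that uses the hypothesis is DR4, which for nonzero $f,g$ follows by applying the characterization of valuation rings recalled above to the element $f/g\in K$, giving $f/g\in\bigO$ or $g/f\in\bigO$; the cases $f=0$ or $g=0$ are trivial since $0\in\bigO g$ always.

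For (ii) $\Rightarrow$ (iii) --- the substantive step --- I would build the value group as $\Gamma := K^\times/\bigO^\times$, where $\bigO^\times$ is the group of units of $\bigO$. First one notes that for $f,g\in K^\times$ the relation $f\asymp g$ amounts to $f/g\in\bigO^\times$, and that $\asymp$ is a congruence for multiplication on $K^\times$ by DR5, so multiplication descends to a group operation on $\Gamma$, which I write additively with neutral element $0:=\bigO^\times$. Let $v:K\to\Gamma_\infty$ send $f\in K^\times$ to its class $[f]$ and send $0$ to $\infty$. I would order $\Gamma$ by declaring $v(f)\le v(g):\iff g\preceq f$: this is well-defined and total by DR3 and DR4, antisymmetric because $f\asymp g\iff[f]=[g]$, and translation-invariant because, for $h\in K^\times$, DR5 applied to both $h$ and $h^{-1}$ yields $g\prec f\Rightarrow gh\prec fh$. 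Hence $\Gamma$ is an ordered abelian group. Now VF2 holds by construction of the group operation, and VF1 in the case $v(x)\le v(y)$ with $x+y\ne 0$ unwinds to the assertion $x+y\preceq x$, which is exactly DR6 combined with DR2; the subcases where $x$ or $y$ is $0$ are handled by the $\infty$ conventions. Finally $\bigO=\{f\in K:\ v(f)\ge 0\}$, since by the definition of $\preceq$ in (ii) we have $v(f)\ge v(1)\iff f\preceq 1\iff f\in\bigO\cdot 1=\bigO$.

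For (iii) $\Rightarrow$ (i), given a valuation $v:K\to\Gamma_\infty$ with $\bigO=\{f:\ v(f)\ge 0\}$, I would first note that $\bigO$ is a subring of $K$: it contains $1$ because $v(1)=0$, it is closed under addition by VF1 and under multiplication by VF2. For any $x\in K^\times$, either $v(x)\ge 0$, so $x\in\bigO$, or $v(x)<0$, so $v(x^{-1})=-v(x)>0$ and $x^{-1}\in\bigO$; by the characterization of valuation rings recalled above, $\bigO$ is therefore a valuation ring, and $K$ is its field of fractions since every $x\in K^\times$ equals $x/1$ or $1/x^{-1}$ with numerator and denominator in $\bigO$. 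The only genuinely delicate point in this whole argument is the construction carried out in (ii) $\Rightarrow$ (iii): verifying that the quotient ordering on $K^\times/\bigO^\times$ is well-defined, total and translation-invariant, and that VF1 drops out of DR6. Everything else is a routine unwinding of the definitions.
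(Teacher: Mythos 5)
Your proof is correct and takes essentially the same route as the paper: the same cycle (i)$\Rightarrow$(ii)$\Rightarrow$(iii)$\Rightarrow$(i), with your value group $K^\times/\bigO^\times$ being exactly the paper's quotient $K^\times\!/\!\asymp$ equipped with the reverse ordering. You simply spell out the routine verifications (well-definedness, totality, translation-invariance, VF1--VF2) that the paper declares ``easy to check.''
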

\begin{proof}
Assume (i) and define the binary relation $\preceq$ by:  $f\preceq g: \iff f\in \bigO g$. Then DR1-DR3, DR5, DR6 follow from $\bigO$ being a ring with $1\ne 0$, and DR4 holds since for $f\neq 0\neq g$, either $f/g \in \bigO$ or $g/f\in \bigO$, so $\preceq$ is a dominance relation.  
Assume (ii), and  let $v:K^\times \rightarrow \Gamma:=K^\times\hspace{-1 mm}/\hspace{-1 mm}\asymp$ be the map sending $f\in K^\times$ to its equivalence class
$f_{\asymp}$, and consider the quotient group $\Gamma$ of the multiplicative group $K^\times$ as an additive group. (This is just a notational convention.) 
Equip $\Gamma$ with the reverse order: $f_{\asymp}\leq g_{\asymp}$ iff $g\preceq f$. It is easy to check  then that $v$ is a valuation on the field $K$, with $\bigO=\{f\in K:\ v(f)\ge 0\}$.  Assuming $v$ is as in (iii) we have $v(f)\ge 0$ or $v(f^{-1})\ge 0$ for all $f\in K^\times$, so $\bigO$ is a valuation ring of $K$.  
\end{proof}

\noindent
Let $K$ be a valued field.  By Proposition~\ref{propeqval}
and its proof we have a valuation $v: K^\times\to \Gamma$ such that $\bigO=\{f\in K:\ v(f)\ge 0\}$, and such that if $v^*: K^\times\to \Gamma^*$ is also a valuation with 
$\bigO=\{f\in K:\ v^*(f)\ge 0\}$, then $v^*=\phi\circ v$ for a unique ordered group isomorphism $\phi: \Gamma\to \Gamma^*$. We equip $K$ with 
such a valuation $v$; by the fact just mentioned, all ways of doing this are equivalent. The dominance relation $\preceq$
on $K$ defined in (ii) of Proposition~\ref{propeqval} is then given in terms of $v$ by: $f\preceq g \iff v(f)\ge v(g)$; likewise: $f\prec g \iff v(f)> v(g)$, $f\asymp g \iff v(f)=v(g) $, and $f\sim g \iff  v(f-g)>v(f)$.
Given $S\subseteq K$ and $f\in K$ we set $S^{\preceq f}:=\{g\in K:\ g\preceq f\}$, and similarly with $S^{ \prec f}$, and the like. 

We make $K$ into a {\em topological field\/} by giving it the {\bf valuation topology}: a neighborhood basis of $0\in K$ is given by the sets $\{f\in K: v(f)>\gamma\}$ with $\gamma\in \Gamma$.  

\medskip\noindent We always consider the Hahn field $\smallk((t^\Gamma))$ to be equipped with the  valuation 
\[v\ :=\ v_\Gamma:\smallk((t^\Gamma))^\times\rightarrow \Gamma, \qquad v(f)\ :=\ \min \supp f.\] The dominance relation $\preceq$ associated to this valuation
then agrees with the relation $\preceq$ that we defined earlier on this Hahn field in terms of  leading monomials. Likewise for $\prec, \succ$, and $\asymp$.

A Hahn field $\smallk[[\fM]]$ is accordingly considered equipped with a valuation $v: \smallk[[\fM]]^\times \to \Gamma_{\fM}$
where the restriction of $v$ to the ordered multiplicative group $\fM\subseteq \smallk[[\fM]]^\times$ is an order-reversing group isomorphism onto the ordered (additive) group $\Gamma_{\fM}$. 
%We write $\preceq_v$ or $\preceq_\Gamma$,
%or the value group $\Gamma$ in the case of the valued Hahn field 
%$K=\smallk((t^\Gamma))$. 

A \textbf{monomial group of} a valued field $K$ is a multiplicative subgroup $\fM$ of $K^\times$ that is bijectively mapped onto the value group
$\Gamma$ by $v$. Thus a Hahn field $\smallk[[\fM]]$ (with the usual valuation) has $\fM$ as a monomial group. %A \textbf{cross-section of} $K$ is a group homomorphism $s:\Gamma \rightarrow K^\times$ such that $v\circ s = \operatorname{id}_\Gamma$. Admitting a cross-section and having a monomial group are equivalent for any valued field.

\begin{example} The same valued field can have many monomial groups: For example, 
$K = \R[[x^\Z]]$ has the set $x^\Z$ as a monomial group, but  has also for every $r\in \R^\times$ the set $\{r^kx^k:\ k\in \Z\}$ as a monomial group. 
\end{example}

\subsection*{Valued field extensions} Let $K$ and $L$ be valued fields. We say that $L$ is \textbf{valued field extension of} $K$  (or $K$ is a \textbf{valued subfield of} $L$) if $L$ is a field extension such that $\bigO_L\cap K = \bigO$. (When two valued fields $K$ and $L$ are in play, it is understood that $\bigO$ is the valuation ring of $K$ and $\bigO_L$ is the valuation ring of $L$.) 
Let $L$ be a valued field extension of $K$. Then we have a natural field embedding $\smallk=\bigO/\smallo\to \smallk_L=\bigO_L/\smallo_L$ of the residue fields, and we identify
$\smallk$ with a subfield of $\smallk_L$ via this embedding. Likewise, we have a natural ordered group embedding $\Gamma \to \Gamma_L$ of the value groups, and we identify
$\Gamma$ with an ordered subgroup of $\Gamma_L$ via this embedding. This extension is said to be \textbf{immediate} if $\smallk=\smallk_L$ and $\Gamma=\Gamma_L$.
A valued field $K$ is \textbf{maximal} if it has no proper immediate valued field extension. A valued field is \textbf{algebraically maximal} if it has no proper immediate algebraic valued field extension. Hahn fields $\smallk[[\fM]$ are maximal, and every valued field has a maximal immediate extension.
%In this case we say that the valuation ring $\bigO_L$ lies over $\bigO$. 
%In general given a field extension $L$ of $K$ there are several valuation rings of $L$ that lie over $\bigO$. For example; let $L= K(x)$ for $x$ transcendental over $K$. 
%\begin{lemma}\label{maxExt} Let $K$ be a valued field
%\begin{enumerate}
%	\item[\rm{(i)}] $K$ has a maximal immediate extension, and
%    \item[\rm{(ii)}] $K$ has an algebraically maximal immediate extension. 
%\end{enumerate}
%\end{lemma}
\subsection*{Henselian fields}
A valued field $K$ is said to be \textbf{henselian} if for every polynomial $P\in \bigO[X]$ and every $a\in \bigO$ with $P(a)\prec 1$ and $(dP/dX) (a) \asymp 1$ there exists $b\in \bigO$ such that $b-a\prec 1$ and $P(b) = 0$. Hahn fields are henselian. Here are some useful characterizations of henselianity:

\begin{lemma}Let $K$ be a valued field. The following are equivalent:
\begin{enumerate}%[{(i)}]
	\item[\rm{(i)}] $K$ is henselian.
    \item[\rm{(ii)}] For every $P(X) = 1 + X + c_2X^2 +\cdots + c_nX^n \in K[X]$ with $c_2,\ldots, c_n \prec 1$, there is $x\in \bigO$ such that $P(x) = 0$.
    \item[\rm{(iii)}] For every $Q(Y) = Y^n + Y^{n-1} + c_2Y^{n-2} + \cdots + c_n\in K[Y]$ with $c_2, \ldots, c_n\prec 1$, there is $y\in \bigO$ such that $Q(y) = 0$.
    \item[\rm{(iv)}] For every $P(X) \in \bigO[X]$ and $a\in \bigO$ with $P(a) \prec (dP/dX)(a)^2$, there is $b\in \bigO$ such that $a-b\prec (dP/dX)(a)$ and $P(b)=0$.
\end{enumerate}
\end{lemma}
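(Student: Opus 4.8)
The plan is to prove the implications $(i)\Rightarrow(ii)\Rightarrow(i)$, $(ii)\Rightarrow(iii)\Rightarrow(ii)$ and $(i)\Leftrightarrow(iv)$, which together yield the full equivalence; write $P'$ for the formal derivative $dP/dX$. Two elementary observations will be used throughout. First, if $P\in\bigO[X]$ and $a\in\bigO$ then every Taylor coefficient $\tfrac{1}{i!}P^{(i)}(a)$ again lies in $\bigO$ (the relevant binomial coefficients are integers), so all the changes of variable below keep us inside $\bigO[X]$. Second, applying the residue map $\bigO[X]\to\smallk[X]$, the pair of conditions ``$P(a)\prec 1$ and $P'(a)\asymp 1$'' says exactly that the image $\overline a$ is a \emph{simple} root of $\overline P$ in $\smallk[X]$.

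Most of these implications are straightforward. The implication $(iv)\Rightarrow(i)$ is immediate, since $P'(a)\asymp 1$ gives $P'(a)^2\asymp 1\succ P(a)$, and the $b$ furnished by $(iv)$ satisfies $b-a\prec P'(a)\asymp 1$. For $(i)\Rightarrow(iv)$ one first notes that $P(a)\prec P'(a)^2$ forces $P'(a)\neq 0$ (no element is $\prec 0$), and then rescales: $Q(Y):=P'(a)^{-2}P\bigl(a+P'(a)Y\bigr)$ lies in $\bigO[Y]$ with $Q(0)\prec 1$ and $Q'(0)=1$, so a root $c\prec 1$ of $Q$ given by $(i)$ yields $b:=a+P'(a)c$ with $P(b)=0$ and $b-a\prec P'(a)$. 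For $(i)\Rightarrow(ii)$ one evaluates at $a=-1\in\bigO$: since the $c_j$ are $\prec 1$ one computes $P(-1)\prec 1$ and $P'(-1)\sim 1\asymp 1$, so $(i)$ produces a root $x\in\bigO$ (with $x\sim -1$). For $(ii)\Rightarrow(i)$, write $R(X):=P(a+X)=\sum_i r_iX^i$ with $r_0=P(a)\prec 1$, $r_1=P'(a)\asymp 1$ and all $r_i\in\bigO$ (the case $P(a)=0$ being trivial); substituting $X=(r_0/r_1)Z$ and dividing by $r_0$ yields a polynomial $1+Z+d_2Z^2+\cdots$ of the shape required in $(ii)$, with the $d_i\prec 1$, and a root $z_0\in\bigO$ of it gives $b:=a+(r_0/r_1)z_0$ with $P(b)=0$ and $b-a\prec 1$. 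Finally $(ii)\Rightarrow(iii)$ is obtained by passing to the reciprocal polynomial: $X^nQ(1/X)=1+X+c_2X^2+\cdots+c_nX^n$ has the form in $(ii)$, any root $x\in\bigO$ of it is nonzero and satisfies $x\asymp 1$ (an $x\prec 1$ would make its value $\sim 1$), and then $1/x\in\bigO$ is a root of $Q$.

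I expect the one genuinely delicate step to be $(iii)\Rightarrow(ii)$, because $(iii)$ guarantees only that \emph{some} root lies in $\bigO$, while what we need (after taking a reciprocal) is the particular root whose residue is $-1$; the ``small'' roots that a monic polynomial over $\bigO$ may carry are of no use. I would handle this through the following claim, proved by induction on the degree $m$: assuming $(iii)$, every monic $Q\in\bigO[X]$ of degree $m\ge 1$ with $\overline Q=X^{m-1}(X+1)$ in $\smallk[X]$ has a root in $\bigO$ of residue $-1$. Indeed such a $Q$ equals $X^m+X^{m-1}+c_2X^{m-2}+\cdots+c_m$ with the $c_i\prec 1$, so $(iii)$ gives a root $y_1\in\bigO$ with $\overline{y_1}\in\{0,-1\}$; if $\overline{y_1}=-1$ we are done, and if $\overline{y_1}=0$ we factor $Q=(X-y_1)Q_1$ with $Q_1\in\bigO[X]$ monic of degree $m-1$ and $\overline{Q_1}=X^{m-2}(X+1)$, and invoke the inductive hypothesis (the base case $m=1$ reads $Q=X+c$ with $c$ a unit). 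Granting the claim, for $(ii)$ take $P=1+X+c_2X^2+\cdots+c_nX^n$, let $m\ge 1$ be the largest index with $c_m\neq 0$, and apply the claim to $X^mP(1/X)=X^m+X^{m-1}+c_2X^{m-2}+\cdots+c_m$, whose reduction is $X^{m-1}(X+1)$; this produces a root $y_0$ of residue $-1$, hence a nonzero unit, and $x:=1/y_0\in\bigO$ is a root of $P$. Putting the implications together gives $(i)\Leftrightarrow(ii)$, $(ii)\Leftrightarrow(iii)$ and $(i)\Leftrightarrow(iv)$, so all four conditions are equivalent.
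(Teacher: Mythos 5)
The paper itself states this lemma without proof (it is imported from \cite{ADH}), so there is no in-paper argument to compare against; judged on its own, your overall route is the standard one, and the implications $(iv)\Rightarrow(i)$, $(i)\Rightarrow(iv)$ (rescaling $Q(Y)=P'(a)^{-2}P(a+P'(a)Y)$), $(i)\Rightarrow(ii)$ (evaluating at $a=-1$), $(ii)\Rightarrow(i)$ (substituting $X=(r_0/r_1)Z$), and $(ii)\Rightarrow(iii)$ (reciprocal polynomial) all check out, including the valuation estimates.

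There is, however, a genuine gap in the inductive claim you use for $(iii)\Rightarrow(ii)$. You assert that a monic $Q\in\bigO[X]$ of degree $m$ with $\overline Q=X^{m-1}(X+1)$ ``equals $X^m+X^{m-1}+c_2X^{m-2}+\cdots+c_m$ with the $c_i\prec 1$''. That is false: the hypothesis on $\overline Q$ only forces the coefficient of $X^{m-1}$ to be a unit of residue $1$, not to equal $1$, whereas $(iii)$ as stated applies only when that coefficient is exactly $1$. The discrepancy is produced by your own induction step: if $Q=X^m+X^{m-1}+c_2X^{m-2}+\cdots+c_m$ and $y_1$ is a root with $\overline{y_1}=0$ but $y_1\neq 0$ (already $Q=X^2+X+c$ with $0\neq c\prec 1$ has such a root, and $(iii)$ may hand you exactly that root), then writing $Q=(X-y_1)Q_1$ makes the subleading coefficient of $Q_1$ equal to $1+y_1\neq 1$, so $(iii)$ cannot be invoked for $Q_1$ verbatim and the recursion as written stalls. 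The repair is short: given monic $Q=X^m+uX^{m-1}+(\text{lower terms }\prec 1)$ with $u\sim 1$, substitute $X=uZ$ and divide by $u^m$ to get $Z^m+Z^{m-1}+d_2Z^{m-2}+\cdots+d_m$ with $d_i=c_iu^{-i}\prec 1$; a root $z\in\bigO$ of this gives the root $uz$ of $Q$ with $\overline{uz}=\overline{z}$, so roots of residue $-1$ correspond. Inserting this one-line normalization each time $(iii)$ is applied (equivalently, proving your claim for all monic $Q$ with $\overline Q=X^{m-1}(X+1)$ via this reduction to the exact shape) makes the induction, and hence the whole equivalence, go through.
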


\begin{lemma} If a valued field is algebraically maximal, then it is henselian.
The converse holds in the equicharacteristic zero case.
\end{lemma}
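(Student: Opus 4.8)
The statement to prove is: \emph{If a valued field is algebraically maximal, then it is henselian; the converse holds in the equicharacteristic zero case.}

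\medskip

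\textbf{First direction: algebraically maximal $\Rightarrow$ henselian.} The plan is to prove the contrapositive. Suppose $K$ is not henselian; I will manufacture a proper immediate algebraic extension. By the characterization lemma just stated, non-henselianity gives a polynomial $P(X)=1+X+c_2X^2+\cdots+c_nX^n\in K[X]$ with all $c_i\prec 1$ that has no zero in $\bigO$. One checks that such a $P$ is irreducible-or-at-least has no root in $K$ (any root would have to be in $\bigO$ by the asymptotic constraints: if $x$ is a root, examining the dominant terms forces $x\asymp 1$, after which one can reduce to the residue field and lift, contradiction — so in fact the obstruction really does produce a polynomial with no root in $K$). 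Let $a$ be a root of $P$ in an algebraic closure and set $L=K(a)$. The point is to show $L$ can be given a valuation extending that of $K$ which is immediate: one shows the residue field does not grow because the ``approximate root'' already lives in $\smallk$ (from $P(X)$ reducing mod $\smallo$ to $1+\bar X$, whose root is $-1\in\smallk$), and that the value group does not grow because $v(a)=0$ is already in $\Gamma$. More carefully, one uses that every algebraic valued field extension has value group and residue field algebraic over those of $K$, and that if both are equal to $\Gamma$ and $\smallk$ respectively then the extension is immediate; the work is to check that \emph{some} extension of $v$ to $L$ achieves this, which follows from a Newton-polygon / Hensel-failure analysis of $P$. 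This contradicts algebraic maximality.

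\medskip

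\textbf{Second direction: in equicharacteristic zero, henselian $\Rightarrow$ algebraically maximal.} Suppose $K$ is henselian of equicharacteristic zero and, for contradiction, that $L\supsetneq K$ is a proper immediate algebraic extension. Pick $a\in L\setminus K$; since $\Gamma_L=\Gamma$ and $\smallk_L=\smallk$, for every $b\in K$ with $b\ne a$ there is $b'\in K$ with $v(a-b')>v(a-b)$ (the pseudo-Cauchy / ``no best approximation'' phenomenon in immediate extensions). So $a$ is a limit of a pseudo-Cauchy sequence $(b_\rho)$ from $K$ with no pseudolimit in $K$. Let $P\in K[X]$ be the minimal polynomial of $a$ over $K$, of degree $\ge 2$; among all polynomials over $K$ that are not ``fixed'' by the pseudo-Cauchy sequence choose one of least degree — standard theory (Kaplansky) shows the minimal polynomial is of this \emph{algebraic} type, and then $P'$ \emph{is} fixed, so $v(P'(b_\rho))$ is eventually constant. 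Writing the Taylor expansion $P(X)=\sum_i \frac{P^{(i)}(b_\rho)}{i!}(X-b_\rho)^i$ — here equicharacteristic zero is used so that the $i!$ are units — one finds that for $\rho$ large, after rescaling by a suitable monomial, $P$ restricted to the relevant ball looks like $d\cdot\big(c + Y + (\text{higher order in }Y\text{, }\prec 1)\big)$ with $c\prec 1$, $d\ne 0$. Henselianity (in the form (ii)/(iv) of the preceding lemma) then produces a root of $P$ in that ball, hence in $K$ — contradicting that $a\notin K$ is a root of the irreducible $P$.

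\medskip

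\textbf{Main obstacle.} The routine parts are the lemma-chasing (translating non-henselianity into a bad polynomial, and rescaling Taylor expansions). The crux in the first direction is showing the failure polynomial $P$ genuinely yields an \emph{immediate} algebraic extension — i.e.\ controlling both $\Gamma_L$ and $\smallk_L$ simultaneously — which is where a Newton-polygon argument or an appeal to the structure theory of algebraic extensions of valued fields is needed. The crux in the second direction is the Kaplansky-style argument that the minimal polynomial of $a$ over $K$ is of \emph{algebraic type} relative to the pseudo-Cauchy sequence (so that its derivative stabilizes in value); this, together with the essential use of $\mathrm{char}=0$ to invert factorials in the Taylor expansion, is the heart of the matter and the place where the equicharacteristic-zero hypothesis is indispensable (in mixed or positive characteristic the statement fails). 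I expect to cite the pseudo-Cauchy sequence machinery from \cite{ADH} rather than redevelop it.
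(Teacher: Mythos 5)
The paper itself records this lemma without proof (it is one of the standard facts imported from \cite{ADH}), so I am measuring your argument against the standard one. Your global strategy --- reduce both directions to the existence or non-existence of proper immediate algebraic extensions --- is the right one, but in each direction the decisive step is either asserted or wrong. In the first direction: (a) your parenthetical claim that any root of the Hensel-failure polynomial $P(X)=1+X+c_2X^2+\cdots+c_nX^n$ (all $c_i\prec 1$) must lie in $\mathcal{O}$ is false; such a $P$ can have roots of negative valuation in $K$ (already $1+X+tX^2$ over a Laurent series field has one root $\asymp 1$ and one root $\asymp t^{-1}$), so failure of characterization (ii) gives no root in $\mathcal{O}$, not no root in $K$; (b) more seriously, the immediacy of some valued extension of $K$ to $K(a)$ --- the whole content of this direction --- is left as an appeal to an unspecified ``Newton-polygon analysis''. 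The clean route, using facts stated in the very same subsection, is the henselization: $K^{h}$ is an immediate algebraic extension of $K$ and is henselian, so algebraic maximality forces $K=K^{h}$. If you want to keep your $P$, the patch is to take for $a$ the root of $P$ in $K^{h}$ whose residue is $-1$ (it exists since $K^h$ is henselian and $P$ reduces to $1+X$); then $a\notin K$ because $a\in\mathcal{O}_{K^h}$ while $P$ has no zero in $\mathcal{O}$, and $K(a)\subseteq K^{h}$ is immediate because subextensions of immediate extensions are immediate.

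In the second direction the sketch conflates the minimal polynomial $P$ of $a$ with an unfixed polynomial of minimal degree: ``$P'$ is fixed'' follows only for the latter, and the rescaling step (``$P$ restricted to the relevant ball looks like $d\cdot(c+Y+\text{smaller})$'') is precisely the nontrivial content and is only asserted. If you insist on the minimal polynomial of $a$, the needed eventual dominance of the linear Taylor term can genuinely fail: the values $\gamma_\rho=v(a-b_\rho)$ are strictly increasing but need never exceed the fixed thresholds (e.g.\ $v(P'(a))$, or the values needed to make $P(b_\rho)\prec P'(b_\rho)^2$) that Hensel's lemma requires --- ruling this out is what the theorem ultimately does, so it cannot be assumed inside the proof. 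The standard repair (Kaplansky, as in \cite{ADH}) is: take $Q$ unfixed of minimal degree, use residue characteristic zero to show the linear term dominates along the sequence, apply Hensel at a late $b_\rho$ to obtain a root of $Q$ in $K$ which is then a pseudolimit of $(b_\rho)$, and contradict the divergence of the sequence --- not the irreducibility of $P$. Relatedly, the essential use of equicharacteristic zero is that $v(i!)=0$ (the factorials are units of the valuation ring), not that $i!$ is invertible in $K$, which holds in mixed characteristic as well, where the statement is false. An alternative complete route for this direction is Ostrowski's defect theorem: in residue characteristic zero the defect is trivial, so for henselian $K$ a finite immediate extension has degree $ef d=1$.
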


\noindent
A \textbf{henselization} of a valued field $K$ is a valued field extension of $K$ that embeds uniquely over $K$ into every henselian valued field extension of $K$. 

\begin{lemma}
Every valued field $K$ has a henselization. Such a henselization is unique up to unique isomorphism over $K$, and is an immediate
algebraic valued field extension of $K$. 
\end{lemma}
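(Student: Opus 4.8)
The plan is to construct a henselization of $K$ explicitly as a \emph{decomposition field} inside a separable closure, and to extract all three assertions — existence together with the universal property, uniqueness, and immediacy — from that description.

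\textbf{Construction and henselianity.} I would fix a separable algebraic closure $K^{\mathrm s}$ of $K$; since valuations extend along algebraic extensions, pick a valuation ring $\bigO^{\mathrm s}$ of $K^{\mathrm s}$ lying over $\bigO$, with associated valuation $v^{\mathrm s}$. Put $G:=\operatorname{Gal}(K^{\mathrm s}/K)$, let $G_{\mathrm d}:=\{\sigma\in G:\sigma(\bigO^{\mathrm s})=\bigO^{\mathrm s}\}$ be the decomposition group, and set $K^{\mathrm h}:=\operatorname{Fix}(G_{\mathrm d})\subseteq K^{\mathrm s}$ with the restricted valuation $v^{\mathrm h}$. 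Then $K^{\mathrm h}/K$ is separable algebraic, hence algebraic. The first check is that $K^{\mathrm h}$ is henselian: since $\operatorname{Gal}(K^{\mathrm s}/K^{\mathrm h})=G_{\mathrm d}$ and every extension of $v^{\mathrm h}$ to $K^{\mathrm s}$ has valuation ring $\sigma(\bigO^{\mathrm s})$ for some $\sigma\in G_{\mathrm d}$, the valuation $v^{\mathrm h}$ extends \emph{uniquely} to $K^{\mathrm s}$; a standard argument then verifies criterion (ii) of the henselianity lemma, factoring $P=1+X+c_2X^2+\cdots+c_nX^n$ (with all $c_i\prec 1$) over $K^{\mathrm s}$, using that $\operatorname{Gal}(K^{\mathrm s}/K^{\mathrm h})$ permutes the roots while fixing the unique valuation, and comparing values to isolate the single root in $\bigO_{K^{\mathrm h}}$.

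\textbf{Universal property.} Given a henselian valued field extension $M$ of $K$, I would show there is a unique valuation-compatible $K$-embedding $K^{\mathrm h}\to M$. For existence, extend the valuation of $M$ uniquely to an algebraic closure $M^{\mathrm a}$ (possible since $M$ is henselian), and let $\widetilde K^{\mathrm s}$ be the separable closure of $K$ inside $M^{\mathrm a}$. Choose a $K$-isomorphism $j\colon K^{\mathrm s}\to\widetilde K^{\mathrm s}$; as both $v^{\mathrm s}$ and the pullback of $v^{\mathrm a}|_{\widetilde K^{\mathrm s}}$ along $j$ extend $v$, conjugacy of valuation extensions in Galois extensions lets us compose $j$ with an element of $G$ so that $j$ becomes valuation-compatible. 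Then $j(\bigO^{\mathrm s})=\bigO^{\mathrm a}\cap\widetilde K^{\mathrm s}$, so $j(K^{\mathrm h})$ is the decomposition field $D$ of $\bigO^{\mathrm a}\cap\widetilde K^{\mathrm s}$ over $\bigO$. To see $D\subseteq M$: the compositum $\widetilde K^{\mathrm s}M$ is separable algebraic over $M$, and as $M$ is henselian its valuation extends uniquely there, so the decomposition group over $M$ is all of $\operatorname{Gal}(\widetilde K^{\mathrm s}M/M)$; restricting to $\widetilde K^{\mathrm s}$ gives $\operatorname{Gal}(\widetilde K^{\mathrm s}/\widetilde K^{\mathrm s}\cap M)\subseteq\operatorname{Gal}(\widetilde K^{\mathrm s}/D)$, hence $D\subseteq\widetilde K^{\mathrm s}\cap M\subseteq M$, and $\iota:=j|_{K^{\mathrm h}}$ works. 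For uniqueness, any two valuation-compatible $K$-embeddings $K^{\mathrm h}\to M$ extend to $K$-embeddings $K^{\mathrm s}\to M^{\mathrm a}$; by uniqueness of the extension of $v^{\mathrm h}$ to $K^{\mathrm s}$ both pull back $v^{\mathrm a}$ to $v^{\mathrm s}$, so they differ by an element of $G$ fixing $\bigO^{\mathrm s}$, i.e.\ by an element of $G_{\mathrm d}$, which fixes $K^{\mathrm h}$ pointwise.

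\textbf{Uniqueness of the henselization and immediacy.} Uniqueness of the henselization up to unique isomorphism over $K$ is then the usual formal consequence of the universal property. For immediacy, I would invoke Zorn's lemma to obtain a maximal immediate algebraic extension $K'$ of $K$ — a union of a chain of immediate algebraic extensions is again immediate algebraic, since value groups and residue fields do not grow — which is algebraically maximal, hence henselian by the stated lemma; the universal property gives a $K$-embedding $K^{\mathrm h}\hookrightarrow K'$, and since $K\subseteq K^{\mathrm h}\subseteq K'$ with $K'/K$ immediate, also $K^{\mathrm h}/K$ is immediate. The main obstacle is the decomposition-group bookkeeping in the universal property, in particular checking that the embedded copy of $K^{\mathrm h}$ really lands in $M$ and not merely in an algebraic closure of $M$, together with the standard but slightly delicate implication that a unique extension of the valuation to the separable closure already forces henselianity.
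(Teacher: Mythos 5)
The paper itself gives no proof of this lemma: it sits in the preliminaries as a standard fact of valuation theory, with \cite{ADH} cited for details, so there is no in-text argument to compare yours against. Your decomposition-field construction is the classical proof of exactly this statement and it is sound: with $K^{\mathrm h}$ the fixed field of the decomposition group, the valuation extends uniquely to $K^{\mathrm s}$, and criterion (ii) of the paper's henselianity lemma follows because a polynomial $1+X+c_2X^2+\cdots+c_nX^n$ with $c_i\prec 1$ has exactly one root $\sim -1$, which is simple (hence separable over $K^{\mathrm h}$ — the point that matters in positive residue characteristic, and the reason the root lies in $K^{\mathrm s}$ at all) and is fixed by $\operatorname{Gal}(K^{\mathrm s}/K^{\mathrm h})$ since these automorphisms preserve the unique extension; the universal property and uniqueness of the embedding follow from conjugacy of extensions as you say; and immediacy via an algebraically maximal immediate algebraic extension correctly leans on the paper's preceding lemma that algebraically maximal implies henselian. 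Two points you gloss over but which are routine to repair: the identification $\operatorname{Gal}(K^{\mathrm s}/K^{\mathrm h})=G_{\mathrm d}$ needs the decomposition group to be closed in the Krull topology (it is, being the stabilizer of $\bigO^{\mathrm s}$), and the Zorn argument for a maximal immediate algebraic extension should be run inside a fixed algebraic closure of $K$, with objects the intermediate fields equipped with valuation rings lying over $\bigO$, so that the partially ordered collection is a set.
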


\noindent
Let $E$ be a subfield of a Hahn field $\smallk[[\fM]]$. We consider $E$ as a valued subfield of $\smallk[[\fM]]$. Since 
$\smallk[[\fM]]$ is henselian, there is a unique embedding
$E^h\to \smallk[[\fM]]$ over $E$, and the image of this embedding is called the \textbf{henselization of $E$ inside $\smallk[[\fM]]$}. 

\section{Differential Rings}
\noindent
Let $R$ be a commutative ring.  A \textbf{derivation} on 
$R$ is an additive map  $\der: R \to R$
such that for all $a,b\in R$ we have  $\der(ab) = \der(a)b + a\der(b)$ (Leibniz rule).
%\marginpar{better use $\der$ instead of $\der$} 
%that satisfies the Leibniz rule: for all $a,b\in R$, 
%\begin{itemize}
%	\item $\der(a+b) = \der(a) + \der(b)$
 %   \item .
%\end{itemize}
A \textbf{differential ring} is a ring together with a derivation on it.

Let $R$ be a differential ring. Unless we specify otherwise, we let
$\der$ be the derivation of $R$, and for $a\in R$ we let $a'$ denote 
$\der(a)$. The 
\textbf{ ring of constants of $R$} is the subring 
\[C_R\ :=\ \{a\in R:\ a'=0\}\]
of $R$. A \textbf{differential ring extension of} $R$ is a differential ring $R^*$ that has $R$ as a subring and whose derivation extends the derivation on $R$; in that case we also call $R$ a \textbf{differential subring of} $R^*$. 
Let $R^*$ be a differential ring extension of $R$.  If $A$ is a subset of $R^*$ and $a'\in R[A]$ for all $a\in A$, then the subring $R[A]$ of $R^*$ is closed under the derivation of $R^*$, and is accordingly viewed as a differential subring of $R^*$. Given $a\in R^*$, we let $R\{a\}:= R[a,a',a'',\ldots]$ denote the smallest differential
subring of $R^*$ that contains $R$ and $a$, and we call $R\{a\}$  the \textbf{differential ring generated by} $a$ \textbf{over} $R$.

\medskip\noindent
The differential ring $R\{Y\}$ of differential
polynomials in the indeterminate $Y$ over $R$ is defined as follows: as a ring, this is just the
polynomial ring 
\[R[Y, Y', Y'',\dots]\ =\ R[Y^{(n)}:n=0,1,2,\dots]\]
in the distinct 
indeterminates $Y^{(n)}$ over $R$, and it is made into a differential ring extension of $R$ by requiring that $(Y^{(n)})'= Y^{(n+1)}$ for all $n$.  If $R$ is a domain, then $R\{Y\}$ is also a domain.

A \textbf{differential field} is a differential ring whose underlying ring is a field of characteristic zero; then the ring of constants is also a field.
Given a differential field denoted by $K$ we generally denote its field of constants by $C$, while if a differential field is denoted by another letter, say $L$, we let $C_L$ be its field of constants. If $R$ is a differential ring that is also an integral domain
of characteristic zero, then its ring of fractions is made into a differential field by setting 
\[\der\left(\frac{a}{b}\right)\ =\ \frac{\der(a)b - a\der(b)}{b^2}\ \text{ for } a,b\in R,\ b\ne 0.  \]
Let $K$ be a differential field. Then for $f\in K^\times$ we let
 $f^\dagger$ denote the logarithmic derivative $\der(f)/f$, so
$(fg)^\dagger=f^\dagger + g^\dagger$ for $f,g\in K^\times$.  
For a subset $S$ of $K^\times$ we
set $S^\dagger:=\{f^\dagger:\ f\in S\}$. Also $K^\dagger:=(K^\times)^\dagger$, an additive subgroup of $K$. 
A \textbf{differential field extension of $K$} is a differential ring extension $L$ of $K$ that is also a field. 
Let $L$ be a differential field extension of $K$ and $a\in L$.
We let $K\<a\>$ be the field of fractions of $K\{a\}$ inside $L$ and we call it the \textbf{differential field generated by $a$ over $K$}. We say that $a$ is \textbf{differentially-algebraic over $K$}, abbreviated $\d$-algebraic over $K$, if $P(a) = 0$ for some nonzero differential polynomial $P(Y)$ over $K$. If $a$ is not $\d$-algebraic over $K$ we say that $a$ is \textbf{differentially transcendental}, or $\d$-transcendental, over $K$. The field extension $L$ is \textbf{$\d$-algebraic over $K$} if every element in $L$ is $\d$-algebraic over $K$. Let $E\subseteq L$. We let $K\<E\>$ be the differential field generated by $E$ over $K$, that is, the smallest differential subfield of $L$ containing $K$ and $E$. We say that $E$ is \textbf{$\d$-algebraically-independent over $K$} if  $a$ is $\d$-transcendental over $K\<E\setminus\{a\}\>$
for all $a\in E$. The subset $E$ of $L$ is a \textbf{$\d$-transcendence basis of $L$ over $K$} if $E$ is $\d$-algebraically independent over $K$ and $L$ is $\d$-algebraic over $K\<E\>$. All $\d$-transcendence bases of $L$ over $K$ have the same cardinality, and we call this cardinality the \textbf{differential-transcendence degree}, or $\d$-transcendence degree, of $L$ over $K$.
%{\bf Need to define $R\{A\}$, $K\<a\>$, $a$ being $\d$-algebraic ($\d$-transcendental) over $K$, differential-transcendence degree.}

\section{Valued Differential Fields}

\noindent
A \textbf{valued differential field} is a differential field $K$ equipped with a valuation ring $\bigO \supseteq \Q$ of $K$. In this section we fix a valued differential field $K$; as usual we denote its valuation ring by $\bigO$ and its residue field $\bigO/\smallo$ by $\smallk$. The next result is part of \cite[Lemma 4.4.2]{ADH}. 

\begin{lemma} Assume that $\der(\smallo) \subseteq \smallo$. Then $\der(\bigO)\subseteq \bigO$.
\end{lemma}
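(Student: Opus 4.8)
The plan is to argue by contradiction, reducing everything to one multiplicative trick together with the hypothesis that $\smallo$ is a differential ideal of $\bigO$.

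The first step is to record an auxiliary fact: for every $a\in\bigO$ and every $\varepsilon\in\smallo$ we have $a'\varepsilon\in\smallo$. Indeed, $a\varepsilon\in\smallo$ since $\smallo$ is an ideal of $\bigO$, so $(a\varepsilon)'\in\smallo$ by the hypothesis $\der(\smallo)\subseteq\smallo$; moreover $\varepsilon'\in\smallo$ by that same hypothesis, hence $a\varepsilon'\in\smallo$, again because $\smallo$ is an ideal. The Leibniz rule $(a\varepsilon)'=a'\varepsilon+a\varepsilon'$ then yields $a'\varepsilon=(a\varepsilon)'-a\varepsilon'\in\smallo$.

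The second step is to exploit this with a cleverly chosen $\varepsilon$. Suppose towards a contradiction that $a\in\bigO$ but $a'\notin\bigO$; in particular $a'\ne 0$, and $v(a')<0$, so that $v(1/a')>0$, i.e.\ $\varepsilon:=1/a'\in\smallo$. Applying the auxiliary fact to this $\varepsilon$ gives $a'\varepsilon\in\smallo$, that is, $1\in\smallo$, contradicting the fact that $\smallo$ is a proper ideal. Hence $a'\in\bigO$, which is the claim.

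I do not expect a genuine obstacle here: the lemma is short once one sees the right move. The only point requiring foresight is at the very start, where instead of trying to estimate $a'$ directly one multiplies $a$ by an as-yet-unspecified $\varepsilon\in\smallo$, differentiates the product, and uses the ideal structure to discard the term $a\varepsilon'$; the choice $\varepsilon=1/a'$ is then forced at the end. In particular no case split on whether $a\in\smallo$ or $a\asymp 1$ is needed, and the standing assumption $\bigO\supseteq\Q$ is not used in this argument.
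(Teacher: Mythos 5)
Your proof is correct and is essentially the paper's argument: the paper likewise takes $x\in\bigO$ with $x'\notin\bigO$, notes $1/x'\in\smallo$, and applies the Leibniz rule to $x/x'=x\cdot(1/x')$ together with $\der(\smallo)\subseteq\smallo$ to land in the same contradiction $1\in\smallo$ (phrased there as $(x/x')'\in 1+\smallo$ while also lying in $\smallo$). Your packaging of the computation as the auxiliary fact $a'\varepsilon\in\smallo$ before specializing $\varepsilon=1/a'$ is only a cosmetic reorganization.
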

\begin{proof} 
Let $x\in \bigO$ and suppose that $x'\notin \bigO$. Then $1/x'\in \smallo$, so $x/x'\in \smallo$, $(1/x')'\in \smallo$, hence $(x/x')'\in \smallo$ . But $(x/x')' = x(1/x')' + 1\in 1+\smallo$, a contradiction.
\end{proof}
\noindent
If $\der(\smallo)\subseteq \smallo$, we equip $\smallk$ with the derivation  $a+\smallo \mapsto \der(a) + \smallo:\bigO/\smallo\rightarrow \bigO/\smallo$. The condition
 $\der(\smallo) \subseteq \smallo$ means that derivatives of ``small'' elements (elements of $\smallo$) are small, so we say that $\der$ is {\bf small} (or that $K$ has \textbf{small derivation}) if this condition is satisfied. 
If $K$ has small derivation, then we refer to $\smallk$ with the induced derivation as the \textbf{differential residue field} of $K$. 
%The following lemma indicates that when the derivation is small the derivative of the elements does not grow too much in comparison with the original element.
%\begin{lemma}\label{smallDersmallGrowth}
%If $K$ has small derivation$\der(\smallo)\subseteq \smallo$ and $y\in K$, then
%\[y\preceq 1 \Rightarrow (y')^2 \preceq y,\ \ \ \ \ y\succeq 1 \Rightarrow (y')^2\preceq y^3\]
%\end{lemma}

\medskip\noindent
\subsection*{Continuous derivations} We now equip $K$ with the valuation topology. Even though it is not part of the definition of ``valued differential field'',  the
only case of interest to us is when the derivation is continuous. If the derivation on $K$ is continuous, then for every $P\in K\{Y\}$ the function $y\mapsto P(y): K\rightarrow K$ is continuous. 

If $\der$ is small, then $\der$ is continuous. In fact, continuous derivations are just  ``small derivations in disguise'' as shown by the next  result  (\cite[Lemma 4.4.7]{ADH}): 
%\begin{lemma}
%If $\der(\smallo)\subseteq \smallo$, then $\der$ is continuous.
%\end{lemma}
%\begin{proof}
%Let $y\in K^{\prec 1}$ and $\gamma\in \Gamma$ be such that $v(y') = \gamma$. Then by Lemma \ref{smallDersmallGrowth} we have that $2\gamma>v(y)$ and thus $\gamma$ is positive, showing that $\der$ is continuous at 0. Since $\der$ is additive, we conclude that $\der$ is continuous on $K$.   
%\end{proof}

%\noindent
%The following is Lemma 4.4.7 in \cite{ADH}.
\begin{lemma}
The following conditions on $K$ are equivalent:
\begin{enumerate}
\item[\rm{(i)}] $\der:K\rightarrow K$ is continuous;
\item[\rm{(ii)}] for some $a\in K^\times$ we have $\der\smallo \subseteq a\smallo$;
\item[\rm{(iii)}] for some $a\in K^\times$ the derivation $\derdelta:=  a^{-1}\der$ is small.
\end{enumerate}
\end{lemma}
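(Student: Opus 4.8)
The plan is to prove the chain of implications (i)$\Rightarrow$(ii)$\Rightarrow$(iii)$\Rightarrow$(i), which is the most economical route. The implication (ii)$\Rightarrow$(iii) is immediate: if $\der\smallo\subseteq a\smallo$ for some $a\in K^\times$, then $\derdelta:=a^{-1}\der$ satisfies $\derdelta\smallo = a^{-1}\der\smallo\subseteq a^{-1}a\smallo=\smallo$, so $\derdelta$ is small. The implication (iii)$\Rightarrow$(i) is also quick: a small derivation is continuous (this is stated in the excerpt just before the lemma), and $\der = a\derdelta$ is then continuous because multiplication by the fixed element $a$ is continuous in the valuation topology (it merely shifts valuations by the constant $v(a)$), and a composition of continuous maps is continuous.

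The substantive implication is (i)$\Rightarrow$(ii). First I would unwind continuity of $\der$ at $0$: since $\der(0)=0$, continuity at $0$ gives, for the basic neighborhood $\{y : v(y) > 0\}=\smallo\setminus\{0\}\cup\{0\}$ of $\der(0)$, some $\gamma\in\Gamma$ such that $v(x)>\gamma$ implies $v(\der x)>0$, i.e. $\der x\in\smallo$. Now pick any $a\in K^\times$ with $v(a)$ large enough that $v(b)\geq 0$ forces $v(ab) > \gamma$; concretely, choose $a$ with $v(a) > \gamma$ (using that $\Gamma$ is the value group of $K$, so such $a$ exists — or take $a$ with $v(a)=\gamma+\delta$ for some $\delta>0$ if one wants strictness, but $v(a)>\gamma$ suffices). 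Then for any $b\in\bigO$ we have $v(ab)=v(a)+v(b)\geq v(a) > \gamma$, hence $\der(ab)\in\smallo$. Since $\der(ab) = \der(a)b + a\der(b)$ and $\der(a)b\in\bigO$ whenever... — careful here: $\der(a)$ need not be in $\bigO$, so instead I would apply the continuity bound to elements of $a\bigO$ directly via the derivation's behavior, or better, argue as follows.

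The cleanest argument for (i)$\Rightarrow$(ii): continuity of $\der$ at $0$ yields $\gamma\in\Gamma$ with $\der(\smallo^{\,\succ t^{\gamma}})\subseteq\smallo$ in the obvious sense, i.e. $v(x)>\gamma\Rightarrow v(\der x)>0$. Fix $a\in K^\times$ with $v(a)=\gamma+1$ if $\Gamma$ has a distinguished element above $0$, or simply with $v(a)>\gamma$; I claim $\der\smallo\subseteq a\smallo$. Given $\varepsilon\in\smallo$ nonzero, we want $v(\der\varepsilon)>v(a)$. Replace $\varepsilon$ by $c^{-1}\varepsilon$ for a suitable $c\in K^\times$... — rather than chase this, the honest route is: for $\varepsilon\in\smallo$ and any $c\in K^\times$ with $v(c)$ large, $c\varepsilon$ has $v(c\varepsilon)>\gamma$, so $\der(c\varepsilon)=c'\varepsilon + c\der\varepsilon\in\smallo$; thus $c\der\varepsilon\in\smallo - c'\varepsilon$, and controlling $v(c'\varepsilon)$ requires knowing $v(c')$, which brings us back to the problem. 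The resolution, which I expect to be the main obstacle to state cleanly, is to note that it suffices to find a single $a$ working uniformly, and this follows by taking $c$ itself to range over a monomial and using that $c'/c = c^\dagger$, so $v(\der(c\varepsilon)) $ can be compared to $v(c) + \min(v(c^\dagger\varepsilon), v(\der\varepsilon))$; choosing $c$ with $v(c)>\gamma - v(\varepsilon)$ (depending on $\varepsilon$) is not uniform, so instead one observes that the set $\{v(\der\varepsilon) - v(\varepsilon) : \varepsilon\in\smallo^{\neq}\}$ is bounded below in $\Gamma$ precisely by continuity, and takes $a$ to realize a value below that bound. I would present this last point as the crux: continuity is exactly the statement that $v(\der x)$ is bounded below on each "ball" uniformly, and (ii) repackages that bound multiplicatively. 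Once (i)$\Rightarrow$(ii) is secured, the cycle closes and the lemma follows.
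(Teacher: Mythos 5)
Your steps (ii)$\Rightarrow$(iii) and (iii)$\Rightarrow$(i) are fine (the paper itself gives no proof of this lemma, citing \cite[Lemma~4.4.7]{ADH}, so the comparison is with the standard argument). The genuine gap is in (i)$\Rightarrow$(ii), which you never actually complete: both of your first two runs are abandoned in mid-argument, and the fallback you declare to be the crux rests on a false claim, namely that continuity of $\der$ is equivalent to the set $\{v(\der\varepsilon)-v(\varepsilon):\ \varepsilon\in\smallo^{\neq}\}$ being bounded below in $\Gamma$. The direction you need (continuity $\Rightarrow$ bounded below) fails already in the main object of this dissertation: the derivation of $\T$ is small, hence continuous, but with $\varepsilon_n:=\exp(-e_n)=1/e_{n+1}\in\smallo$ (where $e_0=x$, $e_{n+1}=\exp(e_n)$) one has $\der\varepsilon_n=-\der(e_n)\varepsilon_n$, so $v(\der\varepsilon_n)-v(\varepsilon_n)=v(\der e_n)\le v(e_n)$, and since every element of $\T$ is eventually dominated by some $e_n$, these values are unbounded below. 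Condition (ii) asserts only a uniform lower bound on $v(\der\varepsilon)$ itself, not on $v(\der\varepsilon)-v(\varepsilon)$; and if you weaken your claim to ``continuity is exactly uniform boundedness below of $v(\der\varepsilon)$ on $\smallo$'', that is a restatement of (i)$\Rightarrow$(ii), not a proof of it.

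Ironically, you walked past the correct argument and discarded it at the words ``which brings us back to the problem''. It does not: continuity at $0$, with target neighborhood $\smallo$, gives $\gamma\in\Gamma$ such that $v(x)>\gamma\Rightarrow \der(x)\in\smallo$. Now fix a \emph{single} $b\in K^\times$ with $v(b)\ge\gamma$ (surjectivity of $v$), independent of $\varepsilon$. For every $\varepsilon\in\smallo$ we have $v(b\varepsilon)>\gamma$, hence $\der(b\varepsilon)\in\smallo$ and
\[ b\,\der(\varepsilon)\ =\ \der(b\varepsilon)-\der(b)\varepsilon\ \in\ \smallo+\der(b)\smallo. \]
You do not need $\der(b)\varepsilon\in\smallo$: once $b$ is fixed, $\der(b)$ is one fixed element, so $\smallo+\der(b)\smallo\subseteq c\smallo$ for any $c\in K^\times$ with $v(c)\le\min\{0,\,v(\der b)\}$ (omit the second constraint if $\der(b)=0$). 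Then $\der\smallo\subseteq a\smallo$ with $a:=b^{-1}c$, which is (ii), and your cycle closes. The missing idea, in one sentence: the error term $\der(b)\varepsilon$ need not be small, it only needs to be absorbed into $a\smallo$ for an $a$ chosen \emph{after} $b$, since $v(\der b)$ is a constant.
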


\subsection*{Asymptotic fields and differential-valued fields} The material from this subsection can be found in more detail in \cite[Section 9.1]{ADH}. Let $K$ be a valued differential field. We say that $K$ is \textbf{asymptotic} if for all $f,g\in K^\times$ with $f,g\prec 1$,
\[f\prec g\  \Longleftrightarrow\  f'\prec g'.\]
The derivation of an asymptotic field is continuous (with respect to the valuation topology). We say that $K$ is a \textbf{differential-valued field}, or just \textbf{$\d$-valued}, if $K$ is asymptotic and $\bigO = C +\smallo$. If $K$ is $\d$-valued, then its constant field is naturally isomorphic to its residue field via the residue map. For the next lemma, see for example \cite[Proposition 9.1.3]{ADH}.

\begin{lemma}
The following are equivalent:
\begin{enumerate}
\item[\rm{(i)}] $K$ is $\d$-valued;
\item[\rm{(ii)}] $\bigO = C +\smallo$ and for all $f,g\in K^\times$, $f,g\prec 1\Rightarrow f^\dagger\succ g'$.
\end{enumerate}
\end{lemma}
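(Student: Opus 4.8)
The statement to prove is the equivalence, for a valued differential field $K$ with valuation ring $\bigO \supseteq \Q$, maximal ideal $\smallo$, residue field $\smallk$, and constant field $C$:
\begin{enumerate}
\item[\rm{(i)}] $K$ is $\d$-valued;
\item[\rm{(ii)}] $\bigO = C +\smallo$ and for all $f,g\in K^\times$, $f,g\prec 1\Rightarrow f^\dagger\succ g'$.
\end{enumerate}
Recall that "$K$ is $\d$-valued" unpacks as: $K$ is asymptotic (for $f,g \prec 1$, $f \prec g \iff f' \prec g'$) \emph{and} $\bigO = C + \smallo$. So in both (i) and (ii) the condition $\bigO = C + \smallo$ is present, and the real content is the equivalence, under the standing hypothesis $\bigO = C + \smallo$, of the asymptotic axiom with the single implication $f,g \prec 1 \Rightarrow f^\dagger \succ g'$.

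**Plan, direction (i) $\Rightarrow$ (ii).** Assume $K$ is $\d$-valued. I only need to derive $f^\dagger \succ g'$ for $f, g \prec 1$. The idea is to compare $f'$ with $g'$ using the asymptotic property and then divide by $f$. Take $f, g \in K^\times$ with $f, g \prec 1$. The key observation is that $f \succ f^2$ (since $f \prec 1$), so by the asymptotic axiom $f' \succ (f^2)' = 2ff'$, i.e. $f' \succ f f'$; dividing by $f'$ — provided $f' \neq 0$ — gives $1 \succ f$... that is automatic, so I need a sharper comparison. Instead: from $\bigO = C + \smallo$, write things in terms of $f^\dagger = f'/f$. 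First handle the case $f' = 0$: then $f \in C$ and $f \prec 1$ forces $f \in C \cap \smallo = \{0\}$ (constants in $\smallo$ would make $\bigO \neq C + \smallo$ fail... actually a nonzero constant in $\smallo$ is fine as an element but then its residue is $0$; one checks $C \cap \smallo = 0$ because a nonzero constant is a unit in $C \subseteq \bigO$ with inverse in $C \subseteq \bigO$, hence a unit of $\bigO$, hence not in $\smallo$), contradiction; so $f' \neq 0$ whenever $f \prec 1$, $f \neq 0$. Now compare $f$ with $fg$: both are $\prec 1$ and $fg \prec f$, so by the asymptotic axiom $(fg)' \prec f'$, i.e. $f'g + fg' \prec f'$. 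Since $g \prec 1$ we get $f'g \prec f'$, hence $fg' \prec f'$ as well (the sum is $\prec f'$ and one summand is $\prec f'$). Dividing by $f$: $g' \prec f'/f = f^\dagger$, which is exactly $f^\dagger \succ g'$.

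**Plan, direction (ii) $\Rightarrow$ (i).** Assume $\bigO = C + \smallo$ and $f,g \prec 1 \Rightarrow f^\dagger \succ g'$. I must recover the asymptotic axiom: for $f, g \prec 1$ nonzero, $f \prec g \iff f' \prec g'$. As above, $C \cap \smallo = 0$, so $f, g \neq 0$ have $f', g' \neq 0$ is not yet clear — but if $f \in C$ and $f \prec 1$ then $f = 0$, contradiction, so $f \notin C$ and similarly $g \notin C$; still $f'$ could be... no, $f' \neq 0$ since $f \notin C$. Now suppose $f \prec g$. Apply the hypothesis with the pair $(g, f)$: $g, f \prec 1$ gives $g^\dagger \succ f'$... not obviously what I want. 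Better: the standard route is to first establish $f' \preceq f^\dagger$, equivalently $f' \preceq f'/f$, equivalently $f \preceq 1$ — automatic. Hmm. The genuinely useful inequality from (ii) is: taking $g$ with $g' \prec f^\dagger$. Let me instead argue: given $f \prec g \prec 1$, then $f/g \prec 1$, so apply (ii) to the pair $(g, f/g)$: both $\prec 1$, so $g^\dagger \succ (f/g)'$. Now $(f/g)' = (f/g)(f/g)^\dagger = (f/g)(f^\dagger - g^\dagger)$. This is getting intricate; the cleaner classical argument (this is essentially \cite[Proposition 9.1.3]{ADH}) runs through the equivalent form: for $f \prec 1$, $f \neq 0$, one shows $f' \asymp f^\dagger \cdot f$ trivially, and the substantive step is $f' \prec 1$ and more precisely that $v(f') - v(f) = v(f^\dagger)$ is "small positive." I would set $u := f^\dagger$, $w := g^\dagger$; from $f \prec g$, write $f = g \cdot \epsilon$ with $\epsilon \prec 1$, so $u = w + \epsilon^\dagger$ and $f' = g'\epsilon + g\epsilon'$. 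Using (ii) on the pair $(g,\epsilon)$ and on $(\epsilon, \text{appropriate})$, bound $\epsilon' \prec \epsilon^\dagger \cdot \epsilon$... and conclude $f' \prec g'$, then symmetrically handle $\succ$ and $\asymp$ to get the biconditional.

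**Main obstacle.** The forward direction is routine algebra with the asymptotic axiom; the reverse direction (ii) $\Rightarrow$ (i) is the crux, because recovering a \emph{biconditional} ($f \prec g \iff f' \prec g'$) from a \emph{one-directional} logarithmic-derivative domination requires carefully chaining the hypothesis through the factorization $f = g\epsilon$ and controlling cross terms like $g\epsilon'$ against $g'\epsilon$. I expect to need the hypothesis applied to \emph{several} pairs (e.g. $(g,\epsilon)$, and pairs built from $\epsilon$ and $1/\epsilon$-type quantities), together with the fact that $C \cap \smallo = 0$ to rule out degenerate zero-derivative cases, and possibly a small bootstrap showing first that the derivation is continuous (so that $f \prec 1 \Rightarrow f' \prec 1$, which lets me stay inside $\smallo$ throughout). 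Once $f' \prec 1$ is available for all $f \prec 1$, the comparison $f \prec g \Rightarrow f' \prec g'$ should fall out by applying (ii) to $(g, f/g)$ and expanding $(f/g)'$, and the converse direction of the biconditional follows by symmetry after noting that both $f \prec g$ and $g \prec f$ cannot fail simultaneously once we also rule out $f \asymp g$ leading to $f' \asymp g'$ by the same expansion with $\epsilon \asymp 1$.
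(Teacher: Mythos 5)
Your direction (i)$\Rightarrow$(ii) is correct and complete: from $\bigO=C+\smallo$ you get $C\cap\smallo=\{0\}$, hence $f'\ne 0$ for nonzero $f\prec 1$, and applying the asymptotic axiom to the pair $(fg,f)$ gives $f'g+fg'\prec f'$, hence $fg'\prec f'$ and $g'\prec f^\dagger$. (Note the paper gives no proof of this lemma, citing \cite[Proposition 9.1.3]{ADH}, so the comparison is with the standard argument.) The genuine gap is in (ii)$\Rightarrow$(i), which you leave as a plan: your last paragraph is a list of expectations (``I expect to need'', ``should fall out''), and the one concrete bound you aim for, $\epsilon'\prec\epsilon^\dagger\cdot\epsilon$, is false as written, since $\epsilon^\dagger\epsilon=\epsilon'$, so it asserts $\epsilon'\prec\epsilon'$. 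The bound you actually need is supplied verbatim by hypothesis (ii): writing $f=g\epsilon$ with $\epsilon=f/g\prec 1$, one has $f'=g'\epsilon+g\epsilon'$; now $g'\epsilon\prec g'$ (using $g'\ne 0$, which follows from $C\cap\smallo=\{0\}$), and (ii) applied to the pair $(g,\epsilon)$ gives $\epsilon'\prec g^\dagger$, hence $g\epsilon'\prec g\,g^\dagger=g'$, so $f'\prec g'$. No continuity bootstrap and no pairs built from $1/\epsilon$ are needed.

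The second missing piece is the case $f\asymp g$, which you wave at with ``the same expansion with $\epsilon\asymp 1$''; that cannot work as stated, because hypothesis (ii) applies only to pairs of elements $\prec 1$ and so gives no control on $\epsilon'$ when $\epsilon\asymp 1$. This is exactly where $\bigO=C+\smallo$ must be used a second time: from $f/g\in\bigO^\times$ write $f/g=c+\epsilon$ with $c\in C$, $c\ne 0$, $\epsilon\in\smallo$; then $f'=cg'+g'\epsilon+g\epsilon'$, the last two terms are $\prec g'$ as above, and $cg'\asymp g'$ since nonzero constants are units of $\bigO$; hence $f'\asymp g'$. With the two implications ($f\prec g\Rightarrow f'\prec g'$ and $f\asymp g\Rightarrow f'\asymp g'$) the biconditional follows by trichotomy: if $f'\prec g'$ but $f\not\prec g$, then either $g\prec f$ (giving $g'\prec f'$) or $f\asymp g$ (giving $f'\asymp g'$), a contradiction either way. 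So your overall strategy (factor $f=g\epsilon$ and feed pairs into (ii)) is the right one, but as submitted the crucial direction is unproven and its key intermediate inequality is stated incorrectly.
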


\subsection*{H-fields}  For more details about $H$-fields, see \cite[Chapter 10]{ADH}. An $H$-field is by definition an ordered differential field $K$ such that:
\begin{enumerate}
	\item[\rm{(i)}] for all $f\in K$, if $f>C$, then $f'>0$;
    \item[\rm{(ii)}] $\bigO=C+\smallo$, where $\bigO = \{g\in K\ :\ |g|\leq c\text{ for some } c\in C\}$ and $\smallo$ is the maximal ideal of the convex subring $\bigO$ of $K$. 
\end{enumerate}

\noindent
An $H$-field $K$ is considered also as a valued differential field by taking the above $\bigO$ as the valuation ring. This makes $H$-fields into differential-valued fields. 
If $K$ is an $H$-field with $C\ne K$, then its valuation topology coincides with its order topology.
Hardy fields containing $\R$ as a subfield are $H$-fields. 
The $H$ in ``$H$-field'' is in honor of the pioneers Borel, Hahn, Hardy, and Hausdorff, three of which share the initial $H$.

An $H$-field $K$ is said to be \textbf{Liouville closed} if it is a real closed $H$-field and for all $a\in K$ there exist $y,z\in K^\times$ such that $y'=a$ and $z^\dagger = a$. 
An $H$-field $K$ is Liouville closed if and only if $K$ is real closed and every equation  $y'+ay=b$ with $a,b\in K$ has a nonzero solution in $K$.

\chapter{Truncation}
\noindent
In this Chapter we formally introduce the notion of truncation and state some well known results that showcase how the notion of a truncation closed subset of a Hahn field is robust in the sense that it is preserved under several extension procedures. In Section 2.2 we introduce the notion of an \textit{infinite part of a valued field} as a first-order counterpart to truncation. It allows us to extend the notion of truncation outside of the Hahn field setting. We also
consider some cases with archimedean value group where this first-order version of truncation necessarily agrees with truncation in an ambient Hahn field $C[[\fM]]$.

\section{Truncation in Hahn Fields}
\noindent
In this section $\fM$ is a monomial group.
For $f=\sum_{\fm}f_{\fm}\fm\in \smallk[[\fM]]$
and $\fn\in \fM$, the \textbf{truncation  $f|_{\fn}$ of $f$ at $\fn$} is defined by
\[ f|_{\fn}\ :=\ \sum_{\fm\succ \fn}f_{\fm}\fm, \quad\text{an element of } \smallk[[\fM].\]
Thus for $f,g\in \smallk[[\fM]]$ we have $(f+g)|_{\fn} = f|_{_\fn} +g|_{\fn}$. 
A subset $S$ of $\smallk[[\fM]]$ is said to be \textbf{truncation closed} if for all $f\in S$ and $\fn\in \fM$ we have $f|_{\fn} \in S$. For 
$f,g\in \smallk[[\fM]]$ we let $f\trunceq g$ mean that $f$ is a truncation of 
$g$, and let $f\truncof g$ mean that $f$ is a proper truncation of $g$,
that is, $f\trunceq g$ and $f\ne g$. 

\medskip\noindent
When, as in \cite{D}, Hahn fields are given as $\smallk((t^\Gamma))$, 
we adapt our notation accordingly:
the \textbf{truncation of $f =\ser{f}\in \smallk((t^\Gamma))$ at $\gamma_0\in \Gamma$} is then
\[f|_{\gamma_0}\ :=\ \sum_{\gamma<\gamma_0}f_\gamma t^\gamma.\]

\noindent
We say that $g$ is a \textbf{truncation of} $f$ if there is $\gamma \in \Gamma$ such that $g=f|_\gamma$ and write $g \trunceq f$. We say that $g$ is a \textbf{proper truncation of} $f$ if there is $\gamma \in \supp(f)$ such that $g= f|_\gamma$ and we write $g\truncof f$. Note that $g\truncof f$ if and only if $g \trunceq f$ and $g\neq f$.
We have the following.
\begin{enumerate}
	\item $\smallk((t^\Gamma))$ is truncation closed,
    \item $\emptyset$ is truncation closed,
    \item $\smallk$ is truncation closed,
    \item $\{f\in K :\ \supp(f) \text{ is finite}\}$ is truncation closed,
    \item $\smallk[t^\gamma]$ is truncation closed for any $\gamma$,
    \item $\smallk[t^{\gamma_0} +t^{\gamma}]$ is not truncation closed, for all distinct $\gamma_0, \gamma \in \Gamma^{\neq}$.
\end{enumerate}

We only justify (6), as (1)--(5) are immediate. Assume $\gamma_0, \gamma\in \Gamma^{\ne}$ and $\gamma_0<\gamma$.  To show that $\smallk[t^{\gamma_0}+t^\gamma]$ is not truncation closed, it is enough
to show that for all  $P(X)\in \smallk[X]$ we have  $P(t^{\gamma_0} + t^\gamma)\neq t^{\gamma_0}$. If $P\in \smallk$, then clearly $P(t^{\gamma_0} + t^{\gamma})=P\ne t^{\gamma_0}$. It is also clear that if $\deg(P)=1$, then $P(t^{\gamma_0}+t^\gamma)\ne t^{\gamma_0}$.  If $\deg (P)=n>0$ and $\gamma>0$, then the highest exponent of $t$ in $P(t^{\gamma_0}+t^\gamma)$ is $n\gamma\ne \gamma_0$ and thus $P(t^{\gamma_0}+t^\gamma)\neq t^{\gamma_0}$. If $\deg(P)=n>1$ and $\gamma<0$, then the lowest exponent of $t$ in $P(t^{\gamma_0}+t^\gamma)$ is $n\gamma_0\ne \gamma_0$ and thus
$P(t^{\gamma_0}+t^\gamma)\ne t^{\gamma_0}$.  

\medskip\noindent
Here is an easy but useful consequence of the definition:

\begin{lemma} If $V$ is a truncation closed $\smallk$-linear subspace of $\smallk[[\fM]]$, then $\supp(V)=V\cap\fM$. 
\end{lemma}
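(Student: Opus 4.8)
We must show: if $V$ is a truncation closed $\smallk$-linear subspace of $\smallk[[\fM]]$, then $\supp(V) = V \cap \fM$, where $\supp(V) := \bigcup_{f \in V} \supp(f)$.

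\textbf{Plan of proof.} The inclusion $V \cap \fM \subseteq \supp(V)$ is immediate, since any nonzero $\fm \in V \cap \fM$ has $\supp(\fm) = \{\fm\}$, so $\fm \in \supp(V)$. The content is the reverse inclusion $\supp(V) \subseteq V \cap \fM$. So fix $f \in V$ and $\fn \in \supp(f)$; the goal is to show $\fn \in V$. First I would isolate the ``tail'' of $f$ below $\fn$: since $\fn \in \supp f$, the truncation $g := f|_{\fn} = \sum_{\fm \succ \fn} f_{\fm}\fm$ lies in $V$ by truncation closedness, and $f - g = \sum_{\fm \preceq \fn} f_{\fm}\fm$ lies in $V$ because $V$ is a $\smallk$-linear subspace (hence closed under subtraction). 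Moreover $\fd(f-g) = \fn$ since $\fn \in \supp f$ and all monomials in $\supp(f-g)$ are $\preceq \fn$, with the coefficient $f_\fn \ne 0$.

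\textbf{Extracting the leading monomial.} Now set $h := f - g \in V$, a nonzero element with $\fd(h) = \fn$ and leading coefficient $a := h_{\fn} = f_{\fn} \in \smallk^\times$. I want to strip off the lower-order terms of $h$. Consider $h|_{\fm}$ for $\fm \prec \fn$ but with $\fm$ chosen to be the largest monomial in $\supp(h)$ strictly below $\fn$ — or, cleaner: among all monomials of $\fM$ that are $\prec \fn$, I do not have a maximum in general, so instead I argue as follows. The element $a^{-1} h \in V$ (scalar multiple) has leading monomial $\fn$ with leading coefficient $1$, i.e.\ $a^{-1}h = \fn + (\text{terms} \prec \fn)$, equivalently $a^{-1}h - \fn \prec \fn$. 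If I can show $a^{-1}h - \fn \in V$, then adding back $\fn = (a^{-1}h) - (a^{-1}h - \fn)$ would only help if I already knew $\fn \in V$, which is circular. So the real maneuver is different: truncate $a^{-1}h$ at $\fn$ itself. By definition $(a^{-1}h)|_{\fn} = \sum_{\fm \succ \fn} (a^{-1}h)_{\fm}\fm = 0$ since all monomials of $a^{-1}h$ are $\preceq \fn$. That gives nothing. Instead, truncate at a monomial just below $\fn$: the point is that $\{ \fm \in \fM : \fm \succ \fn \}$ applied to $a^{-1}h$ kills everything, but I want to keep exactly the $\fn$-term. The correct truncation to use is $(a^{-1}h)|_{\fn'}$ where $\fn'$ is... but $\fM$ need not have a successor below $\fn$.

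\textbf{The right argument.} Here is the clean way, which I would actually write: by transfinite induction on the order type $o(h)$ of $\supp(h)$ (using that $V$ is truncation closed, so all truncations of $h$ are in $V$, and each has strictly smaller support order type than $h$ whenever it is a proper truncation). If $\supp(h) = \{\fn\}$ is a singleton, then $h = a\fn$, so $\fn = a^{-1}h \in V$ and we are done. Otherwise $\supp(h)$ has a second-largest... no — the issue is $\supp(h)$ is well-based, so reverse-well-ordered; its maximum is $\fn$. The set $\supp(h) \setminus \{\fn\}$ is again well-based, and if nonempty has a maximum $\fm_1 \prec \fn$. Then $h|_{\fm_1} = h_{\fn}\fn = a\fn \in V$ by truncation closedness, so $\fn = a^{-1}(a\fn) \in V$. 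This is the whole argument — no induction needed after all, just: \emph{the maximum of $\supp(h)\setminus\{\fn\}$ exists because $\supp(h)$ is well-based}, and truncating $h$ at that maximum leaves precisely $a\fn$. The one case to handle separately is $\supp(h) = \{\fn\}$, handled above directly.

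\textbf{Main obstacle.} The only subtle point — and the one I would be careful to state explicitly — is why $\supp(h) \setminus \{\fn\}$, when nonempty, has a $\preceq$-maximum: this is exactly because $\supp(h)$ is well-based, i.e.\ well-ordered under $\succeq$, so every nonempty subset has a least element under $\succeq$, which is a greatest element under $\preceq$. Everything else (closure of $V$ under subtraction and scalar multiplication, closure under truncation) is given. I would present the proof in two or three lines: reduce to $h = f - f|_{\fn} \in V$ with $\fd(h) = \fn$; if $\supp h = \{\fn\}$, then $\fn \in V$ after scaling; otherwise truncate $h$ at $\max(\supp(h)\setminus\{\fn\})$ to get $h_{\fn}\fn \in V$, then scale.

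\begin{proof}
Write $\supp(V) := \bigcup_{f \in V}\supp(f)$. If $\fm \in V \cap \fM$ is nonzero then $\supp(\fm) = \{\fm\}$, so $\fm \in \supp(V)$; thus $V \cap \fM \subseteq \supp(V)$. For the reverse inclusion, let $f \in V$ and $\fn \in \supp(f)$. Since $V$ is truncation closed, $f|_{\fn} \in V$, and since $V$ is a $\smallk$-linear subspace, $h := f - f|_{\fn} = \sum_{\fm \preceq \fn} f_{\fm}\fm \in V$. As $\fn \in \supp(f)$ we have $f_{\fn} \ne 0$, so $\fn \in \supp(h)$ and in fact $\fd(h) = \fn$, with $a := h_{\fn} = f_{\fn} \in \smallk^\times$.

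If $\supp(h) = \{\fn\}$, then $h = a\fn$, so $\fn = a^{-1}h \in V$. Otherwise $\supp(h) \setminus \{\fn\}$ is a nonempty well-based subset of $\fM$, hence has a $\preceq$-greatest element $\fm_1$, and $\fm_1 \prec \fn$. Then $h|_{\fm_1} = \sum_{\fm \succ \fm_1} h_{\fm}\fm = h_{\fn}\fn = a\fn$, which lies in $V$ by truncation closedness; therefore $\fn = a^{-1}(a\fn) \in V$. In either case $\fn \in V \cap \fM$, so $\supp(V) \subseteq V \cap \fM$.
\end{proof}
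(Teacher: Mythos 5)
Your final proof is correct: the key step — that $\supp(h)\setminus\{\fn\}$, being a nonempty subset of a well-based set, has a $\preceq$-greatest element $\fm_1$, so that $h|_{\fm_1}=f_{\fn}\fn\in V$ and hence $\fn\in V$ after scaling — is exactly right, and the reverse inclusion is trivial as you say. The paper states this lemma without proof as an easy consequence of the definitions, and your argument (truncate $f$ at $\fn$, subtract to isolate the tail, then truncate once more just below $\fn$) is precisely the intended one, so apart from the exploratory false starts you flag and then discard, there is nothing to change.
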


\noindent
We now list some items from \cite{D} that we are going to use:
 
\begin{proposition} \label{D} Let $A$ be a subset of $\smallk[[\fM]]$, $R$ a subring 
of $\smallk[[\fM]]$, and $E$ a $($valued$)$ subfield of $\smallk[[\fM]]$. Then:
\begin{itemize}
	\item[(i)] The ring as well as the field generated in $\smallk[[\fM]]$ by any truncation closed
subset of $\smallk[[\fM]]$ is truncation closed: \cite[Theorem 1.1]{D};
    \item[(ii)] If $R$ is truncation closed and all truncations of all $a\in A$ lie in $R[A]$, then the ring $R[A]$ is truncation closed;
    \item[(iii)] If $E$ is truncation closed and $E\supseteq \smallk$, 
then the henselization $E^{\h}$ of $E$ in 
$\smallk[[\fM]]$ is truncation closed: \cite[Theorem 1.2]{D};
    \item[(iv)] If $E$ is truncation closed, henselian, and $\operatorname{char}(\smallk)=0$, then any algebraic field extension of $E$ in $\smallk[[\fM]]$ is 
truncation closed: \cite[Theorem 5.1]{D}. 
\end{itemize}
\end{proposition}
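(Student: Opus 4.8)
The plan is straightforward. Items (i), (iii) and (iv) are nothing more than Theorems~1.1, 1.2 and 5.1 of \cite{D} transported from the notation $\smallk((t^\Gamma))$ used there to our notation $\smallk[[\fM]]$, so for these I would simply cite \cite{D}, observing that the change of notation is harmless since any monomial group $\fM$ may be presented as $t^{\Gamma}$ for a suitable ordered abelian group $\Gamma$. Only item (ii) calls for an argument here, and I would obtain it from (i).

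To prove (ii) I would enlarge $A$ to the set
\[A^{*}\ :=\ A\ \cup\ \{\,a|_{\fn}\ :\ a\in A,\ \fn\in\fM\,\}\]
of all truncations of all elements of $A$. Since a truncation of a truncation of $a$ is again a truncation of $a$, the set $A^{*}$ is truncation closed; and since $R$ is truncation closed by hypothesis while a union of truncation closed sets is truncation closed, the set $R\cup A^{*}$ is truncation closed as well. I would then observe that $R[R\cup A^{*}]=R[A^{*}]$ coincides with $R[A]$: the inclusion $A\subseteq A^{*}$ gives $R[A]\subseteq R[A^{*}]$, while $A\subseteq R[A]$ together with the hypothesis of (ii), that every truncation of every $a\in A$ lies in $R[A]$, gives $A^{*}\subseteq R[A]$ and hence $R[A^{*}]\subseteq R[A]$. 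Applying (i) to the truncation closed set $R\cup A^{*}$ now shows that the ring it generates, which is exactly $R[A]$, is truncation closed.

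I do not expect any real obstacle inside this deduction: the points to verify — that a truncation of a truncation is a truncation, that a union of truncation closed sets is truncation closed, and that $R[A^{*}]=R[A]$ — are all routine. The genuine difficulty is external to this proposition and lives in \cite{D}, above all in the proof of (i): there one first shows that the $\smallk$-linear span of a truncation closed subset of $\smallk[[\fM]]$ is truncation closed, and then has to control the truncations of products and, for the field statement, of inverses — writing a nonzero $f$ as $a\,\fd(f)(1-\varepsilon)$ with $a\in\smallk^{\times}$ and $\varepsilon\prec 1$ and expanding $f^{-1}=a^{-1}\fd(f)^{-1}\sum_{n}\varepsilon^{n}$ — an analysis carried out by transfinite induction on the well-based supports that occur.
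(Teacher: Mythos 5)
Your proposal is correct and follows essentially the same route as the paper: items (i), (iii), (iv) are simply cited from \cite{D}, and (ii) is deduced from (i) by adjoining to $A$ the set of all truncations of its elements (the paper's $B:=\{f:\ f\trunceq a \text{ for some }a\in A\}$, your $A^{*}$), noting this set together with $R$ is truncation closed and generates the same ring $R[A]$. Your extra care in checking $R[A^{*}]=R[A]$ and in including $A$ itself in $A^{*}$ only makes explicit what the paper leaves as routine.
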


\noindent
Item (ii) here is a variant of \cite[Corollary 2.6]{D}.
To prove (ii), assume $R$ is truncation closed and all truncations of
all $a\in A$ lie in $R[A]$. Let 
\[B\ :=\  \{f\in \smallk[[\fM]]:\ f\trunceq a \text{ for some }a\in A\}.\]
Then $B$ is truncation closed, and so is $R\cup B$, and thus the ring
$R[B]=R[A]$ is truncation closed, by (i) of the proposition above. 
An often used consequence of (i) and (ii) is that if $E$ is a truncation closed subfield of $\smallk[[\fM]]$ and $A\subseteq \fM$, then the subfield $E(A)$ of $\smallk[[\fM]]$ is also truncation closed.

Elaborating on (iii) and (iv) above, suppose $\smallk$ has characteristic $0$ and $E\supseteq \smallk$ is a truncation closed subfield of $\smallk[[\fM]]$. Then 
\[\fM_E\ :=\  E\cap \fM\ =\ \supp(E)\cap \fM\]
is a subgroup of $\fM$ with $E\subseteq \smallk[[\fM_E]]$. Since $\smallk[[\fM_E]]$ is henselian, we have $E^{\h}\subseteq \smallk[[\fM_E]]$. We call the subfield 
\[F\ :=\ \{f\in \smallk[[\fM]]:\ f \text{ is algebraic over }E\}\] 
of $\smallk[[\fM]]$  the {\em algebraic closure of $E$ in $\smallk[[\fM]]$} and we can characterize it as follows:

\begin{lemma}\label{actr} Let $\fM^{\div}_{E}:=\{\fg\in \fM:\ \fg^n\in \fM_E \text{ for some }n\ge 1\}$, the divisible closure of $\fM_E$ in $\fM$. Then $\fM^{\div}_{E}$ is a subgroup of
$\fM$, $E(\fM^{\div}_{E})$ and $F$ are truncation closed subfields of $\smallk[[\fM]]$, and 
\[F\ =\ E(\fM^{\div}_{E})^{\h}.\] 
\end{lemma}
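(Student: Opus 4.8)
The plan is to prove the three assertions — that $\fM^{\div}_E$ is a subgroup, that $E(\fM^{\div}_E)$ and $F$ are truncation closed, and that $F = E(\fM^{\div}_E)^{\h}$ — essentially in that order, treating the group statement as a warm-up, then getting truncation-closedness of $E(\fM^{\div}_E)$ for free, and concentrating the real work on the identification of $F$ with the henselization of $E(\fM^{\div}_E)$ inside $\smallk[[\fM]]$.

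First I would check that $\fM^{\div}_E$ is a subgroup of $\fM$. This is a routine computation in the ordered abelian group $\Gamma$ underlying $\fM$: if $\fg^m, \fh^n \in \fM_E$ with $m,n \ge 1$, then $(\fg\fh^{-1})^{mn} = (\fg^m)^n (\fh^n)^{-m} \in \fM_E$ since $\fM_E$ is a group, and $1 \in \fM_E \subseteq \fM^{\div}_E$. Next, truncation-closedness of $E(\fM^{\div}_E)$ is immediate from the consequence of Proposition~\ref{D}(i) and (ii) already noted in the text: $E$ is a truncation closed subfield and $\fM^{\div}_E \subseteq \fM$, so $E(\fM^{\div}_E)$ is truncation closed. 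Once we know $F = E(\fM^{\div}_E)^{\h}$, truncation-closedness of $F$ follows from Proposition~\ref{D}(iii) (applied to the truncation closed field $E(\fM^{\div}_E) \supseteq \smallk$, whose henselization inside $\smallk[[\fM]]$ is again truncation closed) — so really only the last equality carries content, and I would prove it before concluding truncation-closedness of $F$.

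For the equality $F = E(\fM^{\div}_E)^{\h}$: the inclusion $\supseteq$ is easy, since $E(\fM^{\div}_E)$ consists of elements algebraic over $E$ (each $\fg \in \fM^{\div}_E$ satisfies $X^n - \fg^n = 0$ with $\fg^n \in \fM_E \subseteq E$), hence so does its henselization by Proposition~\ref{D}'s surrounding discussion (henselization is an immediate \emph{algebraic} extension), and $F$ is relatively algebraically closed in $\smallk[[\fM]]$ by construction, so $E(\fM^{\div}_E)^{\h} \subseteq F$. The substantive direction is $F \subseteq E(\fM^{\div}_E)^{\h}$, i.e. every $f \in \smallk[[\fM]]$ algebraic over $E$ already lies in the henselization of $E(\fM^{\div}_E)$. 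Here the key structural input is that $\smallk[[\fM_E^{\div}]]$ is a henselian valued field (Hahn fields are henselian) with residue field $\smallk$ of characteristic $0$ and value group the divisible group $\Gamma^{\div}_E$; I would argue that over a henselian equicharacteristic-zero valued field with divisible value group and algebraically closed-enough residue field, adjoining an element algebraic over the base produces no new residue-field or value-group data beyond what divisibility already supplies — more precisely, that the relative algebraic closure of $E(\fM_E^{\div})$ inside $\smallk[[\fM_E^{\div}]]$ coincides with its henselization. The clean way to see this is: $f$ algebraic over $E$ forces $\fd(f) \in \fM$ to be algebraic over $\Gamma_E$ in the value-group sense, i.e. some power of it lies in $\Gamma_E$, so $\fd(f) \in \fM^{\div}_E$ and (peeling off leading terms by transfinite induction on the support, using truncation) all of $\supp(f) \subseteq \fM^{\div}_E$, whence $f \in \smallk[[\fM^{\div}_E]]$; then $f$ is algebraic over $E(\fM^{\div}_E)$, and since $\smallk[[\fM^{\div}_E]]$ is an immediate extension of $E(\fM^{\div}_E)^{\h}$ in the equicharacteristic-zero case the relative algebraic closure of $E(\fM^{\div}_E)$ there equals its henselization (an algebraic immediate extension of a henselian field is trivial).

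The main obstacle I expect is exactly the last step: controlling which monomials can appear in the support of an element $f \in \smallk[[\fM]]$ that is algebraic over $E$ — showing $\supp(f) \subseteq \fM^{\div}_E$. The inequality $\fd(f) \in \fM^{\div}_E$ for the leading monomial is the standard fact that algebraic extensions do not extend the value group beyond its divisible hull; but propagating this to the \emph{entire} well-ordered support requires either a transfinite induction that repeatedly subtracts the leading term (and one must check the remainder $f - f_{\fd(f)}\fd(f)$ is still algebraic over $E(\fM^{\div}_E)$, which it is, and still lands in $\smallk[[\fM^{\div}_E]]$ provided leading monomials keep lying in $\fM^{\div}_E$), or invoking that $\smallk[[\fM^{\div}_E]]$ is algebraically maximal in the equicharacteristic-zero case, hence relatively algebraically closed in any immediate extension — and $\smallk[[\fM]]$ restricted to value group $\Gamma^{\div}_E$ is such an immediate extension of $E(\fM^{\div}_E)^{\h}$ only after one knows the value group doesn't grow, which is the circular-looking point to untangle carefully. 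I would handle it by first establishing $\Gamma_F = \Gamma^{\div}_E$ and $\smallk_F = \smallk$ directly from "algebraic extensions don't enlarge the value group's divisible hull or the residue field's algebraic closure over $\smallk$, and $\smallk$ is already (relatively) algebraically closed here since $F$'s residue field embeds into $\smallk$", then conclude $F \subseteq \smallk[[\fM^{\div}_E]]$ by the support argument, then finish with algebraic maximality.
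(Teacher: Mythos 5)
Your architecture is genuinely different from the paper's: you want to prove $F=E(\fM^{\div}_{E})^{\h}$ first and only then deduce truncation closedness of $F$ from Proposition~\ref{D}(iii), whereas the paper invokes Proposition~\ref{D}(iv) at the outset to get $F$ truncation closed and then identifies $F$ with the henselization. The problem is the step you finally commit to, namely ``conclude $F\subseteq \smallk[[\fM^{\div}_{E}]]$ by the support argument.'' Knowing that the value group of $F$ equals $v(\fM^{\div}_{E})$ only controls the \emph{leading} monomial of each element of $F$. Peeling off leading terms is fine at successor steps (the remainder $f-f_{\fd(f)}\fd(f)$ is again algebraic over $E$), but at the first limit stage the remainder is $f-f|_{\fm}$ for an infinite truncation $f|_{\fm}$ of $f$, and nothing you have established says that this tail is algebraic over $E$, i.e.\ that $f|_{\fm}\in F$. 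That is precisely the truncation closedness of $F$, which in your plan is only to be deduced \emph{after} the identity you are in the middle of proving; so the transfinite induction cannot pass limit ordinals, and establishing the residue field and value group of $F$ beforehand does not repair it.

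The fix is the alternative you mention and then back away from, and your circularity worry about it is unfounded, because it needs no prior information about $F$ or about $E(\fM^{\div}_{E})^{\h}$: for $f\in F$ work directly with the Hahn field $H:=\smallk[[\fM^{\div}_{E}]]$. Since $f$ is algebraic over $E\subseteq H$, the field $H(f)\subseteq \smallk[[\fM]]$ is a finite extension of $H$; its value group lies in the divisible hull of $v(\fM^{\div}_{E})$, which is already divisible, and its residue field is squeezed between $\smallk$ and the residue field $\smallk$ of $\smallk[[\fM]]$, so $H(f)$ is an immediate extension of $H$. Hahn fields are maximal, so $H(f)=H$ and $f\in H$. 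With $F\subseteq \smallk[[\fM^{\div}_{E}]]$ in hand your ending works: $F$ is an immediate algebraic extension of $E(\fM^{\div}_{E})^{\h}$, which is henselian of equicharacteristic zero and hence algebraically maximal, so $F=E(\fM^{\div}_{E})^{\h}$, and truncation closedness of $F$ then follows from Proposition~\ref{D}(iii). Repaired this way, your route really does avoid the appeal to Proposition~\ref{D}(iv) that the paper's proof makes, at the cost of redoing by hand the immediacy computation that (iv) packages.
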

\begin{proof} It follows from (i) and (ii) of Proposition~\ref{D} that $E(\fM^{\div}_E)$ is truncation closed. Also, 
$E(\fM^{\div}_E)\subseteq F$, hence $F$ is an algebraic extension of the henselization $E(\fM^{\div}_{E})^{\h}$, and thus $F$ is truncation closed by (iii) and (iv) of Proposition~\ref{D}. Therefore,
$\fM_F=\fM_E^{\div}$, and so $F$ is an immediate extension of $E(\fM^{\div}_E)^{\h}$. The latter is algebraically maximal, and
thus equals $F$.   
\end{proof}

\subsection*{Transcendental extensions}
The above deals with extension procedures of algebraic nature. 
As in \cite{D}, we also consider possibly transcendental adjunctions of
of the following kind.
Let $\mathcal{F}= (\mathcal{F}_n)$ be a family such that for each $n$: $\mathcal{F}_n\subseteq \smallk[[X_1,\ldots,X_n]]$, and for all $F\in \mathcal{F}_n$ we have $\partial F/\partial X_i \in \mathcal{F}_n$ for $i=1,\ldots,n$.
Let $K$ be a subfield of $\smallk[[\fM]]$.
We define the $\mathcal{F}$-\textit{extension}  $K(\mathcal{F},\prec 1)$
of $K$ to be
the smallest subfield of $\smallk[[\fM]]$ that contains $K$ and the set 
\[  \bigcup_n \{F(f_1,\ldots,f_n):\ F\in \mathcal{F}_n,\ 
f_1,\dots, f_n\in K^{\prec 1}\} .\]

\begin{lemma}\label{tL16}
Suppose  $\text{char}(\smallk)=0$ and $K$ is a truncation closed subfield of $\smallk[[\fM]]$ containing $\smallk$. Then $K(\mathcal{F},\prec 1)$ is truncation closed. 
\end{lemma}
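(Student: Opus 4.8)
The strategy is to reduce everything to the already-established algebraic stability results in Proposition~\ref{D} together with the power-series substitution identities for Hahn fields. First I would fix notation: let $K$ be a truncation closed subfield of $\smallk[[\fM]]$ with $K\supseteq\smallk$, and recall that by the remarks following Proposition~\ref{D} we have $\fM_K := K\cap\fM = \supp(K)\cap\fM$, a subgroup of $\fM$, with $K\subseteq\smallk[[\fM_K]]$. The key observation is that $K(\mathcal{F},\prec 1)$ is generated over $K$ by the single set
\[
A\ :=\ \bigcup_n\{F(f_1,\dots,f_n)\ :\ F\in\mathcal{F}_n,\ f_1,\dots,f_n\in K^{\prec 1}\},
\]
so by Proposition~\ref{D}(i) it suffices to show that the subset $K\cup A$ of $\smallk[[\fM]]$ is truncation closed; since $K$ itself is truncation closed, this comes down to controlling the truncations of a single element $g = F(f_1,\dots,f_n)$ with $F\in\mathcal{F}_n$ and the $f_i\in K^{\prec 1}$.

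The main work is therefore the following claim: every truncation $g|_{\fn}$ of $g = F(f_1,\dots,f_n)$ lies in $K(\mathcal{F},\prec 1)$. To prove this I would split each $f_i$ as $f_i = a_i + \varepsilon_i$ where $a_i := f_i|_{\fn_i}$ is a suitable truncation and $\varepsilon_i = f_i - a_i \prec \fn$ is small enough that the ``tail'' $F(a_1+\varepsilon_1,\dots,a_n+\varepsilon_n) - $ (finitely many terms) is $\preceq\fn$; the point is that each $a_i$ is again a truncation of $f_i$, hence lies in $K$ (truncation closedness of $K$), and is still $\prec 1$. Using the multivariate Taylor expansion of $F$ around $(a_1,\dots,a_n)$ — which is legitimate because the family of partial derivatives of $F$ of all orders stays inside $\bigcup_n\mathcal{F}_n$ by the closure hypothesis on $\mathcal{F}$, and because substitution of elements of $\smallk[[\fM]]^{\prec 1}$ into formal power series is a $\smallk$-algebra morphism (as recalled in Section~\ref{HS}) — one writes
\[
F(a_1+\varepsilon_1,\dots,a_n+\varepsilon_n)\ =\ \sum_{\mu}\frac{1}{\mu!}\,(\partial^{\mu}F)(a_1,\dots,a_n)\,\varepsilon^{\mu},
\]
and each coefficient $(\partial^\mu F)(a_1,\dots,a_n)$ is of the form $G(a_1,\dots,a_n)$ with $G\in\mathcal{F}_n$ and $a_i\in K^{\prec 1}$, hence lies in $K(\mathcal{F},\prec 1)$. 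Choosing the truncation points $\fn_i$ so that all $\varepsilon^\mu$ with $|\mu|\geq 1$ are $\preceq\fn$ (possible because $\fd(\varepsilon_i)$ can be pushed arbitrarily far down, and only finitely many monomials of $g$ are $\succ\fn$), one gets $g|_{\fn} = \big(\sum_{|\mu|<N}\frac{1}{\mu!}(\partial^\mu F)(\vec a)\varepsilon^\mu\big)|_{\fn}$ for suitable $N$; the inner sum is a finite sum of elements of $K(\mathcal{F},\prec 1)$, and then one finishes by Proposition~\ref{D}(i) applied to $K(\mathcal{F},\prec 1)$ itself (which is a field, so a fortiori generated by a truncation closed set once we know it contains the truncations of its generators — or more cleanly, one runs an induction on the order type $o(g)$ of the support, as in \cite{D}).

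The step I expect to be the real obstacle is making the choice of the truncation points $\fn_i$ uniform and the Taylor expansion genuinely \emph{finite} modulo $\succeq\fn$: one must check that for a fixed target monomial $\fn$ there is a single splitting $f_i = a_i + \varepsilon_i$ such that only finitely many multi-indices $\mu$ contribute monomials $\succ\fn$ to the sum, and that the truncation of that finite partial sum at $\fn$ agrees with $g|_{\fn}$. This uses Lemma~\ref{fGstar} (well-basedness of $\fG^*$ for $\fG = \bigcup_i\supp(\varepsilon_i)$, giving only finitely many ways to reach any fixed monomial) together with the fact that $\fd(\varepsilon_i)\to 0$ as the truncation point of $f_i$ moves up its well-ordered support. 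Once that bookkeeping is in place, the algebraic inputs (Proposition~\ref{D}(i),(ii) and the substitution morphism from Section~\ref{HS}) close the argument without further difficulty; the self-referential flavor — needing $K(\mathcal{F},\prec 1)$ to contain objects built from $K(\mathcal{F},\prec 1)$ — is handled by noting that the Taylor coefficients $G(\vec a)$ involve only $a_i\in K$, not arbitrary elements of $K(\mathcal{F},\prec 1)$, so no circularity actually arises, or alternatively by the transfinite induction on $o(g)$ mentioned above.
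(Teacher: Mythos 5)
There is a genuine gap, and it sits exactly at the point you flag as the ``real obstacle''. Your splitting requires each nonzero tail $\varepsilon_i=f_i-a_i$ to satisfy $\varepsilon_i\preceq\fn$ (the case $|\mu|=1$ of ``all $\varepsilon^\mu$ with $|\mu|\ge 1$ are $\preceq\fn$''), and you justify this by saying that $\fd(\varepsilon_i)$ can be pushed arbitrarily far down and that only finitely many monomials of $g$ are $\succ\fn$. Both claims fail: $\supp(g)^{\succ\fn}$ can be infinite, and $\fd(\varepsilon_i)$ can only be pushed down \emph{within} $\supp(f_i)$, not below an arbitrary prescribed $\fn$. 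Worse, for a proper truncation of $g=F(f_1,\dots,f_n)$ the target monomial typically lies below every $\supp(f_i)$ (products of monomials $\prec 1$ from the $\supp(f_i)$ drop below each individual support), and then your condition forces all $\varepsilon_i=0$, so the decomposition returns $g|_{\fn}=F(\vec a)|_{\fn}=g|_{\fn}$ and no progress is made. Even when some tail is nonzero, powers of $\fd(\varepsilon_i)$ need not eventually fall below a given $\fn$ (this is an archimedean-class issue), so ``only finitely many $\mu$ contribute above $\fn$'' cannot be arranged for an arbitrary $\fn$. What makes the argument work in the paper is that a proper truncation may be taken at a point $\fp\in\supp(g)$, and every element of $\supp(g)$ is $\succeq\fm^k$ for some $\fm\in\bigcup_i\supp(f_i)$ and some $k$; one then truncates a \emph{single} $f_i$ at such an $\fm\in\supp(f_i)$ (a proper truncation, so its order type drops) and Taylor-expands in the one variable $X_i$, the terms of degree $>N$ lying entirely below $\supp$ of the truncation. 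Your choice of truncation points never exploits this, and without it the finiteness claim is simply false.

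The second problem is the circularity. Knowing that $g|_{\fn}$ is the truncation at $\fn$ of a finite sum of products of elements $G(\vec a)$ and $\varepsilon^\mu$ does not put it in $K(\mathcal F,\prec 1)$: you need truncations of those products, hence of the new generator-values $G(\vec a)$, which are again elements of exactly the kind whose truncations are in question. Your remark that ``the Taylor coefficients involve only $a_i\in K$'' does not address this (the original $f_i$ were in $K$ too), and your fallback induction on $o(g)$ is on the wrong quantity: replacing the $f_i$ by truncations gives no control on the order type of the support of the \emph{value}, so that recursion is not well-founded. The paper resolves both issues at once by taking $E$ to be the largest truncation closed subfield of $K(\mathcal F,\prec 1)$, choosing $n$ minimal and then $(o(f_1),\dots,o(f_n))$ lexicographically minimal with $F(f_1,\dots,f_n)\notin E$; replacing one $f_i$ by a proper truncation strictly lowers the tuple, so all coefficients $\frac{\partial^kF}{\partial X_i^k}(f_1,\dots,f_{i-1},h,f_{i+1},\dots,f_n)$ lie in $E$, the finite partial sum lies in the field $E$, and its truncation lies in $E$ because $E$ is truncation closed. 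Your proposal would need to be reorganized along these lines (induction on the arguments' order types inside a truncation closed subfield, with the truncation point of $f_i$ chosen in $\supp(f_i)$ via the relation $\fp\succeq\fm^k$) to close the gap; as written, the key case is not covered and the stated justifications are incorrect.
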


\begin{proof}
Let $E$ be the largest truncation closed subfield of $K(\mathcal{F},\prec 1)$.
Towards a contradiction, assume that $E\ne K(\mathcal{F},\prec 1)$.
Let $n$ be minimal such that $F(f_1,\ldots,f_n)\notin E$ for some  $F\in \mathcal{F}_n$ and  $f_1,\ldots, f_n \in K^{\prec 1}$.
Take $f_1,\ldots,f_n\in K^{\prec 1}$ such that $F(f_1,\ldots,f_n)\notin E$ for some $F\in \mathcal{F}_n$ with lexicographically minimal $(o(f_1),\ldots, o(f_n))$.
Taking such $F$ (so $f = F(f_1,\ldots, f_n)\notin E$),
it suffices to show that then all proper truncations of $f$ lie in $E$.
If \[o(f_1)\ =\ \ldots\ =\ o(f_n) = 0\] then $f_1=\dots=f_n=0$, so 
$F(f_1,\ldots, f_n)\in \smallk$.
%, so by assumption it is in $K\subseteq \mathcal{F}(K)$.\\
Next assume that $f_i\neq 0$ for some $i\in \{1,\ldots,n\}$ and
 $\phi$ is a proper truncation of $f$. Every monomial in $\supp f$ is
$\succeq \fm^k$ for some $\fm\in \bigcup_{i=1}^n \supp f_i$ and some 
$k\in \N$, so we can take $i\in \{1,\ldots,n\}$, $\fm\in \supp(f_i)$, and $N=N(\fm)\in \mathbb{N}$ such that $\supp(\phi)\succ \fm^k$ for all $k>N$.
Take $h,g \in K$ such that $f_i=h+g$, $ h =f_i|_{\fm}$ and $\fd(g) = \fm$. 
By Taylor expansion,
\[F(f_1,\ldots,f_n)\ =\ \sum_{k=0}^\infty \frac{\partial^k F}{\partial X_i^k}(f_1,\ldots,f_{i-1},h,f_{i+1},\ldots, f_n) \frac{g^k}{k!}.\]
Thus $\phi$ is a truncation of 
\[\sum_{k=0}^{N} \frac{\partial^k F}{\partial X_i^k}(f_1,\ldots,f_{i-1},h,f_{i+1},\ldots, f_n) \frac{g^k}{k!}.\]
Given that $(o(f_1),\ldots,o(f_n))$ is minimal and $o(h) < o(f_i)$, we get $\phi \in E$.  
\end{proof}

%%%%%%%%%%%%%%%%%%%%%%%%%%%%%%%%%%%%%%%%%%%%%%%
\noindent 
For example, if $\text{char}(\smallk)=0$, we could take
\[\cF_1=\{(1+X_1)^{-1},\ \exp X_1,\ \log(1+X_1)\},\qquad  \cF_n=\emptyset\  \text{ for }n>1\]
where $\exp X_1:=\sum_{i=0}^\infty X_1^i/i!\ $  and $\log(1+X_1):=\sum_{i=1}^{\infty}(-1)^{i+1}X_1^i/i$.

A subfield $E$ of $\smallk[[\fM]]$ is said to be $\cF$-closed if 
$f(\vec a)\in E$ for all
$f(X_1,\dots, X_n)\in \cF_n$ and $\vec a\in (E\cap\ \smallo )^{\times n}$, $n=1,2,\dots$. The $\cF$-closure of a subfield
$E$ of $\smallk[[\fM]]$ is the smallest $\cF$-closed subfield 
$\cF(E)$ of $\smallk[[\fM]]$ that contains $E$. The next result is \cite[Theorem 1.3]{D}. The more informative Lemma~\ref{tL16} above is not stated explicitly in \cite{D}.

\begin{corollary}\label{ta} If $\operatorname{char}(\smallk)=0$ and
$E\supseteq \smallk$ is a truncation closed subfield of $\smallk[[\fM]]$,
then its $\cF$-closure $\cF(E)$ is also truncation closed.
\end{corollary}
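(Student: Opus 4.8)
The plan is to realize $\cF(E)$ as the union of an increasing chain of subfields, each obtained from the previous one by a single application of the $\cF$-extension operation of Lemma~\ref{tL16}, and then to propagate truncation-closedness along the chain.

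Concretely, I would set $E_0 := E$ and recursively $E_{k+1} := E_k(\cF, \prec 1)$ for each $k$. Since the $\cF$-extension of a subfield contains that subfield, this produces a chain $E = E_0 \subseteq E_1 \subseteq E_2 \subseteq \cdots$ of subfields of $\smallk[[\fM]]$, all containing $\smallk$. I would first check that its union $E_\omega := \bigcup_k E_k$ is exactly $\cF(E)$. It is an $\cF$-closed subfield: being an increasing union of subfields it is a subfield, and given $F \in \cF_n$ and $\vec a \in (E_\omega \cap \smallo)^{\times n}$, the finitely many coordinates of $\vec a$ already lie in a common $E_k$, so (using $E_k \cap \smallo = E_k^{\prec 1}$) we get $F(\vec a) \in E_k(\cF, \prec 1) = E_{k+1} \subseteq E_\omega$. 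And $E_\omega$ is the smallest $\cF$-closed subfield containing $E$: if $L$ is any such, then $E_k \subseteq L$ for all $k$ by induction on $k$, the step being that $L$ contains $E_k$ together with all the generators $F(f_1, \dots, f_n)$ of $E_{k+1}$ over $E_k$ (here $F \in \cF_n$ and $f_i \in E_k^{\prec 1} \subseteq L \cap \smallo$), hence contains the subfield they generate.

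Next, each $E_k$ is truncation closed, by induction on $k$: $E_0 = E$ is truncation closed by hypothesis, and if $E_k$ is truncation closed then Lemma~\ref{tL16} — applicable since $E_k$ is a subfield of $\smallk[[\fM]]$ containing $\smallk$ — yields that $E_{k+1} = E_k(\cF, \prec 1)$ is truncation closed. Since any truncation of an element of $E_\omega$ already takes place inside some single $E_k$, the union $\cF(E) = E_\omega$ of this chain of truncation closed subfields is itself truncation closed. I do not foresee a genuine obstacle: the one point worth flagging is that a single pass of the $\cF$-extension need not yield an $\cF$-closed field — new elements of $\smallo$ appear, and $\cF_n$ is assumed closed only under partial differentiation, not under composition — so the (countable) iteration really is needed, with essentially all of the substantive work already carried out in Lemma~\ref{tL16}.
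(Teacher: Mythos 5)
Your proof is correct, and it is essentially the argument the paper intends: the paper states Corollary~\ref{ta} as a direct consequence of Lemma~\ref{tL16}, and the standard way to get it is exactly your iteration $E_0=E$, $E_{k+1}=E_k(\cF,\prec 1)$, with Lemma~\ref{tL16} applied at each step and truncation closedness passing to the increasing union $\cF(E)=\bigcup_k E_k$. Your flagged point — that one pass of the $\cF$-extension need not be $\cF$-closed, so the countable iteration is genuinely needed — is also right.
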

%\marginpar{probably need reference and indication of what's new here} In the last section

\noindent
Assuming $\operatorname{char}(\smallk)=0$ we now consider the meaning of the above for the case where $\cF_1=\{\exp X_1\}$ and $\cF_n=\emptyset$ for $n>1$. 
(This case is needed in Chapters 7 and 8.)
Let $E$ be a subfield of $k[[\fM]]$. We say that $E$ is {\bf closed under small exponentiation\/} if $\exp(E^{\prec 1})\subseteq E$.
We define the {\bf small exponential closure of $E$\/} to be the smallest subfield $F\supseteq E$ of $\smallk[[\fM]]$ that is closed under small exponentiation;
we denote this $F$ by
$E^{\smallexp}$. Thus by Corollary~\ref{ta}, if $E\supseteq \smallk$ is truncation closed, then $E^{\smallexp}\subseteq\smallk[[\fM_E]]$ is truncation closed.  

\subsection*{Additional facts on truncation} 
Besides $\Gamma$ we now consider a second ordered abelian 
group $\Delta$. Below we identify $\Gamma$ and $\Delta$ 
in the usual way with subgroups of
the lexicographically ordered sum $\Gamma\oplus \Delta$, so that 
$\Gamma+\Delta=\Gamma\oplus \Delta$, with $\gamma>\Delta$ for all $\gamma\in \Gamma^{>}$. This makes $\Delta$ a convex subgroup of $\Gamma+\Delta$. 
Let $\smallk_0$ be a field and $\smallk = \smallk_0((t^\Delta))$.
Then we have a field isomorphism 
\[\smallk((t^\Gamma))\longrightarrow \smallk_0((t^{\Gamma+ \Delta}))\]
that is the identity on $\smallk$, namely 
\[f=\sum_{\gamma}f_{\gamma}t^\gamma\mapsto 
\sum_{\gamma,\delta} f_{\gamma,\delta}t^{\gamma+\delta}\]
where $f_{\gamma}=\sum_\delta f_{\gamma,\delta}t^{\delta}$ for all $\gamma$. 
Below we identify $\smallk((t^\Gamma))$ with  $\smallk_0((t^{\Gamma+ \Delta}))$
via the above isomorphism. 
 For a set $S\subseteq \smallk((t^\Gamma))$
this leads to two notions of truncation: 
we say that $S$ is
$\smallk$-truncation closed if it is truncation closed with 
$\smallk$ viewed as the coefficient 
field and $\Gamma$ as the group of exponents (that is, viewing $S$ as a subset of the Hahn field $\smallk((t^\Gamma))$ over $\smallk$), and we say that
$S$ is $\smallk_0$-truncation closed if it is truncation closed  with 
$\smallk_0$ viewed as the coefficient 
field and $\Gamma+\Delta$ as the group of exponents (that is, viewing
$S$ as a subset of the Hahn field $\smallk_0((t^{\Gamma+\Delta}))$
over $\smallk_0$). 

\begin{lemma}\label{trtr1} If $S\subseteq\smallk((t^\Gamma))$ is $\smallk_0$-truncation closed, then $S$ is $\smallk$-truncation closed.
\end{lemma}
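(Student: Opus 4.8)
The plan is to take an arbitrary $f\in S$ and an arbitrary cut point in the finer monomial group $t^{\Gamma+\Delta}$, and to express the resulting $\smallk_0$-truncation of $f$ as a (finite) combination of $\smallk$-truncations of $f$ and of truncations of the coefficients of $f$, all of which lie in $S$ once we also know $S$ is closed under the relevant operations — but since $S$ is only assumed $\smallk_0$-truncation closed, the cleaner route is to show directly that every $\smallk$-truncation of $f$ is \emph{already} a $\smallk_0$-truncation of $f$, so that $\smallk$-truncation closedness follows trivially from $\smallk_0$-truncation closedness. Concretely: fix $\gamma_0\in\Gamma$ and consider $g=f|_{\gamma_0}$ in the $\smallk$-sense, i.e.\ $g=\sum_{\gamma<\gamma_0}f_\gamma t^\gamma$. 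The claim is that $g$ coincides with the $\smallk_0$-truncation of $f$ at the exponent $\gamma_0\in\Gamma\subseteq\Gamma+\Delta$, i.e.\ $g=\sum_{\alpha<\gamma_0}f^{(0)}_\alpha t^\alpha$ where $f=\sum_\alpha f^{(0)}_\alpha t^\alpha$ is the expansion over $\smallk_0$.

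The key step is the order-theoretic comparison inside $\Gamma+\Delta=\Gamma\oplus\Delta$. Because $\Delta$ is convex and $\gamma>\Delta$ for all $\gamma\in\Gamma^{>}$, an element $\gamma+\delta$ (with $\gamma\in\Gamma$, $\delta\in\Delta$) satisfies $\gamma+\delta<\gamma_0$ if and only if $\gamma<\gamma_0$, or $\gamma=\gamma_0$ and $\delta<0$; but $\gamma_0\in\Gamma$ means the point $\gamma_0$ itself, viewed in $\Gamma+\Delta$, has $\Delta$-component $0$, so "$\gamma=\gamma_0$ and $\delta<0$" never contributes a monomial with exponent $\geq\gamma_0$ — hence the exponents $\alpha=\gamma+\delta<\gamma_0$ are exactly those with $\gamma<\gamma_0$ (together, when $\gamma=\gamma_0$, with $\delta<0$, which are precisely the monomials of the coefficient $f_{\gamma_0}$ that are $\succ 1$, i.e.\ the $\Delta$-truncation of $f_{\gamma_0}$ at the monomial $t^0$, which is $0$). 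So the $\smallk_0$-truncation of $f$ at $\gamma_0\in\Gamma$ is exactly $\sum_{\gamma<\gamma_0}f_\gamma t^\gamma=g$. Therefore every $\smallk$-truncation of $f$ is a $\smallk_0$-truncation of $f$, and since $S$ is $\smallk_0$-truncation closed, $g\in S$. As $f\in S$ and $\gamma_0$ were arbitrary, $S$ is $\smallk$-truncation closed.

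I expect the main (and only real) obstacle to be bookkeeping with the identification of $\smallk((t^\Gamma))$ with $\smallk_0((t^{\Gamma+\Delta}))$: one must be careful that under the isomorphism $f=\sum_\gamma f_\gamma t^\gamma\mapsto\sum_{\gamma,\delta}f_{\gamma,\delta}t^{\gamma+\delta}$, the coefficient $f_{\gamma_0}\in\smallk=\smallk_0((t^\Delta))$ contributes exponents $\gamma_0+\delta$, and to observe that $\gamma_0+\delta<\gamma_0$ in $\Gamma+\Delta$ forces $\delta<0$ — but the $\smallk$-truncation $g$ retains \emph{none} of the $\gamma_0$-block, whereas the $\smallk_0$-truncation at the exponent $\gamma_0$ also retains none of it (since $\gamma_0+\delta\geq\gamma_0\iff\delta\geq 0$ and we want $\gamma_0+\delta<\gamma_0$, giving $\delta<0$)... so in fact one should double-check whether $g$ should be matched with the $\smallk_0$-truncation at $\gamma_0$ or whether the $\Delta$-negative part of $f_{\gamma_0}$ must be added in. If it must, then $g$ is not literally a $\smallk_0$-truncation but rather $g' + (\text{truncation of }f_{\gamma_0})$ where $g'\trunceq_{\smallk_0} f$; in that case one instead writes $g = \big(f|^{\smallk_0}_{\gamma_0}\big)$ outright (choosing the $\smallk_0$-cut to be the least exponent $\geq\gamma_0$, which is $\gamma_0$ itself, with $\delta=0$), and the equality $g=f|^{\smallk_0}_{\gamma_0}$ holds on the nose precisely because $\gamma_0\in\Gamma$ sits at $\Delta$-height $0$. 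Either way the argument is short; the care is entirely in the index manipulation, and no result beyond the convexity of $\Delta$ in $\Gamma+\Delta$ and the definitions of the two truncation operations is needed.
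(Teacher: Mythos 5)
Your overall strategy — show that every $\smallk$-truncation of $f$ is itself a $\smallk_0$-truncation of $f$, so that $\smallk$-truncation closedness follows at once from $\smallk_0$-truncation closedness — is the right one, and it is also the paper's. But the specific identification you make is false: the $\smallk$-truncation $g=f|_{\gamma_0}=\sum_{\gamma<\gamma_0}f_\gamma t^\gamma$ is in general \emph{not} the $\smallk_0$-truncation of $f$ at the exponent $\gamma_0$. As you yourself compute, the exponents of $\Gamma+\Delta$ below $\gamma_0$ are those with $\gamma<\gamma_0$ \emph{or} with $\gamma=\gamma_0$ and $\delta<0$, so
\[ f|^{\smallk_0}_{\gamma_0}\ =\ f|^{\smallk}_{\gamma_0}\ +\ \Big(\sum_{\delta<0}f_{\gamma_0,\delta}\,t^{\delta}\Big)\,t^{\gamma_0}, \]
and the second summand is the infinite part of the coefficient $f_{\gamma_0}\in\smallk_0((t^\Delta))$, which has no reason to vanish: your parenthetical claim that ``the $\Delta$-truncation of $f_{\gamma_0}$ at $t^0$ is $0$'' is unjustified, and your closing paragraph, instead of repairing this, reasserts the same identity (``$g=f|^{\smallk_0}_{\gamma_0}$ holds on the nose''). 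Concretely, take $f=(t^{\delta_0}+1)\,t^{\gamma_0}$ with $\delta_0\in\Delta^{<}$; then $f|^{\smallk}_{\gamma_0}=0$, while the $\smallk_0$-truncation of $f$ at $\gamma_0$ is $t^{\gamma_0+\delta_0}\neq 0$.

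The missing idea is that the cut point must be moved \emph{down} when the $\gamma_0$-block of $f$ has a nonzero infinite part. If $(\gamma_0+\Delta^{<})\cap\supp_{\smallk_0}f=\emptyset$, your identity is correct and the two truncations at $\gamma_0$ agree. Otherwise, since $\supp_{\smallk_0}f$ is well-ordered, that intersection has a least element $\beta$, and the $\smallk$-truncation of $f$ at $\gamma_0$ equals the $\smallk_0$-truncation of $f$ at $\beta$: terms with $\gamma=\gamma_0$ and exponent $<\beta$ vanish by minimality of $\beta$, while every exponent with $\gamma<\gamma_0$ lies below $\beta$ because the $\Gamma$-component dominates. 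This two-case choice of cut point (using well-orderedness of the $\smallk_0$-support) is exactly the content of the paper's proof; with it, your argument closes, and in the case you treated it is fine.
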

\begin{proof} Let $f\in \smallk((t^\Gamma))$ and $\gamma\in \Gamma$. If 
$ (\gamma+\Delta^{<})\cap \supp_{\smallk_0}f=\emptyset$, then the
$\smallk$-truncation of $f$ at $\gamma$ equals the $\smallk_0$-truncation 
of $f$ at 
$\gamma\in \Gamma +\Delta$.  

If $(\gamma+\Delta^{<})\cap \supp_{\smallk_0}f\ne
\emptyset$, then the $\smallk$-truncation of $f$ at $\gamma$ equals the $\smallk_0$-truncation of $f$ at the least element of $(\gamma+\Delta^{<})\cap \supp_{\smallk_0}f$.
\end{proof}

\begin{lemma}\label{trtr2} Let $\smallk_1$ be a truncation closed subfield of the Hahn field 
$\smallk_0((t^\Delta))= \smallk$ over $\smallk_0$, and let $V$ be a 
$\smallk_1$-linear subspace of $\smallk_1((t^\Gamma))\subseteq \smallk((t^\Gamma))$
such that $V\supseteq \smallk_1$ and $V$ is $\smallk$-truncation closed.
Then $V$ is $\smallk_0$-truncation closed.
\end{lemma}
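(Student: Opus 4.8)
The plan is to express every $\smallk_0$-truncation of an element of $V$ as a sum of two elements each visibly in $V$: an ordinary truncation of the same element taken in the Hahn field $\smallk((t^\Gamma))$ over $\smallk$, plus a $\smallk_1$-scalar multiple of a monomial $t^{\gamma_0}$ that $V$ is forced to contain. For $h\in\smallk((t^\Gamma))$ I write $h|^{\smallk}_{\gamma_0}$ for the truncation of $h$ at $\gamma_0\in\Gamma$ viewed in $\smallk((t^\Gamma))$, $h|^{\smallk_0}_{\gamma'}$ for the truncation of $h$ at $\gamma'\in\Gamma+\Delta$ viewed in $\smallk_0((t^{\Gamma+\Delta}))$, and $\supp_{\smallk}h\subseteq\Gamma$, $\supp_{\smallk_0}h\subseteq\Gamma+\Delta$ for the corresponding supports.

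First I would do the bookkeeping. Fix $f\in V$ and $\gamma'\in\Gamma+\Delta$, and write $\gamma'=\gamma_0+\delta_0$ with $\gamma_0\in\Gamma$ and $\delta_0\in\Delta$ (unique, as $\Gamma+\Delta=\Gamma\oplus\Delta$). Since $V\subseteq\smallk_1((t^\Gamma))$ we have $f=\sum_\gamma f_\gamma t^\gamma$ with every $f_\gamma\in\smallk_1\subseteq\smallk=\smallk_0((t^\Delta))$. Because the order on $\Gamma\oplus\Delta$ is lexicographic with $\Gamma$ dominant, an element $\gamma+\delta\in\supp_{\smallk_0}f$ (with $\gamma\in\Gamma$, $\delta\in\Delta$) satisfies $\gamma+\delta<\gamma'$ exactly when $\gamma<\gamma_0$, or $\gamma=\gamma_0$ and $\delta<\delta_0$; collecting the corresponding terms yields
\[ f|^{\smallk_0}_{\gamma'}\ =\ f|^{\smallk}_{\gamma_0}\ +\ \big(f_{\gamma_0}|^{\smallk_0}_{\delta_0}\big)\,t^{\gamma_0}, \]
where $f_{\gamma_0}|^{\smallk_0}_{\delta_0}$ denotes the truncation of $f_{\gamma_0}\in\smallk_0((t^\Delta))$ at $\delta_0$.

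Now $f|^{\smallk}_{\gamma_0}\in V$ because $V$ is $\smallk$-truncation closed, so it remains to put the term $\big(f_{\gamma_0}|^{\smallk_0}_{\delta_0}\big)t^{\gamma_0}$ in $V$. If $f_{\gamma_0}=0$ this term is $0$. Otherwise I first claim $t^{\gamma_0}\in V$. Indeed $g:=f-f|^{\smallk}_{\gamma_0}=\sum_{\gamma\ge\gamma_0}f_\gamma t^\gamma$ lies in $V$ (a $\smallk_1$-linear subspace, hence closed under subtraction), and an appropriate further truncation of $g$ in $\smallk((t^\Gamma))$ — namely $g|^{\smallk}_{\gamma_1}$ with $\gamma_1$ the least element of $\supp_{\smallk}f$ exceeding $\gamma_0$, or $g$ itself when $\gamma_0=\max\supp_{\smallk}f$ — equals $f_{\gamma_0}t^{\gamma_0}$, so $f_{\gamma_0}t^{\gamma_0}\in V$; then $t^{\gamma_0}=f_{\gamma_0}^{-1}\cdot f_{\gamma_0}t^{\gamma_0}\in V$ since $f_{\gamma_0}^{-1}\in\smallk_1$ and $V$ is $\smallk_1$-linear. (Alternatively, since $V$ is a truncation closed $\smallk_1$-linear subspace of $\smallk_1((t^\Gamma))$, the earlier lemma that $\supp(V)=V\cap\fM$, here with $\fM=t^\Gamma$, gives $t^{\gamma_0}\in V$ at once.) Finally, $\smallk_1$ is truncation closed in $\smallk_0((t^\Delta))$, so $f_{\gamma_0}|^{\smallk_0}_{\delta_0}\in\smallk_1$, and hence $\big(f_{\gamma_0}|^{\smallk_0}_{\delta_0}\big)t^{\gamma_0}\in V$ by $\smallk_1$-linearity. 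Adding the two summands, $f|^{\smallk_0}_{\gamma'}\in V$.

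I do not expect a real obstacle: aside from the lexicographic splitting of the support, the only step that is not pure bookkeeping is the assertion $t^{\gamma_0}\in V$, and there the only input is that $f_{\gamma_0}$ is a unit of the field $\smallk_1$ while $V$ is a $\smallk_1$-subspace closed under $\smallk$-truncation.
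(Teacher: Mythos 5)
Your proposal is correct and follows essentially the same route as the paper's proof: the same decomposition $f|^{\smallk_0}_{\gamma'} = f|^{\smallk}_{\gamma_0} + (f_{\gamma_0}|_{\delta_0})t^{\gamma_0}$, extraction of $f_{\gamma_0}t^{\gamma_0}$ (and hence $t^{\gamma_0}$) from a further $\smallk$-truncation, and then truncation closedness of $\smallk_1$ plus $\smallk_1$-linearity of $V$ to finish. The only cosmetic difference is that you truncate $g=f-f|^{\smallk}_{\gamma_0}$ where the paper truncates $f$ itself and subtracts, and you skip the (unneeded) separate case $\Gamma=\{0\}$.
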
 
%\marginpar{corrected in two places: old proof didn't work for $\Gamma=\{0\}$, and case $s=0$ should have been done earlier}
\begin{proof} If $\Gamma=\{0\}$, then $\smallk_1((t^\Gamma))=\smallk_1$, so
$V=\smallk_1$  is $\smallk_0$-truncation closed. In the rest of the proof
we assume $\Gamma\ne\{0\}$. Let $\beta=\gamma+\delta\in \Gamma+\Delta$ with
$\gamma\in \Gamma$ and $\delta\in \Delta$, and $f\in V$.
Let $g$ be the truncation of $f$ at $\beta$ in the Hahn field
$\smallk_0((t^{\Gamma+\Delta}))$ over $\smallk_0$ and $h$ the truncation of $f$ at $\gamma$ in the Hahn field $\smallk((t^\Gamma))$. Then $h\in V$
and $g=h+s$ with $s=\phi t^\gamma$ and $\phi\in \smallk_0((t^\Delta))$.
If $s=0$, then $g\in V$ trivially, so assume $s\ne 0$.
Then $f$ has a $\smallk$-truncation $h+\theta t^\gamma\in V$ with 
$0\ne\theta\in \smallk_1$ and so $\theta t^\gamma\in V$, $t^\gamma=\theta^{-1}(\theta t^\gamma)\in V$. Moreover,
$\phi$ is a $\smallk_0$-truncation of $\theta$, so $\phi\in \smallk_1$, hence
$s=\phi t^\gamma\in V$, and thus $g\in V$.
\end{proof}

\section{Infinite Parts}
\noindent
For the rest of this section $K$ is a valued field with monomial group $\fM$ and coefficient field $C$; the latter means that
$C$ is a subfield of $K$ that is contained in the valuation ring
of $K$ and maps (necessarily isomorphically) onto the residue field under the residue map. We let $v:K^\times\rightarrow \Gamma$ be the valuation of $K$, $\smallk$ its residue field, $\mathcal{O}$ the valuation ring of $K$, and $\smallo$ the maximal ideal of $\bigO$.
Note that $\bigO=C\oplus \smallo$.  
%Let $C$ be a distinguished subfield of $\bigO$ that maps isomorphically onto $\smallk$. 

Suppose $\cU$ is a $C$-vector subspace of $K$ such that $K=\cU\oplus \bigO$, $\cU$ is closed under multiplication in $K$, and $\cU\supseteq \fM^{\succ 1}$.
We call such $\cU$ an \textit{infinite part of} $K$.
% If then we call $\cU$ an $\fM$-infinite part.
For $a\in K$ we define its $\cU$-\textit{part} to be 
the unique $u\in \cU$ such that $a= u+b$ for some $b \in \bigO$, and we denote the 
$\cU$-part of $a$ by $a|_{\cU}$.

\begin{example} The Hahn field $C[[\fM]]$ with coefficient field $C$ and
monomial group $\fM$, and the usual valuation given by its valuation ring 
\[C[[\fM^{\preceq 1}]]\ :=\ \{f\in C[[\fM]]:\ \supp f \subseteq \fM^{\preceq 1}\},\] has infinite part 
\[C[[\fM^{\succ 1}]]\ :=\ \{f\in C[[\fM]]:\ \supp f \subseteq \fM^{\succ 1}\}.\]
We call this the \textbf{canonical infinite part} of the Hahn field $C[[\fM]]$. 

More generally, suppose $K$ is a truncation closed subfield of $C[[\fM]]$ with $C\subseteq K$ and $\fM\subseteq K$. 
We consider $K$ as a valued subfield of this Hahn field,
and equip $K$ with its monomial group $\fM$ and its coefficient field $C$. Then $K$ has infinite part $K\cap C[[\fM^{\succ 1}]]$.
\end{example}

\subsection*{Truncation with respect to an infinite part}
For any monomial $\fm\in \fM$ we define the \textit{truncation of} $a$ \textit{at} $\fm$ \textit{with respect to} $\cU$, as $a|^\cU_{\fm}:=\fm\cdot(\fm\inv a)|_{\cU}$. 
Note that for $\fm = 1$ we get $a|^{\cU}_{1} = a|_{\cU}$.
Note also that for the canonical infinite part
$\cU$ of the Hahn field $C[[\fM]]$ we have $a|^{\cU}_{\fm}=a|_{\fm}$. 

The following rules are easy consequences of the definition above:

\begin{lemma} \label{Vtruncprop}
Let $a,b\in K$,  $c\in C$, and $\fm, \fn\in \fM$. Then:
\begin{enumerate}
	\item[\rm{(i)}] $(a+b)|_{\fm}^\cU =a|_{\fm}^\cU + b|_{\fm}^\cU$ and 
	$(ca)|_{\fm}^{\cU}= c\cdot a|_{\fm}^{\cU}$;
    \item[\rm{(ii)}] $a\preceq \fm$ $\iff$ $a|_{\fm}^\cU = 0$;
    \item[\rm{(iii)}] $a \succ \fm $ $\iff$ $a|_{\fm}^\cU \sim a$;
    \item[\rm{(iv)}] $a|_{\fm}^{\cU}\preceq a$, by (2) and (3);
    \item[\rm{(v)}] $a- a|_{\fm}^\cU\preceq \fm $;
    \item[\rm{(vi)}] $a|_{\fm}^\cU = b|_{\fn}^\cU=0 \Rightarrow (ab)|_{\fm\fn}^\cU=0$;
    \item[\rm{(vii)}] $a|_{\fm}^\cU\neq 0 \Rightarrow a|_{\fm}^\cU\succ \fm$;
    \item[\rm{(viii)}] $\fm\cdot a|_{\fn}^\cU = (\fm a)|_{\fm\fn}^\cU$;
    \item[\rm{(ix)}] $\fm|^{\cU}_{\fn}=\fm$ if $\fm\succ \fn$, and $\fm|^{\cU}_{\fn}=0$ if $\fm\preceq \fn$.  
\end{enumerate}
\end{lemma}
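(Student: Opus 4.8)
\section*{Proof proposal}

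The plan is to reduce every item to the base case $\fm=1$, where $a|^\cU_1 = a|_\cU$ is just the projection of $a$ onto $\cU$ along the direct sum decomposition $K=\cU\oplus\bigO$, and then transport the resulting facts by multiplying and dividing by $\fm$ via the identity $a|^\cU_\fm=\fm\cdot(\fm\inv a)|_\cU$. First I would record three basic properties of the projection $a\mapsto a|_\cU$: (a) since $\cU$ and $\bigO$ are $C$-subspaces of $K$, this map is $C$-linear, by uniqueness of the decomposition; (b) $a|_\cU=0 \iff a\in\bigO \iff a\preceq 1$; and (c) if $a\succ 1$ then, writing $a=a|_\cU+b$ with $b\in\bigO$, we have $b\preceq 1\prec a$, so $a-a|_\cU=b\prec a$, i.e.\ $a|_\cU\sim a$, while conversely $a|_\cU\sim a$ forces $a|_\cU\neq 0$, hence $a\notin\bigO$. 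I would also note that multiplication by a monomial $\fm$ (a unit) preserves $\preceq$, $\asymp$, and hence $\sim$: if $u\sim v$ then $u-v\prec u$, so $\fm(u-v)\prec\fm u$ by DR5, i.e.\ $\fm u\sim\fm v$.

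With these in hand the items follow mechanically. For (i): $(a+b)|^\cU_\fm=\fm\bigl((\fm\inv a)|_\cU+(\fm\inv b)|_\cU\bigr)$ and $(ca)|^\cU_\fm=\fm\cdot c(\fm\inv a)|_\cU=c\cdot a|^\cU_\fm$, using (a). Applying (b) to $\fm\inv a$ gives (ii), and applying (c) together with the fact that multiplication by $\fm$ preserves and reflects $\sim$ gives (iii). Then (iv) is immediate: either $a\preceq\fm$, so $a|^\cU_\fm=0\preceq a$ by (ii), or $a\succ\fm$, so $a|^\cU_\fm\sim a$ hence $\asymp a$ by (iii). For (v), $a-a|^\cU_\fm=\fm\bigl(\fm\inv a-(\fm\inv a)|_\cU\bigr)$ and the bracketed term is the $\bigO$-part of $\fm\inv a$, so $\preceq 1$, whence the product is $\preceq\fm$. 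For (vi): by (ii) the hypotheses give $a\preceq\fm$ and $b\preceq\fn$, so $ab\preceq\fm\fn$ by DR5, and then $(ab)|^\cU_{\fm\fn}=0$ by (ii) again. For (vii): $a|^\cU_\fm\neq 0$ gives $a\succ\fm$ by (ii), hence $a|^\cU_\fm\sim a\succ\fm$ by (iii). Item (viii) is pure monomial arithmetic: both $\fm\cdot a|^\cU_\fn$ and $(\fm a)|^\cU_{\fm\fn}$ equal $\fm\fn\cdot(\fn\inv a)|_\cU$. Finally, for (ix): if $\fm\succ\fn$ then $\fn\inv\fm\in\fM^{\succ 1}\subseteq\cU$, so $(\fn\inv\fm)|_\cU=\fn\inv\fm$ and $\fm|^\cU_\fn=\fm$; if $\fm\preceq\fn$ then $\fm|^\cU_\fn=0$ by (ii).

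The argument is routine once the base case is set up; there is no serious obstacle. The only points that need genuine (if small) care are verifying that multiplication by a monomial is compatible with the asymptotic relation $\sim$ — this underlies (iii), and hence implicitly (iv) and (vii) — and invoking the hypothesis $\fM^{\succ 1}\subseteq\cU$ at precisely the right spot in (ix), since this is the one place where that part of the definition of an infinite part is actually used.
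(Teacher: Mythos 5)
Your proof is correct: the paper offers no written proof of this lemma (it is simply asserted that the rules are ``easy consequences of the definition''), and your argument — reducing each item to the base case $\fm=1$ via $a|^{\cU}_{\fm}=\fm\cdot(\fm\inv a)|_{\cU}$, using $C$-linearity of the projection onto $\cU$ along $K=\cU\oplus\bigO$, compatibility of $\preceq,\sim$ with multiplication by the unit $\fm$, and the hypothesis $\fM^{\succ 1}\subseteq\cU$ exactly where (ix) needs it — is precisely the routine verification the paper intends. No gaps.
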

%\begin{proof}
%\begin{enumerate}
%	\item $\trunc{a+b}{\fm} = \fm \trunc{(\fm\inv (a+b))}{V} = \fm \trunc{\fm\inv a }{V} $
%\end{enumerate}
%\end{proof}

\noindent
The following lemmas give a commutativity property for truncation with respect to $\cU$.

\begin{lemma}\label{trunccom1}
Let $a\in K$ and $\fm\in \fM$. Then
\[  (a|_{\cU})|_{\fm}^\cU\ =\ (a|_{\fm}^{\cU})|_{\cU}\ =\ 
a|^{\cU}_{\max(1,\fm)}.\]
\end{lemma}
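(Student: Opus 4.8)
The plan is to unravel all the definitions and reduce the claimed identity to the defining equation $K = \cU \oplus \bigO$ together with the basic rules in Lemma~\ref{Vtruncprop}. First I would write $a = u + b$ with $u = a|_{\cU} \in \cU$ and $b \in \bigO$, so that $a|_{\cU} = u$ and $a - a|_{\cU} = b \preceq 1$. The key point is that $u \in \cU$ already, so computing $(a|_{\cU})|_{\fm}^{\cU} = u|_{\fm}^{\cU}$ means applying truncation at $\fm$ to an element that is purely ``infinite'', and this should depend only on whether $\fm \succeq 1$ or $\fm \prec 1$.

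For the case $\fm \preceq 1$: here $\max(1,\fm) = 1$, so the target value is $a|_{1}^{\cU} = a|_{\cU} = u$. On the one hand, since $u - u|_\fm^\cU \preceq \fm \preceq 1$ (rule (v)) and $u|_\fm^\cU \in \cU$... actually I would argue more directly: $u \in \cU$ and $\fm \preceq 1$, so I want $u|_\fm^\cU = u$, i.e. $\fm^{-1}u$ has $\cU$-part equal to $\fm^{-1}u$; but that need not hold since $\fm^{-1}u$ may fail to lie in $\cU$. Instead I would reconsider: we want $(a|_\cU)|_\fm^\cU = a|_\cU$, equivalently (using (i)) $(a|_\cU - (a|_\cU)|_\fm^\cU) = 0$, and by (v) the left side is $\preceq \fm \preceq 1$, hence lies in $\bigO$; but it also lies in $\cU$ because $a|_\cU \in \cU$ and $\cU$ is a subspace while $(a|_\cU)|_\fm^\cU \in \cU$ — wait, this requires knowing $(a|_\cU)|_\fm^\cU \in \cU$. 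That is exactly rule (iv)/(vii): $a|_\fm^\cU \ne 0 \Rightarrow a|_\fm^\cU \succ \fm$, and more to the point $a|_\fm^\cU = \fm(\fm^{-1}a)|_\cU$, and $(\fm^{-1}a)|_\cU \in \cU$ with $\cU$ closed under multiplication and $\fm \in \fM^{\cdot}\subseteq$ (note $\fm \in \cU$ only if $\fm \succ 1$). So I would instead show $(a|_\cU)|_\fm^\cU \in \bigO$ directly from (iv): it is $\preceq a|_\cU$, hmm that's not enough either. Cleanest: for $\fm \preceq 1$, rule (ii) gives $a|_\fm^\cU = 0 \iff a \preceq \fm$; since $\fm \preceq 1$ the elements $\preceq\fm$ form a subset of $\smallo\subseteq\bigO$, so applying $|_\fm^\cU$ kills $\bigO$-parts only partially. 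I think the right move is: $(a|_\cU)|_\fm^\cU$ and $a|_\cU$ differ by something $\preceq \fm \preceq 1$ (by (v)), so they have the same $\cU$-part; but $(a|_\cU)|_\fm^\cU = \fm\cdot(\fm^{-1}a|_\cU)|_\cU \in \cU$ — here I use that $\fm^{-1}a|_\cU|_\cU\in\cU$ and then multiply by $\fm$; this is in $\cU$ when... Actually the simplest: observe $(x)|_\fm^\cU$ is always in $\cU\cdot\fM \cap(\text{stuff})$; better yet, $(x|_\fm^\cU)|_\cU = x|_\fm^\cU$ would follow if $x|_\fm^\cU\in\cU$. So the first real lemma I need is: $a|_\fm^\cU \in \cU$ whenever $\fm \preceq 1$, and $a|_\fm^\cU\in\cU$... hmm in general $a|_{\cU}\in\cU$ but $a|_\fm^\cU=\fm(\fm^{-1}a)|_\cU$ need not be in $\cU$ if $\fm\prec 1$.

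Let me restructure the plan. The cleanest route: prove the two displayed equalities separately and symmetrically. For $(a|_\cU)|_\fm^\cU = a|_{\max(1,\fm)}^\cU$: split on $\fm\succ 1$ versus $\fm\preceq 1$. When $\fm\succ 1$, note $a$ and $a|_\cU$ differ by $b\in\bigO$, and since $\fm\succ 1$, rule (ii) type reasoning shows $b|_\fm^\cU = 0$ (as $b\preceq 1 \prec\fm$), so by (i), $(a|_\cU)|_\fm^\cU = a|_\fm^\cU = a|_{\max(1,\fm)}^\cU$ since $\max(1,\fm)=\fm$. When $\fm\preceq 1$, $\max(1,\fm)=1$ so the target is $a|_1^\cU = a|_\cU$; I claim $(a|_\cU)|_\fm^\cU = a|_\cU$: writing $u=a|_\cU$, we have $u - u|_\fm^\cU\preceq\fm\preceq 1$ by (v), and $u|_\fm^\cU = \fm(\fm^{-1}u)|_\cU$; since $u\in\cU$ and $\cU=\cU\oplus\bigO$... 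I'll argue $u|_\fm^\cU$ has the same $\cU$-part as $u$, namely $u$ itself (since $u\in\cU$), while also $u - u|_\fm^\cU\in\bigO$; but $u\in\cU$, so if $u|_\fm^\cU\in\cU$ too then $u - u|_\fm^\cU\in\cU\cap\bigO=\{0\}$. So it all hinges on the sub-claim $u|_\fm^\cU\in\cU$ for $u\in\cU$; equivalently $(\fm^{-1}u)|_\cU$ is such that $\fm\cdot(\fm^{-1}u)|_\cU\in\cU$. Writing $\fm^{-1}u = w + c$ with $w\in\cU$, $c\in\bigO$: then $\fm w\in\cU$ (as $\cU$ closed under mult. — but $\fm\in K$, is $\cU$ closed under multiplication by arbitrary monomials? $\cU$ is closed under multiplication in $K$ meaning $\cU\cdot\cU\subseteq\cU$; it is a $C$-subspace; and $\fM^{\succ1}\subseteq\cU$, but $\fm$ with $\fm\prec1$ is not in $\cU$). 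Hmm — so actually $\fm w$ might not be in $\cU$. This sub-claim is exactly the crux.

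So the main obstacle I anticipate is precisely showing that the operation $x\mapsto x|_\fm^\cU$ lands in $\cU$ on suitable inputs — equivalently, understanding how multiplication by a monomial $\fm\prec 1$ interacts with the splitting $K=\cU\oplus\bigO$. I expect the paper handles this either via an earlier-established fact about $|_\fm^\cU$ (perhaps that $a|_\fm^\cU\in\cU\cdot\fm$-adic structure, or via rules (vii)--(ix) which suggest $\cU$ behaves well), or by a direct computation using $\cU\supseteq\fM^{\succ1}$ and closure under multiplication to handle $\fm w$ by writing $w$ in terms of even-higher monomials. My plan would be: (1) establish the sub-lemma that for $\fm\preceq 1$ and $u\in\cU$, $u|_\fm^\cU\in\cU$, by reducing to $\fM^{\succ1}\subseteq\cU$ and the multiplicative closure of $\cU$ (this is the hard step); (2) deduce $(a|_\cU)|_\fm^\cU = a|_\cU = a|_{\max(1,\fm)}^\cU$ in the case $\fm\preceq 1$ via the $\cU\cap\bigO=\{0\}$ argument above; (3) handle $\fm\succ1$ using rule (ii) to kill the $\bigO$-part; (4) for the other equality $(a|_\fm^\cU)|_\cU = a|_{\max(1,\fm)}^\cU$, when $\fm\succ 1$ use that $a|_\fm^\cU\in\cU$ already (again the sub-lemma, now trivial since $\fm\succ1\Rightarrow\fm\in\cU$ and closure), so $(a|_\fm^\cU)|_\cU = a|_\fm^\cU = a|_{\max(1,\fm)}^\cU$; when $\fm\preceq1$, combine step (1)'s sub-lemma with rule (v) again. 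Throughout, rules (i), (ii), (iv), (v) of Lemma~\ref{Vtruncprop} are the workhorses, and the only genuinely new input needed is the interaction of $\cU$ with multiplication by small monomials.
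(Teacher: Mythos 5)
Your overall architecture (decompose $a=u+b$ with $u=a|_{\cU}\in\cU$, $b\in\bigO$; case split on $\fm\preceq 1$ versus $\fm\succ 1$; use rules (i), (ii), (iv), (v) of Lemma~\ref{Vtruncprop} plus the closure properties of $\cU$) is exactly the paper's, and your treatment of the case $\fm\succ 1$ is complete and correct. But the case $\fm\preceq 1$ has a genuine gap: your entire plan there rests on the sub-lemma ``$u|^{\cU}_{\fm}\in\cU$ for $u\in\cU$, $\fm\preceq 1$,'' which you explicitly leave unproved, label ``the hard step,'' and whose only attempted justification (write $\fm\inv u=w+c$ with $w\in\cU$, $c\in\bigO$ and try to show $\fm w\in\cU$) you yourself correctly observe does not go through. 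Worse, earlier in the proposal you dismiss the one fact that settles everything: you write that ``$\fm\inv u$ may fail to lie in $\cU$,'' and that is false. Since $\fm\preceq 1$, either $\fm=1$ (trivial) or $\fm\inv\succ 1$, so $\fm\inv\in\fM^{\succ 1}\subseteq\cU$; as $\cU$ is closed under multiplication, $\fm\inv u\in\cU$, i.e.\ in your decomposition $c=0$ and $w=\fm\inv u$, whence $\fm w=u\in\cU$ automatically.

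This is precisely how the paper closes the case $\fm\preceq 1$, and it gives more than your sub-lemma: $(\fm\inv u)|_{\cU}=\fm\inv u$, so $u|^{\cU}_{\fm}=\fm\cdot(\fm\inv u)|_{\cU}=\fm\cdot\fm\inv u=u$ on the nose, which immediately yields $(a|_{\cU})|^{\cU}_{\fm}=u=a|^{\cU}_{1}$; combined with $a|^{\cU}_{\fm}=u+b|^{\cU}_{\fm}$ and $b|^{\cU}_{\fm}\preceq b\preceq 1$ (rule (iv)) one also gets $(a|^{\cU}_{\fm})|_{\cU}=u$, with no need for the $\cU\cap\bigO=\{0\}$ detour. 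So the missing idea is not a new closure principle about multiplication by small monomials in general (which, as you noticed, is unavailable), but the observation that the specific element $\fm\inv u$ is a product of two elements of $\cU$. Once that line is inserted, the rest of your plan (steps (2)--(4)) assembles into a correct proof essentially identical to the paper's.
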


\begin{proof}
Let $a= u+ b$ with $u\in \cU$, $b\in \bigO$. Consider first the case $\fm\preceq 1$. Then

\begin{align*}
  (a|_{\cU})|_{\fm}^\cU\ &=\  u|_{\fM}^\cU \\
							&=\ \fm\cdot (\fm\inv u)|_{\cU} \\
                            &=\ \fm\cdot(\fm\inv u)\  =\ u.
\end{align*}
On the other hand, 
\begin{align*}
	   a|_{\fm}^\cU\ &=\   u|_{\fm}^\cU+ b|_{\fm}^\cU\\
    				  &=\ u+ b|_{\fm}^\cU.
\end{align*}
Now $b|_{\fm}^{\cU}\preceq b$ by (iv) of Lemma~\ref{Vtruncprop}, so
$b|_{\fm}^\cU\in \bigO$, and thus
 $(a|_{\cU})|_{\fm}^\cU = u = (a|^{\cU}_{\fm})|_{\cU}$. Since $u=a|^{\cU}_{1}$, this proves the lemma for $\fm\preceq1$.
Next, assume $\fm \succ 1$. Then by (i) and (ii) of Lemma~\ref{Vtruncprop},

\begin{align*}
a|_{\fm}^\cU\ &=\   \ u|_{\fm}^\cU+\ b|_{\fm}^\cU\\
					&=\ \ u|_{\fm}^\cU\\
                    &=\  (a|_{\cU})|_{\fm}^\cU.
\end{align*}
On the other hand, $ (a|_{\fm}^{\cU})|_{\cU} = \big(\fm\cdot (\fm\inv a)|_{\cU}\big)|_{\cU} = \fm\cdot (\fm\inv a)|_\cU=a|_{\fm}^{\cU}$, since $\cU$ is closed under products.
\end{proof}

\noindent
For the proof of Lemma \ref{trunccom}, note that for $\fm, \fn \in \fM$ and $a\in K$ we have $(\fm a)|_{\fn}^\cU = \fm\cdot  a|_{\fn/ \fm}^\cU$.

\begin{lemma}\label{trunccom}
Let $a\in K$ and $\fm\preceq \fn$ in $\fM$. Then 
\[  (a|_{\fm}^{\cU})|_{\fn}^{\cU}\ =\  (a|_{\fn}^{\cU})|_{\fm}^{\cU}\  =\   a|_{\fn}^\cU.\]
\end{lemma}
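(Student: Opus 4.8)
The plan is to reduce Lemma~\ref{trunccom} to the two ingredients already at hand: the commutativity/idempotency result of Lemma~\ref{trunccom1} (the case $\fn=1$ there, combined with the $\fm=1$ rule $a|^\cU_1=a|_\cU$) and the scaling identity $(\fm a)|^\cU_{\fn}=\fm\cdot a|^\cU_{\fn/\fm}$ noted just above the statement, together with the elementary rules of Lemma~\ref{Vtruncprop}. The key observation is that truncation with respect to $\cU$ is, by definition, $a|^\cU_{\fm}=\fm\cdot(\fm\inv a)|_\cU$, so everything can be pushed down to the single operation $x\mapsto x|_\cU$ by factoring out monomials.

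First I would handle the inner-then-outer composite $(a|^\cU_{\fm})|^\cU_{\fn}$. Write $a|^\cU_{\fm}=\fm\cdot(\fm\inv a)|_\cU$, and then apply the scaling identity to pull the $\fm$ out of the $\fn$-truncation:
\[
  (a|^\cU_{\fm})|^\cU_{\fn}\ =\ \bigl(\fm\cdot(\fm\inv a)|_\cU\bigr)|^\cU_{\fn}\ =\ \fm\cdot\Bigl((\fm\inv a)|_\cU\Bigr)|^\cU_{\fn/\fm}.
\]
Now $\fn/\fm\succeq 1$ since $\fm\preceq\fn$, so by Lemma~\ref{trunccom1} applied to $x=\fm\inv a$ and the monomial $\fn/\fm$ (the case $\succeq 1$), we get $\bigl(x|_\cU\bigr)|^\cU_{\fn/\fm}=x|^\cU_{\fn/\fm}$. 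Substituting back and using the scaling identity in reverse, $\fm\cdot(\fm\inv a)|^\cU_{\fn/\fm}=(a)|^\cU_{\fn}$, which gives $(a|^\cU_{\fm})|^\cU_{\fn}=a|^\cU_{\fn}$.

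Next I would do the outer-then-inner composite $(a|^\cU_{\fn})|^\cU_{\fm}$. Again write $a|^\cU_{\fn}=\fn\cdot(\fn\inv a)|_\cU$ and pull $\fn$ through the $\fm$-truncation via scaling: $(a|^\cU_{\fn})|^\cU_{\fm}=\fn\cdot\bigl((\fn\inv a)|_\cU\bigr)|^\cU_{\fm/\fn}$. Here $\fm/\fn\preceq 1$, so Lemma~\ref{trunccom1} in the $\preceq 1$ case gives $\bigl(x|_\cU\bigr)|^\cU_{\fm/\fn}=x|_\cU$ for $x=\fn\inv a$; that is, the $\fm$-truncation is absorbed entirely and we recover $\fn\cdot(\fn\inv a)|_\cU=a|^\cU_{\fn}$. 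This finishes both equalities. I do not expect a serious obstacle: the only care needed is in applying the scaling identity cleanly (keeping track of whether the monomial ratio is $\succeq 1$ or $\preceq 1$, which is exactly what the hypothesis $\fm\preceq\fn$ controls) and in invoking the correct branch of Lemma~\ref{trunccom1}; everything else is bookkeeping with the rules in Lemma~\ref{Vtruncprop}.
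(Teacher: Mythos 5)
Your proposal is correct and follows essentially the same route as the paper: factor out the monomial via the scaling identity $(\fm a)|^\cU_{\fn}=\fm\cdot a|^\cU_{\fn/\fm}$, apply Lemma~\ref{trunccom1} (whose $\max(1,\cdot)$ formulation covers both the $\fn/\fm\succeq 1$ and $\fm/\fn\preceq 1$ cases exactly as you invoke them), and rescale back. The paper only writes out the first composite and says the second is similar; you have correctly supplied that symmetric computation.
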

\begin{proof}
	To simplify notation we omit in this proof superscripts $\cU$ in expressions like  $a|^{\cU}_{\fm}$.
    Using Lemma \ref{trunccom1},
    \[\trunc{(\trunc{a}{\fm})}{\fn}\ =\ \trunc{\big(\fm\cdot(\trunc{\fm\inv a }{\cU})\big)}{\fn}\ =\ \fm\cdot\trunc{\big(\trunc{(\fm\inv a )}{\cU} \big)}{\fn/\fm}\]
    \[\ \ \ \ \ \ \ \  =\ \fm\cdot (\fm^{-1}a)|_{\fn/\fm}\ =\ (\fm\fm\inv a)|_{\fn}\ =\ a|_{\fn}.
    \]
    In a similar way we get $\trunc{(\trunc{a}{\fn})}{\fm} = \trunc{a}{\fn}$.
\end{proof}
 
\subsection*{$\cU$-support} For $a\in K$ we define the $\cU$-\textit{support of} $a$  as 
\[\supp_\cU(a)\ =\ \{\fm\in \fM: a - \trunc{a}{\fm}^\cU\asymp \fm  \}.\] 
When $K$ is a Hahn field $C[[\fM]]$ and $\cU$ is its canonical infinite part, this agrees with the
usual support $\supp a$.  Most of the following rules are easy to verify: 

\begin{lemma}\label{lemsuppU}
Let $a,b\in K$ and $\fm\in \fM$. Then:
\begin{enumerate}
	\item[\rm{(i)}] $\fm \cdot \supp_\cU(a) = \supp_\cU(\fm a)$, 
	\item[\rm{(ii)}] $\supp_\cU(a + b)\subseteq \supp_\cU(a)\cup\supp_\cU(b)$ and $\supp_{\cU}ca=\supp_{\cU}a$ for $c\in C^\times$, 
    \item[\rm{(iii)}] $\supp_\cU(a)\cap \supp_\cU(b) = \emptyset$ $\Rightarrow$ $\supp_\cU(a + b)= \supp_\cU(a)\cup\supp_\cU(b)$,
    \item[\rm{(iv)}] $a\asymp \fm \Rightarrow \fm \in \supp_\cU(a)$,
    \item[\rm{(v)}] $ \supp_\cU(a)\preceq a$,
    \item[\rm{(vi)}] $\supp_{\cU}( \fm)=\{\fm\}$, 
    \item[\rm{(vii)}] $\supp_{\cU}(a|^{\cU}_{\fm})=\supp_{\cU}(a)^{\succ \fm}$ and $\supp_\cU(a-\trunc{a}{\fm}^\cU)= \supp_\cU(a)^{\preceq \fm}$,
   \item[\rm{(viii)}] $\supp_\cU(a)\succ \fm\ \Leftrightarrow\ a=\trunc{a}{\fm}^\cU\ \Leftrightarrow\ a\in \fm\cU$.
\end{enumerate}
\end{lemma}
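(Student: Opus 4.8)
\textbf{Proof plan for Lemma~\ref{lemsuppU}.} The plan is to verify the nine items mostly by unwinding definitions and invoking the already-established Lemmas~\ref{Vtruncprop}, \ref{trunccom1}, and \ref{trunccom}. The key observation that makes several parts routine is that $\supp_\cU(a)^{\succ\fn} = \{\fm : a - a|_\fm^\cU \asymp \fm,\ \fm \succ \fn\}$, and that $a - a|_\fm^\cU$ and $a|_\fm^\cU$ record complementary ``halves'' of the asymptotic data of $a$, so that the $\cU$-support splits accordingly; item (vii) is therefore the natural organizing fact from which much else follows.

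First I would do the straightforward multiplicative and additive compatibilities. For (i), using $(\fm a)|_\fn^\cU = \fm\cdot a|_{\fn/\fm}^\cU$ (recorded just before Lemma~\ref{trunccom}), one checks $\fn \in \supp_\cU(\fm a) \iff \fm a - \fm\cdot a|_{\fn/\fm}^\cU \asymp \fn \iff a - a|_{\fn/\fm}^\cU \asymp \fn/\fm \iff \fn/\fm \in \supp_\cU(a)$, i.e.\ $\supp_\cU(\fm a) = \fm\cdot\supp_\cU(a)$. For (ii), the additivity of $|_\fm^\cU$ from Lemma~\ref{Vtruncprop}(i) gives $(a+b) - (a+b)|_\fm^\cU = (a - a|_\fm^\cU) + (b - b|_\fm^\cU)$; if $\fm$ lies in neither $\supp_\cU(a)$ nor $\supp_\cU(b)$ then both summands are $\prec\fm$ (by definition of $\cU$-support, the difference is $\asymp\fm$ when $\fm$ is in the support and, using the splitting in (vii) or directly, $\prec\fm$ otherwise), so their sum is $\prec\fm$ and $\fm\notin\supp_\cU(a+b)$; the scalar part is immediate from Lemma~\ref{Vtruncprop}(i). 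For (iii), with disjoint supports one argues that for $\fm$ in exactly one of the two supports, say $\supp_\cU(a)$, the $b$-term $b - b|_\fm^\cU$ is $\prec\fm$ while $a - a|_\fm^\cU \asymp \fm$, so the sum is $\asymp\fm$; hence $\supp_\cU(a+b) \supseteq \supp_\cU(a)\cup\supp_\cU(b)$, and combined with (ii) this is equality. Item (iv) is Lemma~\ref{Vtruncprop}(ii) applied with the observation that $a \asymp \fm$ forces $a|_\fm^\cU = 0$ to fail, hence $a - a|_\fm^\cU$; more directly, if $a\asymp\fm$ then $a\succ\fm$ fails to be strict only trivially, and $a - a|_\fm^\cU \preceq \fm$ by Lemma~\ref{Vtruncprop}(v) while it cannot be $\prec\fm$ since that would force $a = a|_\fm^\cU + (\text{something}\prec\fm)$ with $a|_\fm^\cU \succ \fm$ or $=0$, contradicting $a\asymp\fm$; so $a - a|_\fm^\cU \asymp \fm$, i.e.\ $\fm\in\supp_\cU(a)$. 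Item (v) follows since $\fm\in\supp_\cU(a)$ means $\fm \asymp a - a|_\fm^\cU \preceq a$ by Lemma~\ref{Vtruncprop}(v). Item (vi) is immediate from Lemma~\ref{Vtruncprop}(ix): $\fn\in\supp_\cU(\fm) \iff \fm - \fm|_\fn^\cU \asymp \fn$, and $\fm - \fm|_\fn^\cU$ equals $0$ if $\fm\succ\fn$, equals $\fm$ if $\fm\preceq\fn$, so this holds exactly when $\fn = \fm$.

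The substantive step is (vii), and I expect this to be the main obstacle. The idea is: every monomial appearing ``above $\fm$'' in $a$ survives in $a|_\fm^\cU$, and those ``at or below $\fm$'' are killed. Concretely, for $\fn \succ \fm$, apply the commutativity Lemma~\ref{trunccom} (with roles $\fm \preceq \fn$ there playing $\fm \preceq \fn$ here) to get $(a|_\fm^\cU)|_\fn^\cU = a|_\fn^\cU$ — wait, one needs $a|_\fm^\cU - (a|_\fm^\cU)|_\fn^\cU = a - a|_\fn^\cU - (a - a|_\fm^\cU)$, so this difference is $\asymp \fn$ iff $a - a|_\fn^\cU \asymp \fn$ (since $a - a|_\fm^\cU \preceq \fm \prec \fn$ by Lemma~\ref{Vtruncprop}(v), it does not affect the $\fn$-class), i.e.\ $\fn\in\supp_\cU(a|_\fm^\cU) \iff \fn\in\supp_\cU(a)$; for $\fn\preceq\fm$ one has $(a|_\fm^\cU)|_\fn^\cU = a|_\fm^\cU$ by Lemma~\ref{trunccom} again, so $a|_\fm^\cU - (a|_\fm^\cU)|_\fn^\cU = 0 \not\asymp\fn$, hence $\fn\notin\supp_\cU(a|_\fm^\cU)$. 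Thus $\supp_\cU(a|_\fm^\cU) = \supp_\cU(a)^{\succ\fm}$. The second assertion of (vii) then follows by applying this to $a - a|_\fm^\cU$ in place of $a$, or directly by the same commutativity bookkeeping, and noting $\supp_\cU(a) = \supp_\cU(a|_\fm^\cU) \,\dot\cup\, \supp_\cU(a - a|_\fm^\cU)$ via (iii) since the two pieces have disjoint supports. Finally (viii): the chain $\supp_\cU(a) \succ \fm \iff \supp_\cU(a)^{\preceq\fm} = \emptyset \iff \supp_\cU(a - a|_\fm^\cU) = \emptyset$; an element with empty $\cU$-support must be $0$ (if $b\neq 0$ then $\fd$-type reasoning, or rather: $b \asymp \fn$ for $\fn := $ a suitable monomial — by Lemma~\ref{Vtruncprop}(ii)/(iii) and (iv) above, $b\ne 0 \Rightarrow b\asymp\fn$ for some $\fn$ with $b - b|_\fn^\cU \asymp \fn$, contradiction), so $a - a|_\fm^\cU = 0$, i.e.\ $a = a|_\fm^\cU$; and $a = a|_\fm^\cU = \fm\cdot(\fm\inv a)|_\cU \in \fm\cU$ since $(\fm\inv a)|_\cU \in \cU$, while conversely $a\in\fm\cU$ gives $\fm\inv a \in \cU$, so $(\fm\inv a)|_\cU = \fm\inv a$ and $a|_\fm^\cU = a$. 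The only delicate points are keeping track of which of Lemma~\ref{trunccom}'s two conclusions applies (the $\preceq$ versus $\succ$ case split) and the small lemma that empty $\cU$-support forces $0$, which I would state and prove as a one-line consequence of Lemma~\ref{Vtruncprop} before embarking on (vii)–(viii).
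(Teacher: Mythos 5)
Your proposal is correct, and it rests on the same ingredients as the paper (Lemma~\ref{Vtruncprop} and the commutation Lemmas~\ref{trunccom1}, \ref{trunccom}), but the key item (vii) --- the only one the paper proves in detail --- is organized differently. The paper writes $a = a|^{\cU}_{\fm} + c\fm + b$ with $c\in C$, $b\prec \fm$ (using $\bigO=C\oplus\smallo$ and Lemma~\ref{Vtruncprop}(v)) and obtains both inclusions from the subadditivity items (ii), (v), (vi) together with $\supp_\cU(a|^{\cU}_{\fm})\succ\fm$ from Lemma~\ref{trunccom}; you instead test each monomial $\fn$ separately, using Lemma~\ref{trunccom} to compute $(a|^{\cU}_{\fm})|^{\cU}_{\fn}$ in the two regimes $\fn\succ\fm$ and $\fn\preceq\fm$, the ultrametric cancellation coming from $a-a|^{\cU}_{\fm}\preceq\fm\prec\fn$, and then get the second equality of (vii) from the disjoint-support additivity (iii) (the disjointness being justified by (v) and Lemma~\ref{Vtruncprop}(v)). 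Both routes are routine and correct; yours gives a monomial-by-monomial equivalence and avoids the coefficient decomposition, while the paper's is shorter because it leans on the sublemmas already in place. You also make explicit two points the paper leaves implicit: the verifications of (i)--(vi), and the observation needed in (viii) that an element with empty $\cU$-support must be $0$, which you correctly reduce to (iv) plus the surjectivity of $v$ on $\fM$. Two harmless blemishes: in (iv) your opening clause asserts that $a\asymp\fm$ forces $a|^{\cU}_{\fm}=0$ ``to fail,'' whereas Lemma~\ref{Vtruncprop}(ii) gives $a|^{\cU}_{\fm}=0$, which immediately yields $a-a|^{\cU}_{\fm}=a\asymp\fm$ (your ``more directly'' argument is fine and supersedes this); and in (ii) the aside ``using the splitting in (vii)'' should be dropped to avoid any appearance of circularity, since the direct route via Lemma~\ref{Vtruncprop}(v) that you also give is all that is needed.
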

\begin{proof} We only do (vii), since (i)--(vi) are  easy to verify, and (viii) is an easy consequence of (vii), using also (viii) of Lemma~\ref{Vtruncprop} for the second equivalence. We have $a = \trunc{a}{\fm}^\cU + c\fm + b$ for unique $c\in C$ and $b\prec \fm$, by (v) in Lemma \ref{Vtruncprop}. Hence by (ii), (v), and (vi),
$\supp_{\cU}(a)^{\succ \fm}\subseteq  \supp_{\cU}(a|^{\cU}_{\fm})$.
Moreover, 
$\supp_{\cU}(a|^{\cU}_{\fm})\succ \fm$ by Lemma~\ref{trunccom},
and then applying (ii), (v), (vi) to $a|^{\cU}_{\fm}=-a-c\fm-b$ yields
$\supp_{\cU}(a|^{\cU}_{\fm})\subseteq \supp_{\cU}(a)$, 
which gives the first equality of (vii). As to the second equality,
(ii), (v), (vi) and the first equality  give 
\[\supp_{\cU}(a)^{\preceq \fm}\ \subseteq\ \supp_{\cU} (c\fm+b)\ =\ \supp_\cU(a-\trunc{a}{\fm}^\cU)\ \preceq\ \fm,\]
so by (ii) and the first equality,
$\supp_{\cU}(a-a|^{\cU}_{\fm})\subseteq \supp_{\cU}(a)^{\preceq \fm}$.
\end{proof}
%%%%%%%%%%%%%%%%%%%%%%%%%%%%%%%%%PROOF
\iffalse
\begin{proof}
(1) Let $\fm$ be such that $a+b - \trunc{a}{\fm} - \trunc{b}{\fm} \asymp \fm$ then 
\end{proof}
\fi
%%%%%%%%%%%%%%%%%%%%%%%%%%%%%%%%END PROOF
%\begin{lemma}
%For $a\in K$,  $\supp_\cU(a)\succ \fm$ if and only if $a=\trunc{a}{\fm}^\cU$.
%\end{lemma}
%\begin{proof}
%Note that $a = \trunc{a}{\fm}^\cU + c\fm + b$ for unique $c\in C$ and $b\prec \fm$ by (4) in Lemma \ref{Vtruncprop}. It follows that
%\[\fm \in \supp_\cU(a) \Longleftrightarrow c\neq 0\]
%	Assume that for all $a\in K$ if $\supp_v(a)\succ 1$, then $a\in V$. Under such assumption let $b\in K$ be such that $\supp_V(b)\succ \fm$. We then have hat $\supp_V(\fm\inv b)\succ 1$. By our assumption $\fm\inv b =\trunc{\fm\inv b}{V}$ by the remark above Lemma \ref{trunccom} we then get $\fm\inv b = \fm\inv \trunc{b}{\fm}^V$. Thus all that remains is to prove the assumption at the beginning of the proof.\\ 
%    We actually prove the contrapositive. Let $a\in K\setminus V$. Thus $a= \trunc{a}{V} + f$ with $f\in \bigO$. Let $\fm$ be such that $\fm \asymp f$, so $\fm\preceq 1$. Then $\fm\in \supp_V(f)\subseteq \supp_V(a)$.
%\end{proof}

\subsection*{Good Infinite Parts} As before, $\cU$ is an infinite part of $K$.  We say that $\cU$ is \textbf{good} if for all $a,b \in K$ we have $\supp_\cU(ab)\subseteq \supp_\cU(a)\supp_\cU(b)$. In view of the definability of truncation with respect to $\cU$ (in the valued field $K$ equipped with $\fM$, $C$, $\cU$) and the first-order nature of the $\cU$-support, being good is also of first-order nature. We do not know whether being good is a consequence of the definition of ``infinite part''. 
Note that the canonical infinite part of a Hahn field $C[[\fM]]$ is good, and so is the infinite part $K\cap C[[\fM^{\succ 1}]]$ of $K$ when $K\supseteq C(\fM)$ is a truncation closed subfield of $C[[\fM]]$.

\medskip\noindent
Assume $\cU$ is good and $K$ is  a truncation closed valued subfield of $C[[\fM]]$, with the same $C$ and $\fM$ as for $K$,  so $C(\fM)\subseteq K$.  Here ``truncation closed'' is of course with respect to truncation in the Hahn field $C[[\fM]]$. 
%Let $K$ be a valued field with good infinite part $\cU$ and $j:K\rightarrow C[[\fM]]$ be a truncation closed embedding. To simplify notation, we identify $K$ with its image under $j$. 
Thus for $a\in K$ and $\fm\in \fM$ we have in principle two truncations, namely $a|_\fm$ as an element of $C[[\fM]]$ and the truncation $a|^\cU_{\fm}$ with respect to 
$\cU$. Set
\[E\ :=\  \{a\in K:\ \trunc{a}{\fm} = \trunc{a}{\fm}^\cU \text{ for all } \fm\in \fM\}.\]
Thus $\supp(a)=\supp_{\cU}(a)$ for all $a\in E$.

\begin{lemma}
$E$ is a truncation closed subring of $K$ with $C[\fM]\subseteq E$. 
\end{lemma}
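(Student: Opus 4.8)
The plan is to show that $E$ is closed under truncation, addition, and multiplication, and that it contains $C[\fM]$. The key tool throughout is the commutativity of truncations with respect to $\cU$ (Lemma~\ref{trunccom}, Lemma~\ref{trunccom1}) together with the corresponding commutativity of ordinary truncation in $C[[\fM]]$, plus the fact that $\cU$ is good.

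First I would check $C[\fM]\subseteq E$. For $c\in C$ we have $c|_\fm = c|^\cU_\fm$ for all $\fm$ (both are $c$ if $\fm\prec 1$ and $0$ if $\fm\succeq 1$), and for a monomial $\fn\in \fM$ we have $\fn|_\fm = \fn|^\cU_\fm$ by (ix) of Lemma~\ref{Vtruncprop} (compared with the analogous rule for ordinary truncation). Since $E$ is additively closed (see below) and closed under multiplication, any element of $C[\fM]$ is a finite sum of $C$-multiples of monomials, so it lies in $E$. Next, additive closure: for $a,b\in E$ and $\fm\in\fM$, $(a+b)|_\fm = a|_\fm + b|_\fm = a|^\cU_\fm + b|^\cU_\fm = (a+b)|^\cU_\fm$ using additivity of both truncation operations, so $a+b\in E$; similarly $ca\in E$ for $c\in C$. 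Then truncation closure: given $a\in E$ and $\fn\in\fM$, I want $a|_\fn \in E$, i.e. $(a|_\fn)|_\fm = (a|_\fn)|^\cU_\fm$ for all $\fm$. Since $a\in E$, $a|_\fn = a|^\cU_\fn$, and now I use the commutativity relations: $(a|_\fn)|_\fm = a|_{\max(\fn,\fm)}$ for ordinary truncation, while by Lemma~\ref{trunccom} applied to $a|^\cU_\fn$ (together with Lemma~\ref{trunccom1} for the case $\fm\preceq\fn$ or $\fm\succeq\fn$ as needed), $(a|^\cU_\fn)|^\cU_\fm = a|^\cU_{\max(\fn,\fm)}$. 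Since $a\in E$ gives $a|_{\max(\fn,\fm)} = a|^\cU_{\max(\fn,\fm)}$, these agree, so $a|_\fn\in E$.

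The main work is multiplicative closure: given $a,b\in E$, I must show $(ab)|_\fm = (ab)|^\cU_\fm$ for every $\fm\in\fM$. The natural approach is to write $a = a|_\fn + (a - a|_\fn)$ for a well-chosen $\fn$ and similarly for $b$, expand the product into four pieces, and match up ordinary truncation with $\cU$-truncation piece by piece. Here is where goodness of $\cU$ enters crucially: since $\supp_\cU(c) = \supp(c)$ for $c\in E$, and $\supp_\cU(ab)\subseteq \supp_\cU(a)\supp_\cU(b) = \supp(a)\supp(b)$, the monomials appearing in $ab$ (from the $\cU$-point of view) are controlled by products of monomials in $\supp(a)$ and $\supp(b)$. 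Concretely, given $\fm$, for each monomial $\fg\in\supp(a)$ with $\fg\succ\fm$ only finitely many $\fh\in\supp(b)$ satisfy $\fg\fh\succeq\fm$ could contribute, and one can choose $\fn_a, \fn_b$ with $\fn_a\fn_b\preceq\fm$ beyond which the "small" parts of $a$ and $b$ multiply to something $\preceq \fm$, both in the Hahn sense and (by goodness) in the $\cU$-sense. Then $(ab)|_\fm$ depends only on $(a|_{\fn_a})(b|_{\fn_b})$ truncated at $\fm$, and likewise $(ab)|^\cU_\fm$; since $a|_{\fn_a}, b|_{\fn_b}\in C[\fM]$-finite-like truncations already lie in $E$ (by truncation closure proved above) and have equal ordinary and $\cU$-support, one reduces to the case of elements with finite support, where $\cU$-truncation and ordinary truncation visibly coincide.

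The hard part will be the bookkeeping in this last step: making precise the choice of $\fn_a,\fn_b$ so that both the Hahn-field tail $(a-a|_{\fn_a})(b-a|_{\fn_b})$ and the mixed tails are $\preceq\fm$ simultaneously for ordinary truncation and for $\cU$-truncation, and then verifying that the finitely-supported approximations genuinely have $a|_\fm = a|^\cU_\fm$. This is where goodness of $\cU$ is indispensable — without the inclusion $\supp_\cU(ab)\subseteq\supp_\cU(a)\supp_\cU(b)$ there is no a priori control on which monomials the $\cU$-truncation of a product can see, and the matching would fail. Everything else (additivity, truncation closure, $C[\fM]\subseteq E$) is a routine application of the commutativity lemmas.
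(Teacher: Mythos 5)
The routine parts of your proposal (additivity, truncation closedness of $E$, and the terms $c\fm$ lying in $E$) are fine, apart from a small circularity: you invoke closure of $E$ under multiplication to get $C[\fM]\subseteq E$, but that is exactly what remains to be proved; this is harmless, since each $c\fm$ with $c\in C$, $\fm\in \fM$ lies in $E$ by a direct check and finite sums are covered by additivity. The genuine gap is in the multiplicative step. First, your finiteness claim (for fixed $\fg\in\supp(a)$ only finitely many $\fh\in\supp(b)$ have $\fg\fh\succeq\fm$) is false: a well-based support can contain infinitely many monomials above a given bound (take $\supp(b)=\{x^{-1+1/k}:k\ge 1\}\subseteq x^{\R}$, $\fg=x^{2}$, $\fm=x$); Lemma~\ref{fGfN} gives finiteness only for a fixed product monomial, not for all products $\succeq\fm$ at once. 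Second, truncations $a|_{\fn_a}$, $b|_{\fn_b}$ are in general not finitely supported, so there is no reduction ``to the case of elements with finite support''; the head pieces are just as infinite as $a$ and $b$ themselves. Third, and most seriously, even after arranging $(a-a|_{\fn_a})(b-b|_{\fn_b})\preceq\fm$, the cross terms $a|_{\fn_a}\,(b-b|_{\fn_b})$ and $(a-a|_{\fn_a})\,b|_{\fn_b}$ need not be $\preceq\fm$ and do contribute monomials $\succ\fm$ to $ab$; hence $(ab)|_{\fm}$ does not depend only on $(a|_{\fn_a})(b|_{\fn_b})$, and the proposed matching collapses exactly where the work has to be done.

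What is missing is a device for handling those cross terms, and the paper's proof supplies it by transfinite induction on the pair of order types $(o(a),o(b))$, ordered lexicographically. Given $\fd\in\fM$, if there are $\fm\in\supp(a)$ and $\fn\in\supp(b)$ with $\fm\fn\preceq\fd$, write $a=a_1+a_2$ and $b=b_1+b_2$ with $a_1=a|_{\fm}$, $b_1=b|_{\fn}$ (all four pieces lie in $E$ by truncation closedness and linearity); then $a_2b_2\preceq\fm\fn\preceq\fd$ is annihilated by both truncations at $\fd$, while $a_1b$ and $a_2b_1$ lie in $E$ by the inductive hypothesis, since $o(a_1)<o(a)$, and $o(a_2)\le o(a)$ with $o(b_1)<o(b)$; additivity of $|_{\fd}$ and $|^{\cU}_{\fd}$ then gives $(ab)|_{\fd}=(ab)|^{\cU}_{\fd}$. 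Goodness of $\cU$ enters only in the remaining case, where $\fm\fn\succ\fd$ for all $\fm\in\supp(a)$, $\fn\in\supp(b)$: there $(ab)|_{\fd}=ab$, and since $\supp_{\cU}=\supp$ on $E$, goodness gives $\supp_{\cU}(ab)\succ\fd$, so $(ab)|^{\cU}_{\fd}=ab$ by Lemma~\ref{lemsuppU}(viii). So your instinct that goodness is the indispensable ingredient is correct, but as written your reduction would fail before you ever get to use it.
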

\begin{proof}
Clearly, $E$ is truncation closed in $C[[\fM]]$, and $E$ is a $C$-linear subspace of $K$ that contains $C$ and $\fM$.  Let $a,b\in E$; it remains to show that then $ab\in E$. We do so by induction on $(o(a),o(b))$ with respect to the lexicographic ordering. If either $o(a)=0$ or $o(b)=0$, then $a=0$ or $b=0$, in which case we are done. Let $o(a)>0$ and $o(b)>0$, and $\fd\in \fM$. We want to show $\trunc{ab}{\fd} = \trunc{ab}{\fd}^\cU$. Assume that  $\fm\in \supp(a)$ and $\fn \in \supp(b)$ are such that $\fm\fn \preceq \fd$. Then
\[a = a_1 + a_2 \text{ with } a_1 := \trunc{a}{\fm}\  \text{ and }\   b= b_1+b_2 \text{ with } b_1 = \trunc{b}{\fn},\]
and $ab= a_1b + a_2b_1 + a_2b_2$. Hence
\[\trunc{(ab)}{\fd}\ =\ \trunc{(a_1b)}{\fd} + \trunc{(a_2b_1)}{\fd}\  \text{ and }\  \trunc{(ab)}{\fd}^\cU\ =\ \trunc{(a_1b)}{\fd}^\cU + \trunc{(a_2b_1)}{\fd}^\cU.\]
By induction we have $ \trunc{(a_1b)}{\fd}=\trunc{(a_1b)}{\fd}^\cU$ and $\trunc{(a_2b_1)}{\fd} = \trunc{(a_2b_1)}{\fd}^\cU$, so $\trunc{ab}{\fd} = \trunc{ab}{\fd}^\cU$. 
If for all monomials $\fm \in \supp(a)$ and $\fn \in \supp(b)$ we have $\fm\fn\succ \fd$, then $\trunc{ab}{\fd} = ab = \trunc{ab}{\fd}^\cU$ where the last equality uses (8) of Lemma~\ref{lemsuppU}
and the assumption that $\cU$ is good.
\end{proof}

\noindent
This seems as far as we can go under first-order assumptions on 
$\cU$. In the next subsection we indicate some cases with archimedean $\fM$ where $a|^{\cU}_{\fm}=a|_{\fm}$ for all $a\in K$ and $\fm\in \fM$. 

\subsection*{The Archimedean Case}
In this subsection we assume that $\fM$ is archimedean and $K$ is a valued subfield of the Hahn field $C[[\fM]]$ with the same $C$ and $\fM$ as for $K$.  In the next lemma we do not  assume that $\cU$ is good, but derive it from the lemma. Let $\fm$ range over $\fM$. 

\begin{lemma}\label{archCM} Suppose $K=C(\fM)$. Then $\cU=\bigoplus_{\fm\succ 1} C\fm$.%, and so $\cU$ is good. 
\end{lemma}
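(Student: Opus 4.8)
The plan is to prove the two inclusions $\bigoplus_{\fm\succ1}C\fm\subseteq\cU$ and $\cU\subseteq\bigoplus_{\fm\succ1}C\fm$; write $\cU_0:=\bigoplus_{\fm\succ1}C\fm$. The first is immediate: by the definition of an infinite part $\cU\supseteq\fM^{\succ1}$, and since $\cU$ is a $C$-linear subspace of $K$ it contains the $C$-linear span $\cU_0$ of $\fM^{\succ1}$. So the real content is $\cU\subseteq\cU_0$, and this is where the archimedean hypothesis enters.

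I would isolate as an auxiliary fact the following claim about supports: for every $f\in K=C(\fM)$ the set $\supp(f)\cap\fM^{\succ1}$ is finite. Granting it, the canonical truncation $f|_1=\sum_{\fm\succ1}f_\fm\fm$ is a \emph{finite} sum and hence lies in $\cU_0$, while $f-f|_1=f_1+\sum_{\fm\prec1}f_\fm\fm\in C+\smallo=\bigO$. To prove the claim, I would clear denominators and factor out the leading monomial of the denominator, so that, using the invertibility of Hahn series recalled in Section~\ref{HS}, $f=R\sum_n\varepsilon^n$ with $R\in C[\fM]$ of finite support and $\varepsilon\in C[\fM]$ with $\fG:=\supp\varepsilon\subseteq\fM^{\prec1}$ finite; then $\supp(f)\subseteq\supp(R)\cdot\fG^*$ with $\fG^*=\bigcup_n\fG^n$. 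A monomial $\fg\fh$ with $\fg\in\supp R$ and $\fh\in\fG^*$ lies in $\fM^{\succ1}$ only if $\fh\succ\fg^{-1}$, so it suffices to check that $\fG^*\cap\fM^{\succ\fn}$ is finite for each $\fn\in\fM$, and then to take the finite union over $\fn\in\{\fg^{-1}:\fg\in\supp R\}$. For that finiteness: if $\fG=\emptyset$ the set is contained in $\{1\}$; otherwise set $\fh_0:=\max_\preceq\fG\prec1$, so any product of $n\ge1$ elements of $\fG$ is $\preceq\fh_0^n$, and since $\fM$ is archimedean and $\fh_0\prec1$ there is $N$ with $\fh_0^N\prec\fn$; hence $\fh_0^n\preceq\fh_0^N\prec\fn$ for $n\ge N$, so only products of length $<N$ can lie in $\fM^{\succ\fn}$, and there are finitely many such products.

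With the claim in hand the conclusion is short. Let $u\in\cU$. By the claim $u|_1\in\cU_0\subseteq\cU$ and $u-u|_1\in\bigO$, so $u-u|_1\in\cU\cap\bigO=\{0\}$ because $K=\cU\oplus\bigO$; hence $u=u|_1\in\cU_0$. This proves $\cU\subseteq\cU_0$, and together with the first paragraph gives $\cU=\cU_0=\bigoplus_{\fm\succ1}C\fm$.

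The support-finiteness claim is the only real obstacle; the rest is bookkeeping. It is also exactly where archimedeanity is essential: for non-archimedean $\fM$ the canonical truncation at $1$ of an element of $C(\fM)$ can have infinite support, and then $\bigoplus_{\fm\succ1}C\fm$ need not even be a vector-space complement of $\bigO$ in $K$.
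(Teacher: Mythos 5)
Your proof is correct and follows essentially the same route as the paper: the support-finiteness claim you isolate is exactly the paper's ``easily verified observation'' that $F=\{a\in C[[\fM]]:\ \supp(a)^{\succ\fm}\text{ finite for all }\fm\}$ is a subfield of $C[[\fM]]$ containing $K=C(\fM)$, and the remaining bookkeeping (truncation at $1$ lands in $\bigoplus_{\fm\succ 1}C\fm$, the rest lands in $\bigO$, and $\cU\cap\bigO=\{0\}$ forces equality) is the argument the paper leaves implicit. The only difference is that you verify the key point by a direct computation with $f=P/Q$ and the geometric series, rather than by checking that $F$ is closed under the field operations.
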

\begin{proof} The key point is the easily verified observation that
\[F\ :=\ \{a\in C[[\fM]]:\ \supp(a)^{\succ \fm} \text{ is finite for all } \fm\}\]
is a subfield of $C[[\fM]]$ containing $K$.
\end{proof}

\noindent It is worth saying a bit more on this case. Since 
$C\cup \fM$ is truncation closed, $K=C(\fM)$ is truncation closed in $C[[\fM]]$. By Lemma~\ref{archCM} there is only one infinite part of $K$, in particular, $\cU=K\cap C[[\fM^{\succ 1}]]$ and so $\cU$ is good and
$a|^{\cU}_{\fm}=a|_{\fm}$ for all $a\in K$ and all $\fm$. We also
note that the field $F$ defined in the proof of that lemma is the closure of $C[\fM]$  (and of $C(\fM)$) in $C[[\fM]]$ with respect to the valuation topology. Thus
$F$ is henselian as a valued subfield of $C[[\fM]]$, by \cite[Corollary 3.3.5]{ADH}.  Note also that $F$ is truncation closed in $C[[\fM]]$ and has, just like $K$, only one infinite part, which is the same as
that of $K$, namely $\bigoplus_{\fm\succ 1} C\fm$. 

\medskip\noindent
In the remainder of the section we assume that $\cU$ is good. Let $E$ be the subring of $K$ defined in the previous subsection. We can say more about $E$ in the archimedean case that we are considering in this subsection:   

\begin{lemma}
$E$ is a subfield of $K$.
\end{lemma}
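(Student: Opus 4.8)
The plan is to exploit that we already know $E$ is a subring of $K$ with $C[\fM]\subseteq E$, so in particular $C\cup\fM\subseteq E$; hence it suffices to show that $a^{-1}\in E$ for every nonzero $a\in E$. Fix such an $a$ and set $c:=a_{\fd(a)}\in C^\times$, so that $a=c\,\fd(a)\,(1-\varepsilon)$ with $\varepsilon:=1-c^{-1}\fd(a)^{-1}a\prec 1$. Since $E$ is a ring containing $c^{-1}$, $\fd(a)^{-1}$ and $a$, we have $\varepsilon\in E$. In the ambient Hahn field the family $(\varepsilon^n)$ is summable and $a^{-1}=c^{-1}\fd(a)^{-1}s$ with $s:=\sum_n\varepsilon^n$; since the inverse of $a$ computed in the field $K$ agrees with this element of $C[[\fM]]$, we get $s=c\,\fd(a)\,a^{-1}\in K$. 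As $E$ is a ring with $c^{-1}\in C\subseteq E$ and $\fd(a)^{-1}\in\fM\subseteq E$, everything reduces to proving $s\in E$, i.e.\ $s|_{\fn}=s|^{\cU}_{\fn}$ for all $\fn\in\fM$.

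To prove this I would fix $\fn\in\fM$ and truncate the geometric series, which is where archimedeanity of $\fM$ enters. If $\varepsilon=0$ then $s=1\in E$, so assume $\varepsilon\neq 0$. From $\supp(\varepsilon^n)\subseteq\supp(\varepsilon)^n$ and the compatibility of $\preceq$ with multiplication in $\fM$, every monomial of $\supp(\varepsilon^n)$ is $\preceq\fd(\varepsilon)^n$. Since $\fM$ is archimedean and $\fd(\varepsilon)\prec 1$, there is $N\geq 1$ with $\fd(\varepsilon)^N\prec\fn$ (take $N=1$ if $\fn\succeq 1$; otherwise $\fd(\varepsilon)$ and $\fn$ lie in the unique nontrivial archimedean class of $\fM$, which forces $\fd(\varepsilon)^N\prec\fn$ for $N$ large). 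Then $\varepsilon^n\prec\fn$ for all $n\geq N$, so the element $r:=\sum_{n\geq N}\varepsilon^n$ of $K$ has $\supp(r)\prec\fn$, hence $r\preceq\fn$, whence $r|_{\fn}=0$ and, by part (ii) of Lemma~\ref{Vtruncprop}, $r|^{\cU}_{\fn}=0$. On the other hand $p:=s-r=\sum_{n<N}\varepsilon^n$ is a finite sum of elements of $E$, so $p\in E$ and $p|_{\fn}=p|^{\cU}_{\fn}$. Adding these, and using additivity of both truncations (for $|_{\fn}$ in $C[[\fM]]$, and part (i) of Lemma~\ref{Vtruncprop} for $|^{\cU}_{\fn}$), we get $s|_{\fn}=p|_{\fn}=p|^{\cU}_{\fn}=s|^{\cU}_{\fn}$. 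As $\fn$ was arbitrary, $s\in E$, and therefore $a^{-1}=c^{-1}\fd(a)^{-1}s\in E$.

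The genuine content is the single use of archimedeanity of $\fM$, namely to push the tail $r$ of the geometric series strictly below any prescribed monomial $\fn$; without this the tail need not vanish at $\fn$ and the argument collapses. The remaining points are bookkeeping rather than real obstacles, but are worth attention: one must check that the inverse of $a$ computed inside $K$ is literally the element $c^{-1}\fd(a)^{-1}\sum_n\varepsilon^n$ of the ambient Hahn field (so that the statement even makes sense and so that $s\in K$), and one must verify $\varepsilon\in E$ before invoking the ring structure of $E$ on the partial sums $p$. Everything else reduces to the summability rules for Hahn series and the elementary truncation identities of Lemma~\ref{Vtruncprop}.
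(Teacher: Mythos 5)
Your proof is correct and takes essentially the same route as the paper: both reduce to showing $(1-\varepsilon)^{-1}\in E$, use archimedeanity of $\fM$ to split the geometric series into a finite partial sum (which lies in $E$ by the ring structure) plus a tail dominated by the prescribed monomial, and then note that both kinds of truncation kill the tail, so the two truncations of the inverse agree. The only differences are cosmetic — you arrange the tail to be strictly $\prec \fn$ rather than $\preceq$ and spell out the routine checks that $\varepsilon\in E$ and that the inverse in $K$ is the Hahn-field inverse — so nothing further is needed.
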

\begin{proof}
Let $f\in E$ with $f\neq 0$, so $f= c\fm(1-\epsilon)$ with $c\in C^\times$, $\fm \in \fM$ and $\epsilon \in \smallo$. Then $\epsilon\in E$ and $f\inv = c\inv \fm\inv (1-\epsilon)\inv$. Hence $f\inv \in E$ if and only if $(1-\epsilon)\inv\in E$. Let $\fd \in \fM$ be given. Since $\fM$ is archimedean, we have $n$ such that $\epsilon^n \preceq \fd$, so $(1-\epsilon)\inv = 1 + \epsilon + \cdots + \epsilon^nb$ with $b\preceq 1$. Then
\[\trunc{(1-\epsilon)\inv}{\fd}\ =\ \trunc{(1+\epsilon + \cdots + \epsilon^{n-1})}{\fd}\ =\ \trunc{(1+\epsilon + \cdots + \epsilon^{n-1})}{\fd}^\cU\ =\ \trunc{(1-\epsilon)\inv}{\fd}^\cU. \]
Thus $(1-\epsilon)\inv \in E$. 
\end{proof}

\begin{lemma}
Suppose $f\in C[[t_1,\ldots,t_n]]$ and $f(a) \in K$ for all $a\in  \smallo_K^n$. Then $f(a)\in E$ for all $a\in \smallo_E^n$.
\end{lemma}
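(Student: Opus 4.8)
The plan is a direct argument that, for each fixed monomial $\fm$, uses archimedeanity of $\fM$ to absorb all but finitely many terms of the series $f$ into a truncation-negligible tail, reducing $f$ to a polynomial; since $E$ is already known to be a truncation closed subring of $K$ containing $C$, membership of the polynomial value in $E$ is then immediate. Concretely, I would fix $a=(a_1,\dots,a_n)\in\smallo_E^n$; by hypothesis $f(a)\in K$, so it remains to verify $\trunc{f(a)}{\fm}=\trunc{f(a)}{\fm}^{\cU}$ for each $\fm\in\fM$. If $a_1=\dots=a_n=0$, then $f(a)$ is the constant term of $f$, which lies in $C\subseteq E$, and we are done; otherwise set $\fm_0:=\max\{\fd(a_i):a_i\ne 0\}$, a finite maximum in $\fM$, and note $\fm_0\prec 1$.

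Now fix $\fm$. Because the value group of $\fM$ is archimedean and $\fm_0\prec 1$, there is $N\in\N$ with $\fm_0^{N+1}\preceq\fm$. Writing $f=\sum_\nu c_\nu t^\nu$ with $c_\nu\in C$, split $f(a)=P(a)+Q$, where $P:=\sum_{|\nu|\le N}c_\nu t^\nu\in C[t_1,\dots,t_n]$ is a polynomial (finitely many $\nu$) and $Q:=f(a)-P(a)=\sum_{|\nu|>N}c_\nu a^\nu\in K$. For $|\nu|>N$ we have $\supp(a^\nu)\subseteq(\supp a_1)^{\nu_1}\cdots(\supp a_n)^{\nu_n}$, and since each monomial of $\supp a_i$ is $\preceq\fd(a_i)\preceq\fm_0\prec 1$ (recall $\fd(a_i)=\max\supp a_i$), each monomial of $\supp(a^\nu)$ is $\preceq\fm_0^{|\nu|}\preceq\fm_0^{N+1}\preceq\fm$. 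Hence every monomial of $\supp Q$ is $\preceq\fm$; in particular $\trunc{Q}{\fm}=0$, and $Q\preceq\fm$ gives $\trunc{Q}{\fm}^{\cU}=0$ by (ii) of Lemma~\ref{Vtruncprop}.

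Since ordinary truncation is additive, and $\cU$-truncation is additive by (i) of Lemma~\ref{Vtruncprop}, it follows that $\trunc{f(a)}{\fm}=\trunc{P(a)}{\fm}$ and $\trunc{f(a)}{\fm}^{\cU}=\trunc{P(a)}{\fm}^{\cU}$. But $E$ is a truncation closed subring of $K$ containing $C$ and $a_1,\dots,a_n$, so $P(a)\in E$, whence $\trunc{P(a)}{\fm}=\trunc{P(a)}{\fm}^{\cU}$ by the definition of $E$. Combining, $\trunc{f(a)}{\fm}=\trunc{P(a)}{\fm}=\trunc{P(a)}{\fm}^{\cU}=\trunc{f(a)}{\fm}^{\cU}$, and since $\fm$ was arbitrary, $f(a)\in E$.

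The only step that is not purely formal is the archimedean estimate $\fm_0^{N+1}\preceq\fm$: this is the sole place archimedeanity of $\fM$ enters, and it is what lets us replace the power series $f$ by a polynomial for the purpose of truncating at $\fm$. For a general monomial group this device fails, and one would instead have to imitate the more delicate reasoning of Lemma~\ref{tL16} — split a single $a_i$ as $h+g$ with $h$ a proper truncation, Taylor expand in that variable, and induct on the lexicographic tuple $(o(a_1),\dots,o(a_n))$ of order types — an approach that would in addition genuinely require the full hypothesis that $f(b)\in K$ for \emph{all} $b\in\smallo_K^n$, whereas the archimedean argument above uses only that $f(a)\in K$ for the given tuple and never uses that $E$ is a field, only that it is a truncation closed subring containing $C$.
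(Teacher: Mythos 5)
Your proof is correct and follows essentially the same route as the paper: use archimedeanity of $\fM$ to choose a cutoff so that the tail of the series evaluates to something $\preceq\fm$, whose ordinary and $\cU$-truncations both vanish, thereby reducing to a polynomial value which lies in the subring $E$. The only cosmetic difference is that you truncate $f$ by total degree via $\fm_0=\max_i\fd(a_i)$, whereas the paper takes $m$ with $\epsilon_i^m\preceq\fd$ and splits off degree $\ge m$ in each variable separately; the argument is otherwise identical.
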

\begin{proof}
Let $\epsilon= (\epsilon_1,\ldots,\epsilon_n)\in \smallo^n_E$. Let $\fd \in \fM$ be given. Take $m$ such that $\epsilon_i^m \preceq \fd$ for $i=1,\ldots,n$. We have $f= g + t_1^mh_1 + \cdots +t_n^m h_m$ where $g\in C[t_1,\dots, t_n]$ is of degree $<m$ in $t_i$ for every $i$, and $h_i \in C[[t_1,\ldots t_n]]$ for $i=1,\dots,n$.  Then $f(\epsilon) = g(\epsilon) + b$ with $b\preceq \fd$, so $\trunc{b}{\fd} = \trunc{b}{\fd}^\cU=0$, and thus $\trunc{f(\epsilon)}{\fd}= \trunc{f(\epsilon)}{\fd}^\cU$.
\end{proof}

\begin{corollary}
If $K$ is henselian, then $E$ is a henselian valued subfield of $K$.
\end{corollary}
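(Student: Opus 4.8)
The plan is to verify that $E$ satisfies the standard criterion for henselianity recorded in the lemma following the definition of henselian fields: every polynomial $P(X)=1+X+c_2X^2+\cdots+c_nX^n\in E[X]$ with $c_2,\dots,c_n\prec 1$ should have a root in $\bigO_E$. We already know that $E$ is a subfield of $K$ containing $C$ and $\fM$, and that for any $f\in C[[t_1,\dots,t_m]]$ with $f(a)\in K$ for all $a\in\smallo_K^{\,m}$ one has $f(a)\in E$ for all $a\in\smallo_E^{\,m}$ (the lemma just above the corollary). The idea is simply to exhibit the relevant Hensel root as such an evaluation $f(a)$ at a tuple of small elements of $E$.

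First I would introduce, by the formal implicit function theorem, the unique power series $G\in\Q[[t_0,t_1,\dots,t_n]]\subseteq C[[t_0,\dots,t_n]]$ with $G(0)=0$ satisfying the identity $G=\sum_{j=0}^n t_j\,G^j$; this exists because the $Z$-derivative of $Z-\sum_j t_jZ^j$ at the origin equals $1$. Next I would check that $G(a)\in K$ for every $a=(a_0,\dots,a_n)\in\smallo_K^{\,n+1}$: applying the substitution homomorphism $C[[t_0,\dots,t_n]]\to C[[\fM]]$ to the defining identity shows that $G(a)$ is a root, lying in $\smallo$, of $Q(Z):=Z-\sum_j a_jZ^j\in\bigO_K[Z]$; since $Q(0)=-a_0\prec 1$ and $Q'(0)=1-a_1\asymp 1$, henselianity of $K$ yields a root $w\in\smallo_K$ of $Q$, and the uniqueness clause of Hensel's lemma applied in the henselian field $C[[\fM]]$ forces $w=G(a)$, whence $G(a)=w\in K$. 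By the lemma just above the corollary it follows that $G(a)\in E$ for all $a\in\smallo_E^{\,n+1}$.

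Now, given $P(X)=1+X+c_2X^2+\cdots+c_nX^n\in E[X]$ with $c_2,\dots,c_n\prec 1$, I would substitute $X=-1+Z$ and expand: $P(-1+Z)=Z+\sum_{i=2}^n c_i(Z-1)^i=\sum_{j=0}^n\mu_jZ^j$ as polynomials in $Z$, where each $\mu_j$ is a fixed $\Z$-linear combination of $c_2,\dots,c_n$ and therefore lies in $E\cap\smallo=\smallo_E$. Setting $w:=G(\mu_0,\dots,\mu_n)\in E$, the defining identity of $G$ gives $w=\sum_j\mu_jw^j$, which by the displayed polynomial identity means $P(-1+w)=0$; hence $x:=-1+w$ lies in $E$ (as $E$ is a field containing $-1$), lies in $\bigO$ (as $w\in\smallo$), and satisfies $P(x)=0$, so $x\in\bigO_E$ is the required root. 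Therefore $E$ is henselian, and being a subfield of $K$ it is a henselian valued subfield of $K$. The only mildly delicate point is the step passing from ``root of $Q$ in $C[[\fM]]$'' to ``root of $Q$ in $K$'', which rests on the uniqueness assertion in Hensel's lemma; everything else is routine bookkeeping with the substitution homomorphism.
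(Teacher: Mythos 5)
Your argument is essentially the paper's: the paper also verifies henselianity of $E$ by writing the unique small Hensel root of such a polynomial as $f(a_1,\ldots,a_n)$ for a power series $f\in C[[t_1,\ldots,t_n]]$ via the formal implicit function theorem and then invoking the lemma preceding the corollary, and your only real addition is to spell out explicitly (via the uniqueness of the small root in $C[[\fM]]$) why $f(a)\in K$ for all $a\in\smallo_K^{\,n+1}$, a hypothesis of that lemma which the paper leaves implicit. The one slip is in your displayed identity: with $G$ defined by $G=\sum_j t_jG^j$ you must take $\sum_j\mu_jZ^j:=-\sum_{i=2}^n c_i(Z-1)^i$, so that $P(-1+Z)=Z-\sum_j\mu_jZ^j$ and $w=G(\mu)$ gives $P(-1+w)=0$; as literally written, $\mu_1$ would contain the constant $1$ (so it is not a $\Z$-combination of $c_2,\dots,c_n$, hence not obviously in $\smallo_E$) and the identity would only yield $P(-1+w)=w$ --- a sign-level correction after which the proof is fine.
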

\begin{proof}
Assume $K$ is henselian, and let $P(X) = 1+X + a_1X^2 + \cdots + a_n X^{n+1} \in E[X]$ with $a_i \in \smallo_E$ for $i=1,\ldots,n$.  Then $P$ has a unique zero $y\preceq 1$ in $K$. It suffices to show that $y\in E$. By the implicit function theorem we have $y = f(a_1,\ldots,a_n)$ for some power series $f\in C[[t_1,\ldots,t_n]]$. Using the previous Lemma, we conclude that $y\in E$.
\end{proof}

\begin{corollary}
Suppose $C$ has characteristic 0 and $K$ is henselian. Then $E$ is algebraically closed in $K$.
\end{corollary}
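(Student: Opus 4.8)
The plan is to derive the claim from the previous corollary --- which shows $E$ is henselian once $K$ is --- together with the equicharacteristic-zero hypothesis, using that $E$ has the same value group and residue field as $K$ (and as $C[[\fM]]$). Since the earlier results give $C[\fM]\subseteq E$ and $E$ is a subfield of $K$, we have $C(\fM)\subseteq E\subseteq K\subseteq C[[\fM]]$; hence the value group of $E$ is $v(\fM)=\Gamma_{\fM}$, the full value group of $K$ and of $C[[\fM]]$, and the residue field of $E$ is $C$, the common residue field of all three.

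The first step of the argument proper: because $\operatorname{char}(C)=0$, the residue field of $E$ has characteristic $0$, so the henselian valued field $E$ is \emph{algebraically maximal} (henselianity implies algebraic maximality in the equicharacteristic zero case). The second step: let $a\in K$ be algebraic over $E$; then $E(a)$ is a finite, hence algebraic, valued field extension of $E$ with $E\subseteq E(a)\subseteq K\subseteq C[[\fM]]$, so its value group and residue field are squeezed between those of $E$ and those of $C[[\fM]]$, which coincide. Thus $E(a)$ is an \emph{immediate} algebraic extension of $E$, and algebraic maximality of $E$ forces $E(a)=E$, i.e.\ $a\in E$. This proves $E$ is algebraically closed in $K$.

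Alternatively one can invoke Lemma~\ref{actr} applied with coefficient field $C$ (of characteristic $0$) and with our $E$ playing the role of its ``$E$'': $E$ is a truncation closed subfield of $C[[\fM]]$ containing $C$, and since $\fM\subseteq E$ we have $\fM_E=\fM$, whence $\fM_E^{\div}=\fM$; the lemma then identifies the algebraic closure of $E$ in $C[[\fM]]$ with $E(\fM)^{\h}=E^{\h}$, which equals $E$ because $E$ is henselian. As $K\subseteq C[[\fM]]$, this yields the claim a fortiori. There is no serious obstacle here; the only points to check are that $E$, $K$, and $C[[\fM]]$ genuinely share the value group $\Gamma_{\fM}$ and residue field $C$ (which reduces to $\fM\subseteq E$ and $C$ being the common coefficient field) and that the hypothesis ``$K$ henselian'' of the previous corollary is in force, which it is.
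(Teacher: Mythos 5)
Your main argument is exactly the paper's proof: $E$ is henselian (by the previous corollary) and of equicharacteristic zero, hence algebraically maximal, while $C[[\fM]]$ — and so $K$ — is an immediate extension of $E$ because $C(\fM)\subseteq E$, so no proper algebraic extension of $E$ inside $K$ is possible. Your alternative route via Lemma~\ref{actr} (using $\fM_E=\fM$, so the algebraic closure of $E$ in $C[[\fM]]$ is $E^{\h}=E$) is also correct, but the primary line of reasoning coincides with the paper's.
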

\begin{proof}
Since $E$ is henselian and of equicharacteristic zero, it is algebraically maximal by \cite[Corollary 3.3.21]{ADH}.  It remains to note that $C[[\fM]]$ is an immediate extension
of $E$ in view of $E\supseteq C(\fM)$. 
%suffices to show that for $b\in K^\times$ with $b^n\in E$, then $b\in E$. Let $b^n=c\fm(1+a)\in E$ with $c\in C^\times$ and $a\prec 1$. Since $C\fM\subseteq E$, $a\in E$ and thus there is $d\in \smallo_E$ such that $1+d)^n = (1+a)$ by henselianity. Hence $(b(1+d)\inv)^n = c\fm$, so $b(1+d)\inv\in C^\times \fM \subseteq E$ and we conclude that $b\in E$.
\end{proof}

\chapter{Undecidability Results for Hahn Fields with Truncation} \label{TU}

\noindent
In this chapter we consider some model theoretic properties of (valued) Hahn fields with truncation. We show (Corollary \ref{KGamvTUnd})  that such  structures
are very wild in the sense that they can even interpret the theory of $(\N;+,\times)$ via an interpretation of $(\N, \mathcal{P}(\N); +,\in)$. In particular, they
are undecidable; this answers a question posed informally by van den Dries some years ago. As an application we show in Section 3.3 that the exponential field $\T$ of transseries with its canonical monomial group $G^{\operatorname{LE}}$ is undecidable. 
In Section 3.4 we indicate for  Hahn fields with truncation a definable binary relation with ``bad'' properties such as the strict order property and the tree property of the second kind. In Section 3.5 we record an undecidability result for valued Hahn fields $C((t^{\Q}))$ with the derivation $d/dt$; this doesn't involve truncation.

In connection with Theorem~\ref{NinK} the author would like to thank Philipp Hieronymi and Erik Walsberg for bringing monadic second-order logic to his attention.

\section{The Set of Natural Numbers in Monadic Second-Order Logic}
\noindent
We start with the following well-known result.
\begin{theorem}\label{Nund}
The theory of $(\N;+,\times)$ is undecidable.
\end{theorem}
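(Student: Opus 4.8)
The plan is to deduce the undecidability of $\operatorname{Th}(\N;+,\times)$ from the effective definability of enough arithmetic to encode a Turing-complete problem; since the paper cites this as ``well-known,'' the proof should simply recall one standard route. First I would observe that it suffices to produce a computable function $\varphi\mapsto \lceil\varphi\rceil$ from first-order sentences in the language $\{+,\times\}$ to, say, sentences of Peano arithmetic (or just note that the language is already that of arithmetic), and then invoke the fact that $\operatorname{Th}(\N;+,\times)$ is not recursive. The cleanest self-contained argument is via G\"odel's $\beta$-function: one shows that the graph of $\beta$ is definable in $(\N;+,\times)$, hence every recursively enumerable relation is definable in $(\N;+,\times)$, and in particular the halting set $H$ is definable by some formula $\theta(x)$.

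The key steps, in order, would be: (1)~recall the $\beta$-lemma, i.e.\ that for every finite sequence $a_0,\dots,a_n$ of naturals there exist $c,d$ with $\beta(c,d,i)=a_i$ for $i\le n$, where $\beta(c,d,i)$ is the remainder of $c$ upon division by $1+(i+1)d$; note that both the relation ``$r$ is the remainder of $c$ mod $m$'' and divisibility are quantifier-free definable from $+$ and $\times$ using bounded quantifiers, so $\beta$ has a $\Sigma_0$ graph. (2)~Use the $\beta$-function to code the step-by-step computation histories of a fixed universal Turing machine, obtaining a formula $\theta(x)$ with $\N\models\theta(\bar n)\iff n\in H$. (3)~Conclude that if $\operatorname{Th}(\N;+,\times)$ were decidable, then $H$ would be recursive: to test $n\in H$, compute the sentence $\theta(\bar n)$ and decide its membership in the theory. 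Since $H$ is not recursive (the halting problem), $\operatorname{Th}(\N;+,\times)$ is undecidable.

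The only real obstacle is bookkeeping rather than ideas: verifying carefully that the sequence-coding and the simulation of a universal machine can be carried out using only $+$, $\times$, and first-order quantification over $\N$ — i.e.\ that no extra arithmetic operations (exponentiation, coding of finite sequences as a primitive) are secretly needed. This is exactly what the $\beta$-function is designed to finesse, so I would cite it and the standard arithmetization of syntax/computation rather than reproduce the details. Alternatively, one may cite the original sources (G\"odel's incompleteness theorem, or Church--Turing), since in this dissertation Theorem~\ref{Nund} is used only as an off-the-shelf input to the reduction carried out in the next sections.
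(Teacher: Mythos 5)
Your proposal is correct: the $\beta$-function arithmetization of computation (or, equivalently, citing Church--Turing/G\"odel) is the standard proof of this classical fact, and the paper itself offers no proof at all -- it simply invokes the theorem as well known and uses it as an off-the-shelf input to the interpretability arguments of Chapter~4, exactly as your final remark anticipates. So your sketch is consistent with, and if anything more detailed than, what the dissertation does.
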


\subsection*{Monadic second-order logic} Given a structure $\cM=(M;\ldots)$, \textbf{monadic second-order logic of} $\cM$ extends first-order logic over $\cM$ by allowing quantification over subsets of $M$.
More precisely it amounts to considering the two-sorted structure $(M,\cP(M);\ldots,\in)$, where the usual interpretation is given to $\in \subseteq M\times \cP(M)$.
The following positive result and its proof appear in \cite{G}, and are in contrast with the negative Corollary \ref{MOSNplus}.
\begin{theorem}
The theory of $(\N,\cP(\N);\in)$ is decidable.
\end{theorem}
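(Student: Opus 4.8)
The plan is to reduce the theory of $(\N,\cP(\N);\in)$ to the first-order theory of a Boolean algebra and then invoke the classical decidability of the latter. Observe first that the $\N$-sort here carries only equality, so the statement says precisely that the monadic second-order theory of an infinite pure set is decidable.

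\emph{Step 1: eliminating element variables.} In the Boolean algebra $(\cP(\N);\subseteq)$ the bottom element is definable as the unique $X$ with $X\subseteq Y$ for all $Y$, and hence so are the atoms, namely the nonempty $X$ having no nonempty proper subset; these are exactly the singletons. One then translates, effectively and truth-preservingly, each sentence $\varphi$ of the monadic second-order language of the pure set $\N$ into a first-order sentence $\varphi^{*}$ over $(\cP(\N);\subseteq)$: replace each element variable $x$ by a fresh set variable $X_x$, relativize its quantifiers to atoms, render $x=y$ as $X_x=X_y$, and render $x\in X$ as $X_x\subseteq X$. Set variables and the atomic formulas among them carry over unchanged.

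\emph{Step 2: deciding the Boolean algebra.} It remains to use that $\mathrm{Th}(\cP(\N);\subseteq)$, equivalently $\mathrm{Th}(\cP(\N);\cup,\cap,{}^{c},0,1)$, is decidable. This is a theorem of Tarski: the elementary theory of Boolean algebras admits an effective quantifier elimination once one adjoins the unary predicates expressing that an element dominates at least $n$ atoms (for each $n$) and that it dominates infinitely many atoms; and since $\N$ is infinite, the truth values in $\cP(\N)$ of the resulting quantifier-free sentences are immediate. Composing with Step 1 gives a decision procedure for the theory of $(\N,\cP(\N);\in)$.

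The substantive content is entirely in Step 2, the quantifier elimination for Boolean algebras; Step 1 is routine syntactic bookkeeping. An alternative route, and probably the one taken in \cite{G}: any pure-set sentence has the same truth value in $(\N,\cP(\N);\in)$ as in $(\N,\cP(\N);<,\in)$, so it suffices to decide $\mathrm{S1S}$, the monadic second-order theory of $(\N;<)$; by B\"uchi's theorem one can effectively associate to each $\mathrm{S1S}$-formula a B\"uchi automaton with exactly the same $\omega$-word models, and emptiness of B\"uchi automata is decidable. On that route the hard step is the closure of the B\"uchi-recognizable $\omega$-languages under complementation, which is what makes universal set-quantification translatable.
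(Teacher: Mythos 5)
Your argument is correct in substance, but it is a genuinely different route from the paper's: the paper gives no proof at all, simply citing Gurevich's survey \cite{G}, where the decidability of monadic second-order theories of this kind is obtained by the automata/composition methods you sketch in your closing remark (your second route, via $\mathrm{S1S}$ and B\"uchi's theorem, is essentially the cited argument, and it is valid: a sentence not mentioning $<$ is decided by the $\mathrm{S1S}$ procedure). Your primary route -- translating element variables into atom variables of the Boolean algebra $(\cP(\N);\subseteq)$ and then invoking Tarski's decidability of the elementary theory of Boolean algebras -- is a more elementary, self-contained reduction that avoids B\"uchi complementation altogether; the translation in Step~1 is indeed routine and correct, since atoms are exactly the singletons and $n\mapsto\{n\}$ identifies the element sort with the atoms. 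The one imprecision is in Step~2: quantifier elimination for the \emph{complete} theory of $\cP(\N)$ is not quite achieved by adjoining only the predicates ``dominates at least $n$ atoms'' and ``dominates infinitely many atoms''; the classical Tarski--Ershov analysis uses a fuller family of invariant predicates (involving the quotient by the ideal generated by the atoms, etc.). This does not affect the conclusion: what you need is only that $\mathrm{Th}(\cP(\N);\subseteq)$ is decidable, which is Tarski's classical theorem (the complete theories of Boolean algebras are effectively classified by computable invariants, and the invariants of $\cP(\N)$ are computable), so composing with your interpretation does yield a decision procedure for the theory of $(\N,\cP(\N);\in)$. If you keep the Boolean-algebra route, either cite Tarski's decidability result as a black box or state the elimination with the correct invariant predicates.
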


\begin{lemma}\label{timesinMOSNplus}
Multiplication on $\N$ is definable in $(\N,\mathcal{P}(\N);+, \in)$.
\end{lemma}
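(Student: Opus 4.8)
The plan is to encode multiplication via the graph of the exponential-type function on singletons, or more directly, to use the well-known fact that divisibility together with the successor structure suffices. Concretely, I would proceed in three stages. First, note that in $(\N,\cP(\N);+,\in)$ we can already define, for each $n$, the singleton $\{n\}$ (as a set $S$ such that $S\ne\emptyset$ and no proper nonempty subset exists, or: $n\in S$ and for all $m\in S$, $m=n$), and hence the ordering $\le$ on $\N$ via $m\le n \iff \exists k\ (m+k=n)$, which is already first-order in $+$. So the monadic apparatus is available to talk about arbitrary well-behaved sets of naturals, and $0,1$ are definable.

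Second — the key step — I would define the graph of multiplication by characterizing, for given $a,b,c$, the statement ``$c=ab$'' using a set $S$ that codes the finite arithmetic progression $\{0,a,2a,\dots,ba\}$, i.e. the partial sums of the ``add $a$, $b$ times'' computation. Precisely, $c=ab$ should hold iff there is a set $S\subseteq\N$ such that $0\in S$, $c\in S$, and $S$ is exactly the set obtained by: whenever $x\in S$ and $x\ne c$ then $x+a\in S$, together with a ``length'' control ensuring exactly $b$ steps are taken. The length control is the delicate part: I would additionally require a second set $T$ coding the pairing between step-count and value — e.g. $T$ codes $\{i + (\text{something}) : i\le b\}$ — but since we do not yet have pairing, the cleaner route is to use the set $S$ of values together with the observation that $S$ has exactly $b+1$ elements and is an arithmetic progression with common difference $a$; ``$S$ is an arithmetic progression with difference $a$ starting at $0$'' is expressible as: $0\in S$, and for all $x$, ($x\in S$ and $x\ne\max S$) $\iff x+a\in S$; and ``$S$ has exactly $b+1$ elements'' can be expressed by exhibiting a bijection $S\to\{0,1,\dots,b\}$, which is a set of ``pairs'' — so we are back to needing pairing.

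Therefore the honest main obstacle is definable pairing: one must first define in $(\N,\cP(\N);+,\in)$ a pairing function, or equivalently the graph of multiplication directly. The standard trick I would use is the one from the decidability literature (following Büchi/elementary arithmetic): the relation ``$X$ is an arithmetic progression'' is monadic-definable, and one shows $c=a\cdot b$ iff there exists an arithmetic progression $X$ with first term $0$, common difference $a$, last term $c$, whose cardinality equals $b+1$; and ``the cardinality of the arithmetic progression $X$ with difference $a$ equals $b+1$'' is expressible as ``the arithmetic progression $X'$ with the same number of terms, first term $0$ and common difference $1$, has last term $b$'' — and comparing the number of terms of two arithmetic progressions is monadic-definable by requiring a set $Z\subseteq\N$ that is simultaneously ``paced'' by difference $a$ and difference $1$, i.e. $Z$ meets the progression-$a$ lattice and the progression-$1$ lattice in order-isomorphic ways. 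Making this precise — defining, for two sets $X, Y$, the relation ``$X$ and $Y$ have the same number of elements'' in monadic second-order logic over $(\N;+)$ — is exactly the crux, and it is achievable because over $\N$ one can define, monadically, the relation ``$Z$ enumerates $X$ in increasing order at positions indexed by $Y$'' using the successor function on the ordered sets $X$ and $Y$ (both first-order definable from $+$). Once equinumerosity is in hand, the displayed definition of $\cdot$ goes through. So the proof is: (1) define $\le$, $0$, $1$, and the successor relation on any definable linearly ordered subset; (2) define equinumerosity of subsets; (3) define $c = a\cdot b$ via the arithmetic-progression characterization above, and conclude that $\times$ on $\N$ is definable in $(\N,\cP(\N);+,\in)$.
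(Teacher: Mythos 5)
Your overall strategy (define $c=ab$ by exhibiting the arithmetic progression $\{0,a,2a,\dots,ba\}$ and controlling its length) founders exactly where you say it does, and the repair you offer does not work. The step you call ``the crux'' --- defining, for sets $X,Y$, the relation ``$X$ and $Y$ have the same number of elements'' in $(\N,\cP(\N);+,\in)$ --- is justified only by the phrase ``$Z$ enumerates $X$ in increasing order at positions indexed by $Y$,'' but such a $Z$ would have to code a \emph{bijection}, i.e.\ a set of pairs, and monadic second-order logic gives you quantification over subsets of $\N$ only, not over binary relations. Without a pairing function (which is essentially multiplication, the very thing being defined) there is no evident way to code that bijection by a single subset of $\N$, so the argument is circular as it stands: equinumerosity of arbitrary finite subsets of $\N$ is indeed definable in this structure, but the only route to it you indicate passes through multiplication. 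Note also that the obvious weakenings of your length control that \emph{are} monadically expressible --- e.g.\ ``$c+b$ lies in the arithmetic progression of difference $a+1$ through $0$,'' i.e.\ $(a+1)\mid(c+b)$ --- are necessary but not sufficient (take $a=2$, $b=3$, $c=3$), so some genuinely new idea is still needed at this point.

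The paper's proof supplies that idea and bypasses cardinality comparisons entirely, via Julia Robinson's trick. Only divisibility is defined monadically, and this needs no counting: $m\mid n$ iff $n$ belongs to every set containing $0$ and closed under $x\mapsto x+m$. Then $n=m(m+1)$ is recovered from divisibility because $m(m+1)=\operatorname{lcm}(m,m+1)$ and a natural number is determined by its set of multiples; and general products reduce to products of consecutive integers through the identity $(m+k)(m+k+1)=m(m+1)+k(k+1)+2mk$, which involves only addition once the consecutive products are available. If you want to keep your arithmetic-progression picture, you should replace the equinumerosity step by this divisibility/lcm argument (or find an honest monadic definition of the length control, which you have not done).
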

\begin{proof}
If the multiplication of consecutive numbers is defined, then general multiplication of two natural numbers can be defined in terms of addition:
\[n=mk \iff (m+k)(m+k+1) = m(m+1) + k(k+1) + n + n.\]

\noindent
If divisibility is defined, then multiplication of consecutive numbers is defined by
\[n=m(m+1)\iff \forall k(\in \N) (n|k \leftrightarrow [m|k \wedge (m+1)|k]).\]

\noindent
Divisibility can be defined using addition by
\[m|n \iff \forall S \big(0\in S \wedge \forall x (x\in S \rightarrow x+m\in S) \rightarrow n\in S\big)\,\]
where the variable $S$ ranges over $\cP(\N)$ and the variable $x$ ranges over $\N$.
Since addition is a primitive, multiplication is defined in $(\N,\mathcal{P}(\N); +, \in)$.
\end{proof}

\begin{corollary}\label{MOSNplus}
The theory of $(\N,\mathcal{P}(\N); +, \in)$ is undecidable.
\end{corollary}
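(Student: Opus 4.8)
The statement to prove is Corollary \ref{MOSNplus}: the theory of $(\N,\mathcal{P}(\N);+,\in)$ is undecidable.

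The plan is to combine the two immediately preceding results. By Lemma \ref{timesinMOSNplus}, multiplication on $\N$ is definable in $(\N,\mathcal{P}(\N);+,\in)$, so the structure $(\N;+,\times)$ is interpretable in $(\N,\mathcal{P}(\N);+,\in)$ — indeed it is a reduct-up-to-definability of the first sort, with the first sort itself as the (identity) interpreting set and $+$ already primitive. Hence any sentence in the language $\{+,\times\}$ of arithmetic translates effectively into a sentence in the language $\{+,\in\}$ of the two-sorted structure that holds in $(\N,\mathcal{P}(\N);+,\in)$ if and only if the original holds in $(\N;+,\times)$. Since the translation $\varphi\mapsto\varphi^{*}$ is a recursive map on formulas, a decision procedure for $\mathrm{Th}(\N,\mathcal{P}(\N);+,\in)$ would yield one for $\mathrm{Th}(\N;+,\times)$ by composing with this translation. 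But $\mathrm{Th}(\N;+,\times)$ is undecidable by Theorem \ref{Nund}, a contradiction. Therefore $\mathrm{Th}(\N,\mathcal{P}(\N);+,\in)$ is undecidable.

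I would write this out carefully by exhibiting the translation: fix the $\{+,\in\}$-formula $\mu(x,y,z)$ given by Lemma \ref{timesinMOSNplus} that defines $\{(x,y,z):z=xy\}$ on the first sort; then define $\varphi^{*}$ by recursion on $\{+,\times\}$-formulas, relativizing all quantifiers to the first sort, leaving atomic formulas built from $+$ and $=$ unchanged, and replacing each atomic subformula of the form $t_{1}=t_{2}$ that involves products by repeated use of $\mu$ together with fresh first-sort variables (the standard "unnesting of terms" step). One checks by an easy induction that $(\N;+,\times)\models\varphi$ iff $(\N,\mathcal{P}(\N);+,\in)\models\varphi^{*}$, and that $\varphi\mapsto\varphi^{*}$ is computable.

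There is no real obstacle here; the only point requiring a line of care is the routine unnesting of nested multiplications so that $\mu$ (a three-place relation, not a function symbol) can be used, and the observation that relativization to the first sort is harmless because in the two-sorted language the first sort is a distinguished sort. This is entirely standard reduction-via-interpretation, so the corollary follows at once from Lemma \ref{timesinMOSNplus} and Theorem \ref{Nund}.
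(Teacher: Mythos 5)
Your proposal is correct and follows the same route as the paper, which simply cites Lemma \ref{timesinMOSNplus} and Theorem \ref{Nund}; you have merely written out in full the standard interpretation/translation argument (unnesting of products, use of the defining formula for multiplication, computability of the translation) that the paper leaves implicit. No issues.
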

\begin{proof}
This follows from Lemma \ref{timesinMOSNplus} and Theorem \ref{Nund}.
\end{proof}

\section{Hahn Fields with Truncation}\label{HFWT}
\noindent
Let $K=\smallk((t^\Gamma))$ be a Hahn field with nontrivial value group $\Gamma$. We consider $K$ below  as an $L_{\bigO, \fM, \cU}$-structure where 
\[L_{\bigO, \fM, \cU}\ :=\ \{0,1,+,\times,\bigO, \fM, \cU\},\] and the unary predicate symbols $\bigO, \fM,$ and $\cU$ are interpreted respectively as the valuation ring, the canonical monomial group $t^\Gamma$, and the canonical additive complement to $\bigO$. For $\gamma \in \Gamma$ and $\fm = t^\gamma$ we set $f|_\fm : = f|_\gamma$. Then we have the equivalence (for $f$, $u \in  K$)

\[f|_1 = u \iff u\in \cU\ \&\ \exists g\in \bigO (f=u+g),\]
showing that truncation at $1$ is definable in the $L_{\bigO, \fM, \cU}$-structure $K$. 
For $\fm \in t^\Gamma$ and $f\in K$ we have 
\[f|_\fm =  g \iff (\fm^{-1}f)|_0=\fm^{-1}g,\]
showing that the operation $(f,\fm)\mapsto f|_\fm : K\times t^\Gamma \rightarrow K$ is definable in the $L_{\bigO, \fM, \cU}$-structure $K$. 

For convenience of notation we introduce the asymptotic relations $\preceq,\prec ,$ and $\asymp$ on $K$ as follows. 
For $f,g\in K$, $f\preceq g$ if and only if there is $h\in \bigO$ such that $f=gh$, likewise $f\prec g$ if and only if $f\preceq g$ and $g\not\preceq f$, and  $f\asymp g$ if and only if $f\preceq g$ and $g\preceq f$. 
Let $R := \{(\fm,f)\in t^\Gamma\times K: \fm\in t^{\supp(f)}\}$. Then $R$ is definable in the $L_{\bigO, \fM, \cU}$-structure $K$ since for $a,b\in K$
\[(a,b)\in R\iff  a\in t^\Gamma \text{ and } b-b|_a \asymp a.\]

\begin{theorem}\label{NinK}
The $L_{\bigO, \fM, \cU}$-structure $K$ interprets $(\N,\mathcal{P}(\N); +, \in)$.
\end{theorem}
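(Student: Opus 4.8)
The plan is to interpret the two-sorted structure $(\N,\mathcal{P}(\N); +, \in)$ inside the $L_{\bigO,\fM,\cU}$-structure $K = \smallk((t^\Gamma))$ by using the monomial group $t^\Gamma$ to carry subsets of $\N$ and using supports of series to encode the membership relation, all of which are definable by the remarks preceding the theorem. Fix a positive $\gamma_0 \in \Gamma$ (this exists since $\Gamma$ is nontrivial), and let $\fn_0 := t^{-\gamma_0} \succ 1$ play the role of a ``basic large monomial''. The natural numbers will be interpreted by the definable set $N := \{\fn_0^n : n \in \N\} \subseteq t^\Gamma$; note $N$ is $L_{\bigO,\fM,\cU}$-definable as the set of $\fm \in \fM$ with $\fm \succeq 1$ such that there is no $\fm'$ with $1 \prec \fm' \prec \fm$ and $\fm' \preceq \fn_0$ generating $\fm$ — more cleanly, $N$ is definable as the smallest subset of $\fM$ containing $1$ and closed under multiplication by $\fn_0$, which is expressible using the predicate $\cU$ and quantification already available, or one simply observes $\fn_0^n$ is characterized by: $\fn_0^n \succeq 1$, and for every $\fm \in \fM$ with $1 \preceq \fm \preceq \fn_0^n$ either $\fm \succeq \fn_0 \cdot \fm'$ for some power $\fm'$ of $\fn_0$ below it, etc. I would then verify that addition on $N$ corresponds to multiplication of monomials: $\fn_0^m \cdot \fn_0^n = \fn_0^{m+n}$, and monomial multiplication is a field operation, hence $L_{\bigO,\fM,\cU}$-definable. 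This handles the first sort together with its $+$.

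For the second sort, subsets $S \subseteq \N$ will be coded by elements $f \in K$ via $S \mapsto f_S := \sum_{n \in S} t^{-n\gamma_0} = \sum_{n \in S} \fn_0^n$; this is a legitimate element of $K$ because $\{-n\gamma_0 : n \in S\}$ is a well-ordered (indeed, reverse-well-ordered in $\Gamma^{<}$... wait: $-n\gamma_0$ is decreasing in $n$, so the support $\{-n\gamma_0\}$ has order type $\omega$ reversed, which is well-ordered as a subset of $\Gamma$ only if $S$ is finite). So instead I would reverse the encoding: use $f_S := \sum_{n \in S} t^{n\gamma_0} = \sum_{n\in S}\fn_0^{-n}$, whose support $\{n\gamma_0 : n \in S\} \subseteq \Gamma^{\geq 0}$ is always well-ordered, so $f_S \in K$ for every $S \subseteq \N$. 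The interpreting set for the power-set sort is then $\{f \in K : \supp(f) \subseteq \gamma_0\N,\ f_\gamma \in \{0,1\} \text{ for all }\gamma\}$; being supported on $\gamma_0\N$ means $t^{\supp f} \subseteq \{\fn_0^{-n} : n\in\N\}$, definable via the relation $R$ from the excerpt, and the coefficient-in-$\{0,1\}$ condition is handled by saying $f$ equals its own ``$\{0,1\}$-part'' — which one can phrase as: for every monomial $\fm \in t^{\supp f}$, the element $(\fm^{-1}f)|_1 \cdot \fm$... more simply, $f$ is a sum of distinct monomials, i.e. for all $\fm \in t^{\supp f}$ we have $f - f|_\fm - \fm \prec \fm$, using truncation. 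Membership $n \in S$ becomes $\fn_0^{-n} \in t^{\supp f_S}$, i.e. $(\fn_0^{-n}, f_S) \in R$, which is definable. Equality of subsets corresponds to equality in $K$, so the interpretation is injective on the coded power set. Finally one must check the coded sets are exactly the subsets of the coded copy of $\N$ — i.e. every $0/1$-series supported on $\gamma_0\N$ arises, which is immediate.

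The main obstacle I anticipate is not the coding itself but giving a clean $L_{\bigO,\fM,\cU}$-definition of the ``standard-natural-number'' monomials $N = \{\fn_0^n : n \in \N\}$ inside $\fM$: a priori $\fM \cong \Gamma$ could be large and wild, and the powers of $\fn_0$ form a discrete copy of $\Z$ (or $\N$) that must be cut out definably. The right move is probably to define instead a discrete, definable linear order at the level of the coded subsets and read $\N$ off from an element of the power-set sort — that is, reverse the dependency: first interpret $\mathcal{P}(\N)$-style objects as $0/1$-series supported in $\gamma_0\N_{\geq 0}$ where $\gamma_0\N_{\geq 0}$ itself need not be named, define the successor/order on supports using the field and $R$, single out the series whose support is an initial segment under that order to carve out $\N$ as an internal object, and then quotient/relativize. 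In other words, I would not try to name $N$ directly in $\fM$; I would name it as a subset living in the second sort and take $(\N, \mathcal{P}(\N);+,\in)$ as an interpreted structure on that richer data. Once $N$, $+$ on $N$, the power-set domain, and $\in$ are all seen to be definable (with parameter $\gamma_0$, or with $\fn_0$), the theorem follows, and by Corollary~\ref{MOSNplus} this immediately yields undecidability of the $L_{\bigO,\fM,\cU}$-theory of $K$.
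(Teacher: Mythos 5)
Your coding scheme (powers of a fixed monomial as the naturals, with monomial multiplication playing the role of addition; series with $0/1$ coefficients as subsets; support membership, via the definable relation $R$, as $\in$) is essentially the coding the paper uses, but your write-up has a genuine gap exactly at the point you yourself flag as the main obstacle: you never actually define, in the $L_{\bigO,\fM,\cU}$-structure $K$, the set $N=\{\fn_0^{\,n}:n\in\N\}$ of standard powers, nor the domain of your second sort (the series supported on $\gamma_0\N$, which presupposes the same set). Your first attempt, ``the smallest subset of $\fM$ containing $1$ and closed under multiplication by $\fn_0$'', is a second-order condition, not an $L_{\bigO,\fM,\cU}$-formula, and your fallback (``reverse the dependency'', define a successor on supports, carve out $\N$ via initial segments) is only an unexecuted plan: no formulas are exhibited and nothing is verified to pin down exactly the standard powers. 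Since the whole interpretation rests on these two domains being definable, the argument is incomplete as it stands.

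The missing idea is that an interpretation may use parameters, and truncation makes the support of any \emph{fixed} element definable. Take $\fn\prec 1$ and the single element $f=\sum_n \fn^n\in K$, which lies in $K$ because the monomials $\fn^n$ are strictly decreasing, so the support is well-based. Then the first sort is simply $t^{\supp(f)}=\{\fm\in\fM:\ (\fm,f)\in R\}$, definable with parameter $f$; the second sort is $\{g\in K:\ \supp(g)\subseteq\supp(f)\}$ modulo the definable equivalence ``equal support'', with $\in$ interpreted by $R$ again; and $\iota(n)=\fn^n$ turns $+$ into monomial multiplication, which is just field multiplication. No first-order characterization of ``being a power of $\fn$'' is needed, and no $0/1$-coefficient condition either, since subsets are represented only up to equality of supports. (Your sketch could in principle be pushed through: for instance, requiring $1\in\supp(g)$ and that every $\fm\in\supp(g)$ with $\fm\ne 1$ satisfy $\fm\fn^{-1}\in\supp(g)$ forces $\supp(g)\subseteq\fn^{\N}$, since otherwise one gets an infinite increasing sequence in $\supp(g)$, contradicting well-basedness --- but you would have to state and prove this, and the parameter trick makes it unnecessary.)
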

\begin{proof}
Let $\approx$ be the definable equivalence relation on $K$ such that $f\approx g$, for $f,g\in K$, if and only if $\supp(f) = \supp(g)$. 
Take $\fn\in t^\Gamma$ such that $\fn\prec 1$. Consider the element $f= \sum_n \fn^n\in K$, and the set $S= \{g\in K: \supp(g) \subseteq \supp(f)\}$. 
Let $E\subseteq t^{\supp(f)}\times (S/\hspace{-1mm}\approx)$  be given by 
\[(\fm,g/\hspace{-1mm}\approx)\in E :\iff \fm \in t^{\supp(g)},\] 
and note that $E$ is definable in the $L_{\bigO, \fM, \cU}$-structure $K$ since $R$ is. Define $\iota: \N \rightarrow t^{\supp(f)}$ by $\iota(n) = \fm^n$, and note that $\iota$ induces an isomorphism $(\N, \cP(\N);\in) \stackrel{\sim}{\longrightarrow} (t^{\supp(f)},S/\hspace{-1mm}\approx;E)$, such that $\iota(m+n) = \iota(m)\iota(n)$.
\end{proof}

\begin{corollary} \label{UndL}
The theory of the $L_{\bigO, \fM, \cU}$-structure $K$ is undecidable.
\end{corollary}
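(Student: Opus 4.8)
The plan is to apply Theorem~\ref{NinK} together with the undecidability package established earlier. By Theorem~\ref{NinK} the $L_{\bigO,\fM,\cU}$-structure $K$ interprets $(\N,\mathcal{P}(\N);+,\in)$. By Lemma~\ref{timesinMOSNplus} multiplication on $\N$ is definable in $(\N,\mathcal{P}(\N);+,\in)$, so $K$ interprets $(\N,\mathcal{P}(\N);+,\times,\in)$ and in particular $(\N;+,\times)$. Now invoke Theorem~\ref{Nund}: the theory of $(\N;+,\times)$ is undecidable. Since interpretability transfers undecidability (a decision procedure for $\operatorname{Th}(K)$ in the language $L_{\bigO,\fM,\cU}$ would, via the interpretation, yield one for $\operatorname{Th}(\N;+,\times)$, contradiction), the theory of the $L_{\bigO,\fM,\cU}$-structure $K$ is undecidable.

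There is essentially no genuine obstacle here: the work was already done in Theorem~\ref{NinK}, and the present corollary is just the standard observation that an interpretation of a structure with undecidable first-order theory forces the interpreting structure to have undecidable theory. The only point worth a sentence of care is the routine lemma that relative interpretability preserves undecidability (equivalently, that if $\operatorname{Th}(M)$ is decidable and $N$ is interpretable in $M$ then $\operatorname{Th}(N)$ is decidable); this is entirely standard and can be cited or left to the reader. So the ``hard part'' has no real content beyond chaining Theorem~\ref{NinK}, Lemma~\ref{timesinMOSNplus}, and Theorem~\ref{Nund}.

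\begin{proof}
By Theorem~\ref{NinK} the $L_{\bigO,\fM,\cU}$-structure $K$ interprets $(\N,\mathcal{P}(\N);+,\in)$, hence also $(\N,\mathcal{P}(\N);+,\times,\in)$ by Lemma~\ref{timesinMOSNplus}, and in particular it interprets $(\N;+,\times)$. Since relative interpretability transfers undecidability, the undecidability of $\operatorname{Th}(\N;+,\times)$ from Theorem~\ref{Nund} yields that the theory of the $L_{\bigO,\fM,\cU}$-structure $K$ is undecidable.
\end{proof}
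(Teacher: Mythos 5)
Your proof is correct and follows essentially the same route as the paper: the paper deduces the corollary from Theorem~\ref{NinK} together with Corollary~\ref{MOSNplus}, which is itself exactly the combination of Lemma~\ref{timesinMOSNplus} and Theorem~\ref{Nund} that you chain explicitly. You have merely unrolled that intermediate corollary and spelled out the standard fact that interpretability transfers undecidability, which the paper leaves implicit.
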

\begin{proof}
This follows easily from Theorem \ref{NinK} and Theorem \ref{MOSNplus}
\end{proof}

\noindent
In the next subsections we consider cases where we can replace $L_{\bigO, \fM, \cU}$ by another language.

\subsection*{The case of a real closed coefficient field.}
In this subsection we assume that the coefficient field $\smallk$ of $K$ is real closed. Then we have a field ordering $\leq$ on $K$ that extends the ordering of $\smallk$; it is given by: $f>0$ if and only if the leading coefficient of $f$ is $>0$. Note that then $f\in \bigO$ if and only if $|f|<u$ for all positive $u\in \cU$. Equipped with this ordering we now consider $K$ as an $L_{\le, \fM, \cU}$-structure where $L_{\le, \fM,\cU}:=\{0,1,+,\times,\le, \fM, \cU\}$.  We just observed that
$\bigO$ is $0$-definable in this $L_{\fM,\cU}$-structure. Thus:

\begin{corollary} The theory of $K$ as an $L_{\le, \fM,\cU}$-structure is undecidable.
\end{corollary}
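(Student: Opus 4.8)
The plan is to reduce this statement to the already-established Corollary~\ref{UndL}, which gives undecidability of $K$ as an $L_{\bigO,\fM,\cU}$-structure. It therefore suffices to exhibit an interpretation (or, even more simply, a definability reduction) of the $L_{\bigO,\fM,\cU}$-structure $K$ inside the $L_{\le,\fM,\cU}$-structure $K$; since the two structures have the same universe $K$ and the symbols $0,1,+,\times,\fM,\cU$ are common to both languages, it is enough to define the predicate $\bigO$ using $\le$, $+$, $\times$, and $\cU$ (in fact using only $\le$ and $\cU$).

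First I would recall that because $\smallk$ is real closed, the ordering $\le$ on $K$ defined by ``$f>0$ iff the leading coefficient of $f$ is positive'' is a field ordering extending that of $\smallk$, and the valuation ring $\bigO = \smallk((t^{\Gamma^{\ge}}))$ is exactly the convex hull of $\smallk$ — equivalently, the set of $f$ that are not ``infinitely large''. Next I would observe, as noted in the paragraph preceding the statement, that an element $f\in K$ lies in $\bigO$ if and only if $|f|<u$ for every positive $u\in\cU$; here $|f|$ is the usual absolute value, itself $\{0,1,+,\times,\le\}$-definable (e.g.\ $|f|$ is the unique $y\ge 0$ with $y^2=f^2$, or one simply case-splits on $f\ge 0$). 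The one-line justification: if $f\in\bigO$ then $f\asymp 1$ or $f\prec 1$, while any positive $u\in\cU$ satisfies $u\succ 1$, so $|f|<u$; conversely if $f\notin\bigO$ then $f\succ 1$, and writing $u_0 := f|_\cU$ for the $\cU$-part of $f$ (which is a positive element of $\cU$ when $f>0$, after possibly replacing $f$ by $-f$) one checks $|f| \not< u_0$ because $f - u_0\in\bigO$ forces $|f|\ge u_0 - |f-u_0|$ is comparable to $u_0$; a cleaner route is to note $f-u_0\prec u_0$, hence $|f| \sim u_0$, and then pick the positive element $u_0/2 \cdot$ (something) of $\cU$, or simply take any positive $u\in\cU$ with $u\prec u_0$, e.g.\ a proper lower truncation, which exists since $\Gamma$ is nontrivial; then $|f|>u$. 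Thus
\[
f\in\bigO\ \iff\ \forall u\,\bigl(u\in\cU\ \wedge\ u>0\ \rightarrow\ |f| < u\bigr),
\]
which is an $L_{\le,\fM,\cU}$-formula.

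Having defined $\bigO$, every atomic $L_{\bigO,\fM,\cU}$-formula is equivalent in $K$ to an $L_{\le,\fM,\cU}$-formula, so the identity map on $K$ is an interpretation (indeed a definitional expansion in reverse) of the $L_{\bigO,\fM,\cU}$-structure $K$ in the $L_{\le,\fM,\cU}$-structure $K$; an undecidable structure cannot be interpreted in a decidable one, so by Corollary~\ref{UndL} the theory of $K$ as an $L_{\le,\fM,\cU}$-structure is undecidable. The only genuinely delicate point — and the one I would write out carefully — is the converse direction of the displayed equivalence, i.e.\ that a positive element $u\in\cU$ strictly below $|f|$ really does exist when $f\notin\bigO$; this is where nontriviality of $\Gamma$ is used, and it is immediate once one takes $u$ to be a suitable proper truncation of the $\cU$-part of $|f|$ (or just $\fm$ for a monomial $\fm$ with $1\prec\fm\prec |f|$, which exists since between $1$ and $\fd(|f|)^{-1}$... rather $\fd(f)$ with $\fd(f)\succ 1$ one can insert, using that the value group embeds densely enough — or simply take $\fm=\fd(f)$ itself if $f\not\asymp\fd(f)$, and otherwise a proper truncation of $f$). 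Everything else is bookkeeping.
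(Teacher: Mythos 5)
Your reduction is exactly the paper's: the paper's proof is just the remark immediately preceding the corollary, namely that $f\in\bigO$ iff $|f|<u$ for all positive $u\in\cU$, so that $\bigO$ is $0$-definable in the $L_{\le,\fM,\cU}$-structure $K$, after which undecidability follows from Corollary~\ref{UndL}. So your defining formula and the appeal to Corollary~\ref{UndL} coincide with the paper; the paper merely asserts the equivalence, whereas you attempt to prove it, and it is in the converse direction that your proposed witnesses are faulty even though the equivalence itself is true.

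Concretely, with $\Gamma=\Z$ and $f=t^{-1}-1$ (so $u_0:=$ the $\cU$-part of $|f|$ equals $t^{-1}$): (1) $|f|<u_0$, so your first claim ``$|f|\not<u_0$'' fails; (2) there is no positive $u\in\cU$ with $u\prec u_0$, and the only proper truncation of $u_0$ is $0$, so nontriviality of $\Gamma$ does not supply the witnesses you invoke; (3) there is no monomial strictly between $1$ and $\fd(f)$, the clause ``$\fm=\fd(f)$ if $f\not\asymp\fd(f)$'' is vacuous since every nonzero $f$ satisfies $f\asymp\fd(f)$, and $\fd(f)\le|f|$ can itself fail (take $f=\tfrac12 t^{-1}$). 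The repair is the scaling idea you only gesture at: since the canonical $\cU=\smallk\big(\big(t^{\Gamma^{<}}\big)\big)$ is a $\smallk$-linear subspace, $u:=\tfrac12 u_0$ lies in $\cU$, is positive, and $|f|-u=(|f|-u_0)+\tfrac12 u_0\sim\tfrac12 u_0>0$ because $|f|-u_0\in\bigO\prec u_0$; hence $u<|f|$ and the converse of the displayed equivalence holds. With that one sentence substituted for the flawed alternatives, your argument is complete and is the same as the paper's.
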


\begin{corollary}  Suppose $\Gamma$ is divisible.  Then the theory of $K$ in the language $\{0,1,+,\times,\fM,\cU\}$ is undecidable.
\end{corollary}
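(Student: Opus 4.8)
The plan is to deduce this from the preceding corollary by eliminating the symbol $\le$. The point is that when $\Gamma$ is divisible the ordering of $K$ is $0$-definable already in the pure field language $\{0,1,+,\times\}$, and hence in the richer language $\{0,1,+,\times,\fM,\cU\}$. This rests on the classical fact that a Hahn field $\smallk((t^\Gamma))$ with real closed coefficient field $\smallk$ and divisible value group $\Gamma$ is itself a real closed field: it is an ordered Hahn field over the ordered field $\smallk$, hence formally real, and it is henselian of equicharacteristic zero with real closed residue field $\smallk$ and divisible value group, which forces real closedness. Concretely, every positive element is of the form $c\fm(1+\varepsilon)$ with $c\in\smallk^{>0}$, $\fm\in\fM$, $\varepsilon\prec 1$, and has a square root since $c$ is a square in $\smallk$, $\fm$ is a square in $\fM$ ($\Gamma$ being $2$-divisible), and $1+\varepsilon$ has a square root via the power series $(1+t)^{1/2}\in\Q[[t]]$ substituted at $\varepsilon$; and odd-degree polynomials over $K$ acquire roots by the standard henselian Newton-polygon argument. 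Verifying (or, better, simply citing) this real-closedness statement is the main step; everything after it is formal.

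Granting that $K$ is real closed, its unique field ordering is defined without parameters by $a\le b :\Leftrightarrow \exists c\,(b-a=c^2)$, so $\le$ is $0$-definable in the $\{0,1,+,\times,\fM,\cU\}$-structure $K$. Replacing in any $L_{\le,\fM,\cU}$-sentence each occurrence of $\le$ by this formula yields a $\{0,1,+,\times,\fM,\cU\}$-sentence with the same truth value in $K$; this is a computable translation, so a decision procedure for the theory of $K$ as a $\{0,1,+,\times,\fM,\cU\}$-structure would give one for its theory as an $L_{\le,\fM,\cU}$-structure. Since the latter theory is undecidable by the preceding corollary, so is the former.
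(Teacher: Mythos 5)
Your proposal is correct and follows the paper's own route: divisibility of $\Gamma$ (together with the standing assumption that $\smallk$ is real closed) makes $K$ real closed, so the ordering is $0$-definable in the field language and undecidability transfers from the preceding corollary. The extra detail you give on why $K$ is real closed and on the computable translation of sentences just fleshes out the same argument.
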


\begin{proof}
The divisibility of  $\Gamma$ yields that $K$ is real closed, so the above ordering is the only field ordering on $K$ and is definable in the field $K$. 
\end{proof}

%\subsection*{The case of a convex valuation ring} In this subsection we weaken our assumptions on the valued field slightly by not requiring $\smallk$ to be real closed, but assuming the the valuation ring is still convex with respect to the ordering. That is, we assume that $K$ has an ordering $\le$ and that the valuation ring $\bigO$ is convex with respect to the ordering $\le$. We now consider $K$ as an $L'$ structure where, as in the previous section $L':=\{0,1,+,\times,\le, \fM, \cU\}$. We then obtain:
%\begin{theorem}
%The theory of $K$ as an $L'$-structure is undecidable. 
%end{theorem}
%\begin{proof}
%Note that by Corollary \ref{UndL} we only need to show that $\bigO$ is definable in the $L'$-structure $K$. The definability is given by the fact that $f\in \bigO \iff 2|f|< \cU^{>}$; indeed if $f\in \bigO$, and $0\leq u<2|f|$ for $u\in \cU$, then $u\in \bigO$ by the convexity of $\bigO$ and hence $u=0$. For the other direction note first that $\cU^{>} > \bigO$. Now let $f<\cU^{>}$. By construction there are $u\in \cU$ and $a\in \bigO$ such that $|f|=u+a$ with $u\geq 0$. so $2|f|> u$ so $u=0$ and thus $f=a\in \bigO$.
%\end{proof}

\subsection*{Defining the coefficient field $\smallk$} We now consider $K=\smallk((t^\Gamma))$ as an $L_{\bigO, \cU}$-structure, where 
\[L_{\bigO, \cU}\  :=\ \{0,1,+,\times, \bigO,\cU\}.\]
Note that for $f\in \bigO$ we have 
\[f\cU\subseteq \cU\  \iff\  f\in \smallk,\]
where we identify $\smallk$ with $\smallk t^0$. Thus we can define the coefficient field $\smallk$ in the $L_{\bigO, \cU}$-structure $K$. 

\medskip\noindent
\textbf{Question:} Is it possible to define the monomial group $t^\Gamma$ in the $L_{\bigO,\cU}$-structure $K$?

\subsection*{An approach without the monomial group} Alternatively we may work in the setting of the two-sorted structure $(K, \Gamma; v, T)$ where $K$ denotes the underlying field, $\Gamma$ is the ordered value group, $v$ is the valuation, and $T:K\times \Gamma \rightarrow K$ is such that $T(f,\gamma) = f|_\gamma$. Then we can define the binary relation $R\subseteq \Gamma\times K$ by 
\[(\gamma,f) \in R :\iff v(f-T(f,\gamma)) = \gamma .\]
We then obtain the following;
\begin{theorem}
The two-sorted structure $(K,\Gamma;v,T)$ interprets $(\N,\mathcal{P}(\N); +,\in)$.
\end{theorem}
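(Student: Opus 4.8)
The plan is to mirror the proof of Theorem~\ref{NinK} almost verbatim, only replacing the role of the monomial group $t^\Gamma$ and the predicate $\cU$ by the value group $\Gamma$ sitting in the second sort. First I would note that the relation $R\subseteq\Gamma\times K$ defined in the statement is already available as a primitive-definable relation, so I may freely use $(\gamma,f)\in R$, which says exactly that $\gamma\in\supp(f)$. Next I would introduce the definable equivalence relation $\approx$ on $K$ given by $f\approx g\iff \supp(f)=\supp(g)$; this is $0$-definable since $f\approx g$ iff for all $\gamma\in\Gamma$ we have $(\gamma,f)\in R\leftrightarrow(\gamma,g)\in R$. Fix $\gamma_0\in\Gamma^{>}$ (this exists since $\Gamma$ is nontrivial, as assumed throughout Section~\ref{HFWT}), and set $f_0:=\sum_{n} t^{n\gamma_0}\in K$, so that $\supp(f_0)=\{n\gamma_0:n\in\N\}$. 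Let $S:=\{g\in K:\supp(g)\subseteq\supp(f_0)\}$, a definable subset of $K$ (using $R$ and the definable predicate for $\supp(f_0)$, the latter being $\{\gamma:(\gamma,f_0)\in R\}$).

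Then I would define $\iota:\N\to\{n\gamma_0:n\in\N\}\subseteq\Gamma$ by $\iota(n)=n\gamma_0$, and the membership relation $E\subseteq\supp(f_0)\times(S/{\approx})$ by $(\gamma,g/{\approx})\in E:\iff(\gamma,g)\in R$. Since $+$ on $\Gamma$ restricts to the doubling/addition $n\gamma_0+m\gamma_0=(n+m)\gamma_0$ on the image of $\iota$, and since $\iota$ is a bijection of $\N$ onto $\supp(f_0)$ respecting this addition, $\iota$ together with the map $A\mapsto\{g:A=\iota^{-1}(\supp g)\}/{\approx}$ induces an isomorphism
\[(\N,\cP(\N);+,\in)\ \stackrel{\sim}{\longrightarrow}\ \big(\supp(f_0),\,S/{\approx};\,+\!\restriction\!\supp(f_0),\,E\big).\]
This exhibits $(\N,\cP(\N);+,\in)$ as interpreted in $(K,\Gamma;v,T)$, using that $\supp(f_0)$, $S$, $\approx$, $E$, and the restriction of the group addition are all definable, and that the domain $\Gamma$ of the second sort already carries its addition (definable from $v$, since $v(fg)=v(f)+v(g)$, so addition on $\Gamma$ is definable from $v$ and the field multiplication). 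By Corollary~\ref{MOSNplus} (the undecidability of the theory of $(\N,\cP(\N);+,\in)$) this also yields undecidability of $(K,\Gamma;v,T)$, though that is not part of the stated conclusion.

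The only genuinely non-routine point is to make sure that addition on $\N$ is recovered \emph{intrinsically} within the interpretation rather than smuggled in: here it comes for free because the interpretation of $\N$ is literally a subset $\{n\gamma_0:n\in\N\}$ of the ordered group $\Gamma$, closed under the group addition, and the group addition is definable in $(K,\Gamma;v,T)$ (it is the addition of the value sort, which is part of the structure, or alternatively is definable via $v(f)+v(g)=v(fg)$). Thus the main obstacle in the analogous Theorem~\ref{NinK} — needing the monomial group so that multiplication of monomials encodes $+$ on $\N$ — simply evaporates when $\Gamma$ is present as a sort, and the proof is a direct transcription.
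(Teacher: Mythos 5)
Your proposal is correct and is essentially the paper's intended argument: the paper proves this theorem by the one-line remark that the proof is similar to that of Theorem~\ref{NinK}, and your transcription (coding $\N$ as $\{n\gamma_0:n\in\N\}=\supp(f_0)$ in the value sort, $\cP(\N)$ as $S/{\approx}$, membership via the definable relation $R$, and $+$ via the group addition of $\Gamma$, which is definable from $v$ and field multiplication in any case) is exactly that adaptation. The observation that positivity of $\gamma_0$ guarantees $f_0=\sum_n t^{n\gamma_0}\in K$ and that addition replaces monomial multiplication are the only adjustments needed, and you make them correctly.
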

The proof is similar to the proof of Theorem \ref{NinK}.
\begin{corollary}\label{KGamvTUnd}
The theory of the two-sorted structure $(K, \Gamma; v, T)$ is undecidable.
\end{corollary}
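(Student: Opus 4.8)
The statement to prove is Corollary~\ref{KGamvTUnd}: the two-sorted structure $(K,\Gamma;v,T)$ has undecidable theory. The plan is to deduce this from the preceding theorem (that $(K,\Gamma;v,T)$ interprets $(\N,\mathcal{P}(\N);+,\in)$) exactly as Corollary~\ref{UndL} was deduced from Theorem~\ref{NinK}, together with Corollary~\ref{MOSNplus} (undecidability of $(\N,\mathcal{P}(\N);+,\in)$). So the only real content is verifying the interpretation claim in the preceding theorem, and the corollary itself is a one-line formal consequence.

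\textbf{First step.} I would set up the interpretation of $(\N,\mathcal{P}(\N);+,\in)$ inside $(K,\Gamma;v,T)$, mimicking the proof of Theorem~\ref{NinK}. Fix $\gamma_0\in\Gamma^{>}$ (possible since $\Gamma$ is nontrivial) and let $f\in K$ have $\supp(f)=\{n\gamma_0:n\in\N\}$, e.g.\ via a monomial $\fn$ with $v(\fn)=\gamma_0$ and $f=\sum_n\fn^n$. The natural numbers are encoded by the definable set $\{\gamma\in\Gamma:(\gamma,f)\in R\}=\{n\gamma_0:n\in\N\}$, with addition of these value-group elements as the addition of $\N$; note $R$ is definable from $v$ and $T$ by the displayed formula $(\gamma,f)\in R\iff v\big(f-T(f,\gamma)\big)=\gamma$. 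The "power set" sort is $S/\hspace{-1mm}\approx$ where $S=\{g\in K:\supp(g)\subseteq\supp(f)\}$ and $g\approx h$ iff $\supp(g)=\supp(h)$; both $S$ and $\approx$ are definable from $R$ (for instance $g\approx h$ iff for all $\gamma$, $(\gamma,g)\in R\leftrightarrow(\gamma,h)\in R$, and $g\in S$ iff every $\gamma$ with $(\gamma,g)\in R$ satisfies $(\gamma,f)\in R$). The membership relation $E$ on $\{n\gamma_0\}\times(S/\hspace{-1mm}\approx)$ is $(\gamma,g/\hspace{-1mm}\approx)\in E:\iff(\gamma,g)\in R$, and the map $n\mapsto n\gamma_0$ induces the required isomorphism $(\N,\mathcal{P}(\N);+,\in)\xrightarrow{\sim}(\{n\gamma_0\},S/\hspace{-1mm}\approx;E)$.

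\textbf{Second step.} Conclude undecidability: since $(\N,\mathcal{P}(\N);+,\in)$ is undecidable by Corollary~\ref{MOSNplus}, and a structure that interprets an undecidable structure is itself undecidable, the theory of $(K,\Gamma;v,T)$ is undecidable. This is the formal wrapping-up and is immediate once the interpretation is in hand.

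\textbf{Main obstacle.} There is no serious obstacle here; the proof is essentially bookkeeping parallel to Theorem~\ref{NinK}. The one point requiring mild care is that in the two-sorted $(K,\Gamma;v,T)$ setting we do not have direct access to the monomial group $t^\Gamma$ inside $K$, so the encoding of $\N$ must live on the $\Gamma$-sort rather than on a set of monomials; one must check that $R$, $S$, $\approx$, and $E$ are all genuinely definable from just $v$ and $T$ (which the displayed formula for $R$ and the remarks above confirm), and that $+$ on the copy $\{n\gamma_0:n\in\N\}$ of $\N$ is the restriction of the value-group addition, which it is since $n\gamma_0+m\gamma_0=(n+m)\gamma_0$. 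Everything else is routine.
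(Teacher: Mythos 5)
Your proposal is correct and follows essentially the paper's route: the paper proves the corollary by noting that the two-sorted structure interprets $(\N,\mathcal{P}(\N);+,\in)$ "similarly to the proof of Theorem~\ref{NinK}" and then invokes Corollary~\ref{MOSNplus}, which is exactly your two steps, with the copy of $\N$ placed in the $\Gamma$-sort via $\supp(f)=\{n\gamma_0:n\in\N\}$ and the definable relation $R$. Your write-up just makes explicit the adaptation of the Theorem~\ref{NinK} argument that the paper leaves implicit.
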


\section[Undecidability of Transeries with Exponential function and Monomials]{Undecidability of the Exponential Field of Transseries with a Predicate for its Multiplicative Group of Transmonomials}
\noindent
The field $\T$ of transseries is a real closed field extension of $\R$ with a canonical exponential operation \[\exp: \T\to  \T\] that extends the usual exponentiation
with base $e$ on $\R$. The elements of $\T$ are called \textbf{transseries} or  \textbf{logarithmic-exponential series}.  See \cite[Appendix A]{ADH} for a detailed
construction of $\T$. Below in Chapter 8 we repeat some key features
of this construction in connection with the canonical derivation of $\T$, but in this section the derivation plays no role, and we only  list here
some general facts about $\T$ that are relevant in connection with the undecidability results below. 

First, it is known from  \cite{DMM1} that $\T$ as an exponential field is an elementary extension of the real exponential field $\R_{\exp}$.  The field $\T$ also has a natural valuation on it given by the valuation ring 
\[\{f\in \T:\ |f|\le n \text{ for some }n\},\] 
which makes $\T$ a valued field with its subfield $\R$ as a field of representatives for the residue field.
The theory of $\T$ as a {\em valued exponential field\/} has also been analyzed, leading to model-completeness in a natural language, with various
 tameness properties such as weak o-minimality (and thus NIP) as a consequence; this is a special case of results in \cite{DII} and \cite{DL}.  

As a valued field, $\T$ has a distinguished monomial group $G^{\operatorname{LE}}$ whose elements are called \textbf{transmonomials} or \textbf{logarithmic-exponential monomials}. In fact, 
$\T$ is construed as a truncation closed subfield of the Hahn field  $\R[[G^{\operatorname{LE}}]]$, with $\R(G^{\operatorname{LE}})\subseteq \T$.  As a valued field with monomial group, the theory of $\T$ is also well-understood as a special case of the AKE-theorems, with AKE standing for Ax, Kochen, and Ersov. In particular, this complete theory is axiomatized by the
axioms for real closed fields with a nontrivial convex valuation and a monomial group of positive elements. 

\medskip\noindent
We now show that these positive results do not go through if we equip the valued field $\T$ simultaneously with its canonical exponentiation and  its distinguished monomial group $G^{\operatorname{LE}}$. In fact, the theory of this structure is undecidable in a rather strong way, and it turns out that we don't even need the valuation. The basic reason is that  $G^{\operatorname{LE}}=\exp \cU$, where $\cU=\{f\in \T:\ \supp f \succ 1\}$, in other words, $\cU$ is the natural infinite part of $\T$, that is, $\cU$ is the intersection of $\T$ with the canonical additive complement to the valuation ring of $\R[[G^{\operatorname{LE}}]]$.  In particular, $\cU$ is definable in the  exponential field $\T$ equipped with $G^{\operatorname{LE}}$. 

Using this, we can define truncation on $\T$ (with respect to $\cU$) in the same way we defined truncation for $\smallk((t^\Gamma))$. The same proof of Theorem \ref{NinK} goes through to show:
\begin{theorem} The valued exponential field $\T$ equipped with its monomial group $G^{\operatorname{LE}}$ interprets  the structure $(\N,\cP(\N);+,\in)$
\end{theorem}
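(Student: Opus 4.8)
The plan is to transport the proof of Theorem~\ref{NinK} to $\T$, the only genuine work being to recover, as $0$-definable data in the present language $\{0,1,+,\times,\exp,G^{\LE}\}$ (the valuation will turn out to be superfluous), the ordering, the valuation ring $\bigO$, the infinite part $\cU$, and truncation. The ordering is $0$-definable since $\T$ is real closed: $a\ge 0\iff\exists b\,(a=b^2)$. For $\cU$: recall from the construction of $\T$ that $\exp\colon\T\to\T^{>0}$ is a bijection and that $G^{\LE}=\exp(\cU)$, where $\cU=\{f\in\T:\supp f\succ 1\}=\T\cap\R[[(G^{\LE})^{\succ 1}]]$ is the canonical infinite part of the truncation-closed subfield $\T\subseteq\R[[G^{\LE}]]$. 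Writing a general $f\in\T$ as $f=u+c+\varepsilon$ with $u\in\cU$, $c\in\R$ and $\varepsilon\prec 1$, we get $\exp(f)=\exp(u)\cdot e^{c}(1+\delta)$ with $\exp(u)\in G^{\LE}$ and $\delta\prec 1$; if $\exp(f)\in G^{\LE}$, then $e^{c}(1+\delta)=\exp(f)\exp(-u)\in G^{\LE}$ is a transmonomial, but it has leading monomial $1$ and leading coefficient $e^{c}$, forcing $c=0$ and $\delta=0$, i.e. $f=u\in\cU$. Hence
\[\cU\ =\ \{f\in\T:\ \exp(f)\in G^{\LE}\}\]
is $0$-definable. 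Then $\bigO$ is $0$-definable from $\cU$ and the ordering, by $f\in\bigO\iff |f|<u$ for all $u\in\cU$ with $u>0$: every positive element of $\cU$ is infinite, which gives the forward implication, while if $f\succ 1$ then $\fd(f)^{1/2}\in G^{\LE}\subseteq\cU$ is positive and $\prec|f|$, which gives the converse (here we use that the value group of $\T$ is divisible).

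With $\bigO$ available, truncation is $0$-definable exactly as in Section~\ref{HFWT}: $f|_{1}=u\iff u\in\cU\ \&\ f-u\in\bigO$, and $f|_{\fm}=g\iff(\fm\inv f)|_{1}=\fm\inv g$ for $\fm\in G^{\LE}$; moreover, since $\cU$ is the canonical infinite part of $\T$, this agrees with truncation in the Hahn field $\R[[G^{\LE}]]$. Consequently the relation $R=\{(\fm,f)\in G^{\LE}\times\T:\ \fm\in\supp f\}$ is $0$-definable via $(\fm,f)\in R\iff f-f|_{\fm}\asymp\fm$, and so is the equivalence $f\approx g\iff\supp f=\supp g$. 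Now I would simply rerun the argument of Theorem~\ref{NinK}: fix $\fn\in G^{\LE}$ with $\fn\prec 1$ (say $\fn=x\inv$), so that $f:=\sum_{n}\fn^{n}=\tfrac{1}{1-\fn}\in\R(G^{\LE})\subseteq\T$ has support $\{\fn^{n}:n\in\N\}$; interpret the first sort by the $0$-definable set $\{\fm\in G^{\LE}:R(\fm,f)\}$ with multiplication of monomials as its ``$+$'', the second sort by $S/{\approx}$ with $S=\{g\in\T:\supp g\subseteq\supp f\}$, and ``$\in$'' by $(\fm,g/{\approx})\in E\iff R(\fm,g)$. The map $n\mapsto\fn^{n}$ then induces an isomorphism $(\N,\cP(\N);+,\in)\stackrel{\sim}{\longrightarrow}(\{\fm\in G^{\LE}:R(\fm,f)\},S/{\approx};E)$, which is the required interpretation.

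The main obstacle is confined to the definability claims above; everything after that is a verbatim transcription of Theorem~\ref{NinK}, and the passage from $(\N,\cP(\N);+,\in)$ to undecidability is supplied by Lemma~\ref{timesinMOSNplus} and Theorem~\ref{Nund}. The one input that is not pure bookkeeping is the structural identity $G^{\LE}=\exp(\cU)$ together with the bijectivity of $\exp$ on $\T$, both features of the construction of $\T$ that we take as known.
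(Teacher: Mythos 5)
Your proposal is correct and follows essentially the same route as the paper: the key point in both is that $\cU=\{f\in\T:\supp f\succ 1\}$ is $0$-definable via $G^{\LE}=\exp(\cU)$, after which truncation with respect to $\cU$ is defined as in Section~\ref{HFWT} and the interpretation of $(\N,\cP(\N);+,\in)$ is obtained by rerunning the proof of Theorem~\ref{NinK} on $\sum_n \fn^n$ with $\fn\prec 1$. Your additional step of eliminating the valuation (ordering via squares, $\bigO$ from $\cU$ and the ordering) is exactly the observation the paper makes right after this theorem to deduce the corollary for the pure exponential field, so you have in effect proved that slightly stronger statement directly.
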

\begin{corollary}
The theory of the valued exponential field  $\T$ equipped with its monomial group $G^{\operatorname{LE}}$ is undecidable.
\end{corollary}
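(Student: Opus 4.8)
The plan is to read this off the preceding theorem together with Corollary~\ref{MOSNplus}, using the standard model-theoretic fact that interpretations transfer undecidability: if a structure $A$ interprets a structure $B$, then a decision procedure for the theory of $A$ would produce one for the theory of $B$, since each $B$-sentence translates, along the interpretation, into an $A$-sentence of the same truth value. By the theorem just proved, the valued exponential field $\T$ equipped with $G^{\operatorname{LE}}$ interprets $(\N,\cP(\N);+,\in)$, and by Corollary~\ref{MOSNplus} the latter has undecidable theory; hence so does the former. That is the whole argument for the corollary itself.

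All the substance lies upstream, in the interpretation theorem, whose proof I would obtain by mimicking that of Theorem~\ref{NinK}. The crucial observation is that the infinite part $\cU=\{f\in\T:\ \supp f\succ 1\}$ becomes definable once $G^{\operatorname{LE}}$ is available, because $G^{\operatorname{LE}}=\exp\cU$ and $\cU$ is exactly the intersection of $\T$ with the canonical additive complement to the valuation ring of $\R[[G^{\operatorname{LE}}]]$. From $\cU$ one defines truncation on $\T$ as for $\smallk((t^\Gamma))$: put $f|_1=u$ iff $u\in\cU$ and $f-u\in\bigO$, and $f|_{\fm}=g$ iff $(\fm^{-1}f)|_1=\fm^{-1}g$ for $\fm\in G^{\operatorname{LE}}$. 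Then one re-runs the earlier construction: fix $\fn\prec 1$ in $G^{\operatorname{LE}}$, set $f=\sum_n\fn^n$, let $\approx$ be the definable relation ``equal support'', let $S$ be the set of series supported in $\supp f$, and let $E$ be the definable membership relation on $t^{\supp f}\times(S/\hspace{-1mm}\approx)$; the map $\iota(n)=\fn^n$ then induces an isomorphism $(\N,\cP(\N);\in)\stackrel{\sim}{\longrightarrow}(t^{\supp f},S/\hspace{-1mm}\approx;E)$ with $\iota(m+n)=\iota(m)\iota(n)$, and multiplication on $\N$ is definable from $+$ and $\in$ by Lemma~\ref{timesinMOSNplus}.

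The one place a subtlety could hide — and hence the main thing to check — is that the asymptotic relations $\preceq,\prec,\asymp$, the relevant subtractions, and truncation are all definable in $\T$ from the field operations together with $G^{\operatorname{LE}}$ and the valuation, exactly as in Section~\ref{HFWT}; this rests precisely on the identity $G^{\operatorname{LE}}=\exp\cU$ and on $\cU$ being an additive complement to $\bigO$. Granting that, both the interpretation theorem and the corollary are immediate.
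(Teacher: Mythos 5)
Your proposal is correct and follows the paper's own route: the corollary is read off immediately from the preceding interpretation theorem together with Corollary~\ref{MOSNplus}, and the interpretation itself is obtained exactly as you describe, via the definability of $\cU$ from $G^{\operatorname{LE}}=\exp\cU$ and a rerun of the proof of Theorem~\ref{NinK}. No gaps.
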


\medskip\noindent
Note that the valuation ring of $\T$ is convex. Thus we can define the valuation ring from the infinite part $\cU$ and the ordering, since 
\[\bigO\  =\  \{f\in K:\  |f| \leq u\text{ for all positive } u\in \cU\}.\]
Furthermore, the ordering is definable from the field structure, so we obtain:

\begin{corollary}
The theory of the exponential field $\T$ with monomial group $G^{\operatorname{LE}}$ is undecidable.
\end{corollary}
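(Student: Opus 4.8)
The plan is to deduce this from the immediately preceding corollary by observing that the valued exponential field $\T$ with monomial group $G^{\operatorname{LE}}$ is merely a definitional expansion of the exponential field $\T$ with monomial group $G^{\operatorname{LE}}$; concretely, one shows that the valuation ring $\bigO$ is $0$-definable in the language $\{0,1,+,\times,\exp,G^{\operatorname{LE}}\}$, so that adjoining a predicate for $\bigO$ adds nothing to the definable relations.

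First I would recover the infinite part $\cU=\{f\in\T:\ \supp f\succ 1\}$. As recorded in the discussion above, $G^{\operatorname{LE}}=\exp\cU$, and since $\exp\colon\T\to\T^{>0}$ is a bijection this set identity upgrades to the pointwise equivalence $f\in\cU\iff\exp(f)\in G^{\operatorname{LE}}$, valid for all $f\in\T$. Hence $\cU$ is defined by the formula ``$G^{\operatorname{LE}}(\exp x)$'' in the exponential field $\T$ equipped with $G^{\operatorname{LE}}$.

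Next, $\T$ is real closed, so its field ordering (and hence $|{\cdot}|$) is $0$-definable via $x\geq 0\iff\exists y\,(y^2=x)$. The valuation ring of $\T$ is convex, being $\{f\in\T:\ |f|\leq n\text{ for some }n\}$, and as noted in the excerpt
\[\bigO\ =\ \{f\in\T:\ |f|\leq u\ \text{ for all positive }u\in\cU\}.\]
Combining this with the defining formula for $\cU$ from the previous paragraph shows $\bigO$ is $0$-definable in $\{0,1,+,\times,\exp,G^{\operatorname{LE}}\}$. Consequently every relation of the valued exponential field with monomial group is already definable in the exponential field with monomial group, and the preceding corollary then yields that the latter has undecidable theory (in fact it still interprets $(\N,\cP(\N);+,\in)$).

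There is no substantive obstacle here: the only facts that need care are that $G^{\operatorname{LE}}=\exp\cU$, which is established in the discussion preceding the theorem above, and that $\exp$ is injective on $\T$, which together justify the defining formula for $\cU$; everything else is an immediate consequence of real closedness together with the convexity of the valuation ring.
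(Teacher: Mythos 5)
Your proposal is correct and follows essentially the same route as the paper: define $\cU$ from $G^{\operatorname{LE}}=\exp\cU$ via the exponential, recover the ordering from the field structure (real closedness), and use convexity of the valuation ring to define $\bigO$ as $\{f:\ |f|\leq u \text{ for all positive } u\in\cU\}$, then invoke the preceding corollary. No gaps; the extra remark about injectivity of $\exp$ just makes explicit what the paper leaves implicit.
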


\medskip\noindent

\section{Witnessing the Strict Order Property and a Tree Property}
%Dividing lines in model theoretic structures}

\noindent
We have already shown how $(\N;+,\times)$ can be interpreted in the $L$-structure $K$ and thus we know that it has the strict order property and the tree property of the second kind among others. In this section we make explicit a binary relation that witnesses these properties inside $K$. 
\subsection*{The independence property}
Let $L$ be a language and $\cM= (M;\ldots)$ an $L$-structure. We say that an $L$-formula $\phi(x;y)$ \textbf{shatters} a set $A\subseteq M^x$ if for every subset $S$ of $A$ there is $b_S\in M^y$ such that for every $a\in A$ we have that $M\models \phi(a;b_S)$ if and only if $a\in S$. Let $T$ be an $L$-theory. We say that $\phi(x;y)$ has the \textbf{independence property with respect to }$T$, or \textbf{IP} for short, if there is a model $M$ of $T$, such that $\phi(x;y)$ shatters an infinite subset of $M^x$. 

For a partitioned formula $\phi(x;y)$ we let $\phi^{opp}(y;x) = \phi(x;y)$, that is, $\phi^{opp}$ is the same formula $\phi$ but where the role of the parameter variables and type variables is exchanged.

\begin{lemma}\label{IPopp}
A formula $\phi(x;y)$ has $\IP$ if $\phi^{opp}$ has $\IP$.
\end{lemma}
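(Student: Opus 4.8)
The statement is the standard symmetry of the independence property between a partitioned formula and its opposite, so the plan is to unwind the definition of shattering and exhibit the required shattered set for $\phi$ from one for $\phi^{\mathrm{opp}}$ by a routine ``diagonal'' encoding. Suppose $\phi^{\mathrm{opp}}(y;x)$ has $\IP$, witnessed in a model $M \models T$: there is an infinite set $B = \{b_i : i \in I\} \subseteq M^{y}$ shattered by $\phi^{\mathrm{opp}}$. By the Sauer--Shelah shrinking argument (or simply by passing to an elementary extension and using compactness) we may arrange that $B$ is indexed by $\N$ and that there exist parameters $a_W \in M^{x}$ for \emph{every} finite subset $W \subseteq \N$ with $M \models \phi^{\mathrm{opp}}(b_i; a_W) \iff i \in W$; recall $\phi^{\mathrm{opp}}(b_i;a_W)$ is by definition $\phi(a_W;b_i)$.

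First I would introduce, for each $n \in \N$, the parameter tuple $c_n := a_{\{n\}} \in M^{y}$... wait, more carefully: the goal is to produce an infinite set $A \subseteq M^{x}$ that $\phi(x;y)$ shatters. The clean way is to pass to a sufficiently saturated $M$ and use compactness directly: the type-defining condition ``$\phi^{\mathrm{opp}}$ has $\IP$'' is equivalent, by a well-known compactness argument, to the existence (in some model of $T$) of $(b_i)_{i\in\N}$ and $(a_S)_{S\subseteq\N}$ with $M\models\phi(a_S;b_i)\iff i\in S$ for all $i$ and all $S\subseteq\N$. Now set $A := \{a_{S} : S \subseteq \N\}$; this is infinite since for distinct $S,S'$ the tuples $a_S, a_{S'}$ are distinguished by some $b_i$. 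To see $\phi(x;y)$ shatters $A$ (or rather an infinite subset of it), restrict attention to the countable subfamily $A_0 := \{a_{S_j} : j\in\N\}$ for a suitable choice of distinct sets $S_j$; for any target subset $\mathcal{T}\subseteq A_0$, I must find a single parameter $b\in M^{y}$ with $M\models\phi(a_{S_j};b)\iff a_{S_j}\in\mathcal{T}$. The obstacle is that the $b_i$ are indexed by $\N$, not by subsets of $\N$, so a bare finite family of $b_i$'s will not do; the fix is to choose the index sets $S_j$ to realize \emph{all} possible ``columns'', i.e.\ pick $S_j$ so that the map $j \mapsto (\text{bit pattern of }S_j)$ is itself rich enough that each $b_i$ separates $A_0$ in a prescribed way. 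Concretely, enumerate all finite subsets of $\N$ and use a pairing/coding of $\N\times\N$ so that for each target $\mathcal{T}$ there is an index $i$ with $i\in S_j \iff a_{S_j}\in\mathcal{T}$; then $b := b_i$ works.

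The one genuine technical point --- and the step I expect to be the main obstacle --- is organizing this coding so that a \emph{single} $b_i$ picks out an arbitrary subset of the chosen infinite $A_0$, rather than only subsets of some fixed finite slice. This is precisely the content of the symmetric reformulation of $\IP$ via infinite VC-dimension: a formula has $\IP$ iff there is, in some model, a sequence $(a_i)_{i\in\N}$ and a family $(b_S)_{S\subseteq\N}$ with $\models\phi(a_i;b_S)\iff i\in S$, and this is manifestly symmetric in the roles of the two variable blocks. I would quote this equivalence (it is standard, and follows from compactness together with Ramsey's theorem / the Sauer--Shelah lemma applied inside a saturated model), and then the lemma is immediate: from such a configuration for $\phi^{\mathrm{opp}}$, the \emph{same} configuration read with the variable blocks swapped witnesses $\IP$ for $\phi$. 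In fact, once that equivalence is in hand, the proof is a single sentence, so most of the write-up is just recalling the equivalence; alternatively one can give the elementary pairing-function argument sketched above and avoid citing Sauer--Shelah, at the cost of a short explicit computation with the coding $\N \cong \N \times \N$.
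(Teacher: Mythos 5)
Your proposal correctly identifies where the work lies, but the resolution you offer at that point does not work, and this is a genuine gap. Having fixed a countable configuration $(b_i)_{i\in\N}$, $(a_S)_{S\subseteq \N}$ with $\models\phi(a_S;b_i)\iff i\in S$, you propose to choose index sets $S_j$ so that ``for each target $\mathcal{T}\subseteq A_0$ there is an index $i$ with $i\in S_j\iff a_{S_j}\in\mathcal{T}$; then $b:=b_i$ works.'' This is impossible on cardinality grounds: the map $i\mapsto\{j:\ i\in S_j\}$ sends $\N$ into $\cP(\N)$ and can never be surjective, so no pairing or coding of the $S_j$ lets the countably many original witnesses $b_i$ cut out all $2^{\aleph_0}$ subsets of an infinite $A_0$ (only countably many traces are available). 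Your fallback --- quoting that $\IP$ is equivalent to the existence of $(a_i)_{i\in\N}$ and $(b_S)_{S\subseteq\N}$ with $\models\phi(a_i;b_S)\iff i\in S$, ``which is manifestly symmetric'' --- begs the question: that configuration is not symmetric in the two variable blocks, since one side is indexed by $\N$ and the other by $\cP(\N)$, and converting it into the same configuration with the blocks swapped is precisely the content of the lemma. The Sauer--Shelah/VC-duality route would indeed give an alternative proof, but you only cite it as a black box for a statement that, as phrased, is not the needed symmetric criterion.

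Two correct ways to close the gap. The paper's proof runs in the equivalent direction ($\phi$ has $\IP$ $\Rightarrow$ $\phi^{opp}$ has $\IP$, which suffices since $(\phi^{opp})^{opp}=\phi$): by compactness one first enlarges the shattered set so it is indexed by $\cP(\N)$, say $\{a_J:\ J\in\cP(\N)\}$ with witnesses $b_I$ for $I\subseteq\cP(\N)$; then the countably many $b_{I_i}$ with $I_i=\{Y\subseteq\N:\ i\in Y\}$ satisfy $\models\phi(a_J;b_{I_i})\iff i\in J$, so this set is shattered by $\phi^{opp}$ with the (continuum many) $a_J$ as witnesses. The key move is thus to blow up the shattered side to size continuum \emph{before} dualizing, so that there are enough elements to witness all subsets on the other side. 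Alternatively, your set-up can be repaired without re-indexing: choose $(S_j)_{j\in\N}$ to be an independent family of subsets of $\N$ (all finite Boolean combinations nonempty); then for every $W\subseteq\N$ each finite fragment of the condition ``$\phi(a_{S_j};y)$ iff $j\in W$'' is realized by some $b_i$, and compactness (or $\aleph_1$-saturation) provides a witness $b_W$ --- in general \emph{not} among the $b_i$ --- so $\{a_{S_j}:j\in\N\}$ is shattered by $\phi$. Your write-up asks the witnesses to come from the given $b_i$, which is exactly what fails.
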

\begin{proof}
By compactness the formula $\phi(x;y)$ shatters some set $\{a_J: J\in \cP(\N)\}$. Let the shattering be witnessed by $\{b_I:I\subseteq \cP(\N)\}$. Let $B= \{b_{I_i}: i\in\N\}$ be such that $I_i = \{Y\subseteq \N: i\in Y\}$. Then we have
\[\models\phi(a_J, b_{I_i}) \iff i \in J,\]
and thus $\phi^{opp}$ shatters $B$. 
\end{proof}

\subsection*{The Strict Order Property}
We say that a formula $\phi(x;y)$ has the Strict Order Property, or $\SOP$ for short, if there are $b_i\in M^y $, for $i\in \N$, such that $\phi(M^x,b_i) \subset \phi(M^x,b_j)$ whenever $i<j$. 

\begin{proposition}
The formula $\varphi(x;y)$, defining the relation $R$ as in section \ref{HFWT}, has $\SOP$.
\end{proposition}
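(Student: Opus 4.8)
The relation $R$ is defined on $K = \smallk((t^\Gamma))$ by $(a,b) \in R \iff a \in t^\Gamma \text{ and } b - b|_a \asymp a$; unpacking the definition, for $a = \fm \in t^\Gamma$ this says exactly that $\fm \in t^{\supp(b)}$, i.e.\ $\fm$ occurs in the support of $b$. Writing $\varphi(x;y)$ for the defining formula, the set $\varphi(K^x, b) = \{\fm \in t^\Gamma : \fm \in \supp(b)\}$ is precisely $t^{\supp(b)}$. So to get $\SOP$ it suffices to produce elements $b_i \in K$, for $i \in \N$, whose supports form a strictly increasing chain of subsets of $t^\Gamma$; then $\varphi(K^x, b_i) \subsetneq \varphi(K^x, b_j)$ for $i < j$.

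The plan is as follows. First, fix any $\fn \in t^\Gamma$ with $\fn \prec 1$ (possible since $\Gamma \neq \{0\}$), so that $\fn^0 = 1, \fn, \fn^2, \ldots$ is a strictly decreasing sequence of monomials; the set $\{\fn^k : k \in \N\}$ is well-based (reverse-well-ordered), hence a legitimate support. Next, for each $i \in \N$ set
\[
b_i\ :=\ \sum_{k=0}^{i} \fn^k\ \in\ K,
\]
a finite sum, so visibly an element of $K$ with $\supp(b_i) = \{1, \fn, \ldots, \fn^i\}$ (all coefficients equal to $1 \neq 0$). Then $\supp(b_i) \subsetneq \supp(b_j)$ whenever $i < j$, hence $t^{\supp(b_i)} \subsetneq t^{\supp(b_j)}$. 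Finally, invoking the definability of $R$ established in Section~\ref{HFWT} (so that $\varphi(x;y)$ genuinely defines $R$ in the $L_{\bigO,\fM,\cU}$-structure $K$) and the computation $\varphi(K^x,b) = t^{\supp(b)}$ above, we conclude $\varphi(K^x, b_i) \subsetneq \varphi(K^x, b_j)$ for $i < j$, which is exactly $\SOP$ for $\varphi$.

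There is essentially no obstacle here: the only thing to check carefully is the translation between the defining formula $\varphi(x;y)$ and the combinatorial condition ``$\fm \in \supp(b)$'', which is immediate from the identity $b - b|_{\fm} = \sum_{\fm' \preceq \fm} b_{\fm'}\fm'$, so that $b - b|_{\fm} \asymp \fm$ holds iff the coefficient of $\fm$ in $b$ is nonzero. (One could equally take the infinite series $b = \sum_k \fn^k$ from the proof of Theorem~\ref{NinK} and its partial sums, but the finite partial sums already suffice and avoid any summability remarks.) The strictness of the chain is guaranteed because each $b_{i+1}$ has the extra monomial $\fn^{i+1}$ in its support that $b_i$ lacks.
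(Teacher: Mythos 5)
Your proposal is correct and follows essentially the same route as the paper: the paper takes a strictly increasing sequence $\theta_0<\theta_1<\cdots$ in $\Gamma$ and uses the finite partial sums $f_n=\sum_{i=0}^n t^{\theta_i}$, whose supports strictly increase, so $\varphi(K,f_m)\subset\varphi(K,f_n)$ for $m<n$. Your choice of the powers $\fn^k$ of a fixed $\fn\prec 1$ is just a particular instance of this, and the unpacking of $b-b|_{\fm}\asymp\fm$ as $\fm\in\supp(b)$ is the same observation the paper relies on.
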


\begin{proof}
Let $\Theta= \{\theta_i: i\in \N\}$ be any subset of $\Gamma$ such that $\theta_i <\theta_j$ for $i<j$, and consider the set $\{f_n = \sum_{i=0}^n t^{\theta_i}: i \in \N\}$. Note that $\varphi(K,f_m)\subset \varphi(K,f_n)$ for $m<n$.  
\end{proof}

\subsection*{The tree property of the second kind}
We say that a formula $\phi(x;y)$ has the \textbf{tree property of the second kind}, or $\TP$ for short, if there are tuples $b_j^i\in M^y$, for $i,j\in \N$, such that for any $\sigma:\N\rightarrow \N$ the set $\{\phi(x;b^i_{\sigma(i)}): i\in \N\}$ is consistent and for any $i$ and $j\neq k$  we have $\{\phi(x;b^i_j),\phi(x;b^i_k)\}$ is inconsistent.

\begin{lemma}\label{TP2IP}
If $\phi(x;y)$ has $\TP$ then $\phi$ has $\IP$.
\end{lemma}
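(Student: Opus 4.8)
The plan is to deduce that $\phi$ has $\IP$ from Lemma~\ref{IPopp} by showing instead that $\phi^{opp}$ has $\IP$. So suppose $\phi(x;y)$ has $\TP$, witnessed by tuples $b^i_j$ ($i,j\in\N$) in a model $M$ of the ambient theory $T$. The first step is a harmless normalization: by passing to a sufficiently saturated elementary extension of $M$ (which leaves the $b^i_j$ and the defining clauses of $\TP$ untouched) I may assume that for every $\sigma\colon\N\to\N$ there is $a_\sigma\in M$ realizing the consistent set $\{\phi(x;b^i_{\sigma(i)}):i\in\N\}$.

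The key combinatorial observation I would isolate next is that the array is \emph{exact} along each path: $M\models\phi(a_\sigma;b^i_j)$ if and only if $j=\sigma(i)$. Here ``$\Leftarrow$'' is just the choice of $a_\sigma$, and ``$\Rightarrow$'' is forced by the column-inconsistency clause of $\TP$: if $j\ne\sigma(i)$ then $\{\phi(x;b^i_j),\phi(x;b^i_{\sigma(i)})\}$ is inconsistent while $a_\sigma$ already satisfies the second formula, so it cannot satisfy the first.

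From exactness everything falls out by choosing the $\sigma$'s cleverly. For a subset $S\subseteq\N$ put $\sigma_S(i)=i$ if $i\in S$ and $\sigma_S(i)=i+1$ if $i\notin S$; then $\sigma_S(i)=i$ holds exactly when $i\in S$, so exactness gives $M\models\phi(a_{\sigma_S};b^i_i)$ iff $i\in S$, that is, $M\models\phi^{opp}(b^i_i;a_{\sigma_S})$ iff $i\in S$. The same trick shows the diagonal $D:=\{b^i_i:i\in\N\}$ has no repetitions, hence is infinite: given $i\ne i'$, apply exactness to any $\sigma$ with $\sigma(i)=i$ and $\sigma(i')=i'+1$ to get $M\models\phi(a_\sigma;b^i_i)$ but $M\not\models\phi(a_\sigma;b^{i'}_{i'})$, so $b^i_i\ne b^{i'}_{i'}$. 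Thus $\phi^{opp}$ shatters the infinite set $D$ in the model $M$ of $T$, so $\phi^{opp}$ has $\IP$, and therefore $\phi$ has $\IP$ by Lemma~\ref{IPopp}.

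I do not expect a serious obstacle here; the one point that needs a word of care is the normalization step, namely that all the realizations $a_\sigma$ can be found inside a single model of $T$ — this is exactly what the passage to a saturated elementary extension provides, and it is the reason the conclusion is naturally phrased with respect to $T$.
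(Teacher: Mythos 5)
Your proof is correct and is essentially the paper's argument: use path-consistency plus row-inconsistency of the $\TP$ array to get realizations whose $\phi$-behaviour picks out exactly the prescribed subset of a transversal of the array, so $\phi^{opp}$ shatters that set, and conclude via Lemma~\ref{IPopp}. The only differences are cosmetic — you shatter the diagonal $\{b^i_i\}$ where the paper shatters the column $\{b^i_0\}$ via $\{0,1\}$-valued paths — and you add the (worthwhile) details about realizing the paths in a saturated extension and checking the shattered set is infinite, which the paper leaves implicit.
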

\begin{proof}
Let $\{\phi(x,b^i_j)\}_{i,j\in \N}$ witness $\TP$ for $\phi(x;y)$. Fix $j$. Without loss of generality we will assume that $j=0$. Consider the set $\{b^i_0\}$. Let $I\subseteq \N$. By $\TP$ there is $a_I \in M^x$ such that \[\cM\models  \phi(a_I;b^i_j) \iff (i\in I \text{ and }j=0, \text{ or } i\notin I \text{ and } j=1).\]
Thus by Lemma \ref{IPopp} $\phi(x;y)$ has IP.
\end{proof}

\begin{lemma}\label{TP2Char}
Let $A=\{a_i:i\in \N\}\subseteq M^x$ and $B=\{b_I:I\in \mathcal{P}(\N)\}\subseteq M^y$. Assume that there is $\phi(x;y)$ such that for any fixed $b_I\in B$
\[\models \phi(a;b_I)\iff \text{ there is }  i\in I \text{ such that }a=a_i .\]
Then $\phi$ has $\TP$. 
\end{lemma}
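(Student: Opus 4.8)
\textbf{Proof plan for Lemma~\ref{TP2Char}.}

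The plan is to exhibit an explicit array $(b^i_j)_{i,j\in\N}$ witnessing $\TP$, built directly from the set $B$ whose shattering behavior is governed by $\phi$. First I would fix, for each pair $(i,j)\in\N\times\N$, a set $I^i_j\in\cP(\N)$ and put $b^i_j:=b_{I^i_j}$; the whole problem reduces to choosing the family $(I^i_j)$ so that the two clauses in the definition of $\TP$ hold. Recall what is needed: for every $\sigma:\N\to\N$ the set $\{\phi(x;b^i_{\sigma(i)}):i\in\N\}$ must be consistent, and for every $i$ and all $j\ne k$ the pair $\{\phi(x;b^i_j),\phi(x;b^i_k)\}$ must be inconsistent.

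For the inconsistency-within-a-row requirement, the natural choice is to make the rows partition $\N$ into disjoint ``cells'': fix a bijection $\N\cong\N\times\N$, write it as $n\mapsto(r(n),c(n))$, and set $I^i_j:=\{n\in\N: r(n)=i,\ c(n)=j\}$, i.e.\ the $i$-th row is split into the disjoint pieces $I^i_0,I^i_1,I^i_2,\dots$, and these pieces together with the rows themselves tile $\N$. By the hypothesis on $\phi$, the realized set $\phi(a;b_I)$ is ``$a$ lies over some $a_i$ with $i\in I$''; so for $j\ne k$ the sets $I^i_j$ and $I^i_k$ are disjoint, and any $a$ satisfying both $\phi(a;b^i_j)$ and $\phi(a;b^i_k)$ would have to equal $a_n$ for some $n\in I^i_j$ and also $a_m$ for some $m\in I^i_k$; since the $a_n$ are distinct and $I^i_j\cap I^i_k=\emptyset$, this is impossible once the $a_i$ are genuinely distinct (replace $A$ by an infinite subset if necessary, or note that in the intended application the $a_i$ are pairwise distinct). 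Hence $\{\phi(x;b^i_j),\phi(x;b^i_k)\}$ is inconsistent, giving the second clause.

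For the first clause, given $\sigma:\N\to\N$, I need a single $a$ with $\models\phi(a;b^i_{\sigma(i)})$ for all $i$; by hypothesis this holds for $a=a_n$ as soon as $n\in I^i_{\sigma(i)}$ for every $i$. Now observe that the sets $I^0_{\sigma(0)},I^1_{\sigma(1)},I^2_{\sigma(2)},\dots$ are pairwise disjoint (they live in distinct rows) but are otherwise unconstrained, so their union need not contain any single point. The fix is to enlarge the index set: instead of indexing cells by $\N\times\N$, index them so that each row is split into infinitely many cells \emph{and} for every function $\sigma$ the intersection $\bigcap_i I^i_{\sigma(i)}$ is nonempty. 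Concretely, let the underlying point set be $\N^{\N}$ (identified back with $\N$ by any bijection, using that $\N^{\N}$ is countable is false — so instead take the point set to be $\N$, and realize $\TP$ after passing to a model of the theory where compactness supplies the needed points). The clean route: work in $\cM$, note $A$ and $B$ already satisfy the shattering property of the \emph{opposite} formula $\phi^{opp}$ in the sense of Lemma~\ref{TP2IP}'s proof, and then invoke compactness to find, in an elementary extension, points $a_\sigma$ for each $\sigma:\N\to\N$ with $\models\phi(a_\sigma;b^i_{\sigma(i)})$ for all $i$; this is exactly the consistency of the partial type $\{\phi(x;b^i_{\sigma(i)}):i\in\N\}$, which is finitely satisfiable because any finite subset $\{\phi(x;b^{i_1}_{\sigma(i_1)}),\dots,\phi(x;b^{i_r}_{\sigma(i_r)})\}$ is witnessed by any $a_n$ with $n\in I^{i_1}_{\sigma(i_1)}$, and we may choose the cell decomposition so that finite such intersections are always nonempty (e.g.\ index cells in row $i$ by $\N$ and declare $n\in I^i_j$ whenever the $i$-th coordinate of some fixed enumeration equals $j$; finitely many coordinate conditions are always jointly realizable). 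I expect the main obstacle to be precisely this bookkeeping: arranging a single countable decomposition of the point set into cells $I^i_j$ that is simultaneously row-disjoint (for the inconsistency clause) and has the finite-intersection property across rows (for the consistency clause), after which compactness does the rest. Once the array is in place, the two clauses of $\TP$ are immediate from the defining equivalence of $\phi$, and the lemma follows.
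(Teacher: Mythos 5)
Your final construction is essentially the paper's proof: there, one sets $b^i_j=b_{A^i_j}$ for an explicit family $A^i_j\subseteq\N$ (iterated prime towers) that is pairwise disjoint within each row and has nonempty intersection along any finite choice of rows, and then gets the within-row inconsistency from the hypothesis (and distinctness of the $a_i$) and the path-consistency by compactness, exactly as your coordinate cells $I^i_j$ do. Only one clause needs repair: a finite subset $\{\phi(x;b^{i_1}_{\sigma(i_1)}),\dots,\phi(x;b^{i_r}_{\sigma(i_r)})\}$ is witnessed by $a_n$ with $n\in\bigcap_{s\le r} I^{i_s}_{\sigma(i_s)}$, not merely $n\in I^{i_1}_{\sigma(i_1)}$, which is precisely why the nonempty-finite-intersection property you impose on the cell decomposition is needed.
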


\begin{proof}
Let $\phi$, $A$, and $B$ be as in the hypothesis of the Lemma. Let $P=\{p_i \in \N\}$ be the set of primes where $p_i\neq p_j$ for $i\neq j$. We construct $A^i_j\subseteq \N$ recursively as follows:
\begin{itemize}
	\item $A^0_j:= \{p_j^{n_0}:n_0>0\}$
    \item $A^i_j:= \{p_{n_i}^m: n_i\in \N,\ m\in A^{i-1}_{j}\}.$
\end{itemize}
So for example $A^1_2 =\{p_{n_1}^{p_2^{n_0}}: n_1,n_0>0\}$. \\
\textbf{Claim 1} For $\alpha \in \N^{n}$ we have that $\bigcap_{i<n} A^i_{\alpha(i)}\neq \emptyset$.\\
It is not hard to check that 
\[p_{\alpha(0)}^{\text{\reflectbox{$\ddots$}} ^{p_{\alpha(n-1)}}}\in \bigcap_{i<n} A^i_{\alpha(i)}.\]
\textbf{Claim 2:} For fixed $i$, and $j\neq k$ we have $A^i_j\cap A^i_k = \emptyset$. \\
For simplicity in notation we prove the case where $i=1$. Let $m\in A^1_j\cap A^1_k$. Then $m = p_{m_1}^{p_j^{m_0}} = p_{n_1}^{p_k^{n_0}}$. Since $p_j^{m_0}$ and $p_k^{n_0}$ are nonzero, we have that $p_{m_1} = p_{n_1}$, and thus $p_j^{m_0} = p_k^{n_0}$. Similarly, since $m_0$ and $n_0$ are nonzero we conclude that $p_j=p_k$, and thus $j=k$. 

Now let $b^i_j = b_{A^i_j}$. By compactness, together with Claim 1, we get that the set $\{\phi(x;b^i_{\sigma(i)}): i\in \N\}$ is consistent.
By the hypothesis of the Lemma, together with claim 2, we get that for any $i$ and $j\neq k$  we have $\{\phi(x;b^i_j),\phi(x;b^i_k)\}$ is inconsistent.
\end{proof}
 If $\phi(x;y)$ and $A$ are as in the lemma, we say that $\phi(x;y)$ and $B$ \textbf{only shatter} $A$ \textbf{in} $M$. Note that in this case $A$ is in fact a definable set.

\begin{proposition}\label{TP2forK}
	The formula $\varphi(x;y)$, defining the relation $R$ as in section \ref{HFWT}, has $\TP$.
\end{proposition}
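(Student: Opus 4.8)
The plan is to deduce this from Lemma~\ref{TP2Char}, in the same spirit as the preceding proof of $\SOP$ for $\varphi$. Recall that $\varphi(x;y)$ says that $x$ is a monomial occurring in the support of $y$ (concretely, $x\in\fM$ and $y-y|_x\asymp x$). To invoke Lemma~\ref{TP2Char} I need a countable set $A=\{a_i:i\in\N\}$ of monomials together with a family $B=\{b_I:I\subseteq\N\}\subseteq K$ such that, for every $a$, we have $\models\varphi(a;b_I)$ iff $a=a_i$ for some $i\in I$.

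First I would use that $\Gamma$ is nontrivial to fix $\gamma\in\Gamma^{>}$ and set $\gamma_i:=(i+1)\gamma$, so that $\gamma_0<\gamma_1<\gamma_2<\cdots$ in $\Gamma$. Then I put $a_i:=t^{\gamma_i}\in t^\Gamma$ and, for $I\subseteq\N$, $b_I:=\sum_{i\in I}t^{\gamma_i}$. The only point worth checking is that $b_I\in K$: its support $\{\gamma_i:i\in I\}$ is a subset of the well-ordered set $\{\gamma_i:i\in\N\}$, hence is itself well-ordered, so $b_I$ is a genuine Hahn series. By construction $\supp(b_I)=\{\gamma_i:i\in I\}$, and therefore for any $a\in K$ we have $\models\varphi(a;b_I)$ iff $a\in\{t^{\gamma_i}:i\in I\}$ iff $a=a_i$ for some $i\in I$.

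This is exactly the hypothesis of Lemma~\ref{TP2Char} with $M^x=K$, $A=\{a_i:i\in\N\}$, and $B=\{b_I:I\in\cP(\N)\}$, so that lemma yields that $\varphi(x;y)$ has $\TP$. There is essentially no obstacle here: this is the ``tree'' analogue of the $\SOP$ witness above, and all the combinatorial content — the prime-power array producing the required consistency and pairwise inconsistency — is already packaged inside Lemma~\ref{TP2Char}. The only mild care needed is the well-orderedness of the supports of the $b_I$ (immediate) and the observation that the biconditional in Lemma~\ref{TP2Char} is needed for all $a$, not merely $a\in A$; this holds as well, since a non-monomial $a$ can occur in no support.
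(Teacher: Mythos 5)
Your proof is correct and follows essentially the same route as the paper: the paper takes an arbitrary (infinite) well-ordered set $\Theta\subseteq\Gamma$, uses the monomials $t^\Theta$ as the shattered set and the sums $\sum_{\delta\in\Delta}t^\delta$ ($\Delta\subseteq\Theta$) as parameters, and invokes Lemma~\ref{TP2Char}; your choice $\gamma_i=(i+1)\gamma$ is just an explicit such $\Theta$, and your verification of the ``only shatters'' hypothesis matches the paper's.
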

\begin{proof}
Let $\Theta$ be a well-ordered subset of $\Gamma$ and consider the sets \[t^\Theta=\{t^\theta:\theta \in \Theta\}, \text{ and } B= \left\{\sum_{\delta\in \Delta} t^\delta: \Delta \subseteq \Theta\right\}.\] 
It is clear then that $\varphi(x;y)$ and $B$ only shatter $t^\Theta$, and thus by Lemma \ref{TP2Char} the formula $\varphi(x;y)$ has $\TP$.
\end{proof}

\begin{corollary}
The formula $\varphi(x;y)$, defining the relation $R$ as in section \ref{HFWT}, has $\IP$.
\end{corollary}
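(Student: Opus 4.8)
The plan is to deduce this immediately from what has already been established. By Proposition~\ref{TP2forK} the formula $\varphi(x;y)$ that defines the relation $R$ has $\TP$, and by Lemma~\ref{TP2IP} every formula with $\TP$ has $\IP$. Chaining these two statements yields the corollary with no additional argument, so I would present the proof as a one-line citation of Proposition~\ref{TP2forK} and Lemma~\ref{TP2IP}.

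If a more self-contained argument were wanted, I would instead read $\IP$ directly off the construction inside the proof of Proposition~\ref{TP2forK}. That proof fixes a well-ordered $\Theta\subseteq\Gamma$ and produces the set $A=t^\Theta=\{t^\theta:\theta\in\Theta\}$ together with the family $B=\{\sum_{\delta\in\Delta}t^\delta:\Delta\subseteq\Theta\}$, and the hypothesis of Lemma~\ref{TP2Char} that it verifies is precisely that for each $\Delta\subseteq\Theta$ and each $\fm=t^\theta\in A$ one has $\models\varphi(\fm;\sum_{\delta\in\Delta}t^\delta)$ iff $\theta\in\Delta$. Choosing $\Theta$ infinite (for instance of order type $\omega$, which exists since $\Gamma$ is nontrivial throughout Section~\ref{HFWT}), this says exactly that $\varphi$ shatters the infinite set $A$ using parameters from $B$, which is the definition of $\IP$. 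Either route works, and I would favour the first.

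There is essentially no obstacle here: the substantive content lies entirely in Proposition~\ref{TP2forK} and Lemma~\ref{TP2IP}. The only point to keep an eye on is that the shattered set in Proposition~\ref{TP2forK} is genuinely infinite, i.e.\ that $\Gamma$ contains an infinite well-ordered subset; this is immediate from the standing assumption that $\Gamma\neq\{0\}$, since then $\Gamma$ has an element $\gamma>0$ and $\{0,\gamma,2\gamma,\dots\}$ is such a subset.
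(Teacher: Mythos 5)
Your proposal is correct and matches the paper exactly: the paper's proof is precisely the one-line citation of Proposition~\ref{TP2forK} and Lemma~\ref{TP2IP}. The alternative self-contained route you sketch is fine but unnecessary.
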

\begin{proof}
The result follows directly from proposition \ref{TP2forK} and lemma \ref{TP2IP}.
\end{proof}

\section{Undecidability for some Differential Hahn Fields}
\noindent
In this section we consider the Hahn field $K=C((t^\Gamma))$ where $C$ is a field of characteristic zero, and $\Gamma$ is either $\Z$ or $\Q$. We equip $K$ with the derivation $t\frac{d}{dt}$ that is trivial on $C$ and sends $t^\gamma$ to $\gamma t^\gamma$. We prove below that the theory of the valued differential field $K$ is undecidable. This result is folklore for $\Gamma= \Z$ but we include it here since it is not easily found in the literature. 

\begin{lemma}
$\{c\in C:\ y^\dagger = cf^\dagger \text{ for some } y,f\in K^\times, f\not\asymp 1\}=\Q$
\end{lemma}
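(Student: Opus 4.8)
The plan is to understand the set $D := \{c \in C : y^\dagger = c f^\dagger \text{ for some } y, f \in K^\times,\ f \not\asymp 1\}$ by computing logarithmic derivatives explicitly in $K = C((t^\Gamma))$ with the derivation $\der = t\frac{d}{dt}$. First I would record the basic formula: for $f = a t^\gamma (1 + \varepsilon)$ with $a \in C^\times$, $\gamma \in \Gamma$, and $\varepsilon \prec 1$, the logarithmic derivative is
\[
f^\dagger \;=\; \frac{f'}{f} \;=\; \gamma \;+\; \frac{\varepsilon'}{1+\varepsilon},
\]
since $a' = 0$ and $(t^\gamma)^\dagger = \gamma$. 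Here $\varepsilon' = \der(\varepsilon) \prec 1$ as well (the derivation is small: $\der(\smallo) \subseteq \smallo$), so $\frac{\varepsilon'}{1+\varepsilon} \prec 1$, and thus $f^\dagger \in C$ if and only if $\varepsilon' \asymp \varepsilon'/(1+\varepsilon)$ has the same constant term, i.e. $f^\dagger = \gamma + (\text{something} \prec 1)$ with constant term exactly $\gamma$. The key point: $f^\dagger \asymp 1$ with $(f^\dagger)|_1$ (the constant term, residue) equal to $\gamma = v(f)$.

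The containment $\Q \subseteq D$ is the easy direction: given $c = p/q \in \Q$ with $q \neq 0$, take $f = t^q$ and $y = t^p$; then $f^\dagger = q$, $y^\dagger = p$, so $y^\dagger = c f^\dagger$, and $f = t^q \not\asymp 1$ provided $q \neq 0$ — but we need $q$ to be a valid exponent, which it is since $\Z \subseteq \Gamma$ (as $\Gamma$ is $\Z$ or $\Q$) and $\Q \subseteq \Gamma$ when $\Gamma = \Q$; in either case integer exponents suffice, so take $f = t^q$, $y = t^p$ with $p, q \in \Z$, $q \neq 0$. The reverse containment $D \subseteq \Q$ is the substantive direction. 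Suppose $y^\dagger = c f^\dagger$ with $f \not\asymp 1$, so $\gamma := v(f) \neq 0$. Write $f^\dagger = \gamma + \delta_f$ and $y^\dagger = \mu + \delta_y$ where $\mu := v(y)$, and $\delta_f, \delta_y \prec 1$ (this uses that every nonzero element of $K$ has the form $a t^\nu(1+\varepsilon)$ and the formula above; note $\mu$ could be $0$). Then $c f^\dagger = y^\dagger$ gives $c\gamma + c\delta_f = \mu + \delta_y$. Comparing constant terms (apply the residue map, or truncate at $1$): $c\gamma = \mu \in \Gamma$. So $c = \mu/\gamma$.

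The main obstacle is showing this forces $c \in \Q$ rather than merely $c \in \frac{1}{\gamma}\Gamma$, since $\Gamma$ (for $\Gamma = \Q$) is large. The resolution is to use the freedom in choosing $f$: we are told only that *some* $f \not\asymp 1$ works, and $\gamma = v(f)$ ranges over $\Gamma^{\neq}$. But for a *fixed* $c$, the equation $c\gamma = \mu$ must be solvable with $\mu = v(y) \in \Gamma$ and moreover $y^\dagger - c f^\dagger = 0$ exactly — and crucially $c$ is a single element of $C$, independent of $\gamma$. The cleanest argument: from $c\gamma = \mu \in \Gamma$ and $\gamma \in \Gamma^{\neq}$, if $\Gamma = \Z$ then $c \cdot 1 \in \Z$ forcing... wait, we cannot choose $\gamma = 1$ a priori. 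Instead, take the specific $f, y$ that witness membership; then $c = \mu/\gamma$ with $\mu, \gamma \in \Z$ (case $\Gamma = \Z$) or $\mu, \gamma \in \Q$ (case $\Gamma = \Q$), and $\gamma \neq 0$, so in both cases $c = \mu/\gamma \in \Q$. That is all that is needed — the point is simply that $\Q$ is the fraction field of $\Z$ and is its own fraction field, so any ratio of two elements of $\Gamma$ lands in $\Q$. Thus $D \subseteq \Q$, and combined with the easy inclusion, $D = \Q$. I would present the logarithmic-derivative computation as the one genuine lemma-step, and treat the constant-term comparison and the ratio argument as short formal consequences.
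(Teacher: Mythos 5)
Your proposal is correct and follows essentially the same route as the paper: write $y=at^{\mu}(1+\delta)$, $f=bt^{\gamma}(1+\varepsilon)$ with $\gamma\neq 0$, use $f^\dagger=\gamma+\frac{\varepsilon'}{1+\varepsilon}$ with the tail $\prec 1$, compare constant terms to get $\mu=c\gamma$, and conclude $c=\mu/\gamma\in\Q$ since $\Gamma$ is $\Z$ or $\Q$; the converse via $y=t^p$, $f=t^q$ is also the paper's. The brief worry about $c\in\frac{1}{\gamma}\Gamma$ versus $\Q$ is moot for exactly the reason you give, so no gap remains.
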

\begin{proof}
Let $y= at^r(1+\delta)$ and $f= bt^s(1+\epsilon)$ in $K^\times$ be such that $y^\dagger = cf^\dagger$ with $a,b,c\in C$, $r,s\in \Gamma$, $s\neq0$, and $\delta, \epsilon\in K^{\prec 1}$. Then $r +\frac{\delta'}{1+\delta} = c(s+ \frac{\epsilon'}{1+\epsilon})$, so $r= cs$ and thus $c=\frac{r}{s}\in \mathbb{Q}$.
For the other direction it is clear that if $c =\frac{p}{q}\in \Q$, then $y= t^p$ and $f= t^q$ satisfy $y^\dagger = cf^\dagger$.
\end{proof}

\begin{theorem}\label{undCtGam}
The theory of the valued differential field $K$ is undecidable. 
\end{theorem}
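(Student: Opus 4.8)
The plan is to extract from the preceding lemma a first-order definition of $\Q$ inside the valued differential field $K$, and then reduce to the undecidability of $(\N;+,\times)$ via Julia Robinson's definability of $\Z$ in $(\Q;+,\times)$. Concretely, the constant field $C=\{c\in K:\ \der(c)=0\}$ is $\emptyset$-definable in $K$ using only the derivation, and by the lemma just proved, $\Q$ is precisely the set of $c\in C$ for which there are $y,f\in K^\times$ with $f\not\asymp 1$ and $y^\dagger=cf^\dagger$. The condition $f\not\asymp 1$ is expressed by the valuation (the dominance relation), while $y^\dagger$ and $f^\dagger$ are terms in the derivation, so $\Q$ is an $\emptyset$-definable subset of $K$ as a valued differential field, and this works uniformly for $\Gamma=\Z$ and $\Gamma=\Q$. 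Restricting the ring operations of $K$ to this definable set exhibits $(\Q;+,\times)$ as a structure definable in $K$ (no quotient is needed), hence interpretable in $K$.

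Next I would invoke the classical theorem of Julia Robinson that $\Z$ is $\emptyset$-definable in the field $(\Q;+,\times)$; together with Lagrange's four-square theorem this makes $\N$ $\emptyset$-definable in $(\Q;+,\times)$, and therefore $\N$, with the operations inherited from $K$, is a definable --- in particular interpretable --- structure inside $K$. Since the theory of $(\N;+,\times)$ is undecidable by Theorem~\ref{Nund}, and decidability passes from a structure to any structure interpretable in it, the theory of $K$ is undecidable.

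I do not anticipate a genuine obstacle: the arithmetically substantive step is already carried out in the preceding lemma, and what remains is the routine transfer from a definable copy of $\Q$ (hence of $\N$) to undecidability. The only point demanding mild care is checking that the displayed definition of $\Q$ really uses only the primitives at hand --- the derivation for $C$ and for the logarithmic derivatives $y^\dagger,f^\dagger$, and the valuation to say $f\not\asymp 1$ --- which is immediate from the lemma.
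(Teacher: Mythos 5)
Your proposal is correct and follows essentially the same route as the paper: the preceding lemma gives an (existential) definition of $\Q$ inside the valued differential field $K$, and undecidability then follows from Julia Robinson's work on the arithmetic of $\Q$. The extra details you supply (definability of the constant field via $\der$, passing from $\Q$ to $\N$ and to $(\N;+,\times)$) are exactly what the paper leaves implicit in its citation of Robinson.
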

\begin{proof}
By the Lemma above the subring $\Q$ of $C$ is (existentially) definable in the valued differential field $C((t^\Gamma))$. Applying Julia Robinson's result from \cite{RJ} we obtain that the theory is undecidable.
\end{proof}
\noindent
Using the derivation $t\frac{d}{dt}$ is only done to simplify certain expressions: it is clear that Theorem \ref{undCtGam} goes through with $\frac{d}{dt}$ as the derivation instead of $t\frac{d}{dt}$.

\medskip\noindent
The valuation ring of the Hahn field $C((t^\Z))$ is definable in the field $C((t^\Gamma))$ by a theorem of Ax, (see for example \cite{FJ}). Therefore:
\begin{corollary} \label{undCtZ}
The theory of the differential field $C((t^\Z))$ is undecidable.
\end{corollary}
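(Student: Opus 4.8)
The plan is to reduce Corollary~\ref{undCtZ} to Theorem~\ref{undCtGam} by showing that, in the purely field-theoretic language $\{0,1,+,\times\}$, the valuation ring $\bigO_{\Z}=C[[t^{\Z}]]$ of the Hahn field $C((t^{\Z}))$ is a definable subset, and that moreover the derivation $t\frac{d}{dt}$ is definable from the field structure once the valuation ring is available. Granting both definability statements, any sentence in the language of valued differential fields translates into a sentence in the pure field language, so a decision procedure for the field $C((t^{\Z}))$ would yield one for the valued differential field, contradicting Theorem~\ref{undCtGam}; hence $C((t^{\Z}))$ is undecidable as a field, and a fortiori as a differential field (the differential structure only adds more definable sets, so undecidability is preserved — indeed we only need undecidability as a field to conclude, but the statement as a differential field follows immediately).

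First I would invoke the theorem of Ax cited in the excerpt (with reference to \cite{FJ}): for a field $C$ of characteristic zero, the valuation ring of the Hahn field $C((t^{\Z}))$ is a parameter-free definable subset of the field $C((t^{\Z}))$ in the language of rings. This is exactly the ``theorem of Ax'' alluded to just before the corollary, and it is the crux of passing from a valued field to a bare field. Concretely, Ax gives a ring-language formula $\varphi_{\bigO}(x)$ such that $C((t^{\Z}))\models \varphi_{\bigO}(a)$ iff $a\in C[[t^{\Z}]]$; the maximal ideal, the group of units, and the residue field $C$ are then all definable from $\varphi_{\bigO}$ in the obvious way.

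Next I would recover the derivation. Here the point is that $t\frac{d}{dt}$ is (up to the parameter $t$, or working uniformly over all choices of uniformizer) determined by the field and its valuation ring: the constant field is $C$, which as noted is definable; a uniformizer $t$ is any element of valuation $1$ (i.e.\ $t\in\bigO\setminus\bigO^{2}\cdot\bigO^{\times}$, or more carefully any $t$ with $v(t)$ generating $\Z^{\geq}$ over the residue field), and once $C$ and $t$ are pinned down every element of $C((t^{\Z}))$ has a canonical Laurent-type expansion $\sum_{n} c_n t^n$ whose coefficients $c_n\in C$ are themselves definable from $a$, $t$, and the residue map (the $n$-th coefficient of $a$ is the residue of $t^{-n}\big(a - a|_{t^{-(n-1)}}^{\text{truncation}}\big)$, and truncation-at-a-monomial is definable once $\bigO$ is, by the same $f|_1 = u \iff u\in\cU\ \&\ \exists g\in\bigO\,(f=u+g)$ argument used in Section~\ref{HFWT}, with $\cU$ itself definable from $\bigO$ and $t^{\Z}$, and $t^{\Z}$ definable as the set of units times powers of the uniformizer... this last needs care). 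The upshot I would aim for is: the map $a\mapsto t\frac{d}{dt}(a)$ agrees with the unique ring-language-definable operation sending $c t^{n}\mapsto n\,c\,t^{n}$, so $\der$ is definable in the field $C((t^{\Z}))$. Alternatively — and this is cleaner — I would simply observe that undecidability of the valued differential field $C((t^{\Z}))$ together with Ax's definability of $\bigO_{\Z}$ already forces undecidability of the bare field: if the field were decidable, then adding the definable predicate $\bigO_{\Z}$ keeps it decidable, and then it suffices to note that $C((t^{\Z}))$ with its valuation ring interprets $C((t^{\Z}))$ with $t\frac{d}{dt}$, which is the content of the previous sentence.

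The main obstacle is the definability of the derivation, i.e.\ making precise that $t\frac{d}{dt}$ is recoverable from $(C((t^{\Z})),\bigO_{\Z})$ without reference to the distinguished symbol $t$. The subtlety is that there are many uniformizers and many monomial groups (as the Example after Proposition~\ref{propeqval} warns), so one must check that the formula defining ``$n$-th coefficient with respect to a chosen uniformizer $t$'' combines with multiplication-by-$n$-on-$C$ (valuation-independent, since $\Q\subseteq C$ is arithmetically definable via $n\cdot 1$) to produce the same operator regardless of the choice; equivalently, that $t\frac{d}{dt}$ is the unique derivation on $C((t^{\Z}))$ that is trivial on $C$ and satisfies $v(\der f)\geq v(f)$ with $\der f \sim v(f)\cdot f$ when $v(f)\neq 0$ — a characterization that can be phrased in the ring language once $\bigO$ is definable. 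Once that is settled the corollary is immediate from Theorem~\ref{undCtGam}.
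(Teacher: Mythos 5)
The central step of your proposal --- that $t\frac{d}{dt}$ is definable from the field (or valued field) structure of $C((t^{\Z}))$ --- is false, and your intermediate conclusion that the \emph{pure field} $C((t^{\Z}))$ is undecidable is false as well. Take $C=\C$. By Ax--Kochen--Ershov, the valued field $(\C((t^{\Z})),\bigO)$ has decidable theory (its residue field is algebraically closed of characteristic zero and its value group is $\Z$, both with decidable theories), and since $\bigO$ is $0$-definable in the ring language by Ax's theorem, the pure field $\C((t^{\Z}))$ is decidable too. Yet the differential field $(\C((t^{\Z})),t\frac{d}{dt})$ is undecidable --- that is exactly the corollary being proved. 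So no ring-language formula, even one using the definable predicate $\bigO$, can define the graph of $t\frac{d}{dt}$: if it could, your translation would make a decidable structure interpret an undecidable one. The specific mechanisms you propose reflect this impossibility: coefficient extraction needs the lift $C\subseteq C((t^{\Z}))$ of the residue field, and this subfield is in general not definable in the valued field (only the residue field $\bigO/\smallo$ is interpretable); and your fallback characterization of $t\frac{d}{dt}$ as ``the unique derivation trivial on $C$ with $\der f\sim v(f)\cdot f$ when $v(f)\neq 0$'' is, even if true, a statement quantifying over derivations, not a first-order definition of the graph of $\der$, so it yields no translation of sentences.

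The intended reduction goes the other way around: in Corollary~\ref{undCtZ} the derivation is a \emph{primitive} of the structure, so nothing about $\der$ needs to be defined; what must be recovered is the valuation. By Ax's theorem the valuation ring $C[[t^{\Z}]]$ is definable in the ring language, hence the relations $\preceq$ and $\asymp$ are definable in the differential field $(C((t^{\Z})),\der)$, and the proof of Theorem~\ref{undCtGam} applies verbatim: the lemma identifies $\Q=\{c\in C:\ y^\dagger=cf^\dagger \text{ for some } y,f\in K^{\times},\ f\not\asymp 1\}$, which is therefore definable in the differential field, and Julia Robinson's theorem then gives undecidability. In short, you must eliminate the valuation predicate (possible, by Ax), not the derivation (impossible in general, and unnecessary).
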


\chapter{Operators with Support}

\noindent
Throughout $C$ is an abelian group (additively written), $\fM$ is a monomial set, and $\fm$ and $\fn$ range over $\fM$. 
In the first section we define {\em strong\/} operators on the Hahn space 
$C[[\fM]]$ and prove Lemma \ref{sumExt} to the effect that certain maps from $C\fM$ to $C[[\fM]]$ can be extended to strong operators on $C[[\fM]]$. This is useful in constructing strong derivations, see for example the proof of Lemma \ref{extder}. 

In the second section $C$ is a commutative ring and $\fM$ a monomial group. This allows us to introduce the more restricted
notions of {\em supported\/} operators and {\em small\/} operators on the Hahn ring $C[[\fM]]$. Corollary~\ref{cnPn} says how to use small operators in order to construct supported operators; this will be needed in the proof of Lemma \ref{OperatorLemma}. 

\section{Strong Operators} 
\noindent
Consider the Hahn space $C[[\fM]]$. 
An {\bf operator on $C[[\fM]]$} is by definition a map  
\[P\ :\ C[[\fM]]\rightarrow C[[\fM]].\]
An operator $P$ on $C[[\fM]]$ is said to be 
\textbf{additive} if  $P(f+g) = P(f)+P(g)$ for all $f,g\in C[[\fM]]$. If $P$ and $Q$ are additive operators on $C[[\fM]]$, then so are $P+Q$ and $-P$ defined by $(P+Q)(f):=P(f)+Q(f)$ and 
$(-P)(f):= -P(f)$. The set of additive operators on $C[[\fM]]$ with the above addition and multiplication given by composition is a ring with the null operator $O$ as its zero element,
the identity operator $I$ on $C[[\fM]]$ as its identity element, and
$P+(-P)=O$ for $P$ in this ring.  
 We call an operator $P$ on $C[[\fM]]$ \textbf{strongly additive}, or \textbf{strong} for short, if for every summable family $(f_i)$ in $C[[\fM]]$  the family $(P(f_i))$ is summable and $\sum P(f_i) = P(\sum f_i)$. Strong operators on $C[[\fM]]$ are additive, and the null operator $O$ and the identity operator 
$I$ are 
strong. The following is a routine consequence of the definition of ``strong''. 

\begin{lemma}
If $P,Q$ are strong operators on $C[[\fM]]$, then so are $-P$, $P+Q$, $PQ$.
\end{lemma}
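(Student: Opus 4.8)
The plan is to unwind the definition of ``strong'' in each of the three cases, using the elementary properties of summable families recalled earlier in this section: that a sum of a summable family is computed coefficientwise, that $\supp(-g)=\supp(g)$, and that if $(g_i)$ and $(h_i)$ in $C[[\fM]]^I$ are summable then so is $(g_i+h_i)$, with $\sum_i(g_i+h_i)=\sum_i g_i+\sum_i h_i$.

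First I would treat $-P$. Let $(f_i)_{i\in I}$ be a summable family; since $P$ is strong, $(P(f_i))$ is summable. Because $\supp\big((-P)(f_i)\big)=\supp\big(P(f_i)\big)$ for each $i$, the family $\big((-P)(f_i)\big)$ has the same union of supports and the same finiteness-of-overlap property, hence is summable; and comparing coefficients gives $\sum_i(-P)(f_i)=-\sum_i P(f_i)=-P\big(\sum_i f_i\big)=(-P)\big(\sum_i f_i\big)$. Next, for $P+Q$: given $(f_i)$ summable, both $(P(f_i))$ and $(Q(f_i))$ are summable, so by the quoted closure of summability under addition the family $\big((P+Q)(f_i)\big)=\big(P(f_i)+Q(f_i)\big)$ is summable, and $\sum_i(P+Q)(f_i)=\sum_i P(f_i)+\sum_i Q(f_i)=P\big(\sum_i f_i\big)+Q\big(\sum_i f_i\big)=(P+Q)\big(\sum_i f_i\big)$.

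Finally, for $PQ$ I would simply chain the two strength hypotheses. Let $(f_i)$ be summable. Since $Q$ is strong, $(Q(f_i))$ is summable and $\sum_i Q(f_i)=Q\big(\sum_i f_i\big)$. Applying strength of $P$ to the summable family $(Q(f_i))$ gives that $\big(P(Q(f_i))\big)=\big((PQ)(f_i)\big)$ is summable and $\sum_i(PQ)(f_i)=P\big(\sum_i Q(f_i)\big)=P\big(Q\big(\sum_i f_i\big)\big)=(PQ)\big(\sum_i f_i\big)$.

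There is no genuine obstacle here: each case is a direct application of the definition together with the basic facts about summability stated above, and the only point requiring a moment's care is the observation that negation and finite addition of series do not enlarge supports, so that summability of the transformed families is inherited from that of $(P(f_i))$ and $(Q(f_i))$.
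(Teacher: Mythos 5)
Your argument is correct and is exactly the routine unwinding of the definition that the paper has in mind (it states the lemma without proof, calling it a routine consequence of the definition of ``strong''). Each of your three cases — negation and addition via the closure properties of summable families, and composition by chaining the two strength hypotheses — is the intended argument.
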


\noindent
Thus the set of strong operators on $C[[\fM]]$ is a subring of the ring 
of additive operators on $C[[\fM]]$.
%Let $(f_i)_{i\in I}$ be a summable family in $\smallk[[\fM]]$. We have \[\bigcup_i \supp(P(f_i) + Q(f_i)) \subseteq \left( \bigcup_i \supp P(f_i) \right)\cup \left( \bigcup_i \supp Q(f_i) \right),\]
%Which is reverse well ordered. and for each $\fn \in \fN$ we have 
%\[\{i\in I: (P+Q)(f_i)_{\fn} \neq 0 \} \subseteq \{i\in I: P(f_i)_{\fn} \neq 0 \} \cup \{i\in I: Q(f_i)_{\fn} \neq 0 \},\]
%which is finite, thus proving (1). For (2) $(Q(f_i))$ is summable by assumption, so $(P(Q(f_i))) = (P\circ Q(f_i))$ is summable and 
%\[\sum P(Q(f_i)) = P\left(\sum Q(f_i) \right)= P\circ Q\left(\sum f_i\right).\]
%(3) is straightforward.
%\end{proof}
Let $(P_i)_{i\in I}$ be a family of strong operators on $C[[\fM]]$. Then we say that {\bf $\sum_{i\in I} P_i$ exists } if 
 for all $f\in C[[\fM]]$ the family $(P_i(f))_{i\in I}$ 
 is summable.
 % and
%\item the (obviously additive) operator 
%$f\mapsto \sum_i P_i(f)$ on $C[[\fM]]$ is strong.
%\end{itemize}
If $\sum_{i\in I} P_i$ exists, then the operator $f\mapsto \sum_i P_i(f)$ 
on $C[[\fM]]$ is additive, and is denoted by $\sum_{i\in I} P_i$ (or by $\sum P_i$ if $I$ is clear from the context). We do not claim that if 
$\sum_{i\in I} P_i$ exists, then $\sum_{i\in I} P_i$ is strong.

\begin{lemma} Let $(P_i)_{i\in I}$ be a family of strong operators on $C[[\fM]]$. 
\begin{enumerate}
% \item Let $0$ denote the null operator, then $\sum_i 0$ exists.
    \item[\rm{(i)}] If $I$ is finite, then $\sum_{i\in I} P_i$ exists, and
   $\sum P_i$
    is the usual finite sum of additive operators. 
    \item[\rm{(ii)}] If $I$ and $J$ are disjoint sets, and $(P_j)_{j\in J}$ is also a family of strong operators on $C[[\fM]]$, and both $\sum_{i\in I} P_i$, and $\sum_{j\in J} P_j$ exist, then $\sum_{k\in I\cup J} P_k$ exists and equals $\sum_i P_i + \sum_j P_j$. 
    \item[\rm{(iii)}] If $\sigma : I \rightarrow J$ is a bijection and $\sum_{i\in I} P_i$ exists, then $\sum_{j\in J} P_{\sigma\inv(j)}$ exists and equals  $\sum_{i\in I} P_i$.
\end{enumerate}
\end{lemma}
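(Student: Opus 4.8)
The plan is to verify each of (i), (ii), (iii) directly from the definitions of ``strong'', ``$\sum_i P_i$ exists'', and the basic summability properties of families in $C[[\fM]]$ that were established earlier in the excerpt. None of these should require a new idea; the point is simply to check that summability of operators behaves like summability of series.

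For (i): if $I$ is finite, then for each $f\in C[[\fM]]$ the family $(P_i(f))_{i\in I}$ is finite, hence summable (a finite family is always summable), so $\sum_{i\in I}P_i$ exists. Moreover for a finite family $\sum_i P_i(f)$ is the ordinary finite sum of the values, so $f\mapsto\sum_i P_i(f)$ is precisely the finite sum of the additive operators $P_i$ in the ring of additive operators. For (iii): the condition ``$\sum_{i\in I}P_i$ exists'' says that for each $f$ the family $(P_i(f))_{i\in I}$ is summable, i.e.\ $\bigcup_i \supp P_i(f)$ is well-based and each $\fm$ lies in only finitely many $\supp P_i(f)$. Reindexing along a bijection $\sigma:I\to J$ does not change the union of the supports nor the finiteness condition, so $(P_{\sigma^{-1}(j)}(f))_{j\in J}$ is summable with the same sum $\sum_\fm(\cdots)\fm$; hence $\sum_{j\in J}P_{\sigma^{-1}(j)}$ exists and agrees with $\sum_{i\in I}P_i$ on every $f$, so they are equal as operators. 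For (ii): fix $f$. We are given that $(P_i(f))_{i\in I}$ and $(P_j(f))_{j\in J}$ are summable and $I\cap J=\emptyset$; by the displayed property of summable families in the excerpt (``if $I$ and $J$ are disjoint and $(f_i)_{i\in I}$, $(f_j)_{j\in J}$ are summable, then $(f_k)_{k\in I\cup J}$ is summable and $\sum_{k\in I\cup J}f_k=\sum_i f_i+\sum_j f_j$''), applied with $f_i:=P_i(f)$, the family $(P_k(f))_{k\in I\cup J}$ is summable and its sum is $\sum_i P_i(f)+\sum_j P_j(f)$. Since this holds for every $f$, $\sum_{k\in I\cup J}P_k$ exists and equals $\sum_{i\in I}P_i+\sum_{j\in J}P_j$ as additive operators.

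I do not expect a genuine obstacle here: each part is a one-line reduction to the summability lemmas for series already recorded in Section~\ref{HS}. The only mild subtlety worth a sentence is that ``exists'' for a sum of operators is defined pointwise in $f$, so each claimed identity of operators must be checked by evaluating at an arbitrary $f$ and invoking the corresponding series-level statement; once that is said, all three items are immediate.
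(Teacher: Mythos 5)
Your proof is correct and is exactly the routine pointwise verification the paper intends (it states this lemma without proof, as an immediate consequence of the summability facts for families in $C[[\fM]]$ recorded in Section~2.2). Each part reduces, as you say, to evaluating at an arbitrary $f$ and invoking the corresponding series-level statement about finite, disjointly indexed, or reindexed summable families.
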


\begin{lemma}\label{strpr}
Let $(P_i)_{i\in I}$ be a family of strong operators on $C[[\fM]]$ such that $\sum_i P_i$ exists, and let $Q$ be a strong operator on $C[[\fM]]$. Then 
$\sum_i P_iQ$ and $\sum_i QP_i$ exist, and are equal to
 $(\sum_i P_i)Q$ and $Q(\sum_iP_i)$, respectively.
\end{lemma}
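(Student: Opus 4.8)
The statement to be proved is Lemma~\ref{strpr}: given a family $(P_i)_{i\in I}$ of strong operators on $C[[\fM]]$ with $\sum_i P_i$ existing, and a strong operator $Q$, both $\sum_i P_iQ$ and $\sum_i QP_i$ exist and equal $(\sum_i P_i)Q$ and $Q(\sum_i P_i)$ respectively. The proof is a bookkeeping exercise with summable families, so the plan is to unwind the definitions carefully and invoke the strongness of the pieces at the right moments.

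First I would handle $\sum_i P_iQ$. Fix $f\in C[[\fM]]$. To show $\sum_i P_iQ$ exists I must show the family $\bigl(P_i(Q(f))\bigr)_{i\in I}$ is summable; but $Q(f)$ is a single element $g := Q(f)\in C[[\fM]]$, and $\bigl(P_i(g)\bigr)_{i\in I}$ is summable precisely because $\sum_i P_i$ exists (that is literally the definition of existence of $\sum_i P_i$, applied to $g$). Then $\bigl(\sum_i P_iQ\bigr)(f) = \sum_i P_i(Q(f)) = \bigl(\sum_i P_i\bigr)(Q(f)) = \bigl((\sum_i P_i)Q\bigr)(f)$, where the middle equality is just the definition of the operator $\sum_i P_i$. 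Since $f$ was arbitrary, $\sum_i P_iQ = (\sum_i P_i)Q$ as operators. Note this direction does not use that $Q$ is strong, only that $Q$ is an operator.

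Next I would handle $\sum_i QP_i$. Fix $f\in C[[\fM]]$. Since each $P_i$ is strong, each $P_i(f)\in C[[\fM]]$ is defined, and since $\sum_i P_i$ exists, the family $\bigl(P_i(f)\bigr)_{i\in I}$ is summable. Now apply the strongness of $Q$ to this summable family: $\bigl(Q(P_i(f))\bigr)_{i\in I}$ is summable and $\sum_i Q(P_i(f)) = Q\bigl(\sum_i P_i(f)\bigr)$. Summability of $\bigl(Q(P_i(f))\bigr)_{i\in I}$ is exactly what is needed for $\sum_i QP_i$ to exist at $f$; and the displayed identity rewrites as $\bigl(\sum_i QP_i\bigr)(f) = Q\bigl((\sum_i P_i)(f)\bigr) = \bigl(Q(\sum_i P_i)\bigr)(f)$. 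Again $f$ is arbitrary, so $\sum_i QP_i = Q(\sum_i P_i)$.

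\textbf{Main obstacle.} There is no real obstacle here — the lemma is essentially a direct consequence of the definitions together with the defining property of a strong operator. The only point requiring a moment's care is keeping straight which ``summability'' claim is being used where: for $\sum_i P_iQ$ one applies the hypothesis ``$\sum_i P_i$ exists'' to the single element $Q(f)$, whereas for $\sum_i QP_i$ one first applies ``$\sum_i P_i$ exists'' to $f$ to get a summable family and then feeds that family into the strongness of $Q$. I would present the two cases in that order, noting explicitly after the first case that strongness of $Q$ is not needed there, and that it is needed in the second.
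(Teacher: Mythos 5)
Your proof is correct, and it is exactly the routine unwinding of definitions that the paper has in mind (the paper states Lemma~\ref{strpr} without proof, treating it as immediate): apply the existence of $\sum_i P_i$ to the single element $Q(f)$ for the first identity, and feed the summable family $(P_i(f))_i$ into the strongness of $Q$ for the second. Your side remark is also accurate, with the only pedantic caveat that the paper's notion of ``$\sum_i P_iQ$ exists'' is phrased for families of \emph{strong} operators, and the strongness of each $P_iQ$ and $QP_i$ comes from the earlier closure-under-composition lemma, which does use that $Q$ is strong.
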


\noindent
The next lemma modifies a result by 
van der Hoeven \cite[Proposition 3.5]{H} and is sometimes useful in
showing that an operator is strong. 
%A map $\phi:\fM\rightarrow C[[\fM]]$ is said to
%be \textbf{summable} if for every well-based $\fG\subseteq \fM$, 
%the family $(\phi(\fg))_{\fg\in \fG}$ is summable.

%The following lemma shows that given certain conditions the problem of showing that an operator is strong reduces to the behaviour of its restriction on $\fM$.

\begin{lemma}\label{sumExt} Let $C$ be a commutative ring with $1\ne 0$. Let
 $\phi: C\fM\rightarrow C[[\fM]]$ be a map such that: \begin{enumerate} 
%$\phi$ restricted to $\fM$ is summable,
\item[\rm{(i)}] $\phi((a+b)\fm) = \phi(a\fm) + \phi(b\fm)$ for all $a,b\in C$ and $\fm\in \fM$;
\item[\rm{(ii)}] $\supp(\phi(c\fm))\subseteq \supp(\phi(\fm)) \cup \{\fm\}$ for all $c\in C$ and $\fm\in \fM$;
\item[\rm{(iii)}] for every well-based $\fG\subseteq \fM$, 
the family $(\phi(\fg))_{\fg\in \fG}$ is summable.
\end{enumerate}
Then $\phi$ extends uniquely to a strong operator on $C[[\fM]]$.
\end{lemma}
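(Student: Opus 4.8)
The plan is to define the candidate extension by summation and then verify it is well-defined, agrees with $\phi$ on $C\fM$, and is strong; uniqueness will be automatic from strongness. Given $f = \sum_{\fm} f_{\fm}\fm \in C[[\fM]]$, write $\supp(f) = \fG$, a well-based subset of $\fM$. By hypothesis (iii) the family $\big(\phi(f_{\fm}\fm)\big)_{\fm \in \fG}$ has summable "shape" in the sense that $\bigcup_{\fm\in\fG}\supp(\phi(f_{\fm}\fm)) \subseteq \bigcup_{\fm\in\fG}\big(\supp(\phi(\fm))\cup\{\fm\}\big)$ by (ii), and this last union is well-based since $(\phi(\fg))_{\fg\in\fG}$ is summable; moreover for each fixed $\fn$ only finitely many $\fm\in\fG$ satisfy $\fn\in\supp(\phi(\fm))\cup\{\fm\}$, again by summability of $(\phi(\fg))_{\fg\in\fG}$ together with (ii). Hence $\big(\phi(f_{\fm}\fm)\big)_{\fm\in\fG}$ is summable, and I set
\[
\widehat{\phi}(f)\ :=\ \sum_{\fm\in\supp f}\phi(f_{\fm}\fm)\ \in\ C[[\fM]].
\]
Note $\widehat\phi(a\fm) = \phi(a\fm)$ for $a \in C$, $\fm\in\fM$ (the one-term case, using (i) to handle $a=0$), so $\widehat\phi$ extends $\phi$.

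\textbf{Key steps.} First I would prove additivity of $\widehat\phi$: for $f,g\in C[[\fM]]$, the supports of $f$, $g$, $f+g$ all lie in the well-based set $\supp f\cup\supp g =: \fG$, and for $\fm\in\fG$ we have $(f+g)_{\fm} = f_{\fm}+g_{\fm}$, so by (i), $\phi((f+g)_{\fm}\fm) = \phi(f_{\fm}\fm)+\phi(g_{\fm}\fm)$; summing over $\fm\in\fG$ (a legitimate summable reindexing, padding supports by zero terms) and using that $\sum$ is additive on summable families gives $\widehat\phi(f+g) = \widehat\phi(f)+\widehat\phi(g)$. Second, and this is the heart of the matter, I would prove $\widehat\phi$ is strong. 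Let $(f_i)_{i\in I}$ be summable with sum $f$. Each $\supp f_i$, and $\supp f$, lies in the well-based set $\fG := \bigcup_i\supp f_i$. For $\fm\in\fG$, summability gives $f_{\fm} = \sum_i f_{i,\fm}$ with only finitely many nonzero terms, so by (i), $\phi(f_{\fm}\fm) = \sum_i \phi(f_{i,\fm}\fm)$ (a finite sum). Now consider the double-indexed family $\big(\phi(f_{i,\fm}\fm)\big)_{(i,\fm)\in I\times\fG}$: its combined support lies in $\bigcup_{\fm\in\fG}\big(\supp(\phi(\fm))\cup\{\fm\}\big)$, which is well-based as noted above, and for each fixed $\fn$, the pairs $(i,\fm)$ contributing to the $\fn$-coefficient are constrained by $\fm \in \supp(\phi^{-1}\text{-preimage of }\fn)$ — more precisely $\fn\in\supp(\phi(\fm))\cup\{\fm\}$ forces $\fm$ into a finite set, and for each such $\fm$ the indices $i$ with $f_{i,\fm}\ne 0$ form a finite set by summability of $(f_i)$. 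Hence this double family is summable, and by the Fubini-type interchange for summable families recorded in Section~\ref{HS} (summing first over $i$ then over $\fm$, versus first over $\fm$ then over $i$) I get
\[
\widehat\phi(f)\ =\ \sum_{\fm\in\fG}\phi(f_{\fm}\fm)\ =\ \sum_{\fm\in\fG}\sum_{i}\phi(f_{i,\fm}\fm)\ =\ \sum_{i}\sum_{\fm\in\fG}\phi(f_{i,\fm}\fm)\ =\ \sum_i \widehat\phi(f_i),
\]
which also shows along the way that $(\widehat\phi(f_i))_i$ is summable. Third, uniqueness: any strong operator extending $\phi$ must agree with $\widehat\phi$ because $f = \sum_{\fm\in\supp f} f_{\fm}\fm$ is a summable decomposition and strongness forces the operator to respect it.

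\textbf{Main obstacle.} The routine points are additivity and the extension property; the real content is the verification that the double-indexed family $\big(\phi(f_{i,\fm}\fm)\big)_{(i,\fm)}$ is summable, so that the Fubini interchange of Section~\ref{HS} applies. This is exactly where hypotheses (ii) and (iii) are used in tandem: (iii) controls the supports of the $\phi(\fm)$ well enough to keep the combined support well-based and to make each coefficient a finite sum, while (ii) transfers this control from $\phi(\fm)$ to $\phi(c\fm)$ for arbitrary scalars $c$. I expect the bookkeeping of "for each $\fn$, finitely many $(i,\fm)$ contribute" to require the most care, splitting into the $\fn = \fm$ case (handled by (ii)'s $\{\fm\}$ term plus summability of $(f_i)_i$) and the $\fn\in\supp\phi(\fm)$ case (handled by summability of $(\phi(\fg))_{\fg\in\fG}$).
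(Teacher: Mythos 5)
Your proposal is correct and follows essentially the same route as the paper's proof: define the extension by $\widehat\phi(f)=\sum_{\fm\in\supp f}\phi(f_\fm\fm)$ after checking summability via (ii) and (iii), then establish strongness by showing the double-indexed family $\big(\phi(f_{i,\fm}\fm)\big)_{(i,\fm)}$ is summable and applying the Fubini-type interchange from Section~\ref{HS}, with (i) giving $\phi(f_\fm\fm)=\sum_i\phi(f_{i,\fm}\fm)$ coefficientwise. Your extra verifications (additivity of $\widehat\phi$ and the explicit uniqueness argument) are harmless additions to the same argument.
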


\begin{proof}
Let $f = \sum f_\fm \fm\in C[[\fM]]$. We first show that the family $(\phi(f_\fm \fm))_{\fm\in \supp(f)}$ is summable. Indeed, by (ii),
\[\bigcup_{\fm\in \supp(f)}\supp(\phi(f_m\fm))\ \subseteq\  \left(\bigcup_{\fm\in\supp(f)}\supp \phi(\fm)\right)\cup \supp(f).\]
Since the right-hand side is well-based by (iii), so is the left-hand side.

Let any $\fn$ be given. Then by (ii) again,
\begin{align*} \{\fm\ \in \supp(f):\ 
\fn \in \supp(\phi(f_\fm\fm))\} \ &\subseteq\ \{\fm\in \supp(f):\ 
\fn \in \supp(\phi(\fm)) \cup \{\fm\}\}\\
     &\subseteq\  \{\fm\in \supp(f):\ 
\fn \in \supp(\phi(\fm))\} \cup \{\fn\}.
\end{align*}
The right-hand side is finite by (iii), and so is the left-hand side. Thus 
$\phi$ extends to an operator $\psi$ on
$C[[\fM]]$ by $\psi(f) := \sum_{\fm\in \supp(f)} \phi(f_\fm\fm)= \sum_{\fm} \phi(f_\fm \fm)$.
Now let $(f_i)_{i\in I}$ be a summable family in $C[[\fM]]$, and set $\fG:=\bigcup_{i\in I}\supp(f_i)$. We claim that $\big(\phi(f_{i,\fg}\fg)\big)_{(i,\fg)\in I\times \fG}$ is summable. First, 
\[\bigcup_{(i,\fg)\in I\times \fG} \supp(\phi(f_{i,\fg}\fg))\ \subseteq\  \bigcup_{\fg\in \fG}\supp(\phi(\fg)) \cup \fG,\]
so the left-hand side is well-based.
Secondly, let $\fn$ be given. Then
\[\{\fg\in \fG:\fn\in \supp(\phi(f_{i,\fg}\fg))\}\subseteq  \{\fg\in\fG:\fn\in\supp(\phi(\fg))\}\cup \{\fn\},\]
so the left-hand side is finite.
For $i\in I$ and $\fg\in \fG$ we have
\[\fn\in \supp(\phi(f_{i,\fg}\fg))\ \Rightarrow\ f_{i,\fg}\ne 0\ \Rightarrow\ 
\fg\in \supp f_i\]
and that  
for a fixed $\fg\in \fG$ the set $\{i\in I: \fg\in\supp(f_i)\}$ is finite.
 Thus the set
\[\{(i,\fg)\in I\times\fG:\  \fn\in \supp(\phi(f_{i,\fg}\fg))\}\]
is finite, and
our claim that $\big(\phi(f_{i,\fg}\fg)\big)_{(i,\fg)\in I\times\fG}$ is summable
has been established. 
It follows that 
$(\psi(f_i))_{i\in I}$ is summable and
\[\sum_{i\in I}\psi(f_i)\ =\ \sum_{i\in I} \sum_{\fg\in\fG}\phi(f_{i,\fg}\fg)\ =\ \sum_{(i,\fg)\in I\times\fG}\phi(f_{i,\fg}\fg)\ =\ \sum_{\fg\in\fG}\sum_{i\in I}\phi(f_{i,\fg}\fg)\]
\[=\ \sum_{\fg\in\fG}\phi\left(\big(\sum_{i\in I}f_{i,\fg}\big)\fg\right)\ =\ \psi\left(\sum_{\fg\in\fG} \big(\sum_{i\in I}f_{i,\fg}\big)\fg\right)\ =\ \psi\left(\sum_{i\in I}f_i\right),\]
showing that $\psi$ is strong.
\end{proof}

\subsection*{Strongly additive maps}
For use in the last chapter we now consider a second monomial set $\fN$ and define a map
$P: C[[\fM]]\to C[[\fN]]$ to be {\bf strongly additive} if for every summable family $(f_i)$ in $C[[\fM]]$  the family $(P(f_i))$ is summable in $C[[\fN]]$ and $\sum P(f_i) = P(\sum f_i)$. Thus the trivial map sending every $f\in C[[\fM]]$ to $0\in C[[\fN]]$ is strongly additive. 

Suppose $P$ as above is strongly additive. Then $P$ is additive: $P(f+g)=P(f)+P(g)$ for $f,g\in C[[\fM]]$. Moreover, $-P$ is strongly additive, and if $Q: C[[\fM]]\to C[[\fN]]$ is also a strongly additive map, then $P+Q$ is strongly additive. Given also a monomial set $\fG$ and a strongly additive map $Q:C[[\fG]]\to C[[\fM]]$, the composed map
$P\circ Q: C[[\fG]] \to C[[\fN]]$ is strongly additive.

%We work in the setting of Hahn Series. Namely there is a multiplicatively commutative semigroup with cancellation $\fM$ and we consider the elements of $\smallk[[\fM]]$, and operators $P:\smallk[[\fM]]\rightarrow \smallk[[\fM]]$. We say that a family $f_i$ of elements of $\smallk[[\fM]]$ is \textbf{summable} if $\bigcup_i \supp(f_i)$ is anti-well-ordered, and for each $\fm\in \fM$ there are only finitely many $i\in I$ such that $\fm\in \supp(f_i)$. We say that $P$ is \textbf{strongly additive} if for any summable family $(f_i)_{i\in I}$ of elements in $\smallk[[\fM]]$ we have that the family $(P(f_i))_{i\in I}$ is summable.
\section{Supported Operators and Small Operators}
\noindent
In this section $C$ and $\fM$ are equipped with product operations that 
make $C$ a commutative ring and $\fM$ a 
monomial group.
To say that an operator $P$ on $C[[\fM]]$ {\bf has support $\fG$\/} will mean that $P$ is additive, $\fG$ is well-based, and $\supp P(g) \subseteq \fG\supp(g)$ for all $g\in C[[\fM]]$.
An operator on $C[[\fM]]$ with support $\fG$ for some $\fG$
is said to be a {\bf supported operator\/}.

Let $f\in C[[\fM]]$. Then the operator
$g\mapsto fg$ on $C[[\fM]]$ has support $\supp f$. In particular, it is additive, and so
we have for each additive operator $P$ on $C[[\fM]]$ the additive operator $fP$ on $C[[\fM]]$ given by $(fP)(g):= f\cdot P(g)$.
In particular, the null operator $O$ has support $\emptyset$, and the identity operator $I$ has support $\{1\}$. Here is a key fact about supported operators:

\begin{lemma}\label{suppstrong} Every supported operator on $C[[\fM]]$ is strong.
\end{lemma}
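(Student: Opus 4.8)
The statement to prove is: every supported operator on $C[[\fM]]$ is strong. So let $P$ be an operator on $C[[\fM]]$ with support $\fG$, meaning $P$ is additive, $\fG$ is well-based, and $\supp P(g)\subseteq \fG\supp(g)$ for all $g\in C[[\fM]]$. I must show that for every summable family $(f_i)_{i\in I}$ in $C[[\fM]]$, the family $(P(f_i))_{i\in I}$ is summable and $\sum_i P(f_i)=P(\sum_i f_i)$.

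The plan is to verify the two summability conditions directly from the support bound, then prove the sum identity. First, set $\fH:=\bigcup_i \supp(f_i)$, which is well-based since $(f_i)$ is summable. For each $i$ we have $\supp P(f_i)\subseteq \fG\supp(f_i)\subseteq \fG\fH$, so $\bigcup_i \supp P(f_i)\subseteq \fG\fH$; by Lemma~\ref{fGfN} the product $\fG\fH$ of two well-based sets is well-based, hence $\bigcup_i\supp P(f_i)$ is well-based. Second, I must bound, for each fixed $\fm\in\fM$, the number of $i$ with $\fm\in\supp P(f_i)$. If $\fm\in\supp P(f_i)$, then $\fm\in\fG\supp(f_i)$, so $\fm=\fg\fn$ for some $\fg\in\fG$ and $\fn\in\supp(f_i)\subseteq\fH$. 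By the finiteness clause of Lemma~\ref{fGfN}, there are only finitely many pairs $(\fg,\fn)\in\fG\times\fH$ with $\fg\fn=\fm$; for each such $\fn$, since $(f_i)$ is summable, there are only finitely many $i$ with $\fn\in\supp(f_i)$. Hence only finitely many $i$ have $\fm\in\supp P(f_i)$, and $(P(f_i))$ is summable.

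For the sum identity, let $f:=\sum_i f_i$ and $g:=\sum_i P(f_i)$, both of which exist in $C[[\fM]]$ by the above. Fix $\fm\in\fM$; I want $g_\fm=P(f)_\fm$. Choose $\fn\in\fH$ maximal (with respect to $\succeq$) such that $\fm=\fg\fn$ for some $\fg\in\fG$ — there is such a $\fn$, or else finitely many candidates, since $\fG\fH$ is well-based and the relevant set of $\fn$'s is finite; more carefully, let $T:=\{\fn\in\fH:\ \fg\fn=\fm\text{ for some }\fg\in\fG\}$, a finite set. Write $I_0:=\{i\in I:\ \supp(f_i)\cap T\ne\emptyset\}$, a finite set by summability of $(f_i)$, and split $f=f'+f''$ where $f'=\sum_{i\in I_0}f_i$ and $f''=\sum_{i\notin I_0}f_i$ (both exist). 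Then $\supp(f'')\cap T=\emptyset$, so $\fm\notin\fG\supp(f'')\supseteq\supp P(f'')$, and likewise $\fm\notin\supp P(f_i)$ for $i\notin I_0$. Since $P$ is additive and $I_0$ is finite, $P(f)=P(f')+P(f'')=\sum_{i\in I_0}P(f_i)+P(f'')$, so $P(f)_\fm=\sum_{i\in I_0}P(f_i)_\fm+0=\sum_i P(f_i)_\fm=g_\fm$. As $\fm$ was arbitrary, $P(f)=g$, completing the proof.

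The main obstacle here is the bookkeeping in the last step: one needs that $P(f'')$ cannot contribute to the coefficient at $\fm$, which relies on isolating the finite set $T$ of monomials in $\fH$ that multiply into $\fm$ via $\fG$, and then on the summability of $(f_i)$ to reduce to a finite subfamily $I_0$; once that reduction is in place, additivity of $P$ does the rest. Everything else is a routine application of Lemma~\ref{fGfN}.
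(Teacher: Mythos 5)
Your proof is correct and follows essentially the same route as the paper: well-basedness of $\bigcup_i\supp P(f_i)$ via Lemma~\ref{fGfN}, finiteness at each $\fm$ from the finitely many factorizations $\fm=\fg\fn$ together with summability of $(f_i)$, and then the same reduction to a finite subfamily (your $I_0$ is the paper's $I(\fm)$) followed by additivity of $P$ to compare coefficients at $\fm$. The brief aside about choosing a maximal $\fn$ is unnecessary, but you immediately replace it with the finite set $T$, which is all that is needed.
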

\begin{proof} Suppose the operator $P$ on $C[[\fM]]$ has support $\fG$, and let $(f_i)\in C[[\fM]]^I$ be a summable family. Then
\[\bigcup_i \supp P(f_i)\ \subseteq\ \fG\cdot \bigcup_i \supp f_i\]
and the latter is a well-based subset of $\fM$. Let
$\fm \in \fM$ be given. Then there are only finitely many pairs $(\fg,\fh)\in \fG\times \bigcup_i \supp(f_i)$ with $\fm=\fg\fh$. This yields a finite $I(\fm)\subseteq I$ such that $i\in I(\fm)$
whenever $i\in I$, $(\fg,\fh)\in \fG\times \supp(f_i)$, and
$\fm=\fg\fh$. In particular, for every $i\in I$
with $\fm\in \supp P(f_i)$ we have $i\in I(\fm)$. 
Thus $\big(P(f_i)\big)$ is summable. Also $f = g+h$ with $f:=\sum_i f_i$, $g:=\sum_{i\in I(\fm)}f_i$ and $h:= \sum_{i\notin I(\fm)}f_i$, so
$P(f)=P(g)+P(h)$ with $\fm\notin \supp P(h)$. Hence
$$P(f)_{\fm}\ =\ P(g)_{\fm}\ =\ 
\sum_{i\in I(\fm)}P(f_i)_{\fm}\ =\ \sum_i P(f_i)_{\fm}\ =\ \big(\sum_i P(f_i)\big)_{\fm},$$
and thus $P(f)=\sum_i P(f_i)$. 
\end{proof}

\noindent
The following two lemmas are rather obvious:

\begin{lemma}\label{suppobv1} If $c\in C$ and the operators $P,Q$ on $C[[\fM]]$ have support
$\fG$ and $\fH$, respectively, then $cP$, $P+Q$, $PQ$ have support
$\fG$, $\fG\cup \fH$, $\fG\fH$, respectively.
\end{lemma}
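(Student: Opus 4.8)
The statement to prove is Lemma~\ref{suppobv1}: if $c\in C$ and the operators $P,Q$ on $C[[\fM]]$ have support $\fG$ and $\fH$ respectively, then $cP$, $P+Q$, $PQ$ have support $\fG$, $\fG\cup\fH$, $\fG\fH$ respectively. The whole lemma is ``rather obvious'' as the text says, so the plan is simply to unwind the definition of ``has support $\fG$'' in each of the three cases and check the two requirements: additivity, and the support containment $\supp(\text{operator})(g)\subseteq(\text{support set})\cdot\supp(g)$.

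\textbf{First, the case $cP$.} Additivity of $cP$ is immediate since $(cP)(f+g)=c\,P(f+g)=c\,(P(f)+P(g))=(cP)(f)+(cP)(g)$. For the support containment, recall that multiplication by the scalar $c\in C$ does not enlarge support: $\supp(ch)\subseteq\supp(h)$ for any $h\in C[[\fM]]$. Hence $\supp\big((cP)(g)\big)=\supp\big(c\,P(g)\big)\subseteq\supp\big(P(g)\big)\subseteq\fG\supp(g)$, and $\fG$ is well-based by hypothesis, so $cP$ has support $\fG$.

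\textbf{Next, the case $P+Q$.} Additivity of $P+Q$ is again immediate. For the support, by Lemma~\ref{fGfN} (applied with one factor a singleton set, or just directly) the union $\fG\cup\fH$ is well-based since both $\fG$ and $\fH$ are. Using $\supp(f_1+f_2)\subseteq\supp(f_1)\cup\supp(f_2)$ we get
\[
\supp\big((P+Q)(g)\big)\ \subseteq\ \supp\big(P(g)\big)\cup\supp\big(Q(g)\big)\ \subseteq\ \fG\supp(g)\cup\fH\supp(g)\ =\ (\fG\cup\fH)\supp(g),
\]
so $P+Q$ has support $\fG\cup\fH$.

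\textbf{Finally, the case $PQ$.} Additivity of the composition $PQ$ follows from additivity of $P$ and $Q$. By Lemma~\ref{fGfN}, $\fG\fH$ is well-based. For the support containment, apply the support estimate for $Q$ first and then for $P$: for $g\in C[[\fM]]$ we have $\supp\big(Q(g)\big)\subseteq\fH\supp(g)$, and then, since $P$ has support $\fG$,
\[
\supp\big((PQ)(g)\big)\ =\ \supp\big(P(Q(g))\big)\ \subseteq\ \fG\supp\big(Q(g)\big)\ \subseteq\ \fG\fH\supp(g).
\]
Thus $PQ$ has support $\fG\fH$, completing the proof. The only ``content'' beyond bookkeeping is the invocation of Lemma~\ref{fGfN} to guarantee that $\fG\cup\fH$ and $\fG\fH$ are again well-based; there is no real obstacle here, and the hard work has already been done in Lemma~\ref{suppstrong} (which is not even needed for this particular lemma).
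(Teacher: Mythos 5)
Your proof is correct and is exactly the routine definition-unwinding the paper has in mind when it labels this lemma ``rather obvious'' and omits the argument. The only cosmetic point: well-basedness of $\fG\cup\fH$ is not really an instance of Lemma~\ref{fGfN} (which concerns products), but as you note it follows directly, since an infinite strictly $\prec$-increasing sequence in the union would have an infinite subsequence in $\fG$ or in $\fH$.
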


\noindent
Thus the set of supported operators on $C[[\fM]]$ is a subring
of the ring of strong operators on $C[[\fM]]$. This subring also has a useful infinitary
property:

\begin{lemma}\label{suppobv2} Suppose $\fG\subseteq \fM$ is well-based, $(P_i)$ is a family of operators on $C[[\fM]]$, all having support 
$\fG$, and $\sum_i P_i$ exists. Then
$\sum_i P_i$ has support $\fG$. If in addition $I = \bigcup_{j\in J} I_j$ with pairwise disjoint $I_j$, then $\sum_{i\in I_j}P_i$ exists  
for every $j\in J$, and $\sum_{j\in J}\big(\sum_{i\in I_j}P_i\big)$ exists and equals 
$\sum_{i \in I }P_i$. 
\end{lemma}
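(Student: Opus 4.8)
The plan is to verify the support condition for the infinite sum, and then handle the associativity-of-summation claim. Throughout, write $P:=\sum_{i\in I}P_i$, which exists by hypothesis, so $P$ is the additive operator $f\mapsto\sum_i P_i(f)$.

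First I would show $P$ has support $\fG$. Additivity of $P$ is immediate since each $P_i$ is additive and summation of a summable family is additive (this is the identity $\sum_i(f_i+g_i)=\sum_i f_i+\sum_i g_i$ recorded for Hahn spaces). The set $\fG$ is well-based by assumption. For the support inclusion, fix $g\in C[[\fM]]$. Since each $P_i$ has support $\fG$, we have $\supp P_i(g)\subseteq\fG\supp(g)$ for every $i$, hence
\[\supp P(g)\ =\ \supp\Big(\sum_i P_i(g)\Big)\ \subseteq\ \bigcup_i\supp P_i(g)\ \subseteq\ \fG\supp(g),\]
where the first inclusion uses that the support of a sum of a summable family is contained in the union of the supports. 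Thus $P$ has support $\fG$.

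Next I would treat the refinement where $I=\bigcup_{j\in J}I_j$ with the $I_j$ pairwise disjoint. For each $j$, the family $(P_i)_{i\in I_j}$ is a subfamily of $(P_i)_{i\in I}$, so for every $f$ the family $(P_i(f))_{i\in I_j}$ is a subfamily of the summable family $(P_i(f))_{i\in I}$ and is therefore summable; hence $\sum_{i\in I_j}P_i$ exists, and by the first part it has support $\fG$. To get the iterated-sum identity, fix $f\in C[[\fM]]$ and set $f_i:=P_i(f)$; the family $(f_i)_{i\in I}$ is summable. I would like to invoke the general associativity statement for summable families in $C[[\fM]]$, partitioned along $I=\bigcup_j I_j$: namely that each $\sum_{i\in I_j}f_i$ exists, $\big(\sum_{i\in I_j}f_i\big)_{j\in J}$ is summable, and $\sum_{j\in J}\big(\sum_{i\in I_j}f_i\big)=\sum_{i\in I}f_i$. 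The excerpt states this associativity only for the special cases of a finite partition and of a product index set $I\times J$; so the main obstacle is that I must either cite a general partition version or derive it. I expect it can be derived exactly as the $I\times J$ case is: choose an index set $J$ and for each $j$ a bijection of $I_j$ with a ``column'', forming a subset of $J\times(\text{big set})$, and apply the product-form associativity together with the bijection-invariance of summation; this is the routine step I would not grind through here. Granting this, evaluating at $f$ gives
\[\Big(\sum_{j\in J}\sum_{i\in I_j}P_i\Big)(f)\ =\ \sum_{j\in J}\sum_{i\in I_j}P_i(f)\ =\ \sum_{i\in I}P_i(f)\ =\ \Big(\sum_{i\in I}P_i\Big)(f),\]
and since $f$ was arbitrary, $\sum_{j\in J}\big(\sum_{i\in I_j}P_i\big)$ exists and equals $\sum_{i\in I}P_i$, completing the proof.
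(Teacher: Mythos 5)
Your argument is correct. The paper itself gives no proof of this lemma (it is presented as a ``rather obvious''/useful infinitary property of supported operators), so there is no official argument to compare against; what you wrote supplies exactly the routine verification being taken for granted: additivity of $\sum_i P_i$ is already noted in the paper, the support inclusion $\supp\big(\sum_i P_i(g)\big)\subseteq\bigcup_i\supp P_i(g)\subseteq \fG\supp(g)$ is immediate, and subfamilies of summable families are summable. Two small remarks. First, to speak of $\sum_{j\in J}\big(\sum_{i\in I_j}P_i\big)$ in the sense the paper defines (operator sums are defined for families of \emph{strong} operators), you should note that each $\sum_{i\in I_j}P_i$, having support $\fG$ by your first part, is strong by Lemma~\ref{suppstrong}; this is implicit in your write-up but deserves a half-sentence. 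Second, the partition form of associativity that you defer can be obtained more cheaply than by embedding $I$ into a product $J\times X$ and padding with zeros: for a summable family $(f_i)_{i\in I}$ and $I=\bigcup_j I_j$ with disjoint $I_j$, the set $\bigcup_j \supp\big(\sum_{i\in I_j}f_i\big)\subseteq\bigcup_i\supp f_i$ is well-based; for a fixed $\fm$, any $j$ with $\fm\in\supp\big(\sum_{i\in I_j}f_i\big)$ contains some $i\in I_j$ with $\fm\in\supp f_i$, and disjointness together with summability of $(f_i)$ leaves only finitely many such $j$; finally the coefficient of $\fm$ in $\sum_j\sum_{i\in I_j}f_i$ is the finite regrouped sum $\sum_i f_{i,\fm}$, i.e.\ the coefficient of $\fm$ in $\sum_i f_i$. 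Either route closes the one step you left open, and the rest of your proof stands as is.
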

 
\noindent
The next result requires a bit more effort:

\begin{lemma}\label{suppnonobv} Let $\fG, \fH\subseteq \fM$ be well-based, let $(P_i)_{i\in I}$ be a family of operators on $C[[\fM]]$, all having support $\fG$, and let $(Q_j)_{j\in J}$ be a family of operators on $C[[\fM]]$, all having support $\fH$. Assume also that $\sum_i P_i$ and $\sum_j Q_j$ exist.
Then $\sum_{i,j} P_iQ_j$ exists and equals $(\sum_iP_i)(\sum_j Q_j)$.
\end{lemma}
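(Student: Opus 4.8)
The plan is to verify three things about the family $(P_iQ_j)_{(i,j)\in I\times J}$: that it has a common support, that the relevant summability-over-monomials condition holds, and that the resulting sum agrees with the product of the two given sums. The support of each $P_iQ_j$ is contained in $\fG\fH$ by Lemma~\ref{suppobv1}, and $\fG\fH$ is well-based by Lemma~\ref{fGfN}, so this takes care of the ``well-based'' half of existence for any subfamily.

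First I would establish that $\sum_{(i,j)\in I\times J} P_iQ_j$ exists, i.e.\ that for each $f\in C[[\fM]]$ the family $\bigl(P_iQ_j(f)\bigr)_{(i,j)}$ is summable. The union of the supports lies in $\fG\fH\cdot\supp(f)$, which is well-based by Lemmas~\ref{fGfN} and~\ref{suppstrong}'s underlying reasoning. For the finiteness condition, fix a monomial $\fm$. Since $\sum_j Q_j$ exists and each $Q_j$ has support $\fH$, the argument in Lemma~\ref{suppstrong} shows $\bigl(Q_j(f)\bigr)_j$ is summable; only finitely many $j$ contribute a given monomial to $\bigcup_j\supp Q_j(f)$. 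Now $\fm\in\supp P_iQ_j(f)$ forces $\fm=\fg\fn$ for some $\fg\in\fG$ and $\fn\in\supp Q_j(f)\subseteq\fH\supp(f)$; there are finitely many such factorizations $\fm=\fg\fn$ (again Lemma~\ref{fGfN}, applied to $\fG$ and $\fH\supp(f)$), and for each admissible $\fn$ only finitely many $j$ with $\fn\in\supp Q_j(f)$, and then for each such $j$ only finitely many $i$ with $\fm\in\supp P_i(Q_j(f))$ because $\sum_i P_i$ exists. Hence only finitely many pairs $(i,j)$ contribute $\fm$, and the family is summable; by Lemma~\ref{suppobv2} its sum has support $\fG\fH$.

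Next I would identify the sum with $\bigl(\sum_i P_i\bigr)\bigl(\sum_j Q_j\bigr)$ by iterated summation. Applying the iterated-sums property for summable families (stated in the Hahn-space section) to $\bigl(P_iQ_j(f)\bigr)_{(i,j)}$,
\[
\sum_{(i,j)} P_iQ_j(f)\ =\ \sum_i\Bigl(\sum_j P_iQ_j(f)\Bigr)\ =\ \sum_i P_i\Bigl(\sum_j Q_j(f)\Bigr),
\]
where the inner identity uses that $P_i$ is strong (Lemma~\ref{suppstrong}) applied to the summable family $\bigl(Q_j(f)\bigr)_j$. Since $\bigl(Q_j(f)\bigr)_j$ is summable with $\sum_j Q_j(f)=\bigl(\sum_j Q_j\bigr)(f)$, this equals $\sum_i P_i\bigl((\sum_j Q_j)(f)\bigr)=\bigl((\sum_i P_i)(\sum_j Q_j)\bigr)(f)$, exactly by the definitions of these infinite sums of operators applied in succession. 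Since $f$ was arbitrary, $\sum_{i,j}P_iQ_j=(\sum_iP_i)(\sum_jQ_j)$.

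**The main obstacle** is the bookkeeping in the finiteness step: one must be careful that the chain of ``finitely many $\fn$, then finitely many $j$, then finitely many $i$'' genuinely composes to a finite set of pairs $(i,j)$, which is where Lemma~\ref{fGfN} (applied twice, once to $\fG,\fH$ and once to $\fG$ and $\fH\supp f$) and the hypotheses that both $\sum_iP_i$ and $\sum_jQ_j$ exist all get used together. Everything else is a routine invocation of strongness (Lemma~\ref{suppstrong}), the iterated-summation law for summable families, and Lemmas~\ref{suppobv1}--\ref{suppobv2}.
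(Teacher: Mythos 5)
Your proof is correct and follows essentially the same route as the paper: the finiteness argument for existence (factoring $\fm$ against $\fG$ and $\fH\supp f$ via Lemma~\ref{fGfN}, then using summability of $(Q_j(f))_j$ and of $(P_i(g))_i$) matches the paper's argument with its set $T$ of triples. For the identification, the paper simply cites Lemmas~\ref{suppobv2} and~\ref{strpr}, while you unwind the same content pointwise via the iterated-summation law and strongness of the $P_i$ from Lemma~\ref{suppstrong}; this is only a presentational difference.
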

\begin{proof} Let $f\in C[[\fM]]$. Then $\supp P_iQ_j(f)\subseteq \fG\fH\supp f$, so $\bigcup_{i,j} \supp P_iQ_j(f)$ is well-based. Let $\fm\in \fM$ be given. By Lemma~\ref{fGfN} the set 
\[T\ :=\ \{(\fg, \fh, \fn)\in \fG\times \fH \times \supp f:\ \fg\fh\fn=\fm\}\]
is finite. As $\sum_jQ_j(f)$ exists, this yields a finite $J(\fm)\subseteq J$ such that $j\in J(\fm)$ whenever $j\in J$, $(\fg, \fh, \fn)\in T$, and 
$\fh\fn\in \supp Q_j(f)$. Now $P_iQ_j(f)=P_i(Q_j(f))$, so $j\in J(\fm)$ whenever $\fm\in \supp P_iQ_j(f)$. For each $j\in J(\fm)$ there are only
finitely many $i$ with $\fm\in \supp P_i(Q_j(f))$, so there are only
finitely many $(i,j)\in I\times J$ with $\fm\in \supp P_iQ_j(f)$. 
We have now shown that  $\sum_{i,j} P_iQ_j$ exists. That it equals $(\sum_iP_i)(\sum_j Q_j)$ is now an easy consequence of Lemmas~\ref{suppobv2} 
and  ~\ref{strpr}.
\end{proof}

\noindent 
An operator $P$ on $C[[\fM]]$ is said to be \textbf{small}
\footnote{Small operators were defined in \cite[p. 66]{DMM} without requiring additivity, but this invalidates the assertion there about the inverse of $I-P$. Fortunately, this assertion is only used later in that paper for additive $P$, for which it is correct.}, if $P$ has support
$\fG$ for some $\fG\prec 1$. Note that if
$c\in C$, and $P$ and $Q$ are small operators on $C[[\fM]]$, then $cP$, $P+Q$, and $PQ$ are small operators on $C[[\fM]]$. Small operators are particularly useful in constructing inverses of operators:

\begin{corollary}\label{cnPn} Let $P$ be a small operator. Then for any sequence $(c_n)$ in $C$ the sum $\sum_{n} c_nP^n$ exists and is supported; such a sum 
is small if $c_0=0$. Moreover, the operator
$I-P$ is bijective with inverse $\sum_n P^n$.  
\end{corollary}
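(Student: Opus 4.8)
The plan is to prove the three assertions in order, using the infrastructure from Lemmas~\ref{suppobv1}, \ref{suppobv2}, and~\ref{suppnonobv}. First I would fix a well-based $\fG \prec 1$ with $P$ having support $\fG$, and observe that by Lemma~\ref{suppobv1} (the product clause, applied inductively) each $P^n$ has support $\fG^n$, hence each $c_nP^n$ has support $\fG^n$ as well (with the convention $\fG^0 = \{1\}$). To see that $\sum_n c_nP^n$ exists, I must check that for each $f \in C[[\fM]]$ the family $(c_nP^n(f))_n$ is summable. The support bound gives $\bigcup_n \supp(c_nP^n(f)) \subseteq \bigcup_n \fG^n\supp(f) = \fG^*\supp(f)$, which is well-based by Lemma~\ref{fGstar}(i) (as $\fG^* = \bigcup_n \fG^n$ is well-based) together with Lemma~\ref{fGfN}; and for a fixed $\fm$, membership $\fm \in \supp(c_nP^n(f))$ forces $\fm = \fg\fn$ for some $\fg \in \fG^n$, $\fn \in \supp(f)$, but since each $\fg \in \fG^n$ with $n \geq 1$ satisfies $\fg \prec 1$ and by Lemma~\ref{fGstar}(ii) there are only finitely many factorizations of any monomial into elements of $\fG$, only finitely many $n$ can contribute. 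So $\sum_n c_nP^n$ exists, and by Lemma~\ref{suppobv2} it has support $\fG^*$, hence is supported.

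For the ``small'' clause: if $c_0 = 0$, then the sum is $\sum_{n\geq 1} c_nP^n$, and each $c_nP^n$ ($n\geq 1$) has support contained in $\fG\fG^{n-1} \subseteq \fG\fG^* \prec 1$ (since $\fG\fG^* \subseteq \fM^{\prec 1}$ as $\fG \prec 1$ and $\fG^* \preceq 1$, using that $\fM^{\prec 1}$ is closed under multiplication by $\fM^{\preceq 1}$). Thus by Lemma~\ref{suppobv2} the sum has support $\fG\fG^* \prec 1$ and so is small. (Alternatively, apply the first part to the sequence $(c_{n+1})$ and the operator $P$, multiplied by $P$: $\sum_{n\geq 1}c_nP^n = P\cdot\big(\sum_n c_{n+1}P^n\big)$, and composing a supported operator with the small operator $P$ yields a small operator by Lemma~\ref{suppobv1}.)

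For the final assertion, set $Q := \sum_n P^n$ (the case $c_n = 1$ for all $n$), which exists and is supported by the first part. I would compute $(I-P)Q$ and $Q(I-P)$. Using Lemma~\ref{suppnonobv} with one family being the single operator $I - P$ and the other being $(P^n)_n$ — or more directly using Lemma~\ref{strpr}, since $I - P$ is strong and $\sum_n P^n$ exists — we get $(I-P)Q = \sum_n (I-P)P^n = \sum_n (P^n - P^{n+1})$ and $Q(I-P) = \sum_n (P^n - P^{n+1})$. This is a telescoping sum: grouping via Lemma~\ref{suppobv2}, the partial structure collapses so that $\sum_n(P^n - P^{n+1}) = P^0 = I$. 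To make the telescoping rigorous within the ``summable family'' formalism, I would evaluate at an arbitrary $f$ and an arbitrary monomial $\fm$: only finitely many $n$ have $\fm \in \supp P^n(f)$ (shown above), so the sum $\sum_n (P^n(f) - P^{n+1}(f))$ has only finitely many terms contributing at $\fm$, and a finite telescoping gives coefficient $P^0(f)_\fm = f_\fm$. Hence $(I-P)Q = Q(I-P) = I$, so $I-P$ is bijective with inverse $Q = \sum_n P^n$.

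The main obstacle is not any single deep step but rather bookkeeping: one must be careful that ``telescoping'' is justified in the infinitary summability setting rather than by a naive appeal to $\sum(a_n - a_{n+1}) = a_0$, which has no meaning for operators unless the relevant families are summable. The argument that each monomial is hit only finitely often — which rests on Lemma~\ref{fGstar}(ii) applied to the well-based set $\fG \prec 1$ — is what makes both the existence of $\sum_n P^n$ and the telescoping legitimate, and that is the point that deserves the most care.
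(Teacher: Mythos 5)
Your proof is correct and follows essentially the same route as the paper: the same support estimate via Lemmas~\ref{fGfN} and~\ref{fGstar} for existence, Lemma~\ref{suppobv2} for supportedness and smallness, and the strong/telescoping computation (which the paper dismisses as ``clear'') for the inverse of $I-P$. One small caveat: in your parenthetical alternative for the smallness clause, the composition should be written $\big(\sum_n c_{n+1}P^n\big)\circ P$ rather than $P\circ\big(\sum_n c_{n+1}P^n\big)$, since $P$ is only additive and need not commute with multiplication by the constants $c_{n+1}$.
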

\begin{proof}
Let $P$ have support $\fG\prec 1$. Then $P^n$ has support $\fG^n$.
Let $(c_n)$ be a sequence in $C$. Then for $g\in C[[\fM]]$, 
\[\bigcup_{n} \supp c_nP^n(f)\ \subseteq\ \big(\bigcup_{n} \fG^n\big)\cdot \supp f\ =\ \fG^*\cdot \supp f,\]
the latter is well-based, and for every $\fm\in \fM$ there are only finitely many $n$ with $\fm\in \fG^n\supp f$ by \ref{fGfN}
and \ref{fGstar}, so $\sum_nc_n P^n$ exists. Note that all $c_nP^n$ have support $\fG^*$. The rest is clear from Lemmas~\ref{suppstrong}, \ref{suppobv1}, and ~\ref{suppobv2}. 
\end{proof}

\chapter{Derivations on Hahn Fields}\label{derha}

\noindent
In this chapter $\smallk$ is a {\em differential\/} field with derivation $\der$ (possibly trivial), and $\Gamma$ is an ordered abelian group. Let $\alpha,\beta, \gamma$ range over $\Gamma$, and fix an additive map $c:\Gamma \rightarrow \smallk$. 

In the first section we consider how the map $c$ gives rise to a strong derivation on the Hahn field $\smallk((t^\Gamma))$ extending the given derivation $\der$ of $\smallk$. 

In the second section we
consider how to solve differential equations $y-ay'=f$ for $a,f\in \smallk((t^\Gamma))$ with $a\prec 1$. The main result in this chapter is Theorem~\ref{thmA}, which is about preserving truncation closedness under adjoining solutions of such equations to
truncation closed differential subfields of $\smallk((t^\Gamma))$. 
This will be important in the next chapter. 

In the third section we summarize a general construction from
\cite{DMM} that involves (partial) exponential maps on Hahn fields. 
This will also be needed in the next chapter.  

\section{Extending the Derivation on the Coefficient Field to the Hahn Field}\label{extderco}

\noindent
The map $c$ allows us to extend $\der$ to a derivation on the field $\smallk((t^\Gamma))$, also denoted by $\der$, by declaring for $f=\sum_{\gamma}f_\gamma t^\gamma\in \smallk((t^\Gamma))$ that
\[\der \left(f\right)\ :=\  \sum_{\gamma} \Big(\der (f_\gamma) +f_\gamma c(\gamma)\Big)t^\gamma.\]
Thus $(t^\gamma)'=c(\gamma)t^\gamma$. 
For $f\in \smallk((t^\Gamma))$ we have $\supp f'\subseteq \supp f$, so $f'\preceq f$. In particular, $\der$ is small, and the natural field isomorphism from 
$\smallk$ onto the (differential) residue field $\bigO/\smallo$ of $\smallk((t^\Gamma))$ is an isomorphism of differential fields. 
 What are the constants of $\smallk((t^\Gamma))$? In the  extreme case 
$c(\Gamma)=\{0\}$, the constant field of $\smallk((t^\Gamma))$ is
$C_{\smallk}((t^\Gamma))$. For us the opposite extreme is more relevant:

\begin{lemma}\label{cgood}
The following are equivalent:
\begin{enumerate}
	\item[\rm{(i)}] the constant field of $\smallk((t^\Gamma))$ is the same as the constant field of $\smallk$; 
    \item[\rm{(ii)}] $c$ is injective and $c(\Gamma) \cap \smallk^\dagger = \{0\}$;
    \item[\rm{(iii)}] for all $f\in \smallk((t^\Gamma))^\times$ with
$f\not\asymp 1$ we have $\der(f)\asymp f$, hence
$f^\dagger \asymp 1$.
\end{enumerate}
\end{lemma}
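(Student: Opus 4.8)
The plan is to prove the cycle of implications (i) $\Rightarrow$ (ii) $\Rightarrow$ (iii) $\Rightarrow$ (i). Two facts will be used throughout. First, the natural copy of $\smallk$ inside $\smallk((t^\Gamma))$ satisfies $C_{\smallk}\subseteq C_{\smallk((t^\Gamma))}$, so (i) is equivalent to the single inclusion $C_{\smallk((t^\Gamma))}\subseteq \smallk$. Second, since $\der$ is small we have $\supp(f')\subseteq \supp(f)$ and hence $f'\preceq f$ for all $f$; thus the real content of (iii) is the opposite relation $f'\succeq f$, equivalently that the coefficient of $\fd(f)$ in $f'$ does not vanish.

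For (i) $\Rightarrow$ (ii) I would argue by contraposition, exhibiting in each failure case a constant of $\smallk((t^\Gamma))$ lying outside $\smallk$. If $c$ is not injective, choose $\gamma\neq 0$ with $c(\gamma)=0$; then $(t^\gamma)'=c(\gamma)t^\gamma=0$ and $t^\gamma\notin\smallk$. If instead $c(\gamma)=a^\dagger$ for some $a\in\smallk^\times$ with $c(\gamma)\neq 0$ (so necessarily $\gamma\neq 0$, as $c(0)=0$), set $g:=a^{-1}t^\gamma$; using $\der(a^{-1})=-a^{-2}\der(a)$ and $a^{-1}c(\gamma)=a^{-1}a^\dagger=a^{-2}\der(a)$ one computes $g'=\der(a^{-1})t^\gamma+a^{-1}c(\gamma)t^\gamma=0$, and $g\notin\smallk$. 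Either way (i) fails.

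The implication (ii) $\Rightarrow$ (iii) is the step carrying the actual content. Let $f\in\smallk((t^\Gamma))^\times$ with $f\not\asymp 1$ and put $\gamma_0:=v(f)=\min\supp(f)$, so $\gamma_0\neq 0$ and $f_{\gamma_0}\neq 0$. The coefficient of $t^{\gamma_0}$ in $f'$ is $\der(f_{\gamma_0})+f_{\gamma_0}c(\gamma_0)$. Were this zero, we would get $f_{\gamma_0}^\dagger=-c(\gamma_0)$, so $c(\gamma_0)\in\smallk^\dagger$, whence $c(\gamma_0)=0$ by (ii) and then $\gamma_0=0$ by injectivity of $c$ --- contradicting $f\not\asymp 1$. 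Hence the coefficient of $\fd(f)$ in $f'$ is nonzero; combined with $\supp(f')\subseteq\supp(f)$ this gives $\fd(f')=\fd(f)$, i.e.\ $f'\asymp f$, and consequently $f^\dagger=f'/f\asymp 1$.

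Finally, for (iii) $\Rightarrow$ (i): if $f=\sum_\gamma f_\gamma t^\gamma$ is a nonzero constant, then $f'=0$ forces $\der(f_\gamma)+f_\gamma c(\gamma)=0$, i.e.\ $(f_\gamma t^\gamma)'=0$, for each $\gamma\in\supp(f)$. If some such $\gamma$ were nonzero, then $f_\gamma t^\gamma$ would be a nonzero constant with $f_\gamma t^\gamma\not\asymp 1$, contradicting (iii). Hence $\supp(f)\subseteq\{0\}$ and $f\in\smallk$. The only delicate point in the whole argument is the passage (ii) $\Rightarrow$ (iii), and even there everything reduces, once smallness of $\der$ is invoked, to the one-term leading-coefficient computation above; the other two implications are just explicit constructions of constants.
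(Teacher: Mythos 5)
Your proof is correct: the leading-coefficient computation in (ii) $\Rightarrow$ (iii), the explicit constants $t^\gamma$ and $a^{-1}t^\gamma$ witnessing failures of (i), and the reduction of (iii) $\Rightarrow$ (i) to single terms $f_\gamma t^\gamma$ all check out, and the reduction of (i) to the inclusion $C_{\smallk((t^\Gamma))}\subseteq\smallk$ is legitimate. The paper states Lemma~\ref{cgood} without proof, and your argument is exactly the natural one it implicitly relies on (smallness of $\der$ plus the fact that the coefficient of $t^{v(f)}$ in $\der(f)$ is $\der(f_{v(f)})+f_{v(f)}c(v(f))$), so there is nothing to compare against and nothing missing.
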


\noindent
Thus the valued differential field $\smallk((t^\Gamma))$ is asymptotic under the conditions of Lemma~\ref{cgood}.

%It is easy to check that the constant field of
% $\smallk((t^\Gamma))$ is the same as the constant field of $\smallk$ if and only if $c$ is injective and $c(\Gamma) \cap \smallk^\dagger = \{0\}$. %\marginpar{added some remarks and examples}
\medskip\noindent
Sometimes it is more natural to consider a Hahn field $\smallk[[\fM]]$, where $\fM$ is not given in the form $t^\Gamma$, and then
a map $c: \fM\to \smallk$ is said to be \textbf{additive} if $c(\fm\fn)=c(\fm) + c(\fn)$
for all $\fm, \fn\in \fM$. Again, such $c$ allows us to extend $\der$ to a derivation $\der$ on the field $\smallk[[\fM]]$ by 
\[\der \left(f\right)\ :=\  \sum_{\fm} \Big(\der (f_\fm) +f_\fm c(\fm)\Big)\fm.\] 
This operator $\der$ on $\smallk[[\fM]]$ has support $\{1\}$, and so is strong. 

 \medskip\noindent
{\bf Examples.} For $\fM=x^{\mathbb{Z}}$ with $x\succ 1$ we have the usual derivation 
$\frac{d}{dx}$ with 
respect to $x$ on the field of Laurent series 
$\smallk[[x^{\mathbb{Z}}]]=\smallk((t^{\mathbb{Z}}))$ (with $t=x^{-1}$), but it is not of the form considered above.
The derivation $x\frac{d}{dx}$, however, does have the form above, with the
trivial derivation on $\smallk$ and $c(x^k)=k\cdot 1\in \smallk$ for 
$k\in \mathbb{Z}$. Likewise for $\fM=x^{\mathbb{Q}}$ ($x\succ 1$), 
and $\operatorname{char}(\smallk)=0$: then  $x\frac{d}{dx}$ has the above 
form, with the trivial derivation on $\smallk$ and 
$c(x^q)=q\cdot 1\in \smallk$ for 
$q\in \mathbb{Q}$.

\bigskip\noindent
We now return to the setting of  $\smallk((t^\Gamma))$, and observe that
$\der(f|_\gamma) = \der(f)|_\gamma$ for $f\in \smallk((t^\Gamma))$. Thus by Proposition \ref{D}:

\begin{corollary}\label{lem1}
If $R$ is a truncation closed subring of $\smallk((t^\Gamma))$ and $f\in R$ is such that $\der(g)\in R$ for every proper truncation $g$ of $f$, 
then all proper truncations of $\der(f)$ lie in $R$ and thus 
$R[\der(f)]$ is truncation closed.
\end{corollary}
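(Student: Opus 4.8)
The plan is to exploit the single observation recorded just above the statement, namely that the derivation commutes with truncation: $\der(f|_\gamma)=\der(f)|_\gamma$ for all $f\in\smallk((t^\Gamma))$ and all $\gamma\in\Gamma$. (This holds because the extended $\der$ has support $\{1\}$, so $\der$ maps the part of $f$ supported on $\{\gamma'<\gamma\}$ into the part of $\der f$ supported on the same set, and the part supported on $\{\gamma'\geq\gamma\}$ into itself; in particular $\supp \der f\subseteq\supp f$.) Everything else is a bookkeeping argument combined with Proposition~\ref{D}(ii).

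First I would show that every proper truncation of $\der(f)$ lies in $R$. Let $g$ be a proper truncation of $\der(f)$, say $g=\der(f)|_\gamma$ with $\gamma\in\supp(\der f)$. Since $\supp(\der f)\subseteq\supp f$, we get $\gamma\in\supp f$, so $f|_\gamma$ is a proper truncation of $f$, and therefore $\der(f|_\gamma)\in R$ by hypothesis. By the commutation identity, $\der(f|_\gamma)=\der(f)|_\gamma=g$, hence $g\in R$. This proves the first assertion.

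It then remains to apply Proposition~\ref{D}(ii) with $A=\{\der(f)\}$: $R$ is truncation closed by assumption, and every truncation of $\der(f)$ lies in $R[\der(f)]$ — the proper ones lie in $R$ by the previous paragraph, and $\der(f)$ itself (the only improper truncation) lies in $R[\der(f)]$ trivially. Hence $R[\der(f)]$ is truncation closed. I do not expect a genuine obstacle here; the only point requiring care is the commutation identity $\der(f|_\gamma)=\der(f)|_\gamma$, which is exactly where the hypothesis that $\der$ has the prescribed form (support $\{1\}$) is used, and it has already been noted in the text preceding the corollary.
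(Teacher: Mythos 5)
Your proposal is correct and follows essentially the same route as the paper: the commutation $\der(f|_\gamma)=\der(f)|_\gamma$ (noted in the text just before the corollary, and valid since $\supp \der f\subseteq \supp f$) shows every proper truncation of $\der(f)$ is $\der$ of a proper truncation of $f$, hence lies in $R$, and then Proposition~\ref{D}(ii) applied to $A=\{\der(f)\}$ gives that $R[\der(f)]$ is truncation closed. No gaps.
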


\begin{lemma}\label{DifFieldGenTrunc}
Let $R$ be a truncation closed subring of $\smallk((t^\Gamma))$. 
Then the differential subring of $\smallk((t^\Gamma))$ generated by $R$ is 
truncation closed. 
\end{lemma}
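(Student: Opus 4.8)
\medskip\noindent
The plan is to realize the differential subring in question as an \emph{ordinary} subring generated by a well-chosen truncation closed set, and then to quote Proposition~\ref{D}. Write $S$ for the differential subring of $\smallk((t^\Gamma))$ generated by $R$, and set
\[D\ :=\ \{\der^n(f)\ :\ f\in R,\ n\in \N\}\ \subseteq\ \smallk((t^\Gamma)).\]
First I would check that $S$ is the subring generated by $D$. Indeed, $D$ contains $R$ (take $n=0$), and the subring generated by $D$ is closed under $\der$: by the Leibniz rule, $\der$ applied to a polynomial expression in elements of $D$ yields again a polynomial expression in elements of $D$ together with their derivatives, and $\der(\der^n(f))=\der^{n+1}(f)\in D$. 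So this subring is a differential subring containing $R$, hence contains $S$; conversely every $\der^n(f)$ lies in any differential subring that contains $R$, so $D$, and thus the subring it generates, lies inside $S$.

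The one point needing an argument is that $D$ is itself truncation closed. This rests on the identity $\der(g|_\gamma)=\der(g)|_\gamma$ for $g\in\smallk((t^\Gamma))$ recorded above Corollary~\ref{lem1} (valid because $\supp \der(g)\subseteq\supp g$ and the coefficient of $t^\delta$ in $\der(g)$ depends only on the coefficient of $t^\delta$ in $g$). A trivial induction on $n$ then gives $\der^n(f)|_\gamma=\der^n(f|_\gamma)$ for all $n\in\N$ and all $\gamma\in\Gamma$. Since $R$ is truncation closed we have $f|_\gamma\in R$, so $\der^n(f|_\gamma)\in D$; thus every truncation of every element of $D$ again lies in $D$. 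Now Proposition~\ref{D}(i) applies: the subring of $\smallk((t^\Gamma))$ generated by the truncation closed set $D$ is truncation closed, and that subring is $S$, which finishes the proof.

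I do not expect a serious obstacle here: the only thing requiring care is checking that the generating set $D$ is truncation closed, i.e.\ that truncation commutes with every iterate of $\der$, and this is immediate from $\der(g|_\gamma)=\der(g)|_\gamma$. As an alternative that avoids introducing $D$, one can write $S=\bigcup_n R_n$ with $R_0=R$ and $R_{n+1}=R_n[\der(f):f\in R_n]$, and prove by induction on $n$ that each $R_n$ is truncation closed, using Proposition~\ref{D}(ii) (or Corollary~\ref{lem1}) at the inductive step — truncations of the new generators $\der(f)$ equal $\der(f|_\gamma)$ with $f|_\gamma\in R_n$ — and then note that a union of truncation closed subsets is truncation closed.
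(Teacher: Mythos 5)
Your proposal is correct, and in fact it contains the paper's proof verbatim as the "alternative" you sketch at the end: the paper sets $R_0=R$, $R_{n+1}=R_n[\der(f):f\in R_n]$, uses $\der(f)|_\gamma=\der(f|_\gamma)$ to see that all truncations of the new generators lie in $R_{n+1}$, invokes Proposition~\ref{D}(ii) at each stage, and concludes by taking the union. Your main argument is a mild repackaging of the same idea rather than a different method: instead of the staged induction you form the single generating set $D=\{\der^n(f):f\in R,\ n\in\N\}$, observe (by the trivial induction $\der^n(f)|_\gamma=\der^n(f|_\gamma)$, which rests on exactly the same commutation identity $\der(g|_\gamma)=\der(g)|_\gamma$) that $D$ is truncation closed, and then apply Proposition~\ref{D}(i) once. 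What this buys is a slightly cleaner statement of the argument — one appeal to closure under ring generation instead of an induction over the stages $R_n$ — at the cost of having to verify separately that the subring generated by $D$ is closed under $\der$ (your Leibniz-rule remark) and so coincides with the differential subring generated by $R$; the paper's staged construction gets that identification for free. Both verifications you flag as needing care are exactly the ones that matter, and both are handled correctly.
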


\begin{proof}
Let $R_0 = R$ and $R_{n+1} = R_n[\der(f): f\in R_n]$.
Assume inductively that $R_n$ is a truncation closed subring of 
$\smallk((t^\Gamma))$ . 
For $f\in R_n$ and $\gamma\in \Gamma$ we have $\der(f)|_\gamma = \der(f|_\gamma)$, so all truncations of $\der(f)$ lie in $R_{n+1}$, and thus $R_{n+1}$ is truncation closed by Proposition~\ref{D}(ii). 
Since the differential ring generated by $R$ is $\bigcup_{n} R_n$, and a union of truncation closed subsets of $\smallk((t^\Gamma))$ is truncation closed, we conclude that the differential subring of $\smallk((t^\Gamma))$ generated by $R$ is truncation closed. 
\end{proof}

\noindent
Note that Lemma~\ref{DifFieldGenTrunc} goes through with ``subfield'' instead of
``subring''.

%%%%%%%%%%%%%%%%%%%ADJOINING SOLUTIONS TO Y-aY'=f %%%%%%%%%%%%%%%%%%

\section{Adjoining Solutions of $y-ay'=f$}\label{adj}
%Operators of the form $(I-a\der)\inv$}
\noindent
We wish to preserve truncation closedness under
adjoining solutions of differential equations 
\[y-ay'\ =\ f \qquad(a,f \in \smallk((t^\Gamma)),\ a\prec 1).\]
This differential equation is expressed more suggestively as
$(I-a\der)(y)=f$, where $I$ is the identity operator on  
$\smallk((t^\Gamma))$ and $a\der$ is considered as a (strong) 
operator on $\smallk((t^\Gamma))$ in the usual way. Note that $a\der$ 
is even small as defined earlier, hence
$I-a\der$ is bijective, with inverse 
%We are eventually interested in solutions of differential 
%equations of the form 
%\[Y' + bY = h,\]
%with $b\succ 1$ and $h\in \smallk((t^\Gamma))$. 
%We can rewrite such equations in the form 
%\[Y-\frac{-1}{b}Y' = \frac{h}{b}.\]
\[(I-a\der)\inv\ =\ \sum_n (a\der)^n\]
Thus the above differential equation has a unique solution 
$y=(I-a\der)\inv(f)$ in $\smallk((t^\Gamma))$. This is why we now turn our attention to the operator $(I-a\der)\inv$.

\medskip\noindent
For $n\geq 1$, $0\leq m\leq n$ we define $G^n_m(X)\in \mathbb{Z}\{X\}$ recursively as follows:
\begin{itemize}
\item $G^n_0 = 0$,
\item $G^n_n = X^n$,
\item $G^{n+1}_m = X(\der(G^n_m) +G^{n}_{m-1})$ for $1\leq m\leq n$. 
\end{itemize}
This recursion easily gives
\[(a\der)^n\ =\ \sum_{m=1}^n G^n_m(a)\der^m \qquad(n\ge 1),\]
hence
\begin{equation}
(I-a\der)\inv = I + \sum^{\infty}_{n=1} \sum_{m=1}^n G^n_m(a)\der^m.
\end{equation}

\noindent Since $G^n_m(X)$ is homogeneous of degree $n$, we have
$G^n_m(a)\in \smallk t^{n\alpha}$ for $a\in \smallk t^\alpha$.

\begin{lemma}\label{OpLemma1}
Suppose $R$ is a truncation closed differential subring of
$\smallk((t^\Gamma))$, $a\in R\cap\smallk t^\Gamma$, $a\prec 1$, $f\in R$, and  $(I-a\der)\inv(g)\in R$ 
for all $g\truncof f$. Then all proper truncations of $(I-a\der)\inv (f)$ lie in $R$.
\end{lemma}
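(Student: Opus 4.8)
The plan is to fix $\gamma\in\supp\big((I-a\der)\inv(f)\big)$ and prove that the truncation of $y:=(I-a\der)\inv(f)$ at $\gamma$ lies in $R$; since every proper truncation of $y$ has this form, that suffices. As noted just above the statement, $a\der$ is a small operator, so $(I-a\der)\inv=\sum_n(a\der)^n$ and $y=\sum_{n\ge 0}(a\der)^n(f)$ as a summable family. The case $a=0$ is immediate (then $y=f$ and we are done by truncation closedness of $R$), so write $a=a_0t^\alpha$ with $a_0\in\smallk^\times$ and $\alpha>0$. First I would record the elementary support bound: since $\supp(ah)\subseteq\alpha+\supp h$ and $\supp(\der h)\subseteq\supp h$ for every $h\in\smallk((t^\Gamma))$, induction on $n$ gives $\supp\big((a\der)^n h\big)\subseteq n\alpha+\supp h$. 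In particular $\supp y\subseteq\bigcup_{n\ge 0}\big(n\alpha+\supp f\big)$, so I may choose $n_0\ge 0$ and $\delta_0\in\supp f$ with $\gamma=n_0\alpha+\delta_0$.

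The key step is the asymmetric splitting $f=g+r$, with $g:=f|_{\delta_0}$ and $r:=f-g$, so that $\supp r=\supp(f)^{\ge\delta_0}$. Because $\delta_0\in\supp f$, the series $g$ is a proper truncation of $f$ (i.e.\ $g\truncof f$), so by hypothesis $(I-a\der)\inv(g)\in R$; also $g\in R$ by truncation closedness, hence $r\in R$. Now $y=(I-a\der)\inv(g)+(I-a\der)\inv(r)$, so everything reduces to showing $(I-a\der)\inv(r)|_\gamma\in R$.

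Here the choice of $\delta_0$ pays off. For every $n\ge n_0$ and every $\delta\in\supp r$ we have $\delta\ge\delta_0$, hence $n\alpha+\delta\ge n_0\alpha+\delta_0=\gamma$; thus $\supp\big((a\der)^n r\big)$ is contained in $\{\beta:\beta\ge\gamma\}$ and $\big((a\der)^n r\big)|_\gamma=0$ for all $n\ge n_0$. Truncation at $\gamma$ has support $\{1\}$, hence is strong (Lemma~\ref{suppstrong}), so it commutes with the summation:
\[(I-a\der)\inv(r)|_\gamma\ =\ \sum_{n\ge 0}\big((a\der)^n r\big)|_\gamma\ =\ \Big(\sum_{n=0}^{n_0-1}(a\der)^n(r)\Big)\Big|_\gamma.\]
Since $a,r\in R$ and $R$ is a differential ring, $a\der$ maps $R$ into $R$, so the finite sum $\sum_{n=0}^{n_0-1}(a\der)^n(r)$ lies in $R$, and its truncation at $\gamma$ lies in $R$ by truncation closedness. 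Together with $(I-a\der)\inv(g)|_\gamma\in R$ (again by truncation closedness), this yields $y|_\gamma\in R$, completing the argument.

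The main obstacle — and the reason the hypothesis on proper truncations of $f$ is genuinely needed — is that in an arbitrary ordered abelian group $\Gamma$ the sequence $(n\alpha)_n$ need not be cofinal below $\gamma$, so one cannot simply argue that $\big((a\der)^n f\big)|_\gamma$ vanishes for all large $n$; infinitely many of these truncations may be nonzero (e.g.\ in a lexicographic $\Z^2$). The remedy is precisely the split above: the ``low'' part $g=f|_{\delta_0}$ is handed off to the hypothesis, while the ``high'' part $r$ has $\min\supp r=\delta_0$, and the identity $\gamma=n_0\alpha+\delta_0$ forces $n\alpha+\delta\ge\gamma$ for $n\ge n_0$, collapsing the tail to a finite sum that lives in $R$. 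The remaining points to check are the routine bookkeeping facts that truncation at $\gamma$ is strong and that $a\der$ preserves $R$, both of which follow directly from material already in the paper.
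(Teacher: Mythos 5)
Your proof is correct and follows essentially the same route as the paper's: split $f$ at a support element $\delta_0\in\supp f$ with $\gamma=n_0\alpha+\delta_0$, hand the truncation $f|_{\delta_0}$ to the hypothesis, and note that truncating at $\gamma$ annihilates every term of the expansion applied to the remainder from index $n_0$ on, leaving a finite sum that lies in $R$ by closure of $R$ under $a\der$ and under truncation. The only cosmetic difference is that you bound supports directly via $\supp\big((a\der)^n h\big)\subseteq n\alpha+\supp h$ and justify the interchange of truncation with the infinite sum by strongness of the truncation operator, whereas the paper obtains the same bound through the expansion $(a\der)^n=\sum_{m=1}^n G^n_m(a)\der^m$ and the homogeneity of the $G^n_m$.
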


\begin{proof} We have 
$(I-a\der)\inv(f) = f+  \sum^{\infty}_{n=1} \sum_{m=1}^n G^n_m(a)\der^m(f)$.
Also $a=a_{\alpha}t^\alpha$ with $\alpha>0$, hence $\supp  \left(G^n_m(a)\der^m(f)\right)\subseteq n\alpha+\supp f$ for 
$1\le m\le n$. Consider a proper truncation $(I-a\der)\inv(f)|_{\gamma}$
of $(I-a\der)\inv(f)$; we have to show that this truncation lies in $R$.
The truncation being proper gives $N\in \mathbb{N}^{\ge 1}$ and $\beta\in \supp f$ with 
$\gamma\le N\alpha +\beta$. 
Let $f_1:= f|_\beta$ and $f_2:=f-f_1$. Then $f_1, f_2\in R$ and
\[(I-a\der)\inv(f)\ =\ (I-a\der)\inv(f_1) + f_2 + \sum^{\infty}_{n=1} \sum_{m=1}^n G^n_m(a)\der^m(f_2).\]
Using $\supp f_2\ge \beta$ and truncating at $\gamma$ gives 
\[(I-a\der)\inv(f)|_{\gamma}\ =\ (I-a\der)\inv(f_1)|_\gamma + f_2|_\gamma + \left.\left(\sum^{N-1}_{n=1} \sum_{m=1}^n G^n_m(a)\der^m(f_2)\right)\right|_\gamma,\]
which lies in $R$, since $f_1\truncof f$ and thus $(I-a\der)\inv(f_1)\in R$. 
\end{proof}

\noindent
The key inductive step is provided by the next lemma. 

\begin{lemma}\label{OperatorLemma} Let $R$ be a truncation closed differential subring of $\smallk((t^\Gamma))$. Let $a,f\in R$ be such that $a\prec 1$ and for all $b,g\in R$,
\begin{enumerate}
\item[\rm{(i)}] $g\truncof f \Rightarrow (I-a\der)\inv (g)\in R$,
\item[\rm{(ii)}] $b\truncof a \Rightarrow (I-b\der)\inv(g)\in R$.
\end{enumerate}
Then all proper truncations of $(I-a\der)^{-1}(f)$ lie in $R$.
\end{lemma}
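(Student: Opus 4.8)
The plan is to reduce to the case where $a$ is a monomial-coefficient series, i.e. $a\in\smallk t^\alpha$, by using a truncation of $a$ together with hypothesis (ii), and then invoke Lemma~\ref{OpLemma1}. First I would write $a = a_1 + a_2$ where $a_1 := a|_\alpha$ with $\alpha := v(a)\in\Gamma^{>}$ (so $a_1 = a_\alpha t^\alpha\in\smallk t^\alpha$) and $a_2 := a - a_1\prec a_1\asymp a$. Both $a_1, a_2$ lie in $R$ since $R$ is truncation closed. Note $a_2\truncof a$, so hypothesis (ii) applies with $b = a_2$; I will need a version where $b$ ranges over all proper truncations, but (ii) is stated exactly that way, giving $(I-a_2\der)^{-1}(g)\in R$ for all $g\in R$. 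The aim is to relate $(I-a\der)^{-1}$ to $(I-a_1\der)^{-1}$ and $(I-a_2\der)^{-1}$.

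The key algebraic identity I would use is an operator factorization. Writing $I - a\der = (I-a_1\der) - a_2\der$, I want to ``divide out'' the invertible operator $I - a_2\der$ (small, hence bijective by Corollary~\ref{cnPn}):
\[
I - a\der\ =\ \bigl(I - a_2\der\bigr)\bigl(I - Q\bigr),\qquad Q\ :=\ (I-a_2\der)^{-1}(a_1\der).
\]
Here $a_1\der$ has support $\{t^\alpha\} = \fM^{\{t^\alpha\}}\prec 1$, and $(I-a_2\der)^{-1}$ has support in $(\supp a_2)^*\prec 1$ (in fact $\preceq 1$, but composed with $a_1\der$ we land strictly below $1$), so $Q$ is a small operator and $I - Q$ is bijective with inverse $\sum_k Q^k$. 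Consequently
\[
(I-a\der)^{-1}\ =\ \Bigl(\sum_k Q^k\Bigr)\,(I-a_2\der)^{-1}.
\]
Then I would argue that each $Q^k$ applied to something in $R$ stays in $R$: indeed $Q = (I-a_2\der)^{-1}\circ(a_1\der)$, and for $h\in R$ we have $a_1\der(h) = a_1 h'\in R$ (as $R$ is a differential subring), and then $(I-a_2\der)^{-1}(a_1 h')\in R$ by hypothesis (ii). So $Q$ maps $R$ into $R$, hence so does each $Q^k$, and $(I-a_2\der)^{-1}$ maps $R$ into $R$ as well.

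The remaining task is the truncation bookkeeping. Given a proper truncation $(I-a\der)^{-1}(f)|_\gamma$, I would: (a) use the support bound $\supp Q^k(h)\subseteq (\supp a_2)^* \cdot t^{k\alpha}\cdot \supp h$ together with $\alpha>0$ and $\supp a_2\succ 1$ being bounded below in $\Gamma$, to see that for the truncation at $\gamma$ only finitely many $k$ and only a truncation of $(I-a_2\der)^{-1}(f)$ matter; (b) split $f = f_1 + f_2$ with $f_1 := f|_\beta$ a suitable proper truncation, as in the proof of Lemma~\ref{OpLemma1}, so that hypothesis (i) gives $(I-a\der)^{-1}(f_1)\in R$ after rewriting $(I-a\der)^{-1} = (\sum_k Q^k)(I-a_2\der)^{-1}$ and using that the tail involving $f_2$ contributes only monomials $\succ t^\gamma$ or lies in $R$ by the previous paragraph. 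Concretely I expect the clean statement: $(I-a\der)^{-1}(f)|_\gamma$ differs from an element of $R$ (built from finitely many $Q^k$ applied to truncations of $(I-a_2\der)^{-1}(f_1)$ and of $(I-a_2\der)^{-1}(f_2)$, all in $R$) by something supported $\succ t^\gamma$, hence the truncation is in $R$.

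The main obstacle will be making the support estimates for the composite operator $\sum_k Q^k (I-a_2\der)^{-1}$ precise enough to justify step (b) — in particular controlling, for a given target exponent $\gamma$, which proper truncations of $f$ and how many powers $Q^k$ are relevant, so that everything reduces to finitely many applications of hypotheses (i) and (ii) to elements of $R$. This is essentially the same kind of estimate as in Lemma~\ref{OpLemma1} but now with the extra layer $(\supp a_2)^*$ from inverting $I-a_2\der$; since $a_2\prec 1$ and $\alpha>0$, the relevant exponents $n\cdot(\text{stuff}\geq \text{something}>0) + k\alpha + (\text{exponent in }f)$ still go to $+\infty$, so only finitely many terms survive below any fixed $\gamma$, but writing this carefully is where the real work lies.
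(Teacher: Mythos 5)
There is a genuine gap, in two places. First, you have the direction of truncation backwards. In $\smallk((t^\Gamma))$ the truncation $a|_{\gamma_0}=\sum_{\gamma<\gamma_0}a_\gamma t^\gamma$ keeps the \emph{dominant} terms, so with $\alpha=v(a)$ the leading term $a_\alpha t^\alpha$ is the (proper) truncation of $a$ when the tail is nonzero, while the tail $a_2=a-a_\alpha t^\alpha$ is \emph{not} a truncation of $a$ at all (also note $a|_{v(a)}=0$ under the paper's convention, not the leading term). Hence hypothesis (ii) applies to $b=a_1$, never to $b=a_2$, and your central step --- inverting $I-a_2\der$ on $R$ via (ii), so that $Q=(I-a_2\der)^{-1}(a_1\der)$ maps $R$ into $R$ --- is unjustified. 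Second, even if you swap the roles (invert $I-a_1\der$ by (ii) and expand geometrically in the tail), the finiteness claim at the end of your sketch fails in general: cutting $a$ at the \emph{fixed} exponent $\alpha=v(a)$, the $k$-th term of your series is only guaranteed to have support $\ge k\alpha+\beta$, and when $\Gamma$ is not archimedean, $k\alpha$ need never exceed $\gamma-\beta$; ``the relevant exponents go to $+\infty$'' is an archimedean argument. For a proper truncation at such a $\gamma$, infinitely many terms of the series contribute monomials below $\gamma$, so no finite number of applications of (i) and (ii) together with ring operations and truncation closedness of $R$ produces the truncated element.

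The missing idea is that the cut points must be chosen \emph{adaptively} in terms of $\gamma$. Since the truncation is proper, $\gamma\in\supp\big((I-a\der)^{-1}(f)\big)$, so one can pick $\alpha\in\supp a$, $\beta\in\supp f$ and $N\ge 1$ with $\gamma\le N\alpha+\beta$. The paper then sets $a_1:=a|_\alpha$ (a proper truncation, in general a long initial segment, not just the leading term), $a_2:=a-a_1$ (so $\supp a_2\subseteq\Gamma^{\ge\alpha}$), $f_1:=f|_\beta$, $f_2:=f-f_1$, and uses the factorization $I-(P+Q)=\big(I-Q(I-P)^{-1}\big)(I-P)$ with $P=a_1\der$, $Q=a_2\der$, giving $(I-a\der)^{-1}=\sum_{m\ge 0}(I-P)^{-1}\big(Q(I-P)^{-1}\big)^m$. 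Each factor $Q$ now raises supports by at least $\alpha$, so all terms with $m\ge N$ applied to $f_2$ have support $\ge N\alpha+\beta\ge\gamma$ and vanish under truncation at $\gamma$; the finitely many remaining terms lie in $R$ because $(I-P)^{-1}$ preserves $R$ by (ii) (as $a_1\truncof a$), $a_2,f_2\in R$, and $(I-a\der)^{-1}(f_1)\in R$ by (i). Your head/tail splitting and geometric-series factorization have the right shape, but with the head frozen at a single leading term and the inversion via (ii) applied to the wrong factor, the argument cannot deliver the required finite reduction.
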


\begin{proof} Assume $(I-a\der)^{-1}(f)|_\gamma\truncof (I-a\der)^{-1}(f)$. Then we have
$\alpha\in \supp a$ and $\beta\in \supp f$, and $N\in \mathbb{N}^{\ge 1}$ such that
 $\gamma\leq N\alpha + \beta$. 
Put $a_1:=a|_{\alpha}$, $a_2:= a-a_1$, $f_1:=f|_\beta$, and $f_2:=f-f_1$. 
Set $P=a_1\der$, $Q=a_2\der$, so $a\der=P+Q$, $P,Q$ are small, and  
\begin{align*} (I-a\der)\inv(f)\ &=\ (I-a\der)\inv(f_1) + (I-a\der)\inv(f_2)\\
 &=\ (I-a\der)\inv(f_1) + \big(I-(P+Q)\big)\inv(f_2).\end{align*}
Now $Q(I-P)\inv=\sum_n QP^n$ is small, and we have the identity
\begin{align*} I-(P+Q)\ &=\ \big(I-Q(I-P)\inv\big)(I-P),\ \text{ so}\\ 
\big(I-(P+Q)\big)\inv\ &=\ (I-P)\inv \big(I-Q(I-P)\inv\big)\inv\\
&=\ \sum_{m=0}^\infty (I-P)\inv\big(Q(I-P)\inv\big)^m, \text{ hence}\\
(I-a\der)\inv(f)\ &=\ (I-a\der)\inv(f_1) + \sum_{m=0}^\infty (I-P)\inv \big(Q(I-P)\inv\big)^m(f_2). 
\end{align*}
For $m\ge N$ we have $\supp \big(Q(I-P)\inv\big)^m(f_2)\ge \gamma$, so truncating at $\gamma$ yields
\[(I-a\der)\inv(f)|_{\gamma}\ =\ (I-a\der)\inv(f_1)|_{\gamma} + \left.\left(\sum_{m=0}^{N-1}(I-P)\inv \big(Q(I-P)\inv\big)^m(f_2)\right)\right|_\gamma.\] 
Since $f_1\truncof f$ the first summand of the right hand side lies in $R$. Since $a_1\truncof a$, we have $(I-P)\inv(h)\in R$ for all $h\in R$, and thus, using $a_2, f_2\in R$, 
\[\sum_{m=0}^{N-1}(I-P)\inv (Q(I-P)\inv)^m (f_2)\in R.\]
Therefore $(I-a\der)^{-1}(f)|_\gamma\in R$. 
\end{proof}

\begin{theorem}\label{thmA} Let $E$ be a truncation closed differential subfield
of $\smallk((t^\Gamma))$. Let $\widehat{E}$ be the smallest differential subfield of 
$\smallk((t^\Gamma))$ that contains $E$ and is closed under $(I-a\der)\inv$ for all $a\in \widehat{E}^{\prec 1}$. Then $\widehat{E}$ is truncation closed.
\end{theorem}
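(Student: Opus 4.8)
The plan is to build $\widehat E$ as a transfinite union of truncation closed differential subfields and to show at each step that adjoining the solution of one equation $y - ay' = f$ (equivalently, applying $(I - a\der)^{-1}$) preserves truncation closedness, using Lemma~\ref{OperatorLemma} as the key inductive device. Concretely, I would first reduce to the following statement: if $R$ is a truncation closed differential subring of $\smallk((t^\Gamma))$, $a \in R^{\prec 1}$, and $f \in R$, and moreover $(I - b\der)^{-1}(g) \in R$ for all $b \truncof a$ and all $g \in R$, then $(I - a\der)^{-1}(f) \in \smallk((t^\Gamma))$ has all its proper truncations computable from $R$ together with solutions for ``smaller'' data. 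This is almost Lemma~\ref{OperatorLemma} except that that lemma takes $a \in \smallk t^\Gamma$ (a single monomial times a coefficient) implicitly through Lemma~\ref{OpLemma1}'s hypotheses — actually Lemma~\ref{OperatorLemma} already allows general $a \prec 1$, so the heart of the matter is in place.

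The main structural step is the induction. Order pairs $(a, f)$ with $a \in \widehat E^{\prec 1}$, $f \in \widehat E$ by a well-founded relation built from the order types $o(a)$ and $o(f)$ of their supports, say lexicographically on $(o(a), o(f))$. I would let $E_0 := E$, and at a successor stage, having a truncation closed differential subfield $E_\lambda$, close it off under $(I - a\der)^{-1}$ for those $a \in E_\lambda^{\prec 1}$ of minimal complexity not yet handled; at limit stages take unions, which preserve truncation closedness since a union of truncation closed sets is truncation closed. To run the successor step I would invoke Lemma~\ref{OperatorLemma}: its hypothesis (ii), that $(I - b\der)^{-1}(g) \in R$ for all $b \truncof a$ and $g \in R$, is exactly what the induction on $o(a)$ provides; its hypothesis (i), that $(I - a\der)^{-1}(g) \in R$ for all $g \truncof f$, is provided by the induction on $o(f)$ (the proper truncations $g$ of $f$ have $o(g) < o(f)$). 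The conclusion of Lemma~\ref{OperatorLemma} is that all proper truncations of $(I - a\der)^{-1}(f)$ lie in $R$; then, adjoining $(I - a\der)^{-1}(f)$ itself, Proposition~\ref{D}(ii) (or rather the field version together with Proposition~\ref{D}(i)) shows the resulting differential field is still truncation closed.

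Two bookkeeping points need care. First, one must check that the class of all $a \in \widehat E^{\prec 1}$ is actually attained inside the union being built — i.e. that when we need $(I - a\der)^{-1}(f)$ for some $a, f$ already in some $E_\lambda$, the element $a$ really has smaller-complexity truncations already handled, which requires that $o(b) < o(a)$ for $b \truncof a$ (true) and that those $b$ lie in $E_\lambda$ (true since $E_\lambda$ is truncation closed). Second, one must verify that closing under $(I - a\der)^{-1}$ and then under the differential field operations does not reintroduce new ``small'' elements $a$ whose truncations were not yet processed — this is handled by iterating: interleave ``close under $(I-a\der)^{-1}$ for available $a$'' with ``close under differential field operations'' (using Lemma~\ref{DifFieldGenTrunc} and the remark after it that it holds with ``subfield''), and take the union; any $a \in \widehat E^{\prec 1}$ appears at some finite stage, and by well-foundedness of $(o(a), o(f))$ the whole process terminates in the transfinite sense, with $\widehat E$ the union. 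The main obstacle, then, is not any single hard computation but getting this transfinite construction to close up correctly: one must simultaneously induct on $(o(a), o(f))$ and on the stage of the construction, and make sure Lemma~\ref{OperatorLemma}'s two hypotheses are genuinely available at the moment each solution is adjoined. Once the induction is set up cleanly, each individual step is immediate from Lemma~\ref{OperatorLemma}, Lemma~\ref{DifFieldGenTrunc}, and Proposition~\ref{D}.
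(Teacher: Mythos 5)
Your proposal is correct and rests on exactly the same key ingredients as the paper's proof: Lemma~\ref{OperatorLemma} applied to a pair $(a,f)$ chosen lexicographically minimal with respect to the order types $(o(a),o(f))$ of the supports, followed by Proposition~\ref{D}(i),(ii) to see that adjoining $(I-a\der)\inv(f)$ yields a truncation closed differential subfield. The paper packages this more cleanly by taking, via Zorn's Lemma, a maximal truncation closed differential subfield $F$ of $\widehat{E}$ containing $E$ and deriving a contradiction from a minimal-complexity failure of closure, which eliminates all the transfinite bookkeeping you identify as the main obstacle.
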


\begin{proof}
Let $F$ be a maximal truncation closed differential subfield of $\widehat{E}$ 
containing $E$. (Such $F$ exists by Zorn's Lemma.) It suffices to show that 
$F= \widehat{E}$. Assume $F\ne \widehat{E}$. Then there exist  $a\in F^{\prec 1}$ and $f\in F$ with 
$(I-a\der)\inv(f) \notin F$. Take such $a$ and $f$ for which
$(\alpha, \beta)$ is minimal for lexicographically ordered pairs of ordinals,
where $\alpha$ is the order type of the support of $a$ and $\beta$ that of
 $f$. By minimality of $(\alpha,\beta)$ we can apply Lemma \ref{OperatorLemma} to $F,a$, and $f$ to get that $F\big((I-a\der)\inv(f)\big)$ is 
truncation closed, 
using also parts (i) and (ii) of Proposition~\ref{D}. Since $F\big((I-a\der)\inv(f)\big)$ is a 
differential subfield of $\widehat{E}$, this contradicts the maximality of $F$. 
\end{proof}

\subsection*{Complementing the previous results}
Suppose $\smallk_1$ is a differential 
subfield of $\smallk$ and $E$ is a truncation closed
subfield of 
$\smallk((t^\Gamma))$ such that $a\in \smallk_1$ and
$c(\gamma)\in \smallk_1$ whenever $at^\gamma\in E$, $a\in \smallk^\times$, $\gamma\in \Gamma$. Let $\Gamma_1$ be the subgroup of $\Gamma$ generated by the
$\gamma\in \Gamma$ with $at^\gamma\in E$ for some $a\in \smallk^\times$. 
In connection with Lemma~\ref{DifFieldGenTrunc} we note:

\medskip\noindent
{\em The differential subfield of $\smallk((t^\Gamma))$ generated by $E$ is 
contained in  $\smallk_1((t^{\Gamma_1}))$.}

\medskip\noindent
This is because $E\subseteq \smallk_1((t^{\Gamma_1}))$ and $\smallk_1((t^{\Gamma_1}))$ is a differential subfield of 
$\smallk((t^\Gamma))$. Likewise we can complement Theorem~\ref{thmA}:

\medskip\noindent
{\em If
$\der E\subseteq E$ and $\widehat{E}$ is the smallest 
differential subfield of $\smallk((t^\Gamma))$ that contains $E$ and is closed under
$(I-a\der)\inv$ for all $a\in \widehat{E}^{\prec 1}$,   
then $\widehat{E}\subseteq \smallk_1((t^{\Gamma_1}))$.}

\medskip\noindent
This is because $\smallk_1((t^{\Gamma_1}))$ is closed under
$(I-a\der)\inv$ for all $a\in\smallk_1((t^{\Gamma_1}))^{\prec 1}$. 
%\marginpar{checked sofar modulo manipulations with infinite sums of operators}

\section{Exponentiation} 

\noindent
Our aim is to apply the material above
to the differential field $\T_{\exp}$ of purely exponential transseries. 
The construction of $\T_{\exp}$ involves an
iterated formation of Hahn fields, where at each step we apply
 the following general procedure stemming from \cite{DMM} (but copied from \cite[Appendix A]{ADH}). 

\medskip\noindent
Define a {\bf pre-exponential ordered field}
to be a tuple~$(E, A, B, \exp)$ such that
\begin{enumerate}
\item $E$ is an ordered field;
\item $A$ and $B$ are additive subgroups of $E$ with $E=A\oplus B$ and $B$
convex in $E$;
\item $\exp\colon B\to E^{\times}$ is a strictly increasing group morphism (so $\exp(B)\subseteq E^>$).
\end{enumerate}
Let $(E,A,B,\exp)$ be a pre-exponential ordered field. We view $A$ as the part of $E$ where exponentiation is not yet defined, and accordingly we introduce
a ``bigger'' pre-exponential ordered field $(E^*,A^*,B^*,\exp^*)$ as follows:
Take a {\em multiplicative\/} copy $\exp^*(A)$ of the ordered additive
group $A$ with order-preser\-ving isomorphism 
$\exp^*\colon A\to\exp^*(A)$,
and put $E^*\ :=\ E[[\exp^*(A)]]$. Viewing $E^*$ as an ordered Hahn field over 
the ordered coefficient field $E$, we set
\[ A^*\ :=\ E[[\exp^*(A)^{\succ 1}]],\qquad B^*\ :=\ (E^*)^{\preceq 1}\ =\ E\oplus 
(E^*)^{\prec 1}\ =\ A \oplus B \oplus (E^*)^{\prec 1}.\] 
Note that $\exp^*(A)^{\succ 1}=\exp^*(A^{>})$. Next we extend
$\exp^*$ to $\exp^*\colon B^*\to (E^*)^{\times}$ by
\[\exp^*(a+b+\varepsilon)\ :=\ \exp^*(a)\cdot \exp(b)\cdot \sum_{n=0}^\infty
\frac{\varepsilon^n}{n!} \qquad(a\in A,\ b\in B,\ \varepsilon\in (E^*)^{\prec 1}).\]
Then $E\subseteq B^*=\operatorname{domain}(\exp^*)$, and $\exp^*$ extends $\exp$. Note that $E<(A^*)^>$ (but $\exp^*(E)$ is cofinal in $E^*$ if $A\ne \{0\}$). In particular, for $a\in A^{>}$, we have 
\[\exp^*(a)\in \exp^*(A^{>})\ \subseteq\ (A^*)^>,\ \text{  so $\exp^*(a)\ >\ E$.}\]

\medskip\noindent
Assume also that a derivation $\der$ on the field $E$ is given that respects
exponentiation, that is, $\der(\exp(b))=\der(b)\exp(b)$ for all 
$b\in B$. Then we extend $\der$ uniquely to a 
derivation on
the field $E^*$, also denoted by $\der$, by requiring 
that $\der$ is strong as an operator on the Hahn field $E^*$
over $E$, and that $\der(\exp^*(a))=\der(a)\exp^*(a)$
for all $a\in A$. This falls under the general construction at the beginning of this section with $\smallk=E$ and $\fM= \exp^*(A)$ by taking the additive function 
$c: \exp^*(A) \to E$ given by $c(\exp^*(a))=\der(a)$.  
This extended derivation on $E^*$ again respects
exponentiation in the sense that  
$\der(\exp^*(b))=\der(b)\exp^*(b)$ for all 
$b\in B^*$.

\subsection*{Exponential ordered fields} For the sake of completeness and because we use this notion in the next chapters
we define an {\bf exponential ordered field\/} to be an ordered field $E$ equipped with a strictly increasing group morphism 
$\exp: E \to E^{\times}$ from the additive group of $E$ into its multiplicative group. Note that then $\exp(E)\subseteq E^{>}$, and
that this is basically the case of a pre-exponential ordered field
$(E, A, B, \exp)$ with $A=\{0\}$ and $B=E$. 

\chapter{Increasing Unions of Differential Hahn Fields}

\noindent
As we mentioned in the Introduction, $\T_{\exp}$ is an increasing union of differential Hahn fields. Here we abstract from some details of its construction in order to
better focus on features that matter for truncation.
In the first section we present the general set-up of constructing
an increasing union $\smallk_{*}=\bigcup_n \smallk_n$ of
differential Hahn fields $\smallk_n=\smallk[[\fM^{(n)}]]$. In the second section we show how $\Texp$ fits into this setting.
In the third section we define {\em splitting\/} and show that  various
extension procedures preserve being truncation closed under a splitting assumption.
In the fourth section we extend the derivation of $\smallk_{*}$ to its completion $\smallk[[\fM]]$, where $\fM=\bigcup_n\fM^{(n)}$.
In the fifth section we show how to integrate in $\smallk_{*}$
if the latter is {\em transserial}. 
The main result of this chapter is Theorem~\ref{thmB} in the sixth section, to the effect that being truncation closed is preserved under an extension 
procedure that involves integration as well as adjoining solutions of certain subsidiary first-order linear differential equations. This will be crucial in the next (and last) chapter.  

\section{The Basic Set-Up}\label{basicsetup}

\noindent
Let $\smallk$ be a field of characteristic zero and $\fM$ a monomial group with distinguished subgroups $\fM_n$, $n=0,1,2,\dots$, such that, with
$\fM^{(n)}:=\fM_0 \cdots \fM_n\subseteq \fM$, we have:\begin{enumerate}
\item $\fM=\bigcup_n \fM^{(n)}$;
\item $\fM_m \prec \fm_n$ for all $m<n$ and $\fm_n\in \fM^{\succ 1}_n$.
\end{enumerate} 
Thus $\fM^{(n)}$ is a convex subgroup of $\fM$. We also have the direct product group $\fM_0 \times \cdots \times \fM_n$ with a group isomorphism 
\[\fM_0 \times \cdots \times \fM_n\to \fM^{(n)}, \quad (\fm_0,\dots, \fm_n) \mapsto \fm_0\cdots \fm_n\]
and for an element $(\fm_0,\dots, \fm_n)\in \fM_0\times \cdots \times \fM_n$ with $\fm_n\ne 1$ we have
\[\fm_0\cdots\fm_n\succ 1\ \Longleftrightarrow\ \fm_n\succ 1.\]
Setting $\smallk_n:= \smallk[[\fM^{(n)}]]$ we have 
$\smallk_0=\smallk[[\fM_0]]$ and field extensions
\[\smallk\ \subseteq\ \smallk_0\ \subseteq\ \smallk_1\ \subseteq\ \cdots\subseteq\ \smallk_n\ \subseteq\ \smallk_{n+1}\ \subseteq\ \cdots.\]
When convenient we identify $\smallk_{n}$ with the
Hahn field $\smallk_{n-1}[[\fM_n]]$ over $\smallk_{n-1}$ in the usual way
(where $\smallk_{-1}:=\smallk$ by convention). We set 
\[\smallk_{*}\ :=\ \bigcup_n \smallk_n, \quad\text{ a truncation closed subfield of the Hahn field }\smallk[[\fM]].\]
More generally, a set $S\subseteq \smallk_{*}$ is said to be truncation closed
if it is truncation closed as a subset of the Hahn field $\smallk[[\fM]]$. Recall from Chapter 3 that if $E\supseteq \smallk$ is a truncation closed subfield of $\smallk[[\fM]]$, then 
$\fM_E:=E\cap \fM=\supp E\cap \fM$ is a subgroup of $\fM$ with
$E\subseteq \smallk[[\fM_K]]$.

\subsection*{Closure under small exponentiation} The function $\exp: \smallk[[\fM]]^{\prec 1} \to \smallk[[\fM]]$ given by
$$\exp(\varepsilon)\ =\ \sum_{i=0}^{\infty} \varepsilon^i/i!$$ maps $\smallk_{*}^{\prec 1}$ bijectively onto $1+\smallk_{*}^{\prec 1}$. %and maps $\smallk[[\fM_E]]^{\prec 1}$ bijectively onto $1+\smallk[[\fM_E]]^{\prec 1}$.   
Thus if $E$ is a subfield of 
$\smallk_{*}$, then its small exponential closure $E^{\smallexp}$ is also a subfield of 
$\smallk_{*}$.  And if $E\supseteq \smallk$ is a truncation closed
subfield of $\smallk_{*}$, then $E^{\smallexp}\subseteq \smallk_{*}\cap \smallk[[\fM_E]]$ and $E^{\smallexp}$ is truncation closed. 

\subsection*{The valuation}
Take an order reversing group isomorphism $v: \fM \to \Gamma$ onto an
additive ordered abelian group $\Gamma$. Then $v$ extends to the valuation
$v:\smallk[[\fM]]^\times \to \Gamma$ given by $v(f)=v(\max \supp f)$. Note that for 
$a\in \smallk_n= \smallk_{n-1}[[\fM_n]]$ we have: 
\[a\prec_{\fM_n}1\ \Longleftrightarrow\ va>v(\fM^{(n-1)}) \quad(\text{with }\fM^{(-1)}:=\{1\}).\]
Using that the $\fM^{(n)}$ are convex in $\fM$
it follows that $\smallk_{*}$ is dense in the Hahn field
$\smallk[[\fM]]$ with respect to the valuation topology given by $v$. Note also that for $f\in \smallk[[\fM]]$,
$$ f\succ \fM^{(n-1)}\ \Longleftrightarrow\ f \succ \smallk_{n-1}.$$

\subsection*{Equipping $\smallk_{*}$ with a derivation}
We now assume that $\smallk$ is even a differential field, and that for
every $n$ there is given an additive map $c_n: \fM_{n} \to \smallk_{n-1}$.
Then we make $\smallk_n$ into a differential field by recursion on $n$: $\smallk_0=\smallk[[\fM_0]]$ is equipped with the derivation $\der_0$ given by the derivation of $\smallk$ and the additive map $c_0$, and
$\smallk_{n+1}=\smallk_{n}[[\fM_{n+1}]]$ is equipped with the derivation $\der_{n+1}$ given by the 
derivation $\der_n$ of $\smallk_{n}$
and the additive map $c_{n+1}: \fM_{n+1} \to \smallk_{n}$. Thus $\smallk_{n}$ is a differential field extension of $\smallk_{n-1}$. 
We make $\smallk_{*}$ into a differential field so that it contains every
$\smallk_n$ as a differential subfield. We let $\der$ denote the derivation of $\smallk_{*}$, so $\der$ extends each $\der_n$.  It follows easily that
$$(\fm_0\cdots \fm_n)^\dagger\ =\  c_0(\fm_0) +\cdots + c_n(\fm_n), \qquad (\fm_0\in \fM_0, \dots, \fm_n\in \fM_n).$$ 
This suggest we define the additive function $c: \fM \to \smallk_*$ by 
$$c(\fm_0\cdots \fm_n)\ :=\ c_0(\fm_0) +\cdots + c_n(\fm_n), \qquad (\fm_0\in \fM_0, \dots, \fm_n\in \fM_n),$$
so $c$ extends each $c_n$, and $\fm^\dagger=c(\fm)$ for all $\fm\in \fM$.

\begin{lemma}\label{small} If $f\in \smallk_{*}$ and $f\prec 1$, then $\der(f)\prec 1$. $($In other words: $\smallk_{*}$ has small derivation.$)$
\end{lemma}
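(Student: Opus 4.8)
The plan is to reduce the statement to the level of a single Hahn field $\smallk_n = \smallk_{n-1}[[\fM_n]]$, where smallness of the derivation was already established in Section~\ref{extderco}. Concretely, if $f \in \smallk_{*}$ and $f \prec 1$, then $f \in \smallk_n$ for some $n$, so it suffices to show that the derivation $\der_n$ on $\smallk_n$ is small, i.e. that $\der_n(\smallo_n) \subseteq \smallo_n$ where $\smallo_n$ is the maximal ideal of the valuation ring of $\smallk_n$ (with respect to the valuation $v$ on $\fM$). The subtlety is that $\smallk_n$ carries two natural "small" structures: the one coming from viewing it as the Hahn field $\smallk_{n-1}[[\fM_n]]$ over $\smallk_{n-1}$ (where "small" means $\prec_{\fM_n} 1$, equivalently $va > v(\fM^{(n-1)})$), and the one coming from the full valuation $v$ on $\fM^{(n)}$ (where "small" means $va > 0$). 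I would therefore prove smallness by an induction on $n$ that tracks the full valuation.

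First I would set up the induction: the base case $n=0$ is exactly the content of Section~\ref{extderco}, since $\smallk_0 = \smallk[[\fM_0]]$ is equipped with the derivation built from the (small) derivation of $\smallk$ and the additive map $c_0$, and that construction gives $\supp \der_0(f) \subseteq \supp f$, hence $\der_0(f) \preceq f$, so $f \prec 1 \Rightarrow \der_0(f) \prec 1$. For the inductive step, assume $\der_{n-1}$ is small on $\smallk_{n-1}$, i.e. $g \in \smallk_{n-1}$, $g \prec 1 \Rightarrow \der_{n-1}(g) \prec 1$. Now take $f \in \smallk_n = \smallk_{n-1}[[\fM_n]]$ with $f \prec 1$; write $f = \sum_{\fm \in \fM_n} f_{\fm}\fm$ with coefficients $f_{\fm} \in \smallk_{n-1}$. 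Then, by the construction of $\der_n$ from $\der_{n-1}$ and $c_n$,
\[
\der_n(f)\ =\ \sum_{\fm \in \fM_n}\bigl(\der_{n-1}(f_{\fm}) + f_{\fm}\, c_n(\fm)\bigr)\fm,
\]
so $\supp \der_n(f) \subseteq \supp f$ as a subset of $\fM^{(n)} = \fM^{(n-1)}\fM_n$: indeed each monomial $\fm' \fm$ occurring on the right has $\fm' \in \supp(\der_{n-1}(f_{\fm})) \cup \supp(f_{\fm} c_n(\fm)) \subseteq \supp(f_{\fm})$ (using that $\der_{n-1}(f_{\fm}) \preceq f_{\fm}$, which follows from $\der_{n-1}$ having the same shape recursively — this is where I'd invoke that each $\der_m$ has support $\{1\}$ as noted in Section~\ref{extderco}), and $\fm' \in \supp(f_{\fm})$ together with $\fm \in \supp_{\fM_n}(f)$ means $\fm'\fm \in \supp f$. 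Hence $\der_n(f) \preceq f \prec 1$.

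The main obstacle is bookkeeping: making precise that the derivation $\der_n$, built by iterating the construction of Section~\ref{extderco}, has support $\{1\}$ as an operator on $\smallk[[\fM^{(n)}]]$ over $\smallk$ — i.e. $\supp \der_n(f) \subseteq \supp f$ for all $f \in \smallk_n$. This is essentially the assertion, made in Section~\ref{extderco}, that the extended derivation "has support $\{1\}$, and so is strong," applied at each stage; one must check that the additive maps $c_n: \fM_n \to \smallk_{n-1}$ assemble into the single additive map $c: \fM \to \smallk_{*}$ with $\der(\fm) = c(\fm)\fm$, so that on $\smallk_n$ the derivation $\der$ agrees with the Section~\ref{extderco} construction applied to the coefficient differential field $\smallk$ (with its small derivation) and the additive map $c|_{\fM^{(n)}} : \fM^{(n)} \to \smallk_{n-1} \subseteq \smallk_n$. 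Wait — for that to literally be an instance of the Section~\ref{extderco} set-up with coefficient field $\smallk$, I need $c(\fM^{(n)}) \subseteq \smallk$, which is false in general; so the clean statement is rather that $\der$ restricted to $\smallk_n = \smallk_{n-1}[[\fM_n]]$ is the Section~\ref{extderco} extension of $\der_{n-1}$ on $\smallk_{n-1}$ along $c_n : \fM_n \to \smallk_{n-1}$, which is exactly the recursive definition of $\der_{n+1}$ given just above the lemma. With that identification in hand, the induction above goes through, and the conclusion $\der(f) \preceq f$ for all $f \in \smallk_{*}$ in particular gives $f \prec 1 \Rightarrow \der(f) \prec 1$, proving the lemma.
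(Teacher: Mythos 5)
Your reduction to $\smallk_n=\smallk_{n-1}[[\fM_n]]$ and the induction on $n$ are the right framework, but the key step of your inductive argument is false. You claim $\supp \der_n(f)\subseteq \supp f$ as subsets of $\fM^{(n)}$ (equivalently, that $\der$ has support $\{1\}$ over $\smallk$, hence $\der(f)\preceq f$), and this fails as soon as $n\ge 1$, precisely because of the point you yourself flagged mid-proof: $c_n$ takes values in $\smallk_{n-1}$, not in $\smallk$. In particular $\supp(f_{\fm}c_n(\fm))\subseteq\supp(f_{\fm})$ is wrong, since multiplication by $c_n(\fm)$ shifts supports by $\supp c_n(\fm)$. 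Concretely, in $\Texp$ take $f=\ex^{-x}\in\fM_1$, so $f\prec 1$; then $c_1(\ex^{-x})=\der_0(-x)=-x$, so $\der(f)=-x\,\ex^{-x}\succ f$. Thus both the support containment and your concluding assertion that $\der(f)\preceq f$ for all $f\in\smallk_{*}$ are false (though $\der(f)\prec 1$ here, so the lemma itself survives). Note also a mismatch in the induction: your hypothesis is smallness of $\der_{n-1}$, but what you actually invoke is $\der_{n-1}(f_{\fm})\preceq f_{\fm}$ (indeed support containment), which is strictly stronger and, by the same example, false for $n-1\ge 1$. Your pivot to ``$\der_n$ is the Section~\ref{extderco} extension of $\der_{n-1}$ along $c_n$'' is correct but does not repair this: that construction only gives support $\{1\}$ relative to $\fM_n$ over the coefficient field $\smallk_{n-1}$, i.e.\ it controls the $\fM_n$-support of $\der_n(f)$, not the full support over $\smallk$.

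The missing idea is to isolate the coefficient at the monomial $1\in\fM_n$, which is how the paper argues. Write $f=f_1+\sum_{\fm\in\fM_n^{\prec 1}}f_{\fm}\fm$ with all $f_{\fm}\in\smallk_{n-1}$; since $f\prec 1$, the $\fM_n$-support of $f$ is $\preceq 1$ and $f_1\prec 1$ (with respect to the full valuation). The inductive hypothesis, which really is just smallness of $\der$ on $\smallk_{n-1}$, gives $\der(f_1)\prec 1$. For the rest, $\der\big(\sum_{\fm\in\fM_n^{\prec 1}}f_{\fm}\fm\big)=\sum_{\fm\in\fM_n^{\prec 1}}\big(\der(f_{\fm})+c_n(\fm)f_{\fm}\big)\fm$, and this is $\prec 1$ no matter what the $\smallk_{n-1}$-coefficients are: for $\fm\in\fM_n^{\prec 1}$ condition (2) of the set-up gives $\fM^{(n-1)}\prec\fm^{-1}$, so $g\fm\prec 1$ for every $g\in\smallk_{n-1}$. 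Adding the two pieces yields $\der(f)\prec 1$. So the legitimate use of the ``support $\{1\}$'' fact is only at the level of the Hahn field over $\smallk_{n-1}$, combined with convexity of $\fM^{(n-1)}$ in $\fM^{(n)}$, rather than any support containment over $\smallk$.
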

\begin{proof} Let $f\in \smallk_n$ and $f\prec 1$. We show by induction on $n$ that $\der(f)\prec 1$. For $n=0$ this follows from $\der(f)\preceq f$. Let $n\ge 1$, so $f=\sum_{\fm\in \fM_n}f_{\fm}\fm$
with all $f_{\fm}\in \smallk_{n-1}$. Then 
$$f\ =\ f_1 + \sum_{\fm\in \fM_n^{\prec 1}}f_{\fm}\fm,\qquad f_1\prec 1,$$
and thus $\der(f_1)\prec 1$ by the inductive assumption. Also 
$$\der\big(\sum_{\fm\in \fM_n^{\prec 1}}f_{\fm}\fm\big)\ =\ \sum_{\fm\in \fM_n^{\prec 1}}\big(\der(f_{\fm})+c_n(\fm)f_{\fm}\big)\fm\ \prec\ 1,$$
which gives $\der(f)\prec 1$. 
\end{proof}

\begin{lemma}\label{daggerprec} Let $f\in \smallk_n^\times$. Then $f^\dagger\preceq g$ for some $g\in \smallk_{n-1}$.
\end{lemma}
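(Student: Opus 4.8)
The statement to prove is: for $f\in \smallk_n^\times$, there is $g\in \smallk_{n-1}$ with $f^\dagger\preceq g$. I would argue by induction on $n$. For $n=0$ we have $\smallk_0=\smallk[[\fM_0]]$ with derivation $\der_0$ of support $\{1\}$, so $\der_0(f)\preceq f$, hence $f^\dagger=\der_0(f)/f\preceq 1$, and $g=1\in \smallk_{-1}=\smallk$ works. So assume $n\ge 1$ and write $f\in \smallk_n=\smallk_{n-1}[[\fM_n]]$ as $f=\sum_{\fm\in \fM_n}f_{\fm}\fm$ with all $f_{\fm}\in \smallk_{n-1}$. Let $\fn:=\fd(f)\in \fM_n$ be the leading monomial (in the $\fM_n$-expansion), so $f=c\fn(1-\varepsilon)$ where $c=f_{\fn}\in \smallk_{n-1}^\times$, $\fn\in \fM_n$, and $\varepsilon\prec_{\fM_n}1$, i.e. $\varepsilon\in \smallk_n$ with $v\varepsilon>v(\fM^{(n-1)})$. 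Then
\[
f^\dagger\ =\ c^\dagger\ +\ \fn^\dagger\ +\ (1-\varepsilon)^\dagger.
\]

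\textbf{Estimating the three terms.} The first term $c^\dagger$: by the inductive hypothesis applied to $c\in \smallk_{n-1}^\times$ there is $g_0\in \smallk_{n-2}\subseteq \smallk_{n-1}$ with $c^\dagger\preceq g_0$. The second term $\fn^\dagger=c_n(\fn)\in \smallk_{n-1}$ by the definition of the derivation on $\smallk_n$; here I would take $g_1:=c_n(\fn)\in \smallk_{n-1}$ directly, so $\fn^\dagger\preceq g_1$ (indeed $\fn^\dagger=g_1$). The third term: $(1-\varepsilon)^\dagger=\der(1-\varepsilon)/(1-\varepsilon)=-\der(\varepsilon)/(1-\varepsilon)$. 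Since $(1-\varepsilon)\asymp 1$, we have $(1-\varepsilon)^\dagger\asymp \der(\varepsilon)$, and since $\smallk_{*}$ has small derivation (Lemma~\ref{small}), $\varepsilon\prec 1$ gives $\der(\varepsilon)\prec 1$, so $(1-\varepsilon)^\dagger\prec 1\preceq 1$. Combining via the ultrametric inequality (DR6): $f^\dagger\preceq \max\{g_0,g_1,1\}$ in the dominance ordering, and since $\smallk_{n-1}$ is totally ordered by $\preceq$ one of $g_0,g_1,1$ dominates the other two; call it $g\in \smallk_{n-1}$. Then $f^\dagger\preceq g$, as desired.

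\textbf{Main obstacle.} The one subtlety I anticipate is making sure the decomposition $f=c\fn(1-\varepsilon)$ is taken with respect to the \emph{right} layer, namely viewing $\smallk_n$ as the one-step Hahn field $\smallk_{n-1}[[\fM_n]]$ rather than as $\smallk[[\fM^{(n)}]]$; only with the one-step viewpoint is the leading coefficient $c$ guaranteed to lie in $\smallk_{n-1}$ and the leading monomial $\fn$ in $\fM_n$ (so that $\fn^\dagger=c_n(\fn)\in \smallk_{n-1}$). The condition $\varepsilon\prec_{\fM_n}1$ translates to $v\varepsilon>v(\fM^{(n-1)})$, i.e. $\varepsilon\prec \smallk_{n-1}$, but for the argument I only need $\varepsilon\prec 1$ in $\smallk_*$, which is weaker and follows immediately. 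Apart from bookkeeping about which monomial group a given element's leading term lives in, the proof is a routine application of Lemma~\ref{small} together with the inductive hypothesis, so I do not expect a genuine difficulty.
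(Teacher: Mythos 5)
Your proof is correct and takes essentially the same route as the paper: factor $f=c\fn(1\pm\varepsilon)$ with $c\in \smallk_{n-1}^\times$, $\fn\in\fM_n$, $\varepsilon\prec 1$, use $f^\dagger=c^\dagger+c_n(\fn)+(1\pm\varepsilon)^\dagger$, and invoke Lemma~\ref{small} to see that the last term is $\prec 1$. The only difference is that your induction on $n$ is superfluous: since $\smallk_{n-1}$ is a differential subfield, $c^\dagger$ already lies in $\smallk_{n-1}$, which is all the paper's proof needs.
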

\begin{proof} We have $f=f_{\fm}\fm(1+\varepsilon)$ with
$f_{\fm}\in \smallk_{n-1}^\times$, $\fm\in \fM_n$, and $\varepsilon\in \smallk_n^{\prec 1}$. It remains to use that 
$$f^\dagger\ =\ f_{\fm}^\dagger + c_n(\fm) + \frac{\der(\varepsilon)}{1+\varepsilon},$$
with $\der(\varepsilon)\prec 1$ by the previous lemma. 
\end{proof} 

\noindent
The next result is about the constant field of $\smallk_{*}$ and is an easy consequence of Lemma~\ref{cgood}.

\begin{corollary}\label{samecons} The following conditions are equivalent: \begin{enumerate}
\item[\rm{(i)}] for every $n$ the map $c_n: \fM_n\to \smallk_{n-1}$ is injective and $c_n(\fM_n)\cap \smallk_{n-1}^\dagger=\{0\}$;
\item[\rm{(ii)}] $\smallk_{*}$ has the same constant field as $\smallk$.
\end{enumerate}
\end{corollary}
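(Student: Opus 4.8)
The plan is to reduce to Lemma~\ref{cgood} applied at each stage of the tower $\smallk_0\subseteq \smallk_1\subseteq\cdots$, and to chase constants up through the union. Recall that $\smallk_n=\smallk_{n-1}[[\fM_n]]$ is obtained from the coefficient differential field $\smallk_{n-1}$ and the additive map $c_n\colon \fM_n\to\smallk_{n-1}$ exactly by the construction of Section~\ref{extderco} (with $\smallk_{n-1}$ in the role of $\smallk$ and $\fM_n$ in the role of $t^\Gamma$). So Lemma~\ref{cgood} applies verbatim to this one-step extension: the constant field of $\smallk_n$ equals the constant field of $\smallk_{n-1}$ if and only if $c_n$ is injective and $c_n(\fM_n)\cap \smallk_{n-1}^\dagger=\{0\}$.

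First I would prove $(\mathrm{i})\Rightarrow(\mathrm{ii})$. Assuming (i), the displayed equivalence in Lemma~\ref{cgood} gives $C_{\smallk_n}=C_{\smallk_{n-1}}$ for every $n\ge 0$ (with $\smallk_{-1}:=\smallk$). By induction on $n$ this yields $C_{\smallk_n}=C_{\smallk}$ for all $n$. Since $\der$ extends each $\der_n$ and $\smallk_{*}=\bigcup_n\smallk_n$, any constant $f\in C_{\smallk_{*}}$ lies in some $\smallk_n$ and is therefore a constant of $\smallk_n$, hence lies in $C_{\smallk}$. Conversely $C_{\smallk}\subseteq C_{\smallk_{*}}$ trivially, so (ii) holds.

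For $(\mathrm{ii})\Rightarrow(\mathrm{i})$ I would argue contrapositively: suppose (i) fails, so there is a least $n$ for which $c_n$ is not injective or $c_n(\fM_n)\cap\smallk_{n-1}^\dagger\ne\{0\}$. For $m<n$ condition (i) holds, so by the already-established direction $C_{\smallk_{n-1}}=C_{\smallk}$. Now apply the $(\mathrm{iii})\Rightarrow(\mathrm{i})$ failure in Lemma~\ref{cgood} at level $n$: the equivalence there gives $C_{\smallk_n}\supsetneq C_{\smallk_{n-1}}=C_{\smallk}$, and since $\smallk_n\subseteq\smallk_{*}$ this produces a constant of $\smallk_{*}$ outside $\smallk$. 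Hence (ii) fails.

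I do not expect a serious obstacle here — the content is entirely in Lemma~\ref{cgood}, and the only thing to be careful about is the bookkeeping of the index shift ($\smallk_{-1}=\smallk$) and making sure Lemma~\ref{cgood} is genuinely applicable to the relative extension $\smallk_{n-1}[[\fM_n]]$, i.e. that the derivation $\der_n$ on $\smallk_n$ is precisely the one built from $(\der_{n-1},c_n)$ as in Section~\ref{extderco}. That is exactly how $\der_n$ was defined in the ``Equipping $\smallk_{*}$ with a derivation'' paragraph, so the application is legitimate. The mild subtlety worth a sentence in the writeup is that the forward direction of the induction must be run first (to get $C_{\smallk_{n-1}}=C_{\smallk}$) before invoking Lemma~\ref{cgood} at level $n$ in the converse direction.
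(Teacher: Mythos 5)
Your proof is correct and follows exactly the route the paper intends: the paper states Corollary~\ref{samecons} as ``an easy consequence of Lemma~\ref{cgood}'', and your stage-by-stage application of that lemma to $\smallk_n=\smallk_{n-1}[[\fM_n]]$, with induction for (i)$\Rightarrow$(ii) and the minimal-$n$ contrapositive for (ii)$\Rightarrow$(i), is precisely that argument spelled out. The bookkeeping points you flag (the convention $\smallk_{-1}=\smallk$, and that $\der_n$ is the derivation built from $(\der_{n-1},c_n)$ so Lemma~\ref{cgood} genuinely applies) are handled correctly.
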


\noindent
We obtained $\der_n$ as a strong operator on $\smallk_n$ when the latter is viewed as a Hahn field over
$\smallk_{n-1}$, but an easy induction on $n$ shows that $\der_n$ is actually a 
strong operator on the Hahn field $\smallk_n=\smallk[[\fM^{(n)}]]$ over $\smallk$. This fact will be used tacitly at various places in this chapter. Example:
$$ \der\big(\exp(\varepsilon)\big)\ =\ \der(\varepsilon)\cdot\exp(\varepsilon)\qquad \text{for }\varepsilon\in \smallk_{*}^{\prec 1}.$$
It follows that if $E$ is a differential subfield of $\smallk_{*}$
(that is, $\der E \subseteq E$), then so is $E^{\smallexp}$. 

One natural question we could ask at this point is the following:
Is the differential field generated by a truncation closed subset of $\smallk_*$ truncation closed? The answer is no, even in the case where $\smallk_{*}=\T_{\exp}$, as we shall see at the end of the next section.

%\begin{lemma} Let $E$ be a truncation closed subfield of $\smallk_*$. Then the
%differential subfield $K$ of $\smallk_*$ generated by $E$ is also truncation closed.
%\end{lemma}
%\begin{proof} Take a maximal truncation closed subfield $F\supseteq E$ of $K$, 
%and suppose that $F\ne K$. Take $f\in F$ such that $f'\notin F$, and take $n$ 
%such that $f\in \smallk_n$. By Lemma~\ref{DifFieldGenTrunc} the differential subfield
%$K_n$ of $\smallk_n$ generated by $F\cap \smallk_n$ is truncation closed.
%We have $f'\in K_n\subseteq K$, and so the subfield of $K$ generated
%by $F\cup K_n$ is truncation closed, which contradicts the maximality of
%$F$.
%\end{proof}
%\marginpar{problem with this proof: works with truncation closed relative to $k_{n-1}$; need 3.4, and hence maybe $c(\gamma)\in F$ whenever $t^\gamma\in \supp f$}

\section{The Main Example: $\T_{\exp}$} \label{METEXP}

\noindent
We now indicate how
$\T_{\exp}$ is a union $\smallk_{*}$ as above. Here 
$\smallk=\R$ and $\fM_0= x^{\R}$, so 
$$\smallk_0\ =\ \R[[x^{\R}]]\ =\ \R((t^\R))\quad \text{ with }\ t\ :=\ x^{-1}.$$ We equip $\smallk_0$ with the strongly additive $\R$-linear derivation
$\der_0=x\cdot\frac{d}{dx}$, that is, for $f=\sum_r f_rx^r\in \smallk[[x^{\R}]]$ we have $\der_0 (f)=\sum_r rc_rx^r$.
Thus $\der_0$ is the derivation on the Hahn field $\R[[x^\R]]$
over $\R$ that corresponds to the trivial derivation on $\R$ and the function $c_0: x^{\R} \to \R$ given by $c_0(x^r)=r$. Next, we make
$\smallk_0$ into a pre-exponential ordered field
$(\smallk_0, A_0, B_0, \exp_0)$: \begin{enumerate}
\item for the ordering of $\smallk_0$ we take its ordering as an
ordered Hahn field over $\R$; in fact, we have no other choice, since $\smallk_0$ is real closed;
\item $A_0:= \R[[\fM_0^{\succ 1}]]$, and $B_0:=\R[[\fM_0^{\preceq 1}]]$; 
\item $\exp_0(r+\epsilon)\:=\ e^r\sum_{i=0}^\infty \epsilon^i/i!$.
\end{enumerate}
The derivation $\der_0$ respects exponentiation.
This is stage $0$ of our construction. At stage $n$ we have
a pre-exponential ordered field $(\smallk_n, A_n, B_n, \exp_n)$
with a derivation $\der_n$ on $\smallk_n$ that respects exponentiation. Moreover, $\smallk_n=\smallk_{n-1}[[\fM_n]]$
(with $\smallk_{-1}=\R$), 
$A_n=\smallk_{n-1}[[\fM_n^{\succ 1}]]$ and $B_n=\smallk_{n-1}[[\fM_n^{\preceq 1}]]$. Applying to $(\smallk_n, A_n, B_n, \exp_n)$
the construction that extends $(E,A,B,\exp)$ in the previous section to $(E^*, A^*, B^*, \exp^*)$ yields a pre-exponential ordered field
$(\smallk_{n+1}, A_{n+1}, B_{n+1}, \exp_{n+1})$.
Thus $$\smallk_{n+1}\ =\ \smallk_n[[\fM_{n+1}]], \qquad
\fM_{n+1}\ =\ \exp_{n+1}(A_{n}).$$ As in the previous section we now extend $\der_n$ uniquely to a derivation 
$\der_{n+1}$ on $\smallk_{n+1}$ by requiring that $\der_{n+1}$ is strong as an operator on the Hahn field $\smallk_{n+1}$ over 
$\smallk_n$ with 
$\der_{n+1}(\exp_{n+1}(a))=\der_n(a)\exp_{n+1}(a)$
for all $a\in A_n$. This extended derivation on $\smallk_{n+1}$
respects exponentiation and is the derivation on the Hahn field $\smallk_n[[\fM_{n+1}]]$
over $\smallk_n$ that corresponds to the derivation $\der_n$ on 
$\smallk_n$ and the additive function 
$$c_{n+1}\ :\ \fM_{n+1}\to \smallk_n, \qquad c_{n+1}\big(\exp_{n+1}(a)\big)\ :=\ \der_n(a)\quad (a\in A_n).$$
In this way we obtain field extensions 
\[\smallk\ \subseteq\ \smallk_0\ \subseteq\ \smallk_1\ \subseteq\ \cdots\subseteq\ \smallk_n\ \subseteq\ \smallk_{n+1}\ \subseteq\ \cdots.\]
We set $\T_{\exp}=\smallk_*:=\bigcup_n \smallk_n$, denote the derivation on 
$\T_{\exp}$ that extends all $\der_n$ by $\der$. The function
$\T_{\exp} \to \T_{\exp}$ that extends all $\exp_n$ is denoted by $\exp$, and we also write $\ex^f$ instead of $\exp(f)$ for $f\in \Texp$. 
With the appropriate monomial groups 
$\fM^{(n)}:=\fM_0 \cdots \fM_n$ satisfying 
\[\fM_0\ =\ \fM^{(0)}\ \subseteq\ \fM^{(1)}\ \subseteq\ \fM^{(2)}\ \subseteq \cdots \qquad \text{(as ordered abelian groups)}\]
and the monomial group $\fM=\bigcup_n \fM^{(n)}$ the above falls under the 
general set-up specified in Section~\ref{basicsetup}.  Note that the groups $\fM_n$ and $\fM^{(n)}$ are divisible. Hence $\fM$ is divisible, and so
$\R[[\fM]]$ and $\Texp=\smallk_{*}$ are real closed. The next lemma is basically
\cite[Lemma 2.2]{DMM}:

\begin{lemma}\label{AM}  The $A_n$ and $\fM_n$ are related as follows:  \begin{enumerate}
\item[\rm{(i)}] $A_n=\{f\in \smallk_n:\ \supp f \succ \fM^{(n-1)}\}$;
\item[\rm{(ii)}] $A_0+ \cdots + A_n=\{f\in \smallk_n:\ \supp f \succ 1\}$;
\item[\rm{(iii)}] $\fM^{(n)}=x^{\R}\cdot\exp(A_0+ \cdots + A_{n-1})$. 
\end{enumerate}
\end{lemma}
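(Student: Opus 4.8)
The plan is to prove the three parts by simultaneous induction on $n$, reading off the recursive construction of $\Texp$ in Section~\ref{METEXP}. For $n=0$: part (i) says $A_0=\{f\in\smallk_0:\supp f\succ \fM^{(-1)}\}=\{f\in\smallk_0:\supp f\succ 1\}$, which is exactly the definition $A_0:=\R[[\fM_0^{\succ 1}]]$; part (ii) coincides with part (i) in this case; and part (iii) reads $\fM^{(0)}=x^\R=x^\R\cdot\exp(\{0\})$, which holds since the empty sum $A_0+\cdots+A_{-1}$ is $\{0\}$ and $\exp(0)=1$. So the base case is immediate from the definitions.

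For the inductive step, assume (i)--(iii) hold for $n$; I would prove them for $n+1$. For part (i), recall from the construction that $\smallk_{n+1}=\smallk_n[[\fM_{n+1}]]$ and $A_{n+1}=\smallk_n[[\fM_{n+1}^{\succ 1}]]$. An element $f\in\smallk_{n+1}$ lies in $A_{n+1}$ iff $\supp f\subseteq \fM_{n+1}^{\succ 1}$, and by condition (2) of the basic set-up in Section~\ref{basicsetup} (namely $\fM^{(n)}\prec\fm_{n+1}$ for every $\fm_{n+1}\in\fM_{n+1}^{\succ 1}$, together with the fact that $\fM^{(n)}$ is convex in $\fM$) this is equivalent to $\supp f\succ\fM^{(n)}$; that proves (i) for $n+1$. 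For part (ii): by (i) for $n+1$ we have $A_{n+1}=\{f\in\smallk_{n+1}:\supp f\succ\fM^{(n)}\}$, while by the inductive hypothesis $A_0+\cdots+A_n=\{f\in\smallk_n:\supp f\succ 1\}$. Any $f\in\smallk_{n+1}$ with $\supp f\succ 1$ decomposes along $\smallk_{n+1}=\smallk_n[[\fM_{n+1}]]$ into its part supported on $\fM_{n+1}^{\succ 1}$ (which lies in $A_{n+1}$) plus a remainder in $\smallk_n$ with support $\succ 1$ (which lies in $A_0+\cdots+A_n$ by induction); conversely each summand has support $\succ 1$. This gives $A_0+\cdots+A_n+A_{n+1}=\{f\in\smallk_{n+1}:\supp f\succ 1\}$.

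For part (iii): by construction $\fM_{n+1}=\exp_{n+1}(A_n)$, so $\fM^{(n+1)}=\fM^{(n)}\cdot\fM_{n+1}=\fM^{(n)}\cdot\exp(A_n)$. By the inductive hypothesis $\fM^{(n)}=x^\R\cdot\exp(A_0+\cdots+A_{n-1})$. Since $\exp$ restricted to $B_{n+1}\supseteq\smallk_n$ is a group morphism from the additive to the multiplicative group, $\exp(A_0+\cdots+A_{n-1})\cdot\exp(A_n)=\exp(A_0+\cdots+A_{n-1}+A_n)$, and hence
\[
\fM^{(n+1)}\ =\ x^\R\cdot\exp(A_0+\cdots+A_{n-1})\cdot\exp(A_n)\ =\ x^\R\cdot\exp(A_0+\cdots+A_n),
\]
which is (iii) for $n+1$. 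The one point deserving care — and the main (minor) obstacle — is the use of the morphism property of $\exp$ across the sums $A_0+\cdots+A_{n-1}$ and $A_n$: one must check these subgroups all lie in a common domain of $\exp$ on which it is additive-to-multiplicative. This is fine because, at stage $n$, $A_0+\cdots+A_n\subseteq\{f\in\smallk_n:\supp f\succeq 1\}$ sits inside $\smallk_{n+1}$, and in the pre-exponential field $(\smallk_{n+1},A_{n+1},B_{n+1},\exp_{n+1})$ the subring $B_{n+1}=(\smallk_{n+1})^{\preceq 1}=\smallk_n\oplus(\smallk_{n+1})^{\prec 1}$ does not quite contain $A_0+\cdots+A_{n-1}$; rather, one applies $\exp$ stagewise, noting $\exp_{n+1}$ agrees with $\exp_n$ on $A_0+\cdots+A_{n-1}$ and is defined on $A_n$, and that $\exp$ on $\Texp$ extends all the $\exp_m$ and is a group morphism on its whole domain, so the identity $\exp(a+b)=\exp(a)\exp(b)$ holds whenever both $a,b$ are in the domain. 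With that observation recorded, the induction closes.
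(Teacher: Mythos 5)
Your induction is correct, and it differs from the paper mainly in that the paper gives no argument at all for this lemma: it simply records that the statement is basically \cite[Lemma 2.2]{DMM}, so what you have written is a self-contained reconstruction of what the dissertation defers to the literature. The trade-off is the usual one: the paper buys brevity by citation, while your stagewise induction makes visible exactly which features of the construction are used (the definition $A_{n+1}=\smallk_n[[\fM_{n+1}^{\succ 1}]]$, condition (2) of the basic set-up, $\fM_{n+1}=\exp_{n+1}(A_n)$, and the morphism property of $\exp$). Two small points. In (ii), the phrase ``plus a remainder in $\smallk_n$'' deserves one explicit line: writing $f=\sum_{\fm\in\fM_{n+1}}f_{\fm}\fm$ over $\smallk_n$, every full monomial $\fq\fm$ with $\fq\in\fM^{(n)}$ and $1\ne\fm\in\fM_{n+1}^{\prec 1}$ satisfies $\fq\fm\prec 1$ (since $\fm^{-1}\succ\fM^{(n)}$ and $\fM^{(n)}$ is a group), so $\supp f\succ 1$ forces $f_{\fm}=0$ for all such $\fm$ and the remainder is exactly $f_1\in\smallk_n$; with that observation your decomposition is airtight. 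In (iii), your parenthetical worry rests on a misreading of the construction: $B_{n+1}=(\smallk_{n+1})^{\preceq 1}$ is taken with respect to $\smallk_{n+1}$ viewed as a Hahn field over the coefficient field $\smallk_n$, so $B_{n+1}=\smallk_n\oplus(\smallk_{n+1})^{\prec 1}$ contains all of $\smallk_n$ and in particular $A_0+\cdots+A_n$; hence $\exp_{n+1}$ is already defined and multiplicative on everything you need, and your fallback through the morphism property of $\exp$ on all of $\Texp$, while correct, is an unnecessary detour. Neither point is a gap; with them noted the induction closes as you say.
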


\noindent
We refer to \cite[Section 3]{DMM} for the fact that the derivation $\der$ on $\T_{\exp}$ has constant field $\smallk=\R$.
In that paper $\T_{\exp}$ is denoted by $\smallk((t))^{\text{E}}$; the derivation $\frac{d}{dx}$ used there is to be thought of as taking the derivative with respect to the ``independent variable'' $x=t^{-1}$, and we have $\der=x\cdot \frac{d}{dx}$ for our derivation $\der$. Taking this into account, the following is implicit in \cite[Lemma 3.5]{DMM}:

\begin{lemma}\label{transexp} For $\fm\in \fM_{n+1}\setminus \{1\}$ we have
$c(\fm)\in A_n^{\ne}$, so $c(\fm) \succ \fM^{(n-1)}$. 
\end{lemma}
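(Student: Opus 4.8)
The plan is to unwind the construction of $\Texp$ from Section~\ref{METEXP} and track where $c(\fm)$ lives. Fix $\fm\in \fM_{n+1}\setminus\{1\}$. By construction, $\fM_{n+1}=\exp_{n+1}(A_n)$, so $\fm=\exp_{n+1}(a)$ for a unique $a\in A_n$, and since $\fm\ne 1$ we have $a\ne 0$. The defining equation for the additive map $c_{n+1}$ is precisely $c_{n+1}(\exp_{n+1}(a))=\der_n(a)$, and $c(\fm)=c_{n+1}(\fm)$ because $c$ extends each $c_n$. Thus $c(\fm)=\der_n(a)=\der(a)$, and the whole lemma comes down to the single assertion that $\der$ maps $A_n^{\ne}$ into $A_n^{\ne}$, i.e.\ that for nonzero $a\in A_n$ we have $\der(a)\in A_n$ and $\der(a)\ne 0$.

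First I would recall from Lemma~\ref{AM}(i) that $A_n=\{f\in\smallk_n:\ \supp f\succ \fM^{(n-1)}\}$. Since $\der_n$ is a strong operator on the Hahn field $\smallk_n=\smallk[[\fM^{(n)}]]$ over $\smallk$ (as remarked right after Corollary~\ref{samecons}), it suffices to check the support condition monomial by monomial: for $\fn\in\supp a\subseteq\fM^{(n)}$ with $\fn\succ\fM^{(n-1)}$, the monomial $\fn$ contributes to $\der(a)$ only the term $(c(\fn))\fn$ plus whatever $\der$ of its coefficient produces, and all these live on monomials $\preceq\fn$; more carefully, since $\der_n$ has support $\{1\}$ when viewed over the appropriate coefficient field, $\supp\der_n(a)\subseteq\supp a\subseteq\fM^{(n)}$, and every element of $\supp a$ is $\succ\fM^{(n-1)}$, hence $\supp\der(a)\succ\fM^{(n-1)}$, giving $\der(a)\in A_n$. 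For non-vanishing, write $a=a_{\fn}\fn(1+\varepsilon)$ with $\fn=\fd(a)\succ\fM^{(n-1)}$, $a_{\fn}\in\smallk_{n-1}^\times$, $\varepsilon\in\smallk_n^{\prec 1}$; then by Lemma~\ref{daggerprec} applied in $\smallk_{n}$ (or directly) $a^\dagger=a_{\fn}^\dagger+c_n(\fn)+\der(\varepsilon)/(1+\varepsilon)$, and the key point is that $c_n(\fn)\succ\fM^{(n-2)}$ dominates $a_{\fn}^\dagger$ (which lies in $\smallk_{n-1}$) while $\der(\varepsilon)\prec 1$ by Lemma~\ref{small}; hence $a^\dagger\ne 0$, so $\der(a)=a^\dagger a\ne 0$. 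The concluding clause $c(\fm)\succ\fM^{(n-1)}$ is then immediate from $c(\fm)\in A_n^{\ne}$ together with Lemma~\ref{AM}(i).

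The main obstacle I anticipate is making the non-vanishing argument airtight, because it requires an inductive bookkeeping of the ``levels'' $\fM^{(k)}$: one must know that $c_n(\fn)$, for $\fn\in\fM_n$ nontrivial, is genuinely large (lies in $A_{n-1}^{\ne}$, hence $\succ\fM^{(n-2)}$), which is itself the statement of the lemma one index down. So the clean way to organize this is as an induction on $n$: the base case $n=0$ uses $c_0(x^r)=r\in\R^{\times}$ for $r\ne 0$, so $c(\fm)=r\ne 0$ and trivially $c(\fm)\succ\fM^{(-1)}=\{1\}$ is vacuous in the sense that $r\in\R^{\ne}$; and the inductive step is exactly the argument above, where the domination $c_n(\fn)\succ a_{\fn}^\dagger$ for $\fn\in\fM_n\setminus\{1\}$ is supplied by the induction hypothesis applied to $\fM_n$. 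One should double-check the edge cases $n=0$ and the convention $\fM^{(-1)}=\{1\}$, $\smallk_{-1}=\R$, so that the statement ``$c(\fm)\succ\fM^{(n-1)}$'' reads correctly (for $n=0$ it just says $c(\fm)\succ 1$ is false but $c(\fm)\in A_0^{\ne}$ still holds, so really the intended reading for $n\ge 1$ is the substantive one — I would phrase the induction so that the case distinction is clean).
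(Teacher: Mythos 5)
Your opening reduction is exactly the paper's: write $\fm=\exp_{n+1}(a)$ with $a\in A_n^{\ne}$, note $c(\fm)=\der_n(a)$, and reduce the lemma to the claim that $\der_n$ maps $A_n^{\ne}$ into $A_n^{\ne}$. Where you then diverge is that the paper settles $n=0$ by the same direct computation you give and disposes of $n>0$ simply by citing \cite[Lemma 3.5(iii)]{DMM}, whereas you attempt a self-contained induction on $n$ using Lemmas~\ref{AM}, \ref{small} and \ref{daggerprec}. That is a genuinely different (and more informative) route, and your inductive skeleton is the right one: the step at level $n$ hinges on $c_n(\fq)\succ\fM^{(n-2)}$ for the leading $\fM_n$-monomial $\fq$ of $a$, which is the statement one index down.

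However, two of your justifications are wrong as written. First, the support step: the inclusion $\supp\der_n(a)\subseteq\supp a$ is false when supports are taken over $\smallk$, which is what your sentence does (you place $\supp a$ inside $\fM^{(n)}$ and invoke Lemma~\ref{AM}(i)). For example $a=\ex^x\in A_1$ has $\der(a)=x\ex^x$, so $\supp\der(a)=\{x\ex^x\}\not\subseteq\{\ex^x\}$; the same example refutes your claim that the contribution of a monomial ``lives on monomials $\preceq\fn$''. What is true, and what you need, is the inclusion of supports computed in the Hahn field $\smallk_{n-1}[[\fM_n]]$ over $\smallk_{n-1}$ (there $\der_n$ does have support $\{1\}$), combined with $A_n=\smallk_{n-1}[[\fM_n^{\succ 1}]]$; equivalently, $\supp\der(a)\subseteq\fM^{(n-1)}\cdot S$ with $S\subseteq\fM_n^{\succ 1}$ the $\fM_n$-support of $a$, and every such monomial is $\succ\fM^{(n-1)}$. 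Second, the domination step: from ``$a_{\fn}^\dagger$ lies in $\smallk_{n-1}$'' you cannot conclude $c_n(\fn)\succ a_{\fn}^\dagger$, since $c_n(\fn)$ also lies in $\smallk_{n-1}$ and the inductive hypothesis only makes it $\succ\smallk_{n-2}$; you must apply Lemma~\ref{daggerprec} to $a_{\fn}\in\smallk_{n-1}^\times$ to get $a_{\fn}^\dagger\preceq g$ for some $g\in\smallk_{n-2}$, and then compare (you cite that lemma only for the formula for $a^\dagger$). Finally, your closing worry about the edge case rests on a misindexing: the $n=0$ instance of the lemma concerns $\fm\in\fM_1\setminus\{1\}$, and there $c(\fm)\in A_0^{\ne}$ has support in $\fM_0^{\succ 1}$, so $c(\fm)\succ 1$ is true, not false; the computation $c_0(x^r)=r$ enters only to show that $\der_0$ maps $A_0^{\ne}$ into $A_0^{\ne}$. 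With these repairs your induction goes through and replaces the paper's external citation.
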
 
\begin{proof} Let $\fm\in \fM_{n+1}\setminus \{1\}$. Then 
$\fm=\exp(a)$ with
$a\in A_n^{\ne}$, and then $c(\fm)=\der(a)$.
If $n=0$, then $a=\sum_{r>0}a_rx^r$ with $r$ ranging over $\R$ and real coefficients $a_r$, and so $c(\fm)=\sum_{r>0}ra_rx^r\in A_0^{\ne}$. If $n>0$, then $c(\fm)=\der(a)\in A_n^{\ne}$ by \cite[Lemma 3.5(iii)]{DMM}. 
\end{proof}

%{\bf also a brief comparison with the different notations in \cite{ADH}. The above subsection should replace most of the later 7.1} 
\noindent   
We also recall some of the notations for certain subsets of $\Texp$ and its valued group that were used in \cite{ADH}, and relate it to the notation in this section: $E_n$ there corresponds to $\smallk_n$ here, $G_n$ there to ${\fM^{(n)}}$ here, 
the set $G^{\text{E}}=\bigcup_n G_n$ of exponential transmonomials there to $\fM=\bigcup_n {\fM^{(n)}}$ here, and $A_n$ and $B_n$ there have the same meaning as here. 
In the next chapter we focus on $\Texp$ and its extension $\T$ and there we do adopt these notations $G_n$ and $G^{\E}$.

\subsection*{Closure under exponentiation} Let $E$ be a subfield
of $\Texp$. We say that $E$ is {\bf closed under exponentiation\/} if $\exp(E)\subseteq E$. We define the {\bf closure of $E$ under exponentiation\/} 
to be the smallest subfield $F\supseteq E$ of $\Texp$ that is closed under exponentiation;
we denote this $F$ by $E^{\exp}$. Note that if $\der E\subseteq E$, then $\der E^{\exp}\subseteq E^{\exp}$. 

\begin{lemma}\label{exptrunc} If $E\supseteq \R$ is truncation closed, then so is $E^{\exp}$. 
\end{lemma}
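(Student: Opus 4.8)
The plan is to mimic the structure of the argument for Corollary~\ref{ta} and Lemma~\ref{tL16}, but now working inside the increasing union $\Texp = \bigcup_n \smallk_n$ rather than a single Hahn field, and to exploit the tower structure $\fM^{(n)}=x^{\R}\cdot\exp(A_0+\cdots+A_{n-1})$ given by Lemma~\ref{AM}. First I would let $F$ be the largest truncation closed subfield of $E^{\exp}$ containing $E$ (this exists, being a union of a chain of truncation closed subfields), and aim for a contradiction from $F\ne E^{\exp}$. Since $E^{\exp}$ is generated over $E$ by iterated application of $\exp$, if $F\ne E^{\exp}$ there is some $a$ with $\exp(a)\notin F$ but $a\in F$ (take $a$ in a minimal level $\smallk_n$, or with minimal order type $o(a)$ of its support; I would use a lexicographic minimality on the pair (level $n$, $o(a)$)). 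It then suffices to show that every proper truncation of $\exp(a)$ lies in $F$, because then $F(\exp(a))$ is truncation closed by Proposition~\ref{D}(i),(ii), contradicting maximality of $F$.

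The key computation is the standard one: write $a = a_{\succ 1} + a_{\asymp 1} + \varepsilon$ where $a_{\succ 1}$ is the part of $a$ with support $\succ 1$, $a_{\asymp 1}\in\R$ is the constant term, and $\varepsilon\prec 1$. Then
\[
\exp(a)\ =\ \exp(a_{\succ 1})\cdot e^{a_{\asymp 1}}\cdot\sum_{i=0}^{\infty}\varepsilon^i/i!,
\]
where $\exp(a_{\succ 1})$ is a single monomial in $\fM$ (by the construction of $\Texp$), $e^{a_{\asymp 1}}\in\R$, and the exponential series in $\varepsilon$ converges in $\Texp$. Given a proper truncation of $\exp(a)$ at some $\fd\in\fM$, I would pick, for the dominant-monomial part, a monomial $\fm = \fd(a_{\succ 1})$ (or work level by level as in the proof of Lemma~\ref{tL16}), split $a_{\succ 1}=h+g$ with $h = a_{\succ 1}|_{\fm}$ and $\fd(g)=\fm$, and use the Taylor/multiplicativity identity $\exp(h+g)=\exp(h)\exp(g)$ together with $\exp(g)=\sum_k g^k/k!$ to see that the truncation of $\exp(a)$ at $\fd$ is already a truncation of a finite sum $\exp(a_{\succ 1})|$-type expression built from $\exp(h)$ and finitely many powers of $g$ and $\varepsilon$. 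Since $h$ has strictly smaller support order type than $a$ (and lies at the same or lower level), minimality gives $\exp(h)\in F$; the factor $e^{a_{\asymp 1}}\in\R\subseteq F$; and all the polynomial manipulations stay inside $F$ because $F$ is a truncation closed field containing $\R$ and the relevant lower-complexity data. Hence the truncation lies in $F$.

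The main obstacle I expect is bookkeeping the two-parameter induction correctly: one must simultaneously control the level $n$ (so that $\exp(a)$ actually makes sense and $\exp(a_{\succ 1})$ is genuinely a monomial of $\fM$, via Lemma~\ref{AM}(iii) and the fact that $c(\fm)\succ\fM^{(n-1)}$ from Lemma~\ref{transexp}) and the order type of the support within that level, and check that the decomposition $a=a_{\succ 1}+a_{\asymp 1}+\varepsilon$ followed by splitting $a_{\succ 1}$ genuinely decreases the induction invariant while keeping all pieces inside $F$. A secondary point to verify carefully is that a proper truncation of a product $\exp(h)\cdot e^{a_{\asymp 1}}\cdot(\text{partial sum in }\varepsilon)\cdot(\text{partial sum in }g)$ can be recovered from truncations of the factors — this is exactly the kind of statement packaged in Proposition~\ref{D}(i),(ii) (the ring generated by a truncation closed set is truncation closed), so once the pieces are in $F$ the conclusion follows, but one should phrase the argument so that it is the ring-generation lemma being invoked rather than an ad hoc truncation-of-a-product claim. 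Apart from this, the estimates on supports (every monomial of $\supp\exp(a)$ is $\succeq\fm^k$ for $\fm$ ranging over the finitely many ``building'' monomials and $k\in\N$, exactly as in the proof of Lemma~\ref{tL16}) are routine.
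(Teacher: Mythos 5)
Your overall strategy (a maximal truncation closed intermediate field, a minimal counterexample $a$, the decomposition $a=a|_{1}+r+\varepsilon$) matches the paper's, and you correctly observe that $\exp(a|_1)$ is a single transmonomial. But the central computation you propose is not available: you split $a_{\succ 1}=h+g$ with $\fd(g)=\fm\in\supp(a_{\succ 1})\subseteq\fM^{\succ 1}$ and then write $\exp(g)=\sum_k g^k/k!$. Since $g\succ 1$, the family $(g^k/k!)$ is not summable --- $\supp(g^k)$ contains $\fm^k$ and $\fm\prec\fm^2\prec\cdots$ --- so this identity is meaningless in the ambient Hahn field; the exponential of a purely infinite element is, by the very construction of $\Texp$, a \emph{new monomial}, not a Taylor series. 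The Taylor-expansion mechanism of Lemma~\ref{tL16} is legitimately available only for the factor $\exp(\varepsilon)$ with $\varepsilon\prec 1$, and that case is exactly what Corollary~\ref{ta} (closure under small exponentiation) already packages. (A secondary slip: with your choice $\fm=\fd(a_{\succ 1})$ the truncation $h=a_{\succ 1}|_{\fm}$ is $0$, so the proposed induction invariant does not decrease.)

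The repair is the paper's route. First reduce to the case that your field $F$ is closed under small exponentiation: if it is not, then $F^{\smallexp}$ is a strictly larger truncation closed subfield of $E^{\exp}$ by Corollary~\ref{ta} (and if you insist on the \emph{largest} truncation closed subfield, it is automatically small-exp closed for the same reason). Then $\ex^{r}\in\R\subseteq F$ and $\exp(\varepsilon)\in F$, so if $a\ne a|_{1}$ the minimality of $o(a)$ applied to $a|_{1}$ (which has strictly smaller support order type) would give $\ex^{a|_1}\in F$ and hence $\ex^{a}=\ex^{a|_1}\ex^{r}\exp(\varepsilon)\in F$, a contradiction. Therefore $a=a|_{1}$, so $\ex^{a}\in\fM$ by Lemma~\ref{AM}, and $F(\ex^{a})$ is truncation closed by Proposition~\ref{D}(i),(ii). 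No expansion of $\exp(a_{\succ 1})$ is needed, and none is possible.
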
 
\begin{proof} Let $K\supseteq E\supseteq \R$ be a truncation closed subfield of $E^{\exp}$, and suppose
$K\ne E^{\exp}$. It is enough to show that then there exists a truncation closed subfield $L$ of $E^{\exp}$ that properly contains $K$. If $K$ is not closed under small exponentiation, then $L:=K^{\smallexp}$ works. 

Assume that $K$ is closed under small exponentiation. Let $a\in K$ with $\ex^a\notin K$ be such that $o(a)$ is minimal. 
We have $a=a|_{1} + r + \varepsilon$ with $r\in\R$ and $\varepsilon\in K^{\prec 1}$, so $\ex^a=\ex^{a|_1} \cdot \ex^r\cdot \exp(\varepsilon)$
with $\ex^r\in \R\subseteq K$ and $\exp(\varepsilon)\in K$.  Thus by the minimality of $o(f)$ we have $a=a|_{1}$, that is, $\supp a \subseteq \fM^{\succ 1}$. 
Then $\ex^a\in \fM$ by Lemma~\ref{AM}, and so $L:=K(\ex^a)$ is truncation closed.  
\end{proof}

\bigskip\noindent
As promised, we now give an example of a truncation closed subfield of $\T_{\exp}$ such that the differential subfield of $\T_{\exp}$ generated by it is not truncation closed: 

\subsection*{An example inside $\Texp$ where being truncation closed is not preserved.} Take two strictly decreasing sequences 
\[\alpha_0 > \alpha_ 1 >\alpha_2 > \cdots, \text{ and }\beta_0 > \beta_1 > 
\beta_2\cdots \] of real numbers such that $\alpha_n > \beta_0$ and $\beta_n > 0 $ for all $n$,
and the elements $g:=\sum_n \alpha_nx^{\alpha_n}$ and $h:=\sum_n \beta_n x^{\beta_n}$  of $\smallk_0=\R[[x^\R]]$ are $\text{d}$-algebraically independent. 
We now have the monomial 
\[ f\ =\ \exp\left(\sum_n x^{\alpha_n} + \sum_n x^{\beta_n}\right)\in \fM_1,\]
so the subfield $\R(f)$ of $\smallk_1\subseteq \T_{\exp}$ is truncation closed by (i) and (ii) of Proposition~\ref{D}, but $\R\langle f\rangle$, the differential field generated by $\R(f)$ in $\smallk_1$, is not truncation closed. 
(Note that $\R\langle f \rangle$ is also the differential ring
generated by $\R(f)$ in $\smallk_1$.) To see this, note that $g+h = f^{\dagger} \in \R\langle f \rangle$, so if $\R\langle f \rangle$ were truncation closed, then both $g$ and $h$ would be in $\R\langle f \rangle$, making the differential transcendence degree of $\R\langle f \rangle$ greater than $1$, a contradiction.

\section{Splitting}\label{neat}

\noindent
{\em In  this section $E\supseteq \smallk$ is a truncation closed subfield of $\smallk_{*}$}. (We are in the general setting of Section~\ref{basicsetup}.)
The example at the end of the previous section shows that for the differential subfield of $\smallk_{*}$ generated by $E$ to be truncation closed, we are forced to make extra assumptions on $E$.

Let $S\subseteq \fM$. We say that $S$ {\bf splits} if for all ${\fm_0\cdots \fm_n}\in S$
with $\fm_i\in \fM_i$ for $i=0,\dots,n$ we have
${\fm_i}\in S$ for $i=0,\dots,n$. Thus if $S\ne \emptyset$ splits, then $1\in S$. Splitting is a monomial analogue of {\em truncation closed}.  The ``extra assumptions'' involve splitting: 

\begin{lemma}\label{neatdif} Suppose $\fM_E:=E\cap \fM$ splits, and
$c(\fM_E)\subseteq E$. Then the differential
subring $R$ of $\smallk_{*}$ generated by $E$ is truncation closed,
and thus the differential subfield $\Q\<E\>$ of $\smallk_{*}$ generated by $E$ is truncation closed as well. 
\end{lemma}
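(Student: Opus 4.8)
The plan is to prove first that the differential subring $R$ of $\smallk_{*}$ generated by $E$ is truncation closed; the statement about the differential subfield $\Q\<E\>$ then follows, since $\Q\<E\>$ is the field generated by $R$ inside $\smallk[[\fM]]$ and the field generated by a truncation closed set is truncation closed by Proposition~\ref{D}(i). As in the proof of Lemma~\ref{DifFieldGenTrunc} I would set $R_0:=E$ and $R_{k+1}:=R_k[\der(h):h\in R_k]$, so that $R=\bigcup_k R_k$, and prove by induction on $k$ that each $R_k$ is truncation closed; since a union of truncation closed subsets of $\smallk[[\fM]]$ is truncation closed, this suffices.

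The first thing to extract from the hypotheses is the following. Let $\fm\in\fM_E$ and write $\fm=\fm_0\cdots\fm_n$ with $\fm_i\in\fM_i$. Because $\fM_E$ splits, each $\fm_i\in\fM_E$, so $c(\fm)=c(\fm_0)+\cdots+c(\fm_n)\in E$ by the hypothesis $c(\fM_E)\subseteq E$, and hence $\der(\fm)=\fm^\dagger\fm=c(\fm)\fm\in E$. Moreover $E\subseteq\smallk[[\fM_E]]$ (recalled in Section~\ref{basicsetup}), so $\supp(c(\fm))\subseteq\fM_E$ for each $\fm\in\fM_E$. Since $\der$ is a strong operator and $\fM_E$ is a group, a straightforward induction on $k$ now gives $\supp(R_k)\subseteq\fM_E$ for all $k$: the generators $\der(h)=\sum_{\fm\in\supp h}\der(h_\fm\fm)$ of $R_{k+1}$ over $R_k$ have support contained in $\bigcup_{\fm\in\supp h}\big(\{\fm\}\cup\supp(c(\fm))\fm\big)\subseteq\fM_E$. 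In particular every monomial occurring in an element of $R_k$ lies in $\fM_E$, so for each such monomial $\fm$ we have $c(\fm)\in E\subseteq R_k$ and $\fm\in R_k$.

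For the inductive step, suppose $R_k$ is truncation closed. By Proposition~\ref{D}(ii) it suffices to show that $\der(h)|_{\fn}\in R_{k+1}$ for all $h\in R_k$ and $\fn\in\fM$. Write $h=\sum_{\fm\in\supp h}(h_\fm\fm)$; by Leibniz, $\der(h_\fm\fm)=\big(\der(h_\fm)+h_\fm c(\fm)\big)\fm=:u_\fm\fm$ with $u_\fm\in R_k$ (as $\der(h_\fm)\in\smallk\subseteq R_k$ and $c(\fm)\in R_k$), and since $R_k$ is truncation closed also $(u_\fm\fm)|_{\fn}=\fm\cdot\big(u_\fm|_{\fn\fm\inv}\big)\in R_k$. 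Thus each piece $\der(h_\fm\fm)$, together with all its truncations, already lies in $R_k$. The obstacle --- and this is the crux of the proof --- is that $\der(h)=\sum_{\fm}\der(h_\fm\fm)$ is in general an \emph{infinite} summable sum of elements of $R_k$, and $R_k$ need not be closed under such sums, so $\der(h)|_{\fn}\in R_{k+1}$ cannot be read off term by term.

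To overcome this I would run a secondary induction on the order type $o(h)$ of $\supp h$. If $o(h)$ is a successor ordinal, $\supp h$ has a $\preceq$-least element $\fe$, so $h=h|_{\fe}+h_{\fe}\fe$ with $h|_{\fe}\in R_k$ and $o(h|_{\fe})<o(h)$; then $\der(h)|_{\fn}=\der(h|_{\fe})|_{\fn}+\der(h_{\fe}\fe)|_{\fn}$, where the first summand lies in $R_{k+1}$ by the secondary inductive hypothesis and the second lies in $R_k$ by the computation above. The hard case is when $o(h)$ is a limit ordinal and $\fn$ is chosen so that $\der(h)|_{\fn}$ is a genuinely infinite, proper truncation; here I expect to need the filtration $(\smallk_n)$ --- that $\der$ respects each layer $\smallk_n$, that $c(\fm)\in\smallk_{n-1}$ for $\fm\in\fM_E$ of level $n$ (so the leading monomial $\fd(c(\fm))\fm$ of $\der(\fm)$ has the same $\fM_n$-component as $\fm$), and small derivation (Lemma~\ref{small}) --- to show that for a suitable proper truncation $h|_{\fe}$ of $h$ (of strictly smaller order type) one has $\der(h)|_{\fn}=\der(h|_{\fe})|_{\fn}$, reducing again to the secondary inductive hypothesis. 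Controlling this limit step, i.e.\ taming the infinite summability of $\der(h)$ against truncation, is the main difficulty; the rest is bookkeeping with Proposition~\ref{D}, the strongness of $\der$, and the splitting hypothesis.
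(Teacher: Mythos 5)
Your reduction of the lemma to the claim that every truncation of $\der(h)$, $h\in R_k$, lies in $R_{k+1}$, and your preliminary observations ($\supp(R_k)\subseteq \fM_E$, $c(\fm)\in E$ and $\fm^{\pm1}\in E$ for $\fm\in\fM_E$, term-by-term control of $\der(h_\fm\fm)$), are all correct and consistent with the paper's set-up. But the proof stops exactly where the content of the lemma begins: the ``hard case'' you defer is the whole lemma, and the identity you hope to establish there is false in general. Take $n\ge 1$ with $h\in\smallk_n$ and expand $h=\sum_{\fm\in\fM_n}h_\fm\fm$ with $h_\fm\in\smallk_{n-1}$, so $\der(h)=\sum_{\fm\in\fM_n}g_\fm\fm$ with $g_\fm=\der(h_\fm)+c_n(\fm)h_\fm\in\smallk_{n-1}$. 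A proper truncation of $\der(h)$ is taken at a monomial $\fp=\fq\fn\in\supp\der(h)$ with $\fq\in\fM^{(n-1)}$, $\fn\in\fM_n$, and by convexity of $\fM^{(n-1)}$ one gets
\[\der(h)|_{\fp}\ =\ \der\Big(\sum_{\fm\in\fM_n,\ \fm\succ\fn}h_\fm\fm\Big)\ +\ (g_\fn|_{\fq})\,\fn .\]
The correction term $(g_\fn|_\fq)\fn$ is typically nonzero, because the truncation point cuts \emph{inside} the coefficient $g_\fn\in\smallk_{n-1}$ of a single $\fM_n$-monomial; since $c_n(\fn)$ can have large support in $\fM^{(n-1)}$ (in $\Texp$ it does), no proper truncation of $h$ satisfies $\der(h)|_{\fp}=\der(h|_{\cdot})|_{\fp}$, so the limit step of your secondary induction cannot be closed as proposed.

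Handling that correction term is precisely where the two hypotheses you never genuinely engage are needed, and it forces a different induction. Since $\fp\in\supp\der(h)\subseteq\fM_E$, \emph{splitting} gives $\fn\in\fM_E\subseteq E$ (note that your only use of splitting, to get $c(\fm)\in E$ for $\fm\in\supp h$, is redundant, since $c(\fM_E)\subseteq E$ gives that directly; the hypothesis is really needed for this $\fn$, which need not lie in $\supp h$). Then $h_\fn\fn$ is a difference of two truncations of $h$, so $h_\fn=(h_\fn\fn)\fn^{-1}$ and $c(\fn)$ lie in the ring; but one still needs $\der(h_\fn)$ and its truncations, and $o(h_\fn)$ can equal $o(h)$ (e.g.\ $h=h_\fn\fn$), so neither your induction on the generation stage $k$ nor your induction on $o(h)$ reaches it. The paper resolves this by inducting on the \emph{level} of the filtration first: in a maximal truncation closed subring $A\supseteq E$ of $R$, a putative element with $\der(f)\notin A$ is chosen with $n$ minimal such that $f\in A\cap\smallk_n$ and then $o(f)$ minimal; minimality of $n$ yields $\der(h_\fn)\in A$, minimality of $o(f)$ yields $\der(\sum_{\fm\succ\fn}h_\fm\fm)\in A$, and splitting yields $\fn, c(\fn)\in A$, so $\der(f)|_\fp\in A$ and $A[\der(f)]$ contradicts maximality. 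Your outline is missing this level-$n$ induction and the essential use of splitting, which together are the heart of the proof.
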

\begin{proof} Recall that $\fM_E$ is a subgroup of $\fM$ and
$E\subseteq \smallk[[\fM_E]]$, and that $\smallk_{*}\cap \smallk[[\fM_E]]$ is a
differential subfield of $\smallk_{*}$. Thus $R\subseteq \smallk_{*}\cap \smallk[[\fM_E]]$. Let $A\supseteq E$ be any truncation closed subring of $R$ such that $A\ne R$. It is enough to show that then there is a truncation closed subring of $R$ that properly contains $A$. Take $n$ minimal such that for some $f\in A\cap\smallk_n$ we have $\der(f)\notin A$, and take such $f$ with $o(f)$
minimal. We claim that every proper truncation of $\der(f)$ lies in $A$. (As a consequence, $A[\der(f)]$ is a truncation closed subring of $R$ that properly contains $A$.) The case $n=0$ of the claim is clear. Let $n\ge 1$. Then
$f=\sum_{\fm\in \fM_n}f_{\fm}\fm$ with all $f_{\fm}\in \smallk_{n-1}$, and so 
\[g\ :=\ \der(f)\ =\ \sum_{\fm\in \fM_n}\big(\der(f_{\fm})+c_n(\fm)
f_{\fm}\big)\fm\ =\ \sum_{\fm\in \fM_n}g_{\fm}\fm,\quad \text{where }g_{\fm}\ :=\ \der(f_{\fm})+c_n(\fm)f_{\fm}.\]
Such a proper truncation equals
$g|_{\fp}$ where
$\fp\in \supp(g)$, so $\fp=\fq\fn$, $\fq\in \fM^{(n-1)}$, 
$\fn\in \fM_{n}$. In the rest of the proof $\fm$ ranges over $\fM_{n}$. Then
\[g|_{\fp}\ =\ 
\sum_{\fm\succ \fn}g_{\fm}\fm + (g_{\fn}|_{\fq})\fn.\]
We have $\der(\sum_{\fm\succ \fn}f_{\fm}\fm)=\sum_{\fm\succ \fn}g_{\fm}\fm$, and $\sum_{\fm\succ \fn}f_{\fm}\fm$ is a proper truncation of
$f$ and thus lies in $A$, with $o(\sum_{\fm\succ \fn}f_{\fm}\fm)< o(f)$, so 
$\sum_{\fm\succ \fn}g_{\fm}\fm\in A$. 
%Also
%$$g_{\fn}\ =\ 
%$$(g_{\fn}|_{\fq})\fn\ =\ \der(f_{\fn}\fn)|_{\fp}.$$
Now $\fp\in \fM_E$, so $\fn\in \fM_E$ since $\fM_E$ splits. As $A$ is truncation closed, we have $f_{\fn}\fn\in A$, and so $f_{\fn}\in A\cap\smallk_{n-1}$. The minimality of $n$ gives $\der(f_{\fn})\in A$,
so $g_{\fn}=\der(f_{\fn})+c(\fn)f_{\fn}\in A$, hence $(g_{\fn}|_{\fq})\fn\in A$, and thus $g|_{\fp}\in A$, as promised. 
\end{proof} 

\noindent
The proof of Lemma~\ref{neatdif} shows that if $\fM_E$ splits
and $c(\fM_E)\subseteq E$, then 
$\fM_E=\fM_R=\fM_{\Q\<E\>}$, so the assumption that $\fM_E$ splits and $c(\fM_E)\subseteq E$ is inherited by $R$ and $\Q\<E\>$ from $E$. The same is true
for the (truncation closed) henselization $E^{\text{h}}$ of the field $E$ in 
$\smallk[[\fM]]$, since 
$E^{\text{h}}\subseteq \smallk_{*}\cap \smallk[[\fM_E]]$, and thus
$\fM_{E^{\text{h}}}=\fM_E$. Note also that if
$\der E\subseteq E$, then $c(\fM_E)\subseteq E$.  

It will be useful to consider some more extension procedures and see
when splitting and the like are preserved.
Let $\fG$ be a subset of $\fM$. Then $E[\fG]$ and
$E(\fG)$ are truncation closed by (i) and (ii) of Proposition~\ref{D}. Let $\fN$ be the subgroup of $\fM$ generated by $\fG$ over $\fM_E$. Then $E(\fG)\subseteq \smallk[[\fN]]$, and
thus $\fM_{E(\fG)}=\fN$. Note that if $\fM_E$ and $\fG$ split, then so does
$\fN$, and if $c(\fM_E), c(\fG)\subseteq E(\fG)$, then $c(\fN)\subseteq E(\fG)$. Here is a consequence: 

\begin{lemma}\label{algtrunc} Assume $\fM_E$ splits. Let $F$ be the algebraic closure of 
$E$ in $\smallk[[\fM]]$. Then $F$ is truncation closed, $F\subseteq\smallk_{*}$ and $\fM_F$ splits. If $c(\fM_E)\subseteq E$, then
$c(\fM_F)\subseteq E$. If $\der E\subseteq E$, then $\der F\subseteq F$. 
\end{lemma}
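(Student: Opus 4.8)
The plan is to obtain $F$ by interleaving two operations whose effect on truncation and splitting we already understand: adjoining divisible closures of monomials, and taking henselizations. First I would set $\fM_E^{\div}:=\{\fg\in\fM:\ \fg^k\in\fM_E \text{ for some }k\ge 1\}$ and recall from Lemma~\ref{actr} that $F=E(\fM_E^{\div})^{\h}$, where the henselization is formed inside $\smallk[[\fM]]$. So the work splits into three tasks: (a) $\fM_E^{\div}$ splits and $c(\fM_E^{\div})\subseteq E$ (resp.\ $\subseteq F$) under the stated hypotheses; (b) the passage $E\rightsquigarrow E(\fM_E^{\div})$ preserves the relevant properties, using the paragraph just before the lemma with $\fG:=\fM_E^{\div}$; (c) the henselization preserves them, using Lemma~\ref{actr}, Proposition~\ref{D}(iii), and the fact that $E^{\h}\subseteq \smallk_{*}$ and $\fM_{E^{\h}}=\fM_E$ already noted after Lemma~\ref{neatdif}.

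For (a): if $\fM_E$ splits and $\fg=\fg_0\cdots\fg_n$ with $\fg_i\in\fM_i$ lies in $\fM_E^{\div}$, then $\fg^k\in\fM_E$ for some $k$, and since $\fg^k=\fg_0^k\cdots\fg_n^k$ with $\fg_i^k\in\fM_i$, splitting of $\fM_E$ gives $\fg_i^k\in\fM_E$, hence $\fg_i\in\fM_E^{\div}$; thus $\fM_E^{\div}$ splits. For the logarithmic-derivative condition: $c$ is additive and $\fM$ is torsion-free, so from $\fg^k\in\fM_E$ we get $k\,c(\fg)=c(\fg^k)\in E$ (resp.\ in $E^{\exp}$-type fields this needs care), and since $E$ is a field of characteristic zero, $c(\fg)=\tfrac1k c(\fg^k)\in E$. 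So $c(\fM_E)\subseteq E$ implies $c(\fM_E^{\div})\subseteq E$, and likewise with $F$ in place of $E$. The inclusion $F\subseteq\smallk_{*}$: each element of $\fM_E^{\div}$ lies in some $\fM^{(n)}$ because $\fM=\bigcup_n\fM^{(n)}$ and the $\fM^{(n)}$ are divisible, and $\smallk_{*}\cap\smallk[[\fM_E^{\div}]]$ is a field (it is closed under the Hahn-field operations since $\smallk_{*}$ is and each $\fM^{(n)}$ is convex in $\fM$); this field contains $E$ and is algebraically maximal in equicharacteristic zero by henselianity, so it contains $F$.

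For (b) and (c): by the paragraph preceding the lemma applied with $\fG:=\fM_E^{\div}$, the field $E(\fM_E^{\div})$ is truncation closed, $\fM_{E(\fM_E^{\div})}=\fM_E^{\div}$ (a group, since $\fM_E^{\div}$ is already a subgroup of $\fM$), which splits by (a), and $c(\fM_E^{\div})\subseteq E\subseteq E(\fM_E^{\div})$ when $c(\fM_E)\subseteq E$. Then the henselization $E(\fM_E^{\div})^{\h}$ inside $\smallk[[\fM]]$ is truncation closed by Proposition~\ref{D}(iii), lies inside $\smallk_{*}$ (as it is an immediate algebraic extension, its monomial group is still $\fM_E^{\div}$, so it sits in $\smallk_{*}\cap\smallk[[\fM_E^{\div}]]$), and $\fM_{E^{h}}=\fM_E^{\div}$ splits with $c(\fM_E^{\div})\subseteq E(\fM_E^{\div})\subseteq F$. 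By Lemma~\ref{actr} this henselization equals $F$, giving all the claims about $F$. Finally, if $\der E\subseteq E$, then $F$, being algebraic over the differential field $E$, is a differential subfield of $\smallk_{*}$ (the derivation of $\smallk_{*}$ restricts to the unique extension to the algebraic extension $F$), so $\der F\subseteq F$; and then $c(\fM_F)\subseteq F$ automatically, though here we have the sharper $c(\fM_F)\subseteq E$ already from (a). The main obstacle I anticipate is purely bookkeeping: making sure at each of the two extension steps that the monomial group really is exactly $\fM_E^{\div}$ (so that ``splits'' and ``$c(\cdot)\subseteq$'' transfer verbatim) and that nothing escapes $\smallk_{*}$; both follow from convexity of the $\fM^{(n)}$ together with Lemma~\ref{actr}, so there is no genuinely hard new idea.
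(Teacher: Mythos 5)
Your proposal is correct and follows essentially the same route as the paper: identify $F$ as $E(\fM_E^{\div})^{\h}$ via Lemma~\ref{actr}, check directly that the divisible closure $\fM_E^{\div}$ splits and that $c(\fM_E^{\div})\subseteq E$ (using that $E$ is a $\Q$-vector space), and then invoke the preservation remarks for adjoining monomials and for henselization. The only blemish is your aside that the $\fM^{(n)}$ are divisible, which is not assumed in the general set-up of Section~\ref{basicsetup} and is not needed ($\fM_E^{\div}\subseteq\fM\subseteq\smallk_*$ already, and $\smallk_*$ is henselian, so $F=E(\fM_E^{\div})^{\h}\subseteq\smallk_*$).
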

\begin{proof} Let $\fG:=\{\fg\in \fM:\ \fg^n\in \fM_E \text{ for some }n\ge 1\}$. Then $\fG$ equals $\fN$ as defined in the remarks above, and
$\fG$ splits. By Lemma~\ref{actr} we have $F=E(\fG)^{\text{h}}$, so
$\fM_F=\fG$, and thus previous remarks apply to give the desired result.  
\end{proof} 

\noindent
With $\smallk_{*}=\Texp$ as in the previous section, $\smallk_{*}$ is real closed, and then $F$ as in the lemma above is the real closure in
$\smallk_{*}$ of its ordered subfield $E$.

\subsection*{Closure under exponentiation}  In this subsection $\smallk_{*}=\Texp$ as in Section~\ref{METEXP}. 

\begin{lemma}\label{expsplit} If $\fM_E$ splits, then 
$\fM_{E^{\exp}}$ splits.
\end{lemma}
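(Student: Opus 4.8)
\textbf{Proof plan for Lemma~\ref{expsplit}.}

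The plan is to mimic the structure of the proof of Lemma~\ref{exptrunc}, tracking the monomial group along the way and showing it keeps splitting. First I would recall that, by Lemma~\ref{exptrunc} and the remarks following Lemma~\ref{neatdif}, $E^{\exp}$ is truncation closed, so $\fM_{E^{\exp}}=E^{\exp}\cap\fM=\supp(E^{\exp})\cap\fM$ is a subgroup of $\fM$ and $E^{\exp}\subseteq\smallk[[\fM_{E^{\exp}}]]$. The idea is to build $E^{\exp}$ as an increasing union $E=K_0\subseteq K_1\subseteq\cdots$ of truncation closed subfields, where at each stage we either take a small exponential closure $K_{i+1}=K_i^{\smallexp}$ or adjoin a single exponential monomial $\ex^a$ with $\supp a\subseteq\fM^{\succ 1}$, exactly as in the proof of Lemma~\ref{exptrunc}; then $\fM_{E^{\exp}}=\bigcup_i\fM_{K_i}$, and since a union of splitting subsets of $\fM$ visibly splits, it suffices to show that each step preserves splitting.

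The two cases to handle are: (a) if $\fM_K$ splits, then $\fM_{K^{\smallexp}}$ splits; and (b) if $\fM_K$ splits and $a$ has $\supp a\subseteq\fM^{\succ 1}$ with $\ex^a\notin K$, then $\fM_{K(\ex^a)}$ splits. For (a): by the closure-under-small-exponentiation discussion in Section~\ref{basicsetup}, $K^{\smallexp}\subseteq\smallk_{*}\cap\smallk[[\fM_K]]$, so $\fM_{K^{\smallexp}}\subseteq\fM_K$; combined with $\fM_K\subseteq\fM_{K^{\smallexp}}$ this gives $\fM_{K^{\smallexp}}=\fM_K$, which splits by hypothesis. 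For (b): here $\ex^a\in\fM$ by Lemma~\ref{AM}(iii), so $K(\ex^a)=K(\fG)$ with $\fG=\{\ex^a\}\subseteq\fM$. By the remarks preceding Lemma~\ref{algtrunc}, $\fM_{K(\fG)}=\fN$, the subgroup of $\fM$ generated by $\fG$ over $\fM_K$, and $\fN$ splits provided both $\fM_K$ and $\fG$ split. So the crux is to check that the singleton $\{\ex^a\}$ splits, i.e. writing $\ex^a=\fm_0\cdots\fm_n$ with $\fm_i\in\fM_i$, each $\fm_i\in\{\ex^a\}$.

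The main obstacle, then, is pinning down that $\{\ex^a\}$ splits when $\supp a\subseteq\fM^{\succ 1}$. The point is that $a\in A_0+\cdots+A_m$ for some $m$ by Lemma~\ref{AM}(ii), so $\ex^a\in\fM^{(m+1)}$; I would decompose $a=a^{(0)}+\cdots+a^{(m)}$ with $a^{(j)}\in A_j$ according to the convex pieces, and observe that this corresponds under Lemma~\ref{AM}(iii)-type factorization to $\ex^a=\prod_j\ex^{a^{(j)}}$ with $\ex^{a^{(j)}}\in\fM_{j+1}$. A genuinely splitting set must contain each such factor, which forces all but one $a^{(j)}$ to vanish unless we argue differently — so the honest statement is rather that a \emph{single} monomial $\fm\in\fM$ always trivially splits only when its factorization is concentrated in one $\fM_i$. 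I would therefore instead argue directly at the group level: $\fN=\fM_K\cdot(\ex^a)^{\Z}$ (using divisibility of $\fM$ and that $\fM_K$ is a group, one may need the divisible hull, handled as in Lemma~\ref{algtrunc}), and an element $\fm_0\cdots\fm_n\in\fN$ has each $\fm_i$ equal to the corresponding $\fM_i$-component of some $\fg\cdot\ex^{ka}$ with $\fg\in\fM_K$; since $\fM_K$ splits and the $\fM_i$-component of $\ex^{ka}$ is again of the form $\ex^{(\text{truncation-like piece of }ka)}\in\fM_{E^{\exp}}$, everything stays inside $\fM_{E^{\exp}}$. Making this component-extraction argument precise — in particular that the $\fM_i$-component of $\ex^{ka}$ lies in $\fM_{E^{\exp}}$, which reduces to $\supp(ka)\cap(\text{the relevant layer})$ being itself a truncation of $ka$ and hence handled by truncation-closedness of $E^{\exp}$ — is where the real work lies, and I expect it to be the subtle step of the proof.
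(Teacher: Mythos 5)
Your overall strategy (tower of truncation closed subfields, alternating small-exponential closures with adjunctions $K\rightsquigarrow K(\ex^a)$, $\supp a\subseteq \fM^{\succ 1}$) is the paper's, and your case (a) is fine. The gap is in case (b), and you in effect diagnose it yourself without closing it: your reduction is ``it suffices to show that each step preserves splitting,'' but as you observe, $\{1,\ex^a\}$ splits only when $\ex^a$ lies in a single layer $\fM_{i}$, so for an arbitrary choice of $a$ the group $\fM_{K(\ex^a)}=\fM_K\cdot(\ex^a)^{\Z}$ need \emph{not} split (the layer components $\ex^{a^{(j)}}$ need not lie in $K(\ex^a)$). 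Your fallback -- ``the components at least lie in $\fM_{E^{\exp}}$'' -- is not compatible with the induction as you set it up: at the next stage your own argument again invokes ``since $\fM_K$ splits'' to handle the $\fg$-part of $\fg\cdot\ex^{ka}$, and that hypothesis is exactly what you have just conceded you cannot maintain. The paper avoids the problem by a sharper choice of the element being adjoined: take $a\in K$ with $\ex^a\notin K$ and $o(a)$ minimal, and then take the layer index $n$ with $a\in\smallk_n$ minimal in the sense that no $b\in K\cap\smallk_{n-1}$ has $o(b)\le o(a)$ and $\ex^b\notin K$. Writing $a=a_n+\cdots+a_0$ with $a_i\in A_i$ (each partial sum is a truncation, so all $a_i\in K$), if some $a_i\ne 0$ with $i<n$ then $o(a_n)<o(a)$ forces $\ex^{a_n}\in K$, and $b:=a_{n-1}+\cdots+a_0$ contradicts the minimality of $n$; hence $a\in A_n$, $\ex^a\in\fM_{n+1}$, $\{1,\ex^a\}$ splits, and the strong invariant survives. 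This double-minimality trick is the missing idea.

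Your fallback observation could be turned into a correct (and genuinely different) proof, but only after repairing the induction: replace the invariant by ``every $\fM_i$-component of every element of $\fM_{K_j}$ lies in $\fM_{E^{\exp}}$.'' This does propagate, since components of a product are products of components, $\fM_{E^{\exp}}$ is a group, and for any adjoined $\ex^a$ the layer pieces $a^{(j)}$ are differences of truncations of $a$ (only the top piece is itself a truncation, contrary to your phrasing), hence lie in $K_j$, so $\exp(a^{(j)})\in E^{\exp}\cap\fM_{j+1}\subseteq\fM_{E^{\exp}}$; and splitting of a union of $\fM_{K_j}$'s whose components land in $\fM_{E^{\exp}}$ gives the conclusion. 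You did not carry out this reformulation, and as written the proposal simultaneously relies on and abandons the splitting of the intermediate fields. (Two smaller points: no divisible hull is needed for $\fM_{K(\ex^a)}$ -- divisibility only enters for real/algebraic closures as in Lemma~\ref{algtrunc}; and the tower may need transfinite length, which the paper's maximality phrasing handles automatically.)
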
 
\begin{proof} Let $K\supseteq E$ be a truncation closed subfield of $E^{\exp}$ such that $\fM_K$ splits, and suppose
$K\ne E^{\exp}$. It is enough to show that then there exists a truncation closed subfield $L$ of $E^{\exp}$ that properly contains $K$ such that $\fM_L$ splits. If $K$ is not closed under small exponentiation, then $L:=K^{\smallexp}$ (with $\fM_L=\fM_K$) works. 

Assume that $K$ is closed under small exponentiation. Let $a\in K$ with $\ex^a\notin K$ be such that $o(a)$ is minimal. 
We have $a=a|_{1} + r + \varepsilon$ with $r\in\R$ and $\varepsilon\in K^{\prec 1}$, so $\ex^a=\ex^{a|_1} \cdot \ex^r\cdot \exp(\varepsilon)$
with $\ex^r\in \R\subseteq K$ and $\exp(\varepsilon)\in K$.  Thus by the minimality of $o(f)$ we have $a=a|_{1}$, that is, $\supp a \subseteq \fM^{\succ 1}$. 
Then $\ex^a\in \fM$ by Lemma~\ref{AM}, and so $L:=K(\ex^a)$ is truncation closed. It remains to show that $\fM_L$ splits.
With $\fm:=\ex^a$, it is enough to show that $\{1,\fm\}$ splits by earlier remarks.
Take $n$ with $a\in \smallk_n$. We can assume that $n$ here is minimal in the sense that there is no $b\in K\cap\smallk_{n-1}$ with $o(b) \le o(a)$ and
$\ex^b\notin K$.  Then 
$a=a_n + a_{n-1} + \cdots + a_0$ with $a_i\in A_i$ for $i=0,\dots,n$ and $a_n\ne 0$. Each partial sum $a_n + a_{n-1} +\cdots + a_i$ is a truncation of $a$ and so
lies in $K$, hence all $a_i\in K$.  If $a_i=0$ for all $i<n$, then $\fm\in \fM_{n+1}$, and so
$\{1,\fm\}$ splits and we are done. Suppose $a_i\ne 0$ for some $i<n$. Then $o(a_n)< o(a)$, so $\ex^{a_n}\in K$ and $b:=a_{n-1}+\cdots + a_0\in K\cap\smallk_{n-1}$
has the property that $o(b)\le o(a)$ and $\ex^b\notin K$, contradicting the minimality of $n$.
\end{proof}

\subsection*{More about splitting} The following generalities will also be useful in the next chapter. Let $R\supseteq \smallk$ be a truncation closed subring of $\smallk[[\fM]]$ such that $\fM_R$ splits. 
 
\begin{lemma}\label{split1} Let $V$ be a truncation closed $\smallk$-linear subspace of $\smallk[[\fM]]$ such that $\fM_V$ splits. Then for the $($truncation closed$)$ subring
$R[V]$ of $\smallk[[\fM]]$ we have that $\fM_{R[V]}$ splits.
\end{lemma}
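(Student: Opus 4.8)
The plan is to imitate the structure of the proof of Lemma~\ref{neatdif}: take a maximal truncation closed subring $A$ with $R\cup V\subseteq A\subseteq R[V]$ and $\fM_A$ splitting, and show $A=R[V]$. Since $R$ and $V$ are truncation closed and $\fM_R$, $\fM_V$ split, the subring they generate together has $\fM$-part equal to the subgroup of $\fM$ generated by $\fM_R\cup\fM_V$, which splits (by the "More about splitting" remarks: the subgroup generated by two splitting subsets of $\fM$ splits), so a starting point with $\fM_A$ splitting exists, and $\fM_{R[V]}$ is exactly that subgroup. Thus it suffices to show $A=R[V]$, i.e.\ that $A$ already contains all products of the form $r\cdot w$ with $r\in R$, $w\in V$ (and hence, being a ring, all of $R[V]$).

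First I would reduce to showing: for $r\in R$ and $w\in V$, the product $rw$ lies in $A$. I would do this by induction on $(o(r),o(w))$ with respect to the lexicographic ordering on pairs of ordinals, exactly as in the proof of the lemma "$E$ is a truncation closed subring of $K$ with $C[\fM]\subseteq E$" in Chapter~3. If $o(r)=0$ or $o(w)=0$ then $r\in\smallk$ or $w\in\smallk$, and $rw\in A$ since $A\supseteq\smallk$ is a ring containing $R$ and $V$. Otherwise pick $\fm\in\supp(r)$ and $\fn\in\supp(w)$, write $r=r_1+r_2$ with $r_1:=r|_{\fm}$ and $\fd(r_2)=\fm$, and $w=w_1+w_2$ with $w_1:=w|_{\fn}$ and $\fd(w_2)=\fn$. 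Then for any $\fd\in\fM$,
\[
(rw)|_{\fd}\ =\ (r_1 w)|_{\fd}+(r_2 w_1)|_{\fd}+(r_2 w_2)|_{\fd},
\]
and $r_1w$, $r_2w_1$ have smaller lexicographic data, so their truncations lie in $A$ by the inductive hypothesis; the remaining term $(r_2w_2)|_{\fd}$ is handled by choosing, for the given $\fd$, monomials $\fm\in\supp(r)$, $\fn\in\supp(w)$ with $\fm\fn$ as small as possible among products of monomials in $\supp r$ times monomials in $\supp w$ — if some such $\fm\fn\preceq\fd$ the above decomposition applies with that choice, while if every such $\fm\fn\succ\fd$ then $(rw)|_{\fd}=rw\prec\fd$ already shows $(rw)|_{\fd}=rw$, and we must show $rw\in A$ directly. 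This last case is where the splitting hypothesis is essential: one shows that $rw\succ\fd$ for \emph{all} such $\fd$ forces $\supp(rw)$ to be contained in $\fM_{R[V]}$-monomials, and more importantly, that each monomial in $\supp(rw)$ already lies in the ring $A$.

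To make the last case precise I would argue as follows. Each monomial appearing in $rw$ is $\preceq\fd(r)\fd(w)$, and the leading term of $rw$ is $r_{\fd(r)}w_{\fd(w)}\fd(r)\fd(w)$; since $A$ is truncation closed and contains $r$ and $w$, it contains $r_{\fd(r)}\fd(r)$ and $w_{\fd(w)}\fd(w)$, hence their product, hence $\fd(r)\fd(w)\in\fM_A$, and peeling off this leading term reduces the support, so by a second (outer) induction on $o(rw)$ we get $rw\in A$ — this does not even use splitting directly, only that $A$ is a truncation closed ring; splitting is used only to guarantee at the outset that $\fM_A$ can be taken to split and that $\fM_{R[V]}$ is the subgroup generated by $\fM_R\cup\fM_V$, so that the maximal $A$ we found with splitting $\fM$-part is a candidate for all of $R[V]$. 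Finally, $A$ being a ring containing $R$, $V$ and closed under the truncation operations just verified gives $A\supseteq R[V]$, whence $A=R[V]$ and $\fM_{R[V]}=\fM_A$ splits.

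\textbf{Main obstacle.} The delicate point is the bookkeeping in the case "every product $\fm\fn$ of a monomial of $\supp r$ with a monomial of $\supp w$ is $\succ\fd$": one has to be careful that choosing such $\fd$'s does not secretly require controlling infinitely many monomials at once, and that the leading-term peeling terminates (which it does, since supports are well-based and $o(rw)$ strictly decreases). A secondary point worth checking carefully is the opening claim that the subgroup of $\fM$ generated by two splitting subsets splits — this should be immediate from the definition of splitting (it is a statement purely about factoring elements through $\fM_0\times\cdots\times\fM_n$ and is closed under products and inverses), and I expect it to be folded into the "More about splitting" remarks preceding this lemma, so I would cite those rather than reprove it.
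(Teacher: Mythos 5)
There is a genuine gap, and it sits exactly where the lemma's content lies. Your whole scaffold — a maximal truncation closed subring $A$ with $R\cup V\subseteq A\subseteq R[V]$, followed by an induction on $(o(r),o(w))$ to get $rw\in A$ — is beside the point: any subring of $R[V]$ containing $R\cup V$ is automatically equal to $R[V]$ (and $rw\in A$ is immediate since $A$ is a ring containing $R$ and $V$), while the truncation closedness of $R[V]$ is not at issue at all, being already given by Proposition~\ref{D}(i)--(ii). So the long middle of your argument proves nothing that is needed, and, as you yourself note, it never touches splitting. The splitting conclusion is carried entirely by your opening assertion that $\fM_{R[V]}$ equals the subgroup of $\fM$ generated by $\fM_R\cup\fM_V$, and that assertion is both unproved and false in general: $R[V]$ is only a ring, so $\fM_{R[V]}=R[V]\cap \fM$ need not contain inverses of elements of $\fM_V$ (take $R=\smallk$ and $V=\smallk+\smallk\fn$ for a single monomial $\fn\ne 1$; then $\fM_{R[V]}=\{\fn^k:k\in\N\}$, not $\fn^{\Z}$). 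Worse, even the correct inclusion of $\fM_{R[V]}$ into that (splitting) subgroup would not suffice: splitting of a set $S$ demands that the $\fM_i$-components of an element of $S$ lie back in $S$, and knowing they lie in some larger splitting group says nothing about membership in $\fM_{R[V]}$.

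The argument the paper gives is short and goes through the supports directly. Since $R[V]$ is a truncation closed $\smallk$-linear subspace containing $\smallk$, one has $\fM_{R[V]}=\supp\big(R[V]\big)$, and since every element of $R[V]$ is a sum of products $r\,v_1\cdots v_k$ with $r\in R$, $v_j\in V$, every monomial in $\supp\big(R[V]\big)$ has the form $\fm\,\fn_1\cdots\fn_k$ with $\fm\in\supp(R)=\fM_R$ and $\fn_j\in\supp(V)=\fM_V$. Now write $\fm$ and each $\fn_j$ in terms of their $\fM_i$-components; because $\fM_R$ and $\fM_V$ split, each component of $\fm$ lies in $\fM_R\subseteq R$ and each component of $\fn_j$ lies in $\fM_V\subseteq V$, so the $\fM_i$-component of $\fm\,\fn_1\cdots\fn_k$ is again a product of an element of $R$ with elements of $V$ that happens to be a monomial, hence lies in $R[V]\cap\fM=\fM_{R[V]}$. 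That is the step your proposal is missing; once you have it, no maximality argument and no induction on order types is needed.
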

\begin{proof} We have $\fM_{R[V]}=\supp(R[V])$. Now use that every element of $\supp (R[V])$ is of the form
$\fm\fn_1\cdots \fn_k$ with $\fm\in \supp(R)=\fM_R$ and $\fn_1,\dots, \fn_k\in \supp(V)=\fM_V$, $k\in \N$. 
\end{proof}

\begin{lemma}\label{split2} Let $F$ be the fraction field of $R$ in $\smallk[[\fM]]$. Then $\fM_F$ splits. 
\end{lemma}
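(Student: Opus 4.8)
Let $F$ be the fraction field of $R$ in $\smallk[[\fM]]$. Then $\fM_F$ splits.

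The plan is to combine the characterization $\fM_F = \supp(F) \cap \fM$ with the explicit description of elements of $F$ as quotients $a/b$ with $a, b \in R$, reduce to understanding $\fM_{b^{-1}}$ for nonzero $b \in R$, and then expand $b^{-1}$ as a geometric-type series to express its monomials in terms of those of $b$.

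First I would recall that $F$ is truncation closed by Proposition~\ref{D}(i), so $\fM_F = F \cap \fM = \supp(F) \cap \fM$. Every element of $F$ has the form $a b^{-1}$ with $a \in R$ and $b \in R^{\neq 0}$, so $\supp(ab^{-1}) \subseteq \supp(a)\supp(b^{-1})$, and hence $\fM_F \subseteq \fM_R \cdot \bigcup_{b \in R^{\neq 0}} \supp(b^{-1})$. Since $\fM_R$ splits by hypothesis and a product of two splitting subsets need not split in general but does behave well here, the crux is to show that $\supp(b^{-1})$ is contained in the subgroup of $\fM$ generated by a splitting set. Write $b = c\fd(1 - \varepsilon)$ with $c \in \smallk^\times$, $\fd = \fd(b) \in \fM_R$, and $\varepsilon \in \smallk[[\fM]]^{\prec 1}$; then $\varepsilon = 1 - c^{-1}\fd^{-1}b$, and $\supp(\varepsilon) \subseteq \fM_R \cdot \fd^{-1} \subseteq \langle \fM_R \rangle = \fM_R$ (a subgroup). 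Now $b^{-1} = c^{-1}\fd^{-1}\sum_n \varepsilon^n$, so $\supp(b^{-1}) \subseteq \fd^{-1} \cdot (\supp \varepsilon)^* \subseteq \fM_R$. Therefore every monomial in $\supp(ab^{-1})$ lies in $\fM_R \cdot \fM_R = \fM_R$.

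Thus $\fM_F \subseteq \fM_R$, and since trivially $\fM_R \subseteq \fM_F$ (as $R \subseteq F$), we get $\fM_F = \fM_R$, which splits by hypothesis.

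The main obstacle I anticipate is the verification that $\supp(b^{-1}) \subseteq \fM_R$, i.e.\ that inverting a series supported in a subgroup keeps you inside that subgroup: this is where one must be careful that $\fM_R$, being $\supp(R) \cap \fM$, is genuinely a subgroup of $\fM$ (which holds because $R$ is a truncation closed ring containing $\smallk$, so by the remarks preceding Lemma~\ref{algtrunc} and the discussion of $\fM_E$, $\fM_R$ is closed under products and inverses). Once that is in hand, the splitting of $\fM_F$ is immediate since $\fM_F = \fM_R$ and there is nothing further to check — splitting of a set is inherited trivially by itself. A clean way to write this up is simply: $\fM_F = \fM_R$ because $R$ and $F$ have the same group of monomials (a standard fact for Hahn subfields over a common coefficient field), and then invoke the hypothesis that $\fM_R$ splits.
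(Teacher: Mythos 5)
There is a genuine gap: your whole argument rests on the claim that $\fM_R=\supp(R)\cap\fM$ is a subgroup of $\fM$ (closed under inverses), so that $\langle\fM_R\rangle=\fM_R$, $\supp(b^{-1})\subseteq\fM_R$, and finally $\fM_F=\fM_R$. That claim is false for subrings: the facts you cite (the discussion of $\fM_E$ and the remarks around Lemma~\ref{algtrunc}) concern truncation closed sub\emph{fields} $E$, where $\fM_E=E\cap\fM$ inherits inverses from $E$; a subring need not contain the inverses of its monomials, and likewise the ``standard fact'' that $R$ and its fraction field have the same monomials is a fact about fields, not rings. Concretely, take $R=\smallk[x]$ with $x\in\fM_0^{\succ 1}$: this is a truncation closed subring containing $\smallk$ and $\fM_R=\{x^n:n\in\N\}$ splits, but for $b=x\in R$ we have $\supp(b^{-1})=\{x^{-1}\}\not\subseteq\fM_R$, and $x^{-1}\in\fM_F\setminus\fM_R$, so $\fM_F\ne\fM_R$. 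What survives of your computation is only $\supp(b^{-1})\subseteq\langle\fM_R\rangle$, hence $\fM_F\subseteq\langle\fM_R\rangle$; but then the splitting of $\fM_F$ is no longer ``inherited trivially'' and still has to be proved, since $\fM_F$ is in general strictly larger than $\fM_R$.

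The paper's proof avoids this by working with quotients of single monomials: since $F$ is truncation closed (and contains $\smallk$), $\fM_F=\supp(F)$, and every $\fp\in\supp(F)$ has the form $\fp=\fm\fn^{-1}$ with $\fm,\fn\in\supp(R)=\fM_R$ (write $\fp=a/b$ with $a,b\in R$, $b\ne 0$; then $\supp(a)=\fp\,\supp(b)$, so taking $\fn\in\supp(b)$ gives $\fm:=\fp\fn\in\supp(a)$). Now if $\fp=\fp_0\cdots\fp_k$ with $\fp_i\in\fM_i$, decompose $\fm=\fm_0\cdots\fm_k$ and $\fn=\fn_0\cdots\fn_k$ componentwise; uniqueness of the decomposition gives $\fp_i=\fm_i\fn_i^{-1}$, and splitting of $\fM_R$ gives $\fm_i,\fn_i\in\fM_R\subseteq R\subseteq F$, so $\fp_i\in F\cap\fM=\fM_F$. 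This componentwise step is exactly what your ``nothing further to check'' elides, and it is where the splitting hypothesis on $\fM_R$ is actually used. Your proposal would be correct after replacing $\fM_R$ by the subgroup it generates and then adding such a componentwise argument, but as written the key identity $\fM_F=\fM_R$ and its justification are wrong.
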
 
\begin{proof} Since $F$ is truncation closed, we have $\fM_F=\supp (F) $. Now use that every element of $\supp(F)$ is of the form
$\fm\fn^{-1}$ with $\fm, \fn\in \supp(R)=\fM_R$.  
\end{proof}

\section{Extending the Derivation on $\smallk_*$ to $\smallk[[\fM]]$}

\noindent
Recall from Section~\ref{basicsetup} that
$\der_n$ is a strong operator on the Hahn field $\smallk_n=\smallk[[\fM^{(n)}]]$ over $\smallk$. 
Recall also that $\der$ denotes the common extension of the $\der_n$ to a derivation of $\smallk_*$. 
Next we show how $\der$ extends naturally to $\smallk[[\fM]]$. The results of this section are not used later, but are included for their independent interest. 
They might also become useful when trying to extend the results in this chapter and the next to the completions of $\Texp$ and $\T$. 

\begin{lemma}\label{extder}
The derivation $\der$ of $\smallk_*$ extends uniquely to a strong operator 
on the Hahn field $\smallk[[\fM]]$ over $\smallk$. This operator is a derivation on $\smallk[[\fM]]$. 
\end{lemma}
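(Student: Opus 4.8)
The plan is to use Lemma~\ref{sumExt} to build the extension as a strong operator, and then check the Leibniz rule by a summability argument. First I would define a map $\phi: \smallk\fM \to \smallk[[\fM]]$ by $\phi(a\fm) := \der(a)\fm + a\,c(\fm)\fm$ for $a\in\smallk$ and $\fm\in\fM$, where $c(\fm)\in\smallk_*$ is the additive function from Section~\ref{basicsetup} satisfying $\fm^\dagger = c(\fm)$; note that $c(\fm)\fm$ is a genuine element of $\smallk[[\fM]]$ since $c(\fm)\in\smallk_n$ for the least $n$ with $\fm\in\fM^{(n)}$. The three hypotheses of Lemma~\ref{sumExt} then need to be verified: (i) additivity in the coefficient is immediate from additivity of $\der$ on $\smallk$; (ii) holds because $\supp(\phi(a\fm)) \subseteq (\supp c(\fm))\cdot\{\fm\}\cup\{\fm\}$, and the right-hand side is contained in $\supp(\phi(\fm))\cup\{\fm\}$ once one observes $\phi(\fm) = c(\fm)\fm$ (the $\der(1) = 0$ term vanishes), so $\supp(\phi(\fm)) = (\supp c(\fm))\fm$; (iii) for a well-based $\fG\subseteq\fM$ one needs $(\phi(\fg))_{\fg\in\fG}$ summable, i.e. $\bigcup_{\fg}\supp(\phi(\fg))$ well-based and each monomial hit finitely often. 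This last point is the main obstacle, so let me address it directly below.

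For summability over a well-based $\fG$, the key structural fact is convexity of the $\fM^{(n)}$ in $\fM$ together with Lemma~\ref{transexp}: for $\fm\in\fM_{n+1}\setminus\{1\}$ we have $c(\fm)\succ\fM^{(n-1)}$, so the monomials appearing in $c(\fm)\fm$ are all $\succ$ some fixed power of $\fm$ — more precisely, writing $\fm = \fm_0\cdots\fm_k$ with $\fm_i\in\fM_i$, the support of $c(\fm) = \sum_i c_i(\fm_i)$ lies in $\fM^{(k-1)}$, hence $\supp(\phi(\fm)) \preceq \fm\cdot(\text{something in }\fM^{(k-1)})$, and since $\fM^{(k-1)}$ is convex and $\fm\succ\fM^{(k-1)}$ when $\fm_k\neq 1$, every element of $\supp(\phi(\fm))$ is asymptotically comparable to $\fm$ in a controlled way. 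I would argue that $\bigcup_{\fg\in\fG}\supp(\phi(\fg)) \subseteq \fG\cup \bigcup_{\fg\in\fG}(\supp c(\fg))\fg$, and that this union is well-based: given an infinite sequence $\prec$-increasing in it, one extracts an infinite $\prec$-increasing sequence either already in $\fG$ (impossible, $\fG$ well-based) or of the form $\fg_j\fd_j$ with $\fd_j\in\supp c(\fg_j)$; using that $\fg_j\fd_j \succeq$ a bounded perturbation of $\fg_j$ and that within each fixed level $\fM^{(n)}$ the perturbations are bounded, one forces $\fg_j$ itself to be $\prec$-increasing, again contradicting well-basedness of $\fG$. The finiteness-of-fibers condition is handled similarly using Lemma~\ref{fGfN} applied within each $\fM^{(n)}$.

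Once Lemma~\ref{sumExt} applies, it yields a unique strong operator $D$ on $\smallk[[\fM]]$ extending $\phi$; uniqueness of the extension of $\der$ follows because any strong operator agreeing with $\der$ on $\smallk_*$ in particular agrees with $\phi$ on $\smallk\fM$ (each $\phi(a\fm) = \der(a\fm)$ as computed in $\smallk_*$), and a strong operator is determined by its values on $\smallk\fM$ since every $f = \sum_\fm f_\fm\fm$ is the sum of the summable family $(f_\fm\fm)$. It remains to check that $D$ is a derivation, i.e. the Leibniz rule $D(fg) = D(f)g + fD(g)$. Here I would reduce to monomials: both sides are, by strongness of $D$ and the fact that multiplication by a fixed series is a supported (hence strong) operator, strongly bilinear in $(f,g)$, so it suffices to verify $D(\fm\fn) = D(\fm)\fn + \fm D(\fn)$ for $\fm,\fn\in\fM$ (with coefficients $1$; the general $a\fm$, $b\fn$ case then follows from the $\smallk$-linearity and the ordinary Leibniz rule for $\der$ on $\smallk$). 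But $D(\fm\fn) = c(\fm\fn)\fm\fn = (c(\fm)+c(\fn))\fm\fn = c(\fm)\fm\cdot\fn + \fm\cdot c(\fn)\fn = D(\fm)\fn + \fm D(\fn)$, using additivity of $c$. Finally I would note $D$ restricted to $\smallk_*$ equals $\der$: it is a strong operator on $\smallk[[\fM]]$, its restriction to each $\smallk_n = \smallk[[\fM^{(n)}]]$ is strong and agrees with $\der_n$ on $\smallk\fM^{(n)}$, hence equals $\der_n$, and these glue to $\der$ on $\smallk_*$. This completes the argument.
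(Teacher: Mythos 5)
Your overall route is the same as the paper's: restrict $\der$ to $\smallk\fM$, verify the three hypotheses of Lemma~\ref{sumExt}, and then get uniqueness, strongness, and the Leibniz rule (your explicit reduction of Leibniz to monomials via strong bilinearity, and your uniqueness argument, are fine and in fact more detailed than the paper's ``it is clear''). The genuine gap is in hypothesis (iii), the summability of $(\phi(\fg))_{\fg\in\fG}$ for well-based $\fG$, which is the real content of the lemma. Two problems there. First, you invoke Lemma~\ref{transexp}, i.e.\ $c(\fm)\succ \fM^{(n-1)}$; that is a fact about $\Texp$ (transseriality), whereas Lemma~\ref{extder} lives in the general set-up of Section~\ref{basicsetup}, where all one knows is $\supp c(\fm)\subseteq \fM^{(n-1)}$ for $\fm\in\fM^{(n)}$. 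Second, and more seriously, the key claim that ``within each fixed level $\fM^{(n)}$ the perturbations are bounded, so $\fg_j\fd_j$ increasing forces $\fg_j$ increasing'' is not correct as stated: for $\fg$ ranging over a fixed level there is no single monomial bounding $\supp c(\fg)$ (already at level $1$, for $\fg=x^r\ex^a$ one has $\supp c(\fg)\supseteq\supp\der_0(a)$, which varies with $a$), so nothing in your sketch rules out a decreasing $(\fg_j)$ inside one $\fM^{(n)}$ with $(\fg_j\fd_j)$ increasing. What excludes this in the fixed-level case is the previously established fact that $\der_n$ is a strong operator on $\smallk_n=\smallk[[\fM^{(n)}]]$ over $\smallk$, so that $(\der_n(\fg_j))_j$ is summable for $\fg_j$ in a well-based subset of $\fM^{(n)}$; your argument never uses this.

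For comparison, the paper's verification goes: if $\bigcup_{\fg\in\fG}\supp\der(\fg)$ were not well-based, choose $\fm_j\fg_j$ strictly increasing with $\fm_j\in\supp c(\fg_j)$ and pass to a subsequence where $(\fg_j)$ is constant or strictly decreasing. The constant case contradicts well-basedness of $\supp c(\fg)$; if all $\fg_j$ lie in one $\fM^{(n)}$, the strongness of $\der_n$ on $\smallk_n$ over $\smallk$ gives the contradiction; otherwise the levels $n_j$ strictly increase, one writes $\fg_j=\fh_j\fn_j$ with $\fh_j\in\fM^{(n_j-1)}$ and $\fn_j\in\fM_{n_j}\setminus\{1\}$, and since $\supp c(\fg_j)\subseteq\fM^{(n_j-1)}$ and the $\fM^{(n)}$ are convex, the comparisons of the $\fm_j\fg_j$ are governed by the $\fn_j$, which are strictly decreasing --- a contradiction. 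The finite-fiber half of (iii) needs the same case split: your appeal to Lemma~\ref{fGfN} within a level does not cover the case of strictly increasing levels, where one instead uses that the sets $\fM^{(n_j-1)}\fn_j$ are pairwise disjoint. Until these cases are actually argued, the summability check --- and hence the proof --- is incomplete.
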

\begin{proof}
We first show that the restriction of $\der$ to $\smallk\fM$ satisfies the conditions of Lemma \ref{sumExt}. Let $\fm$ range over $\fM$. It is clear that for $a,b\in \smallk$,
\[\der((a+b)\fm)\ =\ \der(a\fm) + \der(b\fm),\qquad
\supp \der(a\fm)\  \subseteq\ \big(\supp\der(\fm)\big)\cup\{\fm\}. \]
Let $\fG\subseteq \fM$ be well-based; we need to show that the family 
$(\der(\fg))_{\fg\in \fG}$ is summable. 
Towards a contradiction, assume $\bigcup_{\fg\in \fG}\supp \der(\fg)$ is not well-based. Then we have a sequence 
\[\fm_0\fg_0\ \prec\ \fm_1\fg_1\ \prec\ \fm_2\fg_2\ \prec\  \cdots\]
such that $\fg_i \in \fG$ and $\fm_i \in \supp c(\fg_i)$ for all $i$. Passing to a subsequence we arrange that
$(\fg_i)$ is constant or strictly decreasing. The first case gives 
$\fg\in \fG$ with $\fg_i=\fg$ for all $i$, and would give a strictly increasing sequence $(\fm_i)$
in $\supp c(\fg)$, which is impossible. So $(\fg_i)$ is strictly decreasing. Passing once again to a subsequence we
arrange that either for some $n$ all $\fg_i\in \fM^{(n)}$, or there are $n_0 < n_1 < n_2 < \cdots$ in $\N^{\ge 1}$ such that $\fg_i\in \fM^{(n_i)}\setminus \fM^{(n_i-1)}$ for all $i$. Consider first the case that $n$ is such that
$\fg_i\in \fM^{(n)}$ for all $i$,
and thus $\fm_i\in \fM^{(n)}$ for all
$i$. But $\big(\der_n(\fg_i)\big)_{i\in \N}$ is summable, with 
$\sum_i \der_n(\fg_i)=\sum_i c(\fg_i)\fg_i$, and the fact that the latter 
sum exists in $\smallk[[\fM^{(n)}]]$ 
contradicts $\fm_0\fg_0\prec \fm_1\fg_1\prec \cdots$. Next,
suppose $n_0 < n_1 < n_2 < \cdots$ in $\N^{\ge 1}$ are such
that $\fg_i\in \fM^{(n_i)}\setminus \fM^{(n_i-1)}$ for all $i$. Then $\fg_i=\fh_i\fn_i$ with $\fh_i\in \fM^{(n_i-1)}$
and $\fn_i\in \fM_{n_i}\setminus \{1\}$ for all $i$. It
follows that $(\fn_i)$ is strictly decreasing, and so
$(\fm_i\fg_i)$ would be too, a contradiction.

Now, let $\fm$ be given and suppose towards a contradiction that 
$\fm\in \supp \der \fg$ for infinitely many $\fg\in \fG$. Then 
$\fm\in \supp \der \fg_i$ for a sequence 
$\fg_0 \succ \fg_1\succ \fg_2 \succ \cdots$ in $\fG$. Passing to a subsequence we
arrange that either for some $n$ all $\fg_i\in \fM^{(n)}$, or there are $n_0 < n_1 < n_2 < \cdots$ in $\N^{\ge 1}$ such that $\fg_i\in \fM^{(n_i)}\setminus \fM^{(n_i-1)}$ for all $i$. If $n$ is such
that all $\fg_i\in \fM^{(n)}$, then $\fm\in \fM^{(n)}$,
contradicting that $\sum_i \der_n(\fg_i)$ exists in $\smallk[[\fM^{(n)}]]$. Next,
suppose $n_0 < n_1 < n_2 < \cdots$ in $\N^{\ge 1}$ are such
that $\fg_i\in \fM^{(n_i)}\setminus \fM^{(n_i-1)}$ for all $i$. Then $\fg_i=\fh_i\fn_i$ with $\fh_i\in \fM^{(n_i-1)}$
and $\fn_i\in \fM_{n_i}\setminus \{1\}$ for all $i$. Then
$(\fn_i)$ is strictly decreasing, and 
$\supp\der(\fg_i)\subseteq \fM^{(n_i-1)}\fn_i$ for all $i$. But $\fm\in \supp \der\fg_i$ for all $i$ contradicts that for $i\ne j$ the sets $\fM^{(n_i-1)}\fn_i$ and $\fM^{(n_j-1)}\fn_j$ are disjoint.  

Thus $(\der(\fg))_{\fg\in \fG}$ is indeed summable.  
We can now appeal to Lemma~\ref{sumExt} to conclude that the restriction of $\der$ to $\smallk\fM$ has a unique extension to a strong operator on 
$\smallk[[\fM]]$. It is clear that this extension extends the derivation 
$\der$ on $\smallk_*$ and is a derivation on $\smallk[[\fM]]$. 
\end{proof}

\noindent
We consider $\smallk[[\fM]]$ below as equipped with
the derivation defined in Lemma~\ref{extder} and denote it also by $\der$. By Lemma~\ref{small} we have $\der(\fm)\prec 1$ for all
$\fm\in \fM^{\prec 1}$, so this derivation on 
$\smallk[[\fM]]$ is small. The next result extends Corollary~\ref{samecons} from 
$\smallk{_*}$ to $\smallk[[\fM]]$:
%, and for $f\in \smallk[[\fM]]$ we also denote $\der(f), \der^2(f),\dots$ by $f', f'',\dots$. (Maybe not, since $f'$ seems more natural for the derivation $d/dx$.)
\begin{lemma} The following conditions are equivalent: \begin{enumerate}
\item[\rm{(i)}] for every $n$ the map $c_n: \fM_n\to \smallk_{n-1}$ is injective and $c_n(\fM_n)\cap \smallk_{n-1}^\dagger=\{0\}$;
\item[\rm{(ii)}] $\smallk[[\fM]]$ has the same constant field as 
$\smallk$.
\end{enumerate}
\end{lemma}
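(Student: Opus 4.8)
The plan is to reduce the statement about $\smallk[[\fM]]$ to the already-established Corollary~\ref{samecons} for $\smallk_{*}$, using the facts that $\smallk_{*}$ is dense in $\smallk[[\fM]]$ for the valuation topology and that the derivation $\der$ on $\smallk[[\fM]]$ is small (hence continuous). First I would dispose of the direction (ii)$\Rightarrow$(i): if (i) fails, then by Corollary~\ref{samecons} already $\smallk_{*}$ has a constant not in $\smallk$, and since $\smallk_{*}\subseteq \smallk[[\fM]]$ is a differential subfield, this same element is a constant of $\smallk[[\fM]]$ not in $\smallk$, so (ii) fails.

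For the substantial direction (i)$\Rightarrow$(ii), assume (i). By Corollary~\ref{samecons}, $\smallk_{*}$ has constant field $\smallk$. Suppose towards a contradiction that $f\in \smallk[[\fM]]$ has $\der(f)=0$ but $f\notin \smallk$. Replacing $f$ by $f-f_1$ (its coefficient on $1$), which is still a constant, we may assume $f_1=0$, so either $f\succ 1$ or $f\prec 1$ (if $f$ has a nonzero coefficient on some $\fm\succ 1$, subtract off... actually more carefully: pick the $\prec$-largest monomial $\fm_0=\fd(f)$ in $\supp f$ and work with $\fm_0^{-1}f$ if needed). The cleanest route: it suffices to rule out constants $f$ with $f\asymp \fm$ for a single $\fm$, then handle the general case by looking at the leading monomial. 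Here is where the key computation enters, mimicking the proof of Lemma~\ref{cgood}: write $f=f_{\fm}\fm(1+\varepsilon)$ with $f_{\fm}\in \smallk^\times$, $\fm\in\fM$, $\varepsilon\prec 1$; then $0 = f^\dagger = f_{\fm}^\dagger + c(\fm) + \der(\varepsilon)/(1+\varepsilon)$, and since $\der(\varepsilon)\prec 1$ (smallness of $\der$) and $f_{\fm}^\dagger=0$ as $f_{\fm}\in\smallk$, we get $c(\fm) = -\der(\varepsilon)/(1+\varepsilon)\prec 1$, forcing $v(c(\fm))>0$. But condition (i) together with the structure $c(\fm_0\cdots\fm_n) = c_0(\fm_0)+\cdots+c_n(\fm_n)$ and Lemma~\ref{transexp}-type reasoning (more precisely: if $\fm\ne 1$, write $\fm=\fm_0\cdots\fm_n$ with $\fm_n\ne 1$; the injectivity and the condition $c_n(\fM_n)\cap\smallk_{n-1}^\dagger=\{0\}$ prevent $c(\fm)$ from being small — one shows $c(\fm)$ actually dominates, or is at least $\asymp 1$) gives a contradiction unless $\fm=1$, i.e. $f\asymp 1$, contradicting $f_1=0$.

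To make the general case precise I would instead argue: let $f$ be a nonzero constant, $\fm_0=\fd(f)$ its leading monomial. If $\fm_0=1$ then $f-f_1$ is a constant with smaller leading monomial (or zero); if $f_1\ne 0$ note $f-f_1\prec 1$, and if $f_1 = 0$ then $\fd(f)\prec 1$ already. So WLOG $f\prec 1$ or $f\succ 1$; replacing $f$ by $1/f$ if necessary (the inverse of a constant is a constant), WLOG $f\prec 1$, say $f\asymp \fm$ with $\fm\prec 1$, $\fm\ne 1$. Then run the computation above. The main obstacle is precisely establishing the arithmetic fact that under (i), $c(\fm)$ cannot be $\prec 1$ for $\fm\ne 1$: this requires unwinding $c(\fm)=c_0(\fm_0)+\cdots+c_n(\fm_n)$ with $\fm_n\ne 1$, using that $\der$ restricted to each $\smallk_j$ is strong and the inductive structure, and invoking Lemma~\ref{cgood}(iii) at each level — essentially that $c_n(\fm_n)\succ \fM^{(n-1)}$ when $\fm_n\ne 1$ dominates everything below, by an argument parallel to Lemma~\ref{transexp}. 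I expect that this "the leading term of $c(\fm)$ comes from the top level and is non-small" lemma, rather than the final assembly, is where the real work lies; everything else is bookkeeping with density and smallness. (If the text has already proved, as in Lemma~\ref{transexp} for the $\Texp$ case, that $c(\fm)\succ \fM^{(n-1)}$ for $\fm\in\fM_{n+1}\setminus\{1\}$, the general version needed here follows the same induction and may simply be cited.)
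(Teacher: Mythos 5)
Your (ii)$\Rightarrow$(i) direction is fine and is what the paper does. The gap is in the core of (i)$\Rightarrow$(ii). Your argument funnels into the claim that, under (i), $c(\fm)$ cannot be $\prec 1$ for $\fm\neq 1$, to be obtained ``parallel to Lemma~\ref{transexp}'' from $c_n(\fm_n)\succ\fM^{(n-1)}$. That domination is exactly the transserial condition, which is introduced only in the following section, is proved in Lemma~\ref{transexp} only for the concrete construction of $\Texp$, and is \emph{not} a consequence of (i) in the general setting of Section~\ref{basicsetup} where this lemma lives. In fact even the weaker claim you need is false under (i): take $\smallk=\R(z)$ with derivation $d/dz$, $\fM_0=x^{\Z}$ with $c_0(x^k)=kz$, and $\fM_1=s^{\Z}$ with $c_1(s^k)=kx^{-1}$ (all higher $\fM_n$ trivial). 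For $k\neq 0$ neither $kz$ nor $kx^{-1}$ is a logarithmic derivative (for the latter this comes down to $a'-za=k$ having no solution $a\in\R(z)$), so (i) holds, yet $c(s)=x^{-1}\prec 1$. So reaching ``$c(\fm)\prec 1$'' is not a contradiction, and your proof stops there. Two smaller slips: $f_{\fm}^\dagger=0$ uses triviality of $\der$ on $\smallk$, which is assumed only in later sections, not here; and the announced use of density of $\smallk_{*}$ and continuity of $\der$ never actually enters and could not by itself control the constants of $\smallk[[\fM]]$.

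What really has to be excluded is that $c(\fm)+f_{\fm}^\dagger=-\der(\delta)$ with $\delta=\log(1+\varepsilon)\prec 1$; and this $\delta$ lies in $\smallk[[\fM]]$, not in $\smallk_{n-1}$ or even $\smallk_{*}$. Writing $\delta=\delta_1+\delta_2$ with $\supp(\delta_1)\subseteq\fM^{(n-1)}$ and $\supp(\delta_2)\cap\fM^{(n-1)}=\emptyset$, the relation forces $\der(\delta_2)=0$, i.e.\ $\delta_2$ is itself a would-be constant outside $\smallk$ --- the argument becomes circular. The paper avoids all of this by a direct leading-monomial computation: for $f$ with $1\notin\supp(f)$ and $\fd(f)=\fp\fn$, $\fp\in\fM^{(n-1)}$, $\fn\in\fM_n\setminus\{1\}$, it shows $\fd(\der(f))\in\fM^{(n-1)}\fn$, hence $\der(f)\notin\smallk$; condition (i) is used at exactly one point, to guarantee that the coefficient $\der(g_{\fn})+c_n(\fn)g_{\fn}$ does not vanish (otherwise $c_n(\fn)=-g_{\fn}^\dagger$ would lie in $c_n(\fM_n)\cap\smallk_{n-1}^\dagger\setminus\{0\}$). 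Any repair of your $\dagger$-computation would still need this leading-monomial analysis (or an equivalent) to dispose of $\delta_2$, so the proposed route neither works as stated nor shortens the proof.
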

\begin{proof} The direction (ii)$\Rightarrow$(i) is immediate from
Corollary~\ref{samecons}. For (i)$\Rightarrow$(ii), assume (i), and let $f\in \smallk[[\fM]]\setminus \smallk$; we show that then 
$\der(f)\notin \smallk$ (and thus $\der(f)\ne 0$). Subtracting 
an element of $\smallk$ from $f$ we arrange that $1\notin \supp(f)$.
Let $\fd:=\fd(f)\in \fM^{\ne 1}$ be the leading monomial of $f$, and take
$n$ minimal with $\fd\in \fM^{(n)}$. Then $\fd=\fp\fn$ with
$\fp\in \fM^{(n-1)}$ and $\fn\in \fM_n,\ \fn\ne 1$. We claim that
then $\fd(\der(f))=\fq\fn$ for some $\fq\in \fM^{(n-1)}$ (and thus
$\der(f)\notin \smallk$, as promised). 
We have $f=g+h$ where 
\[g\in \smallk_n,\quad \supp(g)=\supp(f)\cap \fM^{(n)},\quad h\in \smallk[[\fM]],\quad \supp(h)\cap\fM^{(n)}=\emptyset.\]  
Now $g=\sum_{\fm\in \fM_n}g_{\fm}\fm$ with all $g_{\fm}\in \smallk_{n-1}$, and so $g=g_{\fn}\fn + \sum_{\fm\in \fM_n^{\prec \fn}}g_{\fm}\fm$, and so
\[\der(g)\ =\ \big(\der(g_{\fn})+c_n(\fn)g_{\fn}\big)\fn + 
\sum_{\fm\in \fM_n^{\prec \fn}} \big(\der(g_{\fm})+c_n(\fm)g_{\fm}\big)\fm,\]
and thus $\fd\big(\der(g)\big)=\fq\fn$, where 
$\fq\in \fM^{(n-1)}$. Next, let $\fm\in \supp(h)$. Then
$\fm=\fm_0\cdots\fm_{n+k}$ with $k\ge 1$ and all $\fm_i\in \fM_i$,
and $\fm_{n+k}\ne 1$. Then $\fm_{n+k}\prec 1$, since
$\fm_{n+k}\succ 1$ would give $\fm\succ \fd$, contradicting that
$\fd$ is the leading monomial of $f$. It follows that
for the term $h_{\fm}\fm$ of $h$ (with $h_{\fm}\in \smallk$),
we have $\der(h_{\fm}\fm)=\big(\der(h_{\fm})+c(\fm)h_{\fm}\big)\fm\prec \fM^{(n)}$. Hence $\fd(\der(f))=\fq\fn$.
\end{proof}

\section{Integration in $\smallk_*$}

\noindent
In this section we assume that the initial derivation on $\smallk$ is trivial. (This is the case for $\T_{\exp}$ where $\smallk=\R$.) Then $1$ has no antiderivative in $\smallk[[\fM]]$. 
More generally:

\begin{lemma} Let $g\in \smallk[[\fM]]$. Then $1\notin \supp \der (g)$.
\end{lemma}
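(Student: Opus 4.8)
The statement to prove is: if the initial derivation on $\smallk$ is trivial, then for every $g\in \smallk[[\fM]]$ we have $1\notin \supp \der(g)$. The plan is to reduce to the question of computing, for each $\fm\in \fM$, the coefficient of $\fm$ in $\der(\fm)$, and to show this coefficient is $0$ when $\fm=1$ (and more generally that $1\notin \supp\der(\fm)$ for every $\fm$). Write $g=\sum_{\fm}g_{\fm}\fm$. Since $\der$ is strong on $\smallk[[\fM]]$ (Lemma~\ref{extder}) and the derivation on $\smallk$ is trivial, we have $\der(g)=\sum_{\fm}g_{\fm}\der(\fm)=\sum_{\fm}g_{\fm}c(\fm)\fm$, where $c(\fm)\in\smallk_{*}$ and $\fm^\dagger=c(\fm)$. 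So it suffices to show $1\notin\supp(c(\fm)\fm)$ for every $\fm$ with $g_{\fm}\ne 0$; equivalently, writing $\fm^{-1}\cdot(\text{that term})$, it suffices to show that $c(\fm)$ has no monomial equal to $\fm^{-1}$ in its support, i.e. the coefficient of $\fm^{-1}$ in $c(\fm)$ vanishes.

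\textbf{Key steps.} First I would dispose of the case $\fm=1$: then $c(1)=0$ since $c$ is additive, so $\der(1)=0$ and the corresponding term contributes nothing. Next, for $\fm\ne 1$, take $n$ minimal with $\fm\in\fM^{(n)}$ and write $\fm=\fm_0\cdots\fm_n$ with $\fm_i\in\fM_i$ and $\fm_n\ne 1$. Then $c(\fm)=c_0(\fm_0)+\cdots+c_n(\fm_n)$, with $c_i(\fm_i)\in\smallk_{i-1}$ for $i<n$ and $c_n(\fm_n)\in\smallk_{n-1}$; in particular $\supp(c(\fm))\subseteq\fM^{(n-1)}$. I would then argue that $\fm^{-1}\notin\fM^{(n-1)}$: indeed $\fm^{-1}=\fm_0^{-1}\cdots\fm_n^{-1}$ with $\fm_n^{-1}\ne 1$ in $\fM_n$, and since $\fM^{(n-1)}=\fM_0\cdots\fM_{n-1}$ is exactly the set of products with trivial $\fM_n$-component (using the direct-product decomposition $\fM_0\times\cdots\times\fM_n\cong\fM^{(n)}$ from Section~\ref{basicsetup}), $\fm^{-1}$ cannot lie in $\fM^{(n-1)}$. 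Hence the coefficient of $\fm^{-1}$ in $c(\fm)$ is $0$, so $1\notin\supp(c(\fm)\fm)$. Finally, assembling: $\supp\der(g)\subseteq\bigcup_{\fm\in\supp g}\supp(c(\fm)\fm)$ by strongness, and each of these sets omits $1$, so $1\notin\supp\der(g)$.

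\textbf{Expected main obstacle.} The only mildly delicate point is the clean bookkeeping of the direct-product/convexity structure of $\fM=\bigcup_n\fM^{(n)}$ so as to justify both that $\supp(c(\fm))\subseteq\fM^{(n-1)}$ (this needs that $c_i(\fm_i)\in\smallk_{i-1}=\smallk[[\fM^{(i-1)}]]\subseteq\smallk[[\fM^{(n-1)}]]$ for each $i\le n$) and that $\fm^{-1}$ genuinely has nontrivial $\fM_n$-component hence escapes $\fM^{(n-1)}$. Both are immediate from the set-up in Section~\ref{basicsetup}, so there is no real difficulty; the proof is short. An alternative, even more economical route avoids choosing $n$: simply note that for $\fm\ne 1$, $c(\fm)\in\smallk_{*}$ and $\fm^{-1}\notin\supp(c(\fm))$ because every monomial in $\supp(c(\fm))$ lies in a strictly ``lower'' group than the one supporting $\fm^{-1}$ — but spelling this out still amounts to the same index-$n$ argument, so I would present the version above.
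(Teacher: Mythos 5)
Your proof is correct and follows essentially the same route as the paper: expand $\der(g)=\sum_{\fm}g_{\fm}c(\fm)\fm$ using strongness and triviality of $\der$ on $\smallk$, then use the decomposition $\fm=\fm_0\cdots\fm_n$ with $\fm_n\ne 1$ to see that $c(\fm)\in\smallk_{n-1}$ while the term $c(\fm)\fm$ has support in $\fM^{(n-1)}\fm_n\not\ni 1$. Your phrasing via ``the coefficient of $\fm^{-1}$ in $c(\fm)$ vanishes'' is just a trivial restatement of the paper's observation that $g_{\fm}c(\fm)\fm\in\smallk_{n-1}\fm_n$.
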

\begin{proof} With $g=\sum g_{\fm}\fm$ (where all $g_{\fm}\in \smallk$) we have
$\der(g)=\sum g_{\fm}c(\fm)\fm$. For $\fm\in \fM_0$ we have $c(\fm)\in \smallk$, with $c(1)=0$, and thus $1\notin \supp g_{\fm}c(\fm)\fm$.  Next, consider a monomial $\fm\in \fM\setminus \fM_0$. Then we have $n\ge 1$ with $\fm=\fm_0\cdots \fm_n$, $\fm_i\in \fM_i$ for $i=0,\dots,n$ and $\fm_n\ne 1$, 
hence $g_{\fm}c(\fm)\fm\in \smallk_{n-1}\fm_n$, in particular,
$1\notin \supp g_{\fm}c(\fm)\fm$.  
\end{proof}

\noindent
Thus if $f\in \smallk[[\fM]]$ and $1\in \supp f$, then $f\notin \der\smallk[[\fM]]$. Under a reasonable assumption on $\smallk_{*}$ (being transserial), $1\in \supp f$ is the only obstruction for $f\in \smallk_*$ to have an antiderivative in $\smallk_*$, by the next lemma. The term ``transserial'' is suggested by the fact that by Lemma~\ref{transexp} the field $\Texp$
of purely exponential transseries has the property we are now defining: we call $\smallk_{*}$ \textbf{transserial} if $c_0:\fM_0\to \smallk$ is injective and $c(\fm)\succ \fM^{(i-1)}$ for all $i\in \N$ and 
$\fm\in \fM_{i+1}\setminus\{1\}$. Note that if $\smallk_{*}$ is transserial, then $c: \fM \to \smallk_{*}$ is injective.

\begin{lemma}\label{cint} Assume $\smallk_{*}$ is transserial. Let $f\in \smallk_n$ be such that $1\notin \supp f$. Then $f\in \der\smallk_n$.
\end{lemma}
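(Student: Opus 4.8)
The plan is to proceed by induction on $n$. For the base case $n=0$ we have $f\in\smallk_0=\smallk[[\fM_0]]$ with $1\notin\supp f$; writing $f=\sum_{\fm\in\fM_0,\ \fm\ne 1}f_{\fm}\fm$ and recalling that on $\smallk_0$ the derivation is $\der_0$, which sends $g=\sum g_{\fm}\fm$ to $\sum g_{\fm}c_0(\fm)\fm$, we only need that $c_0(\fm)\ne 0$ for every $\fm\in\fM_0$ with $\fm\ne 1$; this is exactly the injectivity of $c_0$ (part of transseriality, together with $c_0(1)=0$). So the candidate antiderivative is $g=\sum_{\fm\ne 1}\big(f_{\fm}/c_0(\fm)\big)\fm$, and one checks its support is still well-based (it has the same support as $f$), hence $g\in\smallk_0$ and $\der_0(g)=f$.

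For the inductive step, assume $n\ge 1$ and the statement holds for $\smallk_{n-1}$. Write $f=\sum_{\fm\in\fM_n}f_{\fm}\fm$ with all $f_{\fm}\in\smallk_{n-1}$. Recall $\der=\der_n$ acts by $\der\big(\sum_{\fm}g_{\fm}\fm\big)=\sum_{\fm}\big(\der(g_{\fm})+c_n(\fm)g_{\fm}\big)\fm$. The idea is to solve coefficientwise. For $\fm=1$: since $1\notin\supp f$ means $f_1=0$ (here $f_1$ is the coefficient of the monomial $1\in\fM_n$, an element of $\smallk_{n-1}$ whose own support need not avoid $1$)—wait, more carefully: $1\notin\supp f$ as a subset of $\fM$ means that the full expansion of $f$ has no $\fM$-monomial equal to $1$; equivalently $f_1\in\smallk_{n-1}$ has $1\notin\supp f_1$. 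So by the inductive hypothesis there is $g_1\in\smallk_{n-1}$ with $\der(g_1)=f_1$. For $\fm\ne 1$ in $\fM_n$ we must solve the linear differential equation $\der(g_{\fm})+c_n(\fm)g_{\fm}=f_{\fm}$ in $\smallk_{n-1}$. By transseriality $c_n(\fm)\succ\fM^{(n-2)}$, in particular $c_n(\fm)\succ 1$, so $a:=c_n(\fm)^{-1}\prec 1$; dividing, the equation becomes $(I-a\der)(g_{\fm})\cdot(-a^{-1})$... more precisely $g_{\fm}-(-c_n(\fm)^{-1})\der(g_{\fm})$—better: rewrite as $g_{\fm}= c_n(\fm)^{-1}f_{\fm} - c_n(\fm)^{-1}\der(g_{\fm})$, i.e. $(I - a\der)(g_{\fm}) = a f_{\fm}$ with $a=-c_n(\fm)^{-1}\prec 1$ when $\smallk_{n-1}$ is viewed as a Hahn field and $a\der$ is a small operator on it (here one uses that $\der$ on $\smallk_{n-1}$ is small, Lemma~\ref{small}, so $a\der$ has support contained in $\{1\}\cdot\supp a\prec 1$). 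Since $I-a\der$ is bijective on $\smallk_{n-1}$ with inverse $\sum_k (a\der)^k$ (Corollary~\ref{cnPn}), we get a unique $g_{\fm}\in\smallk_{n-1}$, and then $g:=\sum_{\fm}g_{\fm}\fm$ is the desired antiderivative—provided the family $(g_{\fm}\fm)_{\fm}$ is summable in $\smallk_n$.

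The main obstacle is precisely this summability / well-basedness of $\supp g$. For each $\fm\ne 1$, from $(I-a\der)(g_{\fm})=af_{\fm}$ with $a=-c_n(\fm)^{-1}$ we get $g_{\fm}=\sum_{k\ge 0}(a\der)^k(af_{\fm})$, and since $a\der$ is small with support $\preceq$ the support of $a=c_n(\fm)^{-1}$, all of whose monomials are $\preceq c_n(\fm)^{-1}\prec 1$ (using $c_n(\fm)\succ 1$), we obtain $\supp g_{\fm}\subseteq\big(\supp c_n(\fm)^{-1}\big)^{*}\cdot\supp f_{\fm}$. So $\supp(g_{\fm}\fm)\subseteq \big(\supp c_n(\fm)^{-1}\big)^{*}\cdot\supp f_{\fm}\cdot\fm\subseteq\smallk_{n-1}$-support shifted by $\fm$. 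I would then argue: every monomial $\fp\in\supp g$ lies in $\supp(g_{\fm}\fm)$ for some $\fm\in\supp f$ (an $\fM_n$-monomial) and is of the form $\fq\fm$ with $\fq\in\fM^{(n-1)}$; if $(\fp_i)$ were strictly increasing in $\supp g$, pass to a subsequence so the corresponding $\fm_i\in\fM_n$ are constant or strictly decreasing. Constant $\fm$ contradicts that $g_{\fm}\in\smallk_{n-1}$ has well-based support. Strictly decreasing $\fm_i$: then $\fm_i\prec 1$ eventually (since $\supp f$ is well-based, only finitely many $\fm_i\succeq 1$), and I claim $\fp_i=\fq_i\fm_i\prec\fM^{(n-1)}$ is then forced to be strictly decreasing—here is the crux, and it needs the quantitative bound from transseriality, $c_n(\fm)\succ\fM^{(n-2)}$, controlling how large $\supp c_n(\fm)^{-1}$, hence $\supp g_{\fm}$, can be relative to $\fm$; this should parallel the well-basedness argument in the proof of Lemma~\ref{extder}. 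Once summability is secured, $\der(g)=\sum_{\fm}\big(\der(g_{\fm})+c_n(\fm)g_{\fm}\big)\fm=\sum_{\fm}f_{\fm}\fm=f$, completing the induction.
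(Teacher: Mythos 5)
Your overall skeleton (induction on $n$, coefficientwise solution, inverting $I+c_n(\fm)^{-1}\der$) is the paper's, but the key step is misjustified and the resulting ``main obstacle'' is handled incorrectly. You claim that $a\der$ with $a=-c_n(\fm)^{-1}$ is a small operator on $\smallk_{n-1}$ because ``$\der$ on $\smallk_{n-1}$ is small, Lemma~\ref{small}, so $a\der$ has support contained in $\{1\}\cdot\supp a$.'' This conflates smallness of the derivation in the valued-field sense ($\der\smallo\subseteq\smallo$, which is what Lemma~\ref{small} gives) with being a small \emph{operator} in the sense of Chapter 5. As an operator on the Hahn field $\smallk_{n-1}=\smallk[[\fM^{(n-1)}]]$ over $\smallk$, $\der$ does \emph{not} have support $\{1\}$: in $\Texp$ one has $\der(\ex^{x^2})=2x^2\ex^{x^2}$, so the derivation moves supports by factors taken from the (unbounded, non-well-based) union $\bigcup_{\fn}\supp c(\fn)$. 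Consequently $c_n(\fm)^{-1}\der$ need not be supported at all over $\smallk$, your appeal to Corollary~\ref{cnPn} is not licensed by ``$c_n(\fm)\succ1$, so $a\prec1$'', and your support estimate $\supp g_{\fm}\subseteq(\supp c_n(\fm)^{-1})^{*}\cdot\supp f_{\fm}$ is false for the same reason. The correct device, which is what the paper does, is to view $\smallk_{n-1}$ as the Hahn field $\smallk_{n-2}[[\fM_{n-1}]]$ \emph{over} $\smallk_{n-2}$: there $\der_{n-1}$ does have support $\{1\}$, and multiplication by $c_n(\fm)^{-1}$ is small precisely because transseriality gives the full bound $c_n(\fm)\succ\fM^{(n-2)}$, i.e.\ $c_n(\fm)^{-1}\prec_{\fM_{n-1}}1$ (not merely $c_n(\fm)\succ1$); then Corollary~\ref{cnPn} applies and yields $g_{\fm}\in\smallk_{n-1}$.

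Once the inversion is set up over the intermediate coefficient field, the summability problem you flag as ``the crux'' and leave unproven does not arise at all: since $g_{\fm}=0$ whenever $f_{\fm}=0$, the family $(g_{\fm})_{\fm\in\fM_n}$ is indexed (up to $\fm=1$) by the well-based set $\supp_{\fM_n}f$, and any such assignment of coefficients in $\smallk_{n-1}$ already defines an element of $\smallk_{n-1}[[\fM_n]]=\smallk_n$; no analogue of the well-basedness argument of Lemma~\ref{extder} is needed. So the gap is twofold: the smallness claim as you state it is wrong, and the compensating well-basedness argument you sketch is both unnecessary (in the correct formulation) and not actually carried out (and rests on the false support bound above).
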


\begin{proof}
We proceed by induction on $n$. 
For $n=0$ we have $f = \sum_{\fm\in \fM_0^{\ne 1}} f_\fm \fm$ (with all $f_{\fm}\in \smallk$) and then $f=\der(g)$ for $g = \sum_{\fm\in \fM_0^{\ne 1}} \frac{f_\fm}{c(\fm)}\fm$. Assume the lemma holds for a certain $n$, and suppose
$f\in \smallk_{n+1}$ and $1\notin \supp f$. We have
$f = \sum_{\fm\in \fM_{n+1}}f_\fm \fm$ with all $f_\fm \in \smallk_{n}$, and $1\notin \supp f_{1}$. The inductive assumption gives $f_1=\der(g_1)$ with $g_1\in \smallk_{n}$. Next, let
$\fm\in \fM_{n+1}$, $\fm\ne 1$; we wish to find $g_{\fm}\in \smallk_n$ such that $\der(g_{\fm}) + g_{\fm}c(\fm)=f_{\fm}$,
that is, $\big(\der_n + c(\fm)I_n\big)(g_{\fm})=f_{\fm}$
(where $I_n$ is the identity operator on $\smallk_n$), equivalently,
$$\big(I_n+ c(\fm)^{-1}\der_n\big)(g_{\fm})\ =\ c(\fm)^{-1}f_{\fm}.$$ 
Using $c(\fm)\succ \fM^{(n-1)}$ we obtain from Section~\ref{adj} that the additive operator
$I_n+c(\fm)^{-1}\der_n$ on the Hahn field $\smallk_n=\smallk_{n-1}[[\fM_n]]$ over $\smallk_{n-1}$ is bijective, so we do have a unique solution $g_{\fm}$ as desired, and moreover $g_{\fm}=0$ whenever $f_{\fm}=0$. 
This yields the element $g:= g_1 + \sum_{\fm\in \fM_{n+1}^{\neq 1}} g_{\fm}\fm$ of $\smallk_{n+1}$ satisfying $f=\der(g)$.
\end{proof}

\begin{corollary} If $\smallk_{*}$ is transserial, then $f\in \der\smallk_{*}$ for all $f\in \smallk_*$  with $1\notin\supp(f)$. 
\end{corollary}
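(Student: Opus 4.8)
The final statement is the Corollary: if $\smallk_{*}$ is transserial, then $f\in \der\smallk_{*}$ for all $f\in \smallk_*$ with $1\notin\supp(f)$.

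This is an immediate consequence of the preceding Lemma~\ref{cint}. The plan is as follows. Given $f\in \smallk_*$ with $1\notin \supp(f)$, use that $\smallk_* = \bigcup_n \smallk_n$ is a union, so there exists some $n$ with $f\in \smallk_n$. Since $\smallk_{*}$ is transserial, Lemma~\ref{cint} applies: as $f\in \smallk_n$ and $1\notin\supp(f)$, we get $f\in \der\smallk_n$, i.e.\ there is $g\in \smallk_n$ with $\der(g)=f$. But $\smallk_n\subseteq \smallk_*$ and $\der$ on $\smallk_*$ restricts to $\der_n$ on $\smallk_n$, so $g\in \smallk_*$ witnesses $f\in \der\smallk_*$.

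There is essentially no obstacle here; the only point worth noting is the compatibility of derivations, namely that the derivation $\der$ on $\smallk_*$ agrees with $\der_n$ on each $\smallk_n$ (so that an antiderivative found inside $\smallk_n$ is genuinely an antiderivative in $\smallk_*$), which is part of the set-up in Section~\ref{basicsetup}. I would simply cite Lemma~\ref{cint} and this compatibility. Here is the proof:

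\begin{proof}
Let $f\in \smallk_*$ with $1\notin\supp(f)$. Since $\smallk_*=\bigcup_n \smallk_n$, there is some $n$ with $f\in \smallk_n$. By Lemma~\ref{cint} (using that $\smallk_*$ is transserial) there is $g\in \smallk_n$ with $\der(g)=f$. As $\smallk_n$ is a differential subfield of $\smallk_*$, we have $g\in \smallk_*$, so $f\in \der\smallk_*$.
\end{proof}
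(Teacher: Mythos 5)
Your proof is correct and is exactly the argument the paper intends (the corollary is stated without proof as an immediate consequence of Lemma~\ref{cint}): pick $n$ with $f\in\smallk_n$, apply the lemma, and use that $\der$ restricts to $\der_n$ on $\smallk_n$. Nothing further is needed.
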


\noindent
In the proof of Lemma~\ref{cint} we saw how inverting operators
$I_n-a\der_n$ for $a\in \smallk_n$ with $a\prec \fM^{(n-1)}$
plays a role in integrating elements $f\in \smallk_n$.
This role will become more prominent in connection with truncation
in the next section.

\begin{lemma}\label{transconstant} Suppose $\smallk_{*}$ is transserial. Then $C_{\smallk_{*}}=\smallk$.
\end{lemma}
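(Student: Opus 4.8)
The plan is to reduce the statement $C_{\smallk_*}=\smallk$ to the equivalent condition (i) of Corollary~\ref{samecons}, namely that for every $n$ the map $c_n:\fM_n\to \smallk_{n-1}$ is injective and $c_n(\fM_n)\cap \smallk_{n-1}^\dagger=\{0\}$, and then verify that both conjuncts follow from transseriality. Recall that transseriality says $c_0:\fM_0\to\smallk$ is injective and that $c(\fm)\succ \fM^{(i-1)}$ for all $i\in\N$ and $\fm\in\fM_{i+1}\setminus\{1\}$. Since $\der$ is trivial on $\smallk$ by the blanket assumption of the section, $\smallk\subseteq C_{\smallk_*}$ is automatic, so only the reverse inclusion is at stake, and that is exactly what Corollary~\ref{samecons}(i) delivers.

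So first I would establish injectivity of each $c_n$. For $n=0$ this is part of the definition of transserial. For $n\ge 1$ and $\fm\in\fM_n\setminus\{1\}$, transseriality (applied with $i+1=n$, i.e. $i=n-1$) gives $c(\fm)=c_n(\fm)\succ\fM^{(n-2)}$; in particular $c_n(\fm)\ne 0$, and since $c_n$ is a group homomorphism this yields injectivity. (One must note $c_n$ restricted to $\fM_n$ agrees with $c$ on $\fM_n$, which is immediate from the definition $c(\fm_0\cdots\fm_n)=c_0(\fm_0)+\cdots+c_n(\fm_n)$.) Second, I would check $c_n(\fM_n)\cap\smallk_{n-1}^\dagger=\{0\}$. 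For $n=0$: $\smallk_{-1}^\dagger=\smallk^\dagger=\{0\}$ since the derivation on $\smallk$ is trivial, so the intersection is $\{0\}$ (using $c_0(1)=0$). For $n\ge 1$: suppose $\fm\in\fM_n$ with $c_n(\fm)=g^\dagger$ for some $g\in\smallk_{n-1}^\times$. If $\fm=1$ then $c_n(\fm)=0$ and we are done; otherwise $c_n(\fm)\succ\fM^{(n-2)}$, in particular $c_n(\fm)\succ\smallk_{n-2}$ (using the basic set-up fact $f\succ\fM^{(n-2)}\Leftrightarrow f\succ\smallk_{n-2}$). But by Lemma~\ref{daggerprec}, $g^\dagger\preceq h$ for some $h\in\smallk_{n-2}$, hence $g^\dagger\preceq\smallk_{n-2}$, contradicting $c_n(\fm)\succ\smallk_{n-2}$. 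Therefore $\fm=1$ and $c_n(\fm)=0$, proving the intersection is $\{0\}$.

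With both conditions of Corollary~\ref{samecons}(i) verified, Corollary~\ref{samecons} gives $C_{\smallk_*}=\smallk$, completing the proof. I do not expect any serious obstacle here; the only points requiring a little care are matching indices correctly (transseriality is phrased with $\fM_{i+1}$ and $\fM^{(i-1)}$, so one must track the shift) and handling the base case $n=0$ separately since $\smallk_{-1}=\smallk$ has trivial derivation while $\smallk_{n-1}$ for $n\ge 1$ does not. Everything else is a direct appeal to Lemma~\ref{daggerprec}, the equivalence $f\succ\fM^{(n-1)}\Leftrightarrow f\succ\smallk_{n-1}$ from Section~\ref{basicsetup}, and Corollary~\ref{samecons}.
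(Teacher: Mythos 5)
Your proof is correct and follows essentially the same route as the paper: both reduce the statement to condition (i) of Corollary~\ref{samecons} and verify injectivity of each $c_n$ plus $c_n(\fM_n)\cap\smallk_{n-1}^\dagger=\{0\}$ using transseriality together with Lemma~\ref{daggerprec}. Your extra care with the index shift and the $n=0$ base case matches the paper's (terser) argument.
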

\begin{proof} We verify condition (i) of Corollary~\ref{samecons}.
Thus we have to show that for every $n$ the map $c_n: \fM_n\to \smallk_{n-1}$
is injective and $c_n(\fM_n)\cap \smallk_{n-1}^\dagger=\{0\}$. Since $\smallk^\dagger=\{0\}$, this holds for $n=0$. Let $n\ge 1$. We already noted that transseriality gives injectivity of $c_n$.
Let $\fm\in \fM_n\setminus\{1\}$. Then $c_n(\fm)\succ \fM^{(n-2)}$
by transseriality,
while for every $a\in\smallk_{n-1}^\dagger$ we have $a\preceq b$ for some $b\in\fM^{(n-2)}$ by Lemma~\ref{daggerprec}. 
This yields $c_n(\fM_n)\cap \smallk_{n-1}^\dagger=\{0\}$. 
\end{proof} 

\noindent
Thus for transserial $\smallk_{*}$ and given $f\in \smallk_{*}$, any two elements $g\in \smallk_{*}$ with $f=\der(g)$ differ by an element of $\smallk$. This is what we use tacitly later. 

\begin{lemma} Suppose $\smallk_{*}$ is transserial. Then $\smallk_{*}$ is differential-valued.
\end{lemma}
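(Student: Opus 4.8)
The plan is to verify the definition of ``differential-valued'': we must show that $\smallk_*$ is asymptotic and that $\bigO = C_{\smallk_*} + \smallo$. By Lemma~\ref{transconstant} we already know $C_{\smallk_*} = \smallk$, and since the natural isomorphism $\smallk \to \bigO/\smallo$ is an isomorphism of differential fields (as noted in Section~\ref{extderco} and the general set-up), we get $\bigO = \smallk \oplus \smallo = C_{\smallk_*} + \smallo$ immediately. So the real content is showing that $\smallk_*$ is asymptotic, i.e., that for all $f,g \in \smallk_*^\times$ with $f,g \prec 1$ we have $f \prec g \iff f' \prec g'$.

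First I would reduce the asymptotic condition to a statement about logarithmic derivatives. A standard equivalent formulation (see the discussion around $\d$-valued fields in the Preliminaries) is: $\smallk_*$ is asymptotic with $\bigO = C + \smallo$ iff $\bigO = C + \smallo$ and $f^\dagger \succ g'$ for all $f,g \in \smallk_*^\times$ with $f,g \prec 1$. Actually the cleanest route here uses Lemma~\ref{cgood}/Corollary~\ref{samecons}: since $\smallk_*$ is transserial, Corollary~\ref{samecons}(i) holds, so condition (iii) of Lemma~\ref{cgood} holds level by level, giving $f^\dagger \asymp 1$ for every $f \in \smallk_n^\times$ with $f \not\asymp 1$. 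More precisely, I would first establish the key estimate: for $f \in \smallk_*^\times$ with $f \prec 1$, writing $f$ at the appropriate level $n$ where $f \prec_{\fM_n} 1$, Lemma~\ref{daggerprec} gives $f^\dagger \preceq g$ for some $g \in \smallk_{n-1}$, while the leading behaviour of $f$ forces $f^\dagger$ to be large. The point is that $f = f_\fm \fm (1+\varepsilon)$ with $\fm \in \fM_n$, $\fm \prec 1$ (or possibly $f_\fm \fm \prec 1$ with $\fm = 1$), and $f^\dagger = f_\fm^\dagger + c(\fm) + \der(\varepsilon)/(1+\varepsilon)$; transseriality ($c_n$ injective, $c(\fm) \succ \fM^{(n-2)}$) controls $c(\fm)$, and this is what makes $f' \asymp f \cdot f^\dagger$ comparable in a way that tracks dominance.

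The concrete argument I would run: take $f, g \in \smallk_*^\times$, both $\prec 1$, both lying in $\smallk_n$ for some common $n$. Since $\smallk_* = \bigcup_n \smallk_n$ and each $\smallk_n = \smallk[[\fM^{(n)}]]$ is a Hahn field with the small, strong derivation $\der_n$ coming from the additive map $c$ restricted to $\fM^{(n)}$, and since transseriality of $\smallk_*$ restricts to give the hypotheses of Lemma~\ref{cgood} for the pair $(\smallk_{n-1}, \fM_n)$ at each stage — wait, more carefully: I should argue that $\smallk_n$ itself, as a Hahn field over $\smallk$ with this derivation, satisfies the equivalent conditions of Lemma~\ref{cgood}, hence is asymptotic. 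For this I need $c\colon \fM^{(n)} \to \smallk_*$ to be injective with $c(\fM^{(n)}) \cap \smallk^\dagger = c(\fM^{(n)}) \cap \{0\}$ when restricted appropriately — but $c$ maps into $\smallk_*$, not into $\smallk$, so Lemma~\ref{cgood} in its stated form doesn't directly apply to $\smallk_n$ as a Hahn field over $\smallk$. The right move is the iterative one: $\smallk_0 = \smallk[[\fM_0]]$ is asymptotic by Lemma~\ref{cgood} (its hypotheses holding since $c_0$ is injective and $\smallk^\dagger = \{0\}$), and then each step $\smallk_{n+1} = \smallk_n[[\fM_{n+1}]]$ is asymptotic over $\smallk_n$ by Lemma~\ref{cgood} applied with coefficient field $\smallk_n$, using that $c_{n+1}$ is injective (transseriality) and $c_{n+1}(\fM_{n+1}) \cap \smallk_n^\dagger = \{0\}$ — the latter because for $\fm \ne 1$, $c_{n+1}(\fm) \succ \fM^{(n-1)}$ by transseriality while every element of $\smallk_n^\dagger$ is $\preceq$ some element of $\fM^{(n-1)}$ by Lemma~\ref{daggerprec}. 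A short lemma then says: if $\smallk_n \subseteq \smallk_{n+1}$ with $\smallk_{n+1}$ asymptotic over $\smallk_n$ (in the relevant sense) and $\smallk_n$ asymptotic, then $\smallk_{n+1}$ is asymptotic; iterating and taking the union gives $\smallk_*$ asymptotic.

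The main obstacle, I expect, is handling the ``mixed levels'' cleanly — comparing $f \prec g$ when $f$ and $g$ have different leading monomial levels, and propagating the asymptotic relation up the tower without circularity. The technical heart is verifying that $\smallk_{n+1}$ is asymptotic \emph{as a valued differential field} given that $\smallk_n$ is, which is where one really uses that for $f \in \smallk_{n+1}^\times$ with $f \prec_{\fM_{n+1}} 1$ one has $f^\dagger \asymp c_{n+1}(\fd_{n+1}(f))$ dominating anything in $\smallk_n$, courtesy of transseriality; the comparison of two such $f, g$ then reduces to comparing their $\fM_{n+1}$-leading monomials, while the comparison of two elements of $\smallk_n$ is the inductive hypothesis, and the comparison of an $\smallk_n$-element with a genuinely-$\smallk_{n+1}$ element is forced by the valuation jump. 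Once this inductive step is isolated as a sub-lemma, assembling the final statement is routine. I would present the proof by first quoting $C_{\smallk_*}=\smallk$ and $\bigO = C + \smallo$, then proving asymptoticity by the induction on $n$ just sketched, invoking Lemmas~\ref{cgood}, \ref{daggerprec}, \ref{small}, \ref{transexp}, and \ref{transconstant} as needed.
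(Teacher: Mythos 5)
Your proposal follows essentially the same route as the paper: an induction on $n$ showing each $\smallk_n$ is $\d$-valued (via the criterion $\bigO=C+\smallo$ together with $f^\dagger\succ g'$ for nonzero $f,g\prec 1$), with the inductive step splitting on whether the leading $\fM_{n+1}$-monomial is $1$ and invoking Lemmas~\ref{cgood}, \ref{daggerprec}, and \ref{transconstant} exactly as the paper does. One small correction: transseriality gives $c_{n+1}(\fm)\succ\fM^{(n-1)}$, so for $\fm\ne 1$ one gets $f^\dagger\sim c_{n+1}(\fm)\succ\smallk_{n-1}$, hence $f^\dagger\succ 1\succ\der(g)$ by Lemma~\ref{small} --- not domination of all of $\smallk_n$ as you assert near the end, but this weaker (correct) bound is all the inductive step needs.
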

\begin{proof}  We show by induction on $n$ that $\smallk_n$ is $\d$-valued.
The case $n=0$ is taken care of by Lemmas~\ref{cgood} and ~\ref{transconstant}.  Assume $\smallk_n$ is $\d$-valued. To show that  $\smallk_{n+1}$ is $\d$-valued, let
$f,g\in \smallk_{n+1}$, $0\ne f,g\prec 1$; 
it suffices to show that then $f^\dagger \succ \der(g)$. 
We have $f=f_{\fm}\fm(1+\varepsilon)$ with $f_{\fm}\in \smallk_n^\times$, $\fm\in \fM_{n+1}$, and $\varepsilon\prec \smallk_n$, so by Lemma~\ref{cgood} applied to
$\smallk_n$ in the role of $\smallk$,
\[f^\dagger\ =\  f_{\fm}^\dagger + c_{n+1}(\fm) + \frac{\der(\epsilon)}{1+\varepsilon}, \qquad \frac{\der(\epsilon)}{1+\varepsilon}\prec \smallk_n.\]
If $\fm\ne 1$, then $f^\dagger\sim c_{n+1}(\fm)\succ \smallk_{n-1}$
by Lemma~\ref{daggerprec}, giving $f^\dagger \succ 1\succ \der(g)$.  So assume $\fm=1$. Then  $f\sim f_{\fm}\prec 1$, so $f^\dagger\sim f_{\fm}^\dagger\in \smallk_n^\times$. We have $g= h+\delta$ with $h\in \smallk_n^{\prec 1}$
and $\delta\prec \smallk_n$, so $\der(g)=\der(h) + \der(\delta)$. Now $f^\dagger\sim f_{\fm}^\dagger \succ \der(h)$ by the inductive
assumption. Also $\der(\delta)\prec\smallk_n$ by Lemma~\ref{cgood}
applied to $\smallk_n$ in the role of $\smallk$, so $f^\dagger \succ \der(\delta)$, and thus $f^\dagger \succ \der(g)$. 
\end{proof}

\section{Truncation and Integration in $\smallk_{*}$}

\noindent
We continue to assume in this section that the derivation is trivial on $\smallk$. 

\subsection*{Using operators $(I-a\der)\inv$ on $\smallk_*$}

\noindent
\begin{lemma} Let $F$ be a truncation closed
differential subfield of $\smallk_{*}$ that contains
$\smallk$ and $\fM$. Let $F_{\infty}$ be the smallest differential subfield
of $\smallk_{*}$ that contains $F$ such that for every $n$,
$F_n:=F_{\infty}\cap \smallk_n$ is closed under
$(I_n-a\der_n)\inv$ for all $a\in F_n$ with $a\prec\fM^{(n-1)}$, where by convention 
%$F_{-1}=\smallk_{-1}$ and 
$\fM^{(-1)}:=\{1\}$. 
Then $F_{\infty}$ is truncation closed.
\end{lemma}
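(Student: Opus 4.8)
The plan is to mimic the proof of Theorem~\ref{thmA}, but now working level by level in the tower $\smallk_0 \subseteq \smallk_1 \subseteq \cdots$. First I would fix a maximal truncation closed differential subfield $E$ of $F_\infty$ that contains $F$ (which exists by Zorn's Lemma, since a union of a chain of truncation closed differential subfields is again one), and aim to show $E = F_\infty$. Since $F_\infty$ is generated over $F$ by iterated application of the operators $(I_n - a\der_n)^{-1}$, it suffices to show that $E$ is itself closed under all these operators; for then $E \supseteq F_\infty$ by minimality of $F_\infty$, and hence $E = F_\infty$. So, towards a contradiction, suppose $E$ is not closed under some $(I_n - a\der_n)^{-1}$, and take $n$ minimal for which this happens, with $a \in E \cap \smallk_n$, $a \prec \fM^{(n-1)}$, and $f \in E \cap \smallk_n$ such that $(I_n - a\der_n)^{-1}(f) \notin E$. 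Among all such $a, f$ choose the pair for which $(\alpha, \beta)$ is lexicographically minimal, where $\alpha = o(a)$ and $\beta = o(f)$ are the order types of the supports of $a$ and $f$.

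The core is then a level-$n$ analogue of Lemma~\ref{OperatorLemma}. Here I would need to re-read the setup: $\der_n$ is a strong operator on the Hahn field $\smallk_n = \smallk_{n-1}[[\fM_n]]$ over $\smallk_{n-1}$, and since $a \prec \fM^{(n-1)}$, i.e.\ $a \prec_{\fM_n} 1$ in the sense of Section~\ref{basicsetup}, the operator $a\der_n$ is \emph{small} as an operator on this Hahn field. Thus $(I_n - a\der_n)^{-1} = \sum_m (a\der_n)^m$ exists, and the machinery of Section~\ref{adj} applies verbatim with $\smallk_{n-1}$ in the role of $\smallk$ and $\fM_n$ in the role of $t^\Gamma$ — provided the decompositions there go through. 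The subtlety is that $a$ need not be a monomial times a coefficient: we must split $a = a_1 + a_2$ with $a_1 = a|_{\fm}$ and $a_2 = a - a_1$ for a suitable $\fm \in \supp(a)$, mirroring Lemma~\ref{OperatorLemma} rather than Lemma~\ref{OpLemma1}, so that $o(a_1) < o(a)$ and $o(a_2) < o(a)$. Then with $P = a_1\der_n$, $Q = a_2\der_n$ (both small), and $f_1 = f|_{\fn}$, $f_2 = f - f_1$, one uses the identity
\[
(I_n - (P+Q))^{-1} = \sum_{m=0}^\infty (I_n - P)^{-1}\big(Q(I_n - P)^{-1}\big)^m,
\]
truncates at $\gamma$ (using that for $m \ge N$ the $m$-th term has support $\ge \gamma$, where $\gamma \le N\cdot v(\fm) \cdot(\text{stuff})$ comes from the properness of the truncation $(I_n-a\der_n)^{-1}(f)|_\gamma$), and concludes that every proper truncation of $(I_n - a\der_n)^{-1}(f)$ lies in $E$: the $f_1$-contribution lies in $E$ by minimality of $\beta$ (as $f_1 \truncof f$), and the finite sum $\sum_{m=0}^{N-1}(I_n-P)^{-1}(Q(I_n-P)^{-1})^m(f_2)$ lies in $E$ because $a_1 \truncof a$ forces $(I_n-P)^{-1}$ to map $E \cap \smallk_n$ into $E$ (by minimality of $\alpha$), and $a_2, f_2 \in E$.

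Having shown all proper truncations of $g := (I_n - a\der_n)^{-1}(f)$ lie in $E$, I would invoke parts (i) and (ii) of Proposition~\ref{D}: the field $E(g)$ is truncation closed, and it is a differential subfield of $F_\infty$ (note $g' = \der_n(g) = a^{-1}(g - f) \in E(g)$ since $a \ne 0$), contradicting the maximality of $E$. This closes the induction and finishes the proof. The main obstacle I anticipate is purely bookkeeping: checking carefully that the ``smallness'' of $a\der_n$ relative to $\smallk_{n-1}[[\fM_n]]$, together with the estimate $\supp \der_n(h) \subseteq \supp(h)$ for $h \in \smallk_{n-1}[[\fM_n]]$ (which holds because $c_n(\fM_n) \subseteq \smallk_{n-1}$, so $\der_n$ has support $\{1\}$ over $\smallk_{n-1}$), really does let Lemma~\ref{OperatorLemma}'s argument run with $\smallk_{n-1}$-coefficients and $\fM_n$-exponents, and that all the auxiliary elements constructed (the $f_i$, the $a_i$, and the intermediate sums) genuinely lie in $\smallk_n$ and in $E$ rather than only in $\smallk_*$. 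A secondary point to get right is the interaction between truncation in the Hahn field $\smallk[[\fM]]$ (the ambient notion of ``truncation closed'') and truncation in the Hahn field $\smallk_{n-1}[[\fM_n]]$ over $\smallk_{n-1}$ used inside the operator argument; here Lemmas~\ref{trtr1} and~\ref{trtr2} should provide the needed compatibility, since $\fM = \fM^{(n-1)} \oplus$-type decompositions make $\smallk_{n-1}$-truncation closedness follow from $\smallk$-truncation closedness for linear subspaces containing $\smallk_{n-1}$.
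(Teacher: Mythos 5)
Your plan takes a genuinely different route from the paper's. The paper proves this lemma by recursion on levels: it defines $K_n$ as the smallest differential subfield of $\smallk_n$ containing $K_{n-1}$ and $F\cap\smallk_n$ and closed under the level-$n$ operators, shows $K_{n-1}(\fM_n)\subseteq K_n\subseteq K_{n-1}[[\fM_n]]$, applies Theorem~\ref{thmA} as a black box over the coefficient field $K_{n-1}$, and then converts $K_{n-1}$-truncation closedness into truncation closedness in $\smallk[[\fM]]$ via Lemma~\ref{trtr2}, which is applicable precisely because all $\fM_n$-coefficients of elements of $K_n$ lie in the inductively truncation closed field $K_{n-1}$. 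You instead run one global Zorn argument and redo Lemma~\ref{OperatorLemma} at level $n$. Much of that is sound: $a\der_n$ is indeed small on $\smallk_{n-1}[[\fM_n]]$ when $a\prec\fM^{(n-1)}$, Lemma~\ref{trtr1} keeps $a_1,a_2,f_1,f_2$ inside $E$, and lexicographic minimality of the order types of the $\fM_n$-supports does supply hypotheses (i) and (ii) of the level-$n$ version of Lemma~\ref{OperatorLemma}.

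There is, however, a genuine gap at the decisive step. What the level-$n$ operator argument gives is that all proper truncations of $g:=(I_n-a\der_n)\inv(f)$ \emph{computed in the Hahn field $\smallk_{n-1}[[\fM_n]]$ over $\smallk_{n-1}$}, i.e.\ at monomials of $\fM_n$, lie in $E$. But the maximality of $E$ and the conclusion of the lemma concern truncation closedness in $\smallk[[\fM]]$: to invoke Proposition~\ref{D}(i),(ii) you need every truncation of $g$ at an arbitrary monomial $\fq\fn$ (with $\fq\in\fM^{(n-1)}$, $\fn\in\fM_n$) to lie in $E(g)$, and such a truncation equals the $\fM_n$-truncation of $g$ at $\fn$ plus $(g_\fn|_\fq)\fn$, where $g_\fn\in\smallk_{n-1}$ is the coefficient of $\fn$. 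Nothing in your argument places $(g_\fn|_\fq)\fn$, or even $g_\fn$, in $E(g)$, and your appeal to Lemmas~\ref{trtr1} and~\ref{trtr2} does not close this: \ref{trtr1} goes the direction you do not need here (fine to coarse), while \ref{trtr2} (coarse to fine) requires knowing beforehand that the set in question lies in $\smallk_1[[\fM_n]]$ for a truncation closed coefficient field $\smallk_1$ and is an $\smallk_1$-space containing $\smallk_1$ --- exactly the structure the paper's recursion manufactures and which you have not established for $E(g)$. The gap is repairable within your scheme, using the hypothesis $\fM\subseteq F\subseteq E$: for $h\in E\cap\smallk_n$ and $\fn$ in its $\fM_n$-support, dividing the difference of $h$ and its $\fM_n$-truncation at $\fn$ by $\fn\in E$ gives $h_\fn+\varepsilon$ with $\supp\varepsilon\prec\fM^{(n-1)}$, and truncating at $\max\supp\varepsilon$ recovers $h_\fn\in E\cap\smallk_{n-1}$; hence $a,f\in(E\cap\smallk_{n-1})[[\fM_n]]$, and since for each $\fn\in\fM_n$ only finitely many terms of $\sum_m(a\der_n)^m(f)$ contribute to the coefficient of $\fn$, also $g_\fn\in E\cap\smallk_{n-1}\subseteq E$, so $(g_\fn|_\fq)\fn\in E$ and the fine truncations of $g$ do lie in $E$. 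Some such argument (or the paper's restructuring via $K_n\subseteq K_{n-1}[[\fM_n]]$ and Lemma~\ref{trtr2}) must be supplied; as written, the proof conflates the two notions of truncation at exactly the point where the contradiction with maximality is drawn.
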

\begin{proof} First note that $c(\fM_n)\subseteq F\cap \smallk_{n-1}$: 
this is because for $\fm\in \fM_n$ we have $\fm\in F$, so
$c(\fm)=\fm^\dagger\in F$, hence $c(\fm)\in F\cap \smallk_{n-1}$.
We define differential subfields
$K_n\subseteq \smallk_n$ by recursion on $n$ as follows;
$K_n$ is the smallest differential subfield of $\smallk_n$ such that
\begin{itemize}
 \item $K_{n-1}\subseteq K_n$,
 \item $F\cap \smallk_n\subseteq K_n$, and
 \item $K_n$ is closed under $(I_n-a\der_n)\inv$ for all $a\in K_n$ with $a\prec\fM^{(n-1)}$,
\end{itemize}
where by convention $K_{-1}=\smallk_{-1}=\smallk$. 
We show by induction on $n$: 
\begin{enumerate}
 \item $K_{n-1}({\fM_n}) \subseteq K_n \subseteq K_{n-1}[[{\fM_n}]]$,  and 
 \item $K_n$ is truncation closed. 
\end{enumerate}
The first inclusion holds because $K_{n-1}\subseteq K_n$ and ${\fM_n}\subseteq F\cap \smallk_n\subseteq K_n$.
Also $c(\fM_n)\subseteq K_{n-1}$, so  $K_{n-1}[[{\fM_n}]]$ is a differential subfield of $\smallk_n$ closed under $(I_n-a\der_n)\inv$ for all $a\in K_{n-1}[[{\fM_n}]]$ with $a\prec\fM^{(n-1)}$.
Moreover, $F\cap \smallk_n$ is truncation closed, so $F\cap\smallk_n$ is $\smallk_{n-1}$-truncation closed by Lemma~\ref{trtr1}, hence 
\[F\cap \smallk_n\ \subseteq\ (F\cap \smallk_{n-1})[[{\fM_n}]]\ \subseteq\  K_{n-1}[[{\fM_n}]],\]
and thus $K_n\subseteq K_{n-1}[[{\fM_n}]]$ and $F\cap \smallk_n$
is $K_{n-1}$-truncation closed. 
The subfield $E_n$ of $K_{n-1}[[{\fM_n}]]$ generated by $K_{n-1}$ and $F\cap \smallk_n$ is a differential subfield of  $K_{n-1}[[{\fM_n}]]$, and $E_n$ is $K_{n-1}$-truncation closed
by Proposition~\ref{D}(i). Applying Theorem~\ref{thmA} to $E_n$ in the role of $E$ we conclude that $K_n$ is $K_{n-1}$-truncation closed, and so $K_n$ is $\smallk_{n-1}$-truncation closed.  
In view of $K_{n-1}\subseteq K_n$ and Lemma~\ref{trtr2} (applied to
$\smallk, K_{n-1}$, $\smallk_{n-1}$, $K_n$ in the roles of $\smallk_0, \smallk_1$, $\smallk$, $V$ in that lemma)
 it follows that $K_n$ is truncation closed. It is also clear by induction on $n$ that $K_n\subseteq F_n$.
Hence $K_{\infty}:= \bigcup_n K_n$ is a differential subfield of $F_{\infty}$ that contains $F$.
Moreover, $K_{\infty}\cap \smallk_n=K_n$ is closed under $(I_n-a\der_n)\inv$ for all $a\in K_n$ with $a\prec\fM^{(n-1)}$, so $K_{\infty}=F_{\infty}$ by the minimality of $F_{\infty}$.
Since $K_{\infty}$ is truncation closed, so is $F_{\infty}$.
\end{proof} 

\noindent
The assumption that ${\fM}\subseteq F$ is too strong for our purpose, but
we replace it in the next lemma by a splitting assumption. Recall that splitting was introduced
in Section~\ref{neat}. 

%$S\subseteq \fM$ {\bf neat} if for all ${\fm_0\cdots \fm_n}\in S$,
%with $\fm_i\in \fM_i$ for $i=0,\dots,n$, we have
%${\fm_i}\in S$ for $i=0,\dots,n$. 

\begin{lemma}\label{opClosure} Let $F$ be a truncation closed
differential subfield of $\smallk_{*}$ that contains
$\smallk$ and such that ${\fM}_F$ splits. 
Let $F_{\infty}$ be the smallest differential subfield
of $\smallk_{*}$ containing $F$ such that for every $n$,
$F_n:=F_{\infty}\cap \smallk_n$ is closed under
$(I_n-a\der_n)\inv$ for all $a\in F_n$ with $a\prec\fM^{(n-1)}$. 
Then $F_{\infty}$ is truncation closed and 
${\fM}_{F_{\infty}}={\fM}_F$.
\end{lemma}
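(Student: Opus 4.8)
The plan is to reduce this lemma to the previous one (where $\fM\subseteq F$ is assumed) by first enlarging $F$ to $F(\fM)$, but the obstruction is that $F(\fM)$ need not be a differential subfield of $\smallk_*$ — only $\fM_F$ splits, not all of $\fM$, so $c(\fm)$ for $\fm\in\fM\setminus F$ need not lie in $F(\fM)$. Instead I would mimic the proof of the previous lemma directly, carrying the splitting hypothesis through the recursion. First I would note, exactly as before, that for $\fm\in\fM_{n}\cap F$ we have $c(\fm)=\fm^\dagger\in F$; but now one only controls $c$ on $\fM_F$, and since $\fM_F$ splits and $F$ is a differential field, $c(\fM_F)\subseteq F$ as observed after Lemma~\ref{neatdif}.

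The core of the argument is to build differential subfields $K_n\subseteq\smallk_n$ by recursion: $K_{-1}=\smallk$, and $K_n$ is the smallest differential subfield of $\smallk_n$ containing $K_{n-1}$ and $F\cap\smallk_n$ and closed under $(I_n-a\der_n)\inv$ for all $a\in K_n$ with $a\prec\fM^{(n-1)}$. I would then prove simultaneously by induction on $n$ that (1) $K_{n-1}(\fM_{K_n})\subseteq K_n\subseteq K_{n-1}[[\fM_n]]$ where $\fM_{K_n}:=K_n\cap\fM$, (2) $K_n$ is truncation closed, and (3) $\fM_{K_n}$ splits with $c(\fM_{K_n})\subseteq K_n$. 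The inclusion $K_n\subseteq K_{n-1}[[\fM_n]]$ uses that $F\cap\smallk_n$ is $\smallk_{n-1}$-truncation closed (Lemma~\ref{trtr1}), hence $F\cap\smallk_n\subseteq(F\cap\smallk_{n-1})[[\fM_n]]\subseteq K_{n-1}[[\fM_n]]$, together with the fact that $K_{n-1}[[\fM_n]]$ is closed under the relevant operators (this needs $c(\fM_n\cap K_n)\subseteq K_{n-1}$, which is where I use that $\fM_{K_n}$ splits and that $c(\fM_F)\subseteq F$, so the "new" monomials in $K_n$ beyond those of $F$ come only from the $(I_n-a\der_n)\inv$ closure and from group operations, and on all of these $c$ takes values in the previously constructed field — this is the delicate bookkeeping point). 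For truncation closedness I would argue as in the previous lemma: let $E_n$ be the subfield of $K_{n-1}[[\fM_n]]$ generated by $K_{n-1}$ and $F\cap\smallk_n$; it is a $K_{n-1}$-truncation closed differential subfield by Proposition~\ref{D}(i) and Lemma~\ref{trtr1}, and $\fM_{E_n}$ splits by Lemmas~\ref{split1} and \ref{split2}; then Theorem~\ref{thmA} applied to $E_n$ gives that $K_n$ is $K_{n-1}$-truncation closed, and Lemma~\ref{trtr2} (with $\smallk,K_{n-1},\smallk_{n-1},K_n$ in the roles of $\smallk_0,\smallk_1,\smallk,V$) upgrades this to truncation closedness in $\smallk[[\fM]]$.

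Finally I would set $K_\infty:=\bigcup_n K_n$, observe it is a differential subfield of $\smallk_*$ containing $F$, that $K_\infty\cap\smallk_n=K_n$ is closed under all $(I_n-a\der_n)\inv$ with $a\prec\fM^{(n-1)}$, and that $K_n\subseteq F_n$ by an easy induction (since each defining property of $K_n$ is satisfied by $F_n$), so minimality of $F_\infty$ forces $K_\infty=F_\infty$; hence $F_\infty$ is truncation closed. For the equality $\fM_{F_\infty}=\fM_F$: the inclusion $\fM_F\subseteq\fM_{F_\infty}$ is trivial, and for the reverse I would track that at each stage the new monomials adjoined — those appearing in supports of $(I_n-a\der_n)\inv(f)$ for $a,f\in K_n$, $a\prec\fM^{(n-1)}$ — already lie in $\fM_{K_n}$, since by the homogeneity computation in Section~\ref{adj} the support of $(I_n-a\der_n)\inv(f)$ is contained in $(\supp a)^*\cdot\supp f$, and inductively $\supp a,\supp f\subseteq\fM_{K_n}\subseteq\fM_F$ (a group), so no genuinely new monomials enter; combined with Lemmas~\ref{split1} and \ref{split2} for the ring- and fraction-field-generation steps, this gives $\fM_{F_\infty}=\fM_F$. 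The main obstacle I anticipate is the careful verification at each recursion step that $c$ is defined and takes values in the right subfield on every monomial that has been adjoined so far — i.e.\ that the splitting property is genuinely propagated by all three closure operations (generating a differential subring, passing to the fraction field, and inverting $I_n-a\der_n$) — since without this one cannot even assert that $K_{n-1}[[\fM_n]]$ is a differential field closed under the required operators.
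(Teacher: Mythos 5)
Your overall strategy --- redoing the induction of the preceding lemma (where $\fM\subseteq F$ is assumed) while carrying the splitting hypothesis along --- can be made to work, but the two places you yourself flag as ``delicate bookkeeping'' are exactly where the sketch, as written, does not go through. First, $K_{n-1}[[\fM_n]]$ is not a differential subfield of $\smallk_n$ in this setting: that would require $c_n(\fM_n)\subseteq K_{n-1}$ for the \emph{whole} group $\fM_n$, which is precisely what is lost once $\fM\not\subseteq F$; and your substitute condition $c(\fM_n\cap K_n)\subseteq K_{n-1}$ is circular, since $K_n$ is the object you are trying to bound. The correct ambient field at level $n$ is $K_{n-1}[[\fM_{F,n}]]$ with $\fM_{F,n}:=F\cap\fM_n$, where $c_n(\fM_{F,n})\subseteq F\cap\smallk_{n-1}\subseteq K_{n-1}$ because $F$ is a differential field; splitting of $\fM_F$ is what guarantees $F\cap\smallk_n\subseteq\smallk_{n-1}[[\fM_{F,n}]]$ and $\fM^{(n)}\cap F=\fM_F^{(n)}:=\fM_{F,0}\cdots\fM_{F,n}$. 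Second, the bound $\supp\big((I_n-a\der_n)\inv(f)\big)\subseteq(\supp a)^*\cdot\supp f$ is only valid for supports over the coefficient field $\smallk_{n-1}$, where $\der_n$ has operator support $\{1\}$; over $\smallk$ the derivation multiplies the coefficient of $\fm$ by $c(\fm)$, whose support need not lie in the group generated by $\supp a\cup\supp f$. What saves the conclusion $\fM_{F_\infty}=\fM_F$ is not this support estimate but the containment $F_\infty\subseteq\smallk[[\fM_F]]$, which has to come from minimality of $F_\infty$ once a suitable subfield with all the closure properties is exhibited.

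The paper's proof packages both repairs at once and is much shorter: it observes that, thanks to splitting, $\fM_F$, $(\fM_{F,n})$, $(\fM_F^{(n)})$ again satisfy conditions (1) and (2) of Section~\ref{basicsetup}, that each $\smallk_{F,n}:=\smallk[[\fM_F^{(n)}]]$ is a differential subfield of $\smallk_n$ (using $c_n(\fM_{F,n})\subseteq F\cap\smallk_{n-1}$) closed under $(I_n-a\der_n)\inv$ for $a\in\smallk_{F,n}$ with $a\prec\fM^{(n-1)}$ (equivalently $a\prec\fM_F^{(n-1)}$), and that $F\subseteq\smallk_{F,*}:=\bigcup_n\smallk_{F,n}$. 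Then the preceding lemma applies verbatim with $\smallk_{F,*}$ in place of $\smallk_*$, since inside $\smallk_{F,*}$ the field $F$ does contain the full monomial group $\fM_F$; minimality gives $F_\infty\subseteq\smallk_{F,*}$, hence both truncation closedness and $\fM_{F_\infty}=\fM_F$. Your induction is essentially this reduction unrolled: if you replace $\fM_n$ and $\fM^{(n)}$ by $\fM_{F,n}$ and $\fM_F^{(n)}$ from the start, your $K_n$ becomes the $K_n$ of the previous lemma computed inside $\smallk_{F,*}$, the application of Theorem~\ref{thmA} and Lemmas~\ref{trtr1} and~\ref{trtr2} is then legitimate, and the argument closes.
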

\begin{proof} Recall that $\fM_{F,n}:= (\supp F) \cap \fM_n=F\cap \fM_n$, a subgroup of
$\fM_n$, and $$\fM_F\ :=\ F\cap \fM\ =\ (\supp F)\cap \fM,$$ a subgroup of $\fM$. Setting
$\fM_F^{(n)}:=\fM_{F,0}\cdots \fM_{F,n}$, conditions (1) and (2) in Section~\ref{basicsetup} hold for $\fM_F$, $(\fM_{F,n})$, $(\fM_F^{(n)})$ in place of
$\fM$, $(\fM_n)$, $(\fM^{(n)})$; note also that $\fM^{(n)}\cap F=\fM_F^{(n)}$.  
We set 
\[\smallk_{F,n}\ :=\ \smallk[[{\fM_F^{(n)}}]],\]
s subfield of $\smallk_n= \smallk[[{\fM^{(n)}}]]$, with $F\cap \smallk_n\subseteq \smallk_{F,n}$. We also set $\smallk_{F,-1}:=\smallk$. 
Using that $F$ is a differential subfield of $\smallk_{*}$ we get $c_n(\fM_{F,n})\subseteq F\cap \smallk_{n-1}\subseteq \smallk_{F,n-1}$, and so $\smallk_{F,n}$ is a differential subfield of $\smallk_n$. 
The increasing chain 
\[\smallk\ \subseteq\ \smallk_{F,0}\ \subseteq\ \smallk_{F,1}\ \subseteq\ \cdots\subseteq\ \smallk_{F,n}\ \subseteq\ \smallk_{F,n+1}\ \subseteq \cdots\]
yields a differential subfield $\smallk_{F,*}:=\bigcup_n \smallk_{F,n}$ of $\smallk_{*}$ with $F\subseteq \smallk_{F,*}$. 
It remains to apply the previous lemma with
$\smallk_{F,*}$ instead of $\smallk_{*}$.  
\end{proof} 

%One can probably also get rid of the 
%assumption $\smallk\subseteq F$,
%and assume instead that $F$ is strongly truncation closed, with the role of $\smallk$ taken over by $\smallk_F:= F\cap \smallk$.
%If \der was not transserial then (I+1/c(\fm))\inv might not be defined.

\noindent
Let $F$ be as in Lemma~\ref{opClosure}. With the notations
in the proof of this lemma we record for later use: \begin{enumerate}
\item for $a\in \smallk_{F,n}$ we have: $a\prec\fM^{(n-1)} \Leftrightarrow a\prec\fM_F^{(n-1)}$;
\item if $a\prec\fM_F^{(n-1)}$, then $\smallk_{F,n}$ is closed under $(I_n-a\der_n)\inv$; 
\item if $\smallk_{*}$ is transserial, then so is $\smallk_{F,*}$, and $f\in \der\smallk_{F,n}$ for all $f\in \smallk_{F,n}$ with $1\notin \supp(f)$.
\end{enumerate}
Items (1) and (2) were tacitly used in the proof of Lemma~\ref{opClosure},
and (3) is a consequence (really a special case) of Lemma~\ref{cint}.

\begin{theorem}\label{thmB} Assume $\smallk_{*}$ is transserial. Let $F$ be a truncation closed differential subfield of $\smallk_{*}$ that contains $\smallk$ and such that ${\fM}_F$ splits. 
Let $F_{\infty}$ be the smallest differential subfield
of $\smallk_{*}$ that contains $F$ such that for every $n$,
$F_n:=F_{\infty}\cap \smallk_n$ is closed under
$(I_n-a\der_n)\inv$ for all $a\in F_n$ with $a\prec\fM^{(n-1)}$, and $f\in \der F_{\infty}$ for all $f\in F_\infty$ with $1\notin\supp(f)$. 

Then $F_{\infty}$ is truncation closed and ${\fM}_{F_{\infty}}={\fM}_F$.
\end{theorem}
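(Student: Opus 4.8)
The proof should combine Lemma~\ref{opClosure} with the integration machinery (items (1)--(3) recorded after that lemma, which rest on Lemma~\ref{cint} and Lemma~\ref{transconstant}). The idea is to build $F_\infty$ as an increasing union $\bigcup_n K_n$ where each $K_n\subseteq \smallk_{F,n}$ is the smallest differential subfield closed under both the relevant operators $(I_n-a\der_n)\inv$ and under ``integrating elements without a constant term inside $\smallk_{F,n}$''. Concretely: start from $F$, intersect with $\smallk_{F,n}$ to get a truncation closed differential subfield, adjoin solutions of $y-a\der_n(y)=g$ for $a\prec\fM_F^{(n-1)}$ as in Theorem~\ref{thmA}, and also adjoin, for every $f\in K_n$ with $1\notin\supp f$, the unique antiderivative $g\in\smallk_{F,n}$ with $f=\der(g)$ and (say) $1\notin\supp g$ (uniqueness of $g$ modulo $\smallk$ comes from Lemma~\ref{transconstant}, applied inside $\smallk_{F,*}$ which is again transserial by item (3)). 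Iterate these two operations to a fixed point to obtain $K_n$, then set $F_\infty=\bigcup_n K_n$ and argue $\fM_{F_\infty}=\fM_F$ via the splitting bookkeeping already used in Lemma~\ref{opClosure}.

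\textbf{The key steps, in order.} First, reduce to the ambient field $\smallk_{F,*}$ as in the proof of Lemma~\ref{opClosure}: since $\fM_F$ splits, $\fM^{(n)}\cap F=\fM_F^{(n)}$, each $\smallk_{F,n}=\smallk[[\fM_F^{(n)}]]$ is a differential subfield of $\smallk_n$, $\smallk_{F,*}$ is transserial, and (by item (2)) $\smallk_{F,n}$ is closed under $(I_n-a\der_n)\inv$ for $a\prec\fM_F^{(n-1)}$. So without loss of generality we may assume $\fM=\fM_F$, i.e.\ all monomials of $\smallk_*$ already lie in $F$. Second, do the recursion on $n$: assume $K_{n-1}$ is a truncation closed differential subfield of $\smallk_{n-1}$ with $K_{n-1}\supseteq F\cap\smallk_{n-1}$. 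The hard analytic content is a single ``closure step'' inside the Hahn field $\smallk_n=\smallk_{n-1}[[\fM_n]]$ over the coefficient field $\smallk_{n-1}$ (replaced by $K_{n-1}$): one must show that starting from a truncation closed differential subfield $E_n$ (generated by $K_{n-1}$ and $F\cap\smallk_n$), the smallest differential subfield containing $E_n$ that is closed under $(I_n-a\der_n)\inv$ for all relevant $a$ \emph{and} under the integration operation is still $K_{n-1}$-truncation closed. The first of these is exactly Theorem~\ref{thmA}; the integration part is the new ingredient and needs a Zorn's-lemma argument just like the proof of Theorem~\ref{thmA}, using Corollary~\ref{lem1} together with the fact (from the proof of Lemma~\ref{cint}) that integrating $f=\sum_{\fm}f_{\fm}\fm$ amounts to solving $(I_n+c_n(\fm)^{-1}\der_n)(g_{\fm})=c_n(\fm)^{-1}f_{\fm}$ coefficientwise, which for $\fm\ne1$ is again an $(I-b\der)\inv$-type problem with $b=c_n(\fm)^{-1}\prec\fM_F^{(n-1)}$ by transseriality — so truncation closedness of the antiderivative follows from the same operator lemmas (Lemma~\ref{OperatorLemma}, Lemma~\ref{OpLemma1}) applied one archimedean level down, plus induction on $n$ for the coefficient $f_1$. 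Third, pass from $K_{n-1}$-truncation closed to truncation closed via Lemma~\ref{trtr1} and Lemma~\ref{trtr2} exactly as in Lemma~\ref{opClosure}. Fourth, take the union, check $K_\infty=\bigcup_n K_n\subseteq F_\infty$ and that $K_\infty$ has both closure properties defining $F_\infty$, so $K_\infty=F_\infty$; finally $\fM_{F_\infty}=\fM_F$ since at each stage we stayed inside $\smallk_{F,n}=\smallk[[\fM_F^{(n)}]]$.

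\textbf{The main obstacle.} The delicate point is the interaction of the two closure operations at a fixed level $n$: adjoining an antiderivative $g$ of some $f\in K_n$ with $1\notin\supp f$ introduces new elements whose truncations must again be shown to lie in the field, but a truncation of $g$ need not be an antiderivative of a truncation of $f$ (only $\der(g|_{\fp})=\der(g)|_{\fp}=f|_{\fp}$ holds, and $f|_{\fp}$ may have $1$ in its support if $1\in\supp g$ forced us to choose a representative carefully). One must therefore fix the normalization $1\notin\supp g$ once and for all, verify that truncations of such a $g$ are then handled by Corollary~\ref{lem1} together with the coefficientwise operator analysis, and run a \emph{single} Zorn's-lemma argument over the maximal truncation closed differential subfield of $K_n$ that is closed under \emph{both} operations simultaneously, rather than alternating them — closing under one can break a partial closure under the other, so the minimality/maximality has to be set up for the combined closure. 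Getting this combined induction (outer on $n$, inner Zorn on $o(f)$ and on the order type of $\supp a$) to close without circularity is the crux; everything else is an assembly of results already in the excerpt.
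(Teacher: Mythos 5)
Your plan assembles the right ingredients (reduction to $\smallk_{F,*}$, Lemma~\ref{opClosure} and Theorem~\ref{thmA} for the operator closure, the coefficientwise integration from the proof of Lemma~\ref{cint}, Lemmas~\ref{trtr1} and~\ref{trtr2}, and a final minimality comparison with $F_\infty$), and a level-wise combined closure of this kind could be made to work; the paper organizes it differently, keeping Lemma~\ref{opClosure} as a black box and running a single global extension argument: given a truncation closed differential subfield $K$ with $F\subseteq K\subsetneq F_\infty$, either some $K\cap\smallk_n$ fails operator closure, and then Lemma~\ref{opClosure} applied to $K$ already produces a strictly larger truncation closed differential subfield of $F_\infty$, or $K$ is operator closed and one adjoins a single antiderivative $g$ of some $f\in K$ with $1\notin\supp f$, $f\notin\der K$, chosen with $o(f)$ minimal. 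Two problems with your write-up. First, your Zorn set-up is circular as stated: any differential subfield of $K_n$ containing $E_n$ that is closed under \emph{both} operations equals $K_n$ by the very minimality of $K_n$, so there is nothing to take a maximal element of; the intermediate fields must carry no closure hypotheses, and the extension step must instead be split into the two cases just described. (Your normalization worry about $1\in\supp g$ is harmless: antiderivatives differ by elements of $\smallk\subseteq K$ by Lemma~\ref{transconstant}.)

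Second, and this is the genuine gap, the step you yourself call the crux --- that all proper truncations of the adjoined antiderivative $g$ lie in the field --- is never proved, and the one concrete claim you make about it, $\der(g|_{\fp})=\der(g)|_{\fp}$, is false in $\smallk_*$ for general $\fp$: it holds for truncations taken in $\smallk_{n-1}[[\fM_n]]$ (i.e.\ with respect to the coefficient field $\smallk_{n-1}$), but for $\fp=\fn\fq$ with $\fn\in\fM^{(n-1)}$ and $\fq\in\fM_n$ the term $c_n(\fq)g_{\fq}$ does not commute with truncation at $\fn$; this is exactly the phenomenon behind the counterexample at the end of Section~\ref{METEXP}, and Corollary~\ref{lem1} (which concerns the Chapter~6 derivation, where truncation and $\der$ do commute) cannot be invoked here. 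What is needed, and what the paper's proof supplies, is a membership argument for the coefficients plus a block decomposition: since $K$ is operator closed and $c(\fm)=\fm^{\dagger}\in K$ with $c(\fm)\inv\prec\fM^{(n-2)}$ by transseriality, each coefficient $g_{\fm}$ ($\fm\ne 1$) of $g=\sum_{\fm\in\fM_{F,n}}g_{\fm}\fm$ equals $\big(I_{n-1}+c(\fm)\inv\der_{n-1}\big)\inv\big(c(\fm)\inv f_{\fm}\big)$ and hence lies in $K_{n-1}$ (while $g_1$ is an antiderivative of $f_1$ in $\smallk_{n-1}$; here your arrangement, where $K_{n-1}$ is already integration closed, is in fact slightly cleaner than the paper's appeal to the minimality of $n$). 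Then one writes $g|_{\fp}=\sum_{\fm\succ\fq}g_{\fm}\fm+(g_{\fq}|_{\fn})\fq$, observes that $\der\big(\sum_{\fm\succ\fq}g_{\fm}\fm\big)=\sum_{\fm\succ\fq}f_{\fm}\fm$ is a proper truncation of $f$ of order type $<o(f)$ with $1$ not in its support, hence lies in $\der K$ by the minimal choice of $f$, so $\sum_{\fm\succ\fq}g_{\fm}\fm\in K$ by Lemma~\ref{transconstant}; and $(g_{\fq}|_{\fn})\fq\in K$ because $g_{\fq},\fq\in K$ and $K$ is truncation closed, whence $K(g)$ is truncation closed by Proposition~\ref{D}. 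Without this analysis the decisive step of your proposal does not close, at either the level-wise or the global scale.
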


\begin{proof} By the remarks preceding this lemma, $\smallk_{F,*}$ has the closure properties required for $F_{\infty}$, so $F_{\infty}\subseteq \smallk_{F,*}$ by the minimality of $F_{\infty}$, and thus $\fM_{F_{\infty}}=\fM_F$.

Let any truncation closed differential subfield $K\supseteq F$
of $F_{\infty}$ be given, and suppose $K\ne F_{\infty}$. It
suffices to show that then there is a
truncation closed differential subfield $L$ of $F_{\infty}$
that strictly contains $K$. If $K_n:=K\cap\smallk_n$ is not closed under $(I_n-a\der_n)\inv$ for some $n$ and $a\in K_n$ with $a\prec\fM^{(n-1)}$, then Lemma~\ref{opClosure} applied to $K$ in the role of $F$ yields such an extension $L$. So we can assume
that for every 
$n$, $K_n:=K\cap\smallk_n$ is closed under $(I_n-a\der_n)\inv$ for all $a\in K_n$ with $a\prec\fM^{(n-1)}$. 
Since $K\ne F_{\infty}$, we have a nonempty set 
\[ S\ :=\ \{f\in K:\  f\notin \der K,\ 1\notin \supp(f)\}.\]
It is enough to find $f\in S$ and $g\in \smallk_{*}$ such that $f=\der(g)$ (so $g\in F_{\infty}$) and every proper truncation of $g$ lies in $K$ (so $K(g)$ is truncation closed by (i) and (ii) of Proposition~\ref{D}). 

Take $f\in S$ such that $o(f)=\min o(S)$. 
Take $n$ minimal such that $f\in K_n$. 
Then 
\[f\ =\ \sum_{\fm\in \fM_{F,n}}f_{\fm}{\fm}\qquad\text{(all $f_{\fm}\in K_{n-1}$, with $K_{-1}:=\smallk$)}.\]  
Consider first the case $n=0$. Then $f=\der(g)$ with
$g:=\sum_{\fm\in \supp f}\frac{f_\fm}{c(\fm)}\fm$. Every proper truncation of $g$ equals $\der(\tilde{f})$ 
for some proper truncation $\tilde{f}$ of $f$, and thus lies in $K$. 

Next, let $n\ge 1$. Then $1\notin \supp f_{1}$. Since $f_1\in K_{n-1}$, the minimality of $n$ gives $f_1=\der(g_1)$ with $g_1\in K_{n-1}$, using also Lemmas~\ref{cint}
and ~\ref{transconstant}. Now, let
$\fm\in \fM_{F,n}\setminus \{1\}$. As we saw in the proof of Lemma~\ref{cint} (with $n+1$ in the role of the present $n$) we have $g_{\fm}\in \smallk_{n-1}$ such that $\der(g_{\fm}) + g_{\fm}c(\fm)=f_{\fm}$,
that is, 
\[\big(I_{n-1}+ c(\fm)^{-1}\der_{n-1}\big)(g_{\fm})\  =\ c(\fm)^{-1}f_{\fm}.\]
Now $c(\fm)\in  K_{n-1}$, so $K_{n-1}$ is closed under $\big(I_{n-1}+ c(\fm)^{-1}\der_{n-1}\big)\inv$, 
and thus  $g_{\fm}\in K_{n-1}$.  Moreover, if $f_{\fm}=0$, then $g_{\fm}=0$. 
This yields the element 
\[g\ :=\  g_1 + \sum_{\fm\in \fM_{F,n}^{\neq 1}} g_{\fm}\fm\ =\ \sum_{\fm\in \fM_{F,n}}g_{\fm}\fm\]
of $\smallk_{n}$ satisfying $f=\der(g)$. It only remains to show that all proper truncations of $g$
are in $K$. Such a proper truncation equals
$g|_{\fp}$ where
$\fp\in \supp(g)$, so $\fp=\fn\fq$, $\fn\in \fM_F^{(n-1)}$, 
$\fq\in \fM_{F,n}$. In the rest of the proof $\fm$ ranges over $\fM_{F,n}$. Then
\[g|_{\fp}\ =\ 
\sum_{\fm\succ \fq}g_{\fm}\fm + (g_{\fq}|_{\fn})\fq.\]
We have $\der(\sum_{\fm\succ \fq}g_{\fm}\fm)=\sum_{\fm\succ \fq}f_{\fm}\fm$, and here the right hand side is a proper truncation of
$f$ and thus lies in $K$, with $o(\sum_{\fm\succ \fq}f_{\fm}\fm)< o(f)$, so $\sum_{\fm\succ \fq}f_{\fm}\fm\notin S$ and thus
$\sum_{\fm\succ \fq}g_{\fm}\fm\in K$. Since $g_{\fq}\in K$ and $\fq\in F\subseteq K$ we also have $(g_{\fq}|_{\fn})\fq\in K$,
and thus $g|_{\fp}\in K$, as promised.
\end{proof}

\noindent
Let $\smallk_*$ and $F$ be as in the theorem and let $\fm\in \fM_F$. Then for the derivation $\derdelta:= \fm\der$ on $\smallk_{*}$ we have the
following: $f\in \derdelta F_{\infty}$ for all $f\in F_{\infty}$ with $\fm\notin \supp(f)$.  Theorem~\ref{thmB}, including the remark we just made, will be essential
in proving the main result of this dissertation in the next chapter. 

%\subsection*{Application to $\Texp$} We now take
%$\smallk_{*}=\Texp$ as in Section~\ref{METEXP}. Then we can iterate the constructions involved in Lemmas~\ref{exptrunc}, ~\ref{algtrunc}, ~\ref{expsplit}, and Theorem~\ref{thmB} to obtain:

%\begin{corollary} Let $E\supseteq \R$ be a truncation closed differential subfield of $\Texp$ such that $\fM_E$ splits. 
%Let $E^*$ be the smallest real closed differential subfield
%of $\Texp$ containing $E$ that is closed under exponentiation,
%and such that for every $n$,
%$E_n:=E^*\cap \smallk_n$ is closed under
%$(I_n-a\der_n)\inv$ for all $a\in E_n$ with $a\prec\fM^{(n-1)}$, and $f\in \der E^*$ for all $f\in E^*$ with $1\notin\supp(f)$. 

%Then $E^*$ is truncation closed and $\fM_{E^*}$ splits.
%\end{corollary}

%\noindent
%This corollary will be an essential lemma in the proof of the main theorem in the next chapter. There we work in the larger
%valued differential field $\T$ of logarithmic-exponential transseries. This will enable us to get rid of the
%obstruction ``$1\in \supp f$'' to integrating. 

\chapter{Truncation in $\T$}

\section{Introduction}\label{intro}

\noindent
In this chapter we recollect from \cite{ADH} how to construct the field $\T$ of logarithmic-exponential transseries from $\Texp$, and extend the notion of splitting from sets of exponential transmonomials to sets of logarithmic-exponential transmonomials in order to prove our main result. 

As to $\T$, this is basically $\Texp$ extended with logarithms. For a detailed description, see
Section~\ref{consT} below. For now, we just mention the following basic facts about $\T$ in order to
 state the main result of this dissertation in a way that can be easily grasped. \begin{enumerate}
\item $\T$ is a truncation closed subfield of a Hahn field $\R[[G^{\LE}]]$, with $\R(G^{\LE})\subseteq \T$. 
Here $G^{\LE}$ is a monomial group whose elements are the \textbf{logarithmic-exponential transmonomials}. 
It contains the group $G^{\E}$ of exponential transmonomials as a subgroup, and the subfield $\Texp$ of the Hahn field $\R[[G^{\E}]]\subseteq \R[[G^{\LE}]]$, defined as in Section~\ref{METEXP},
%with $\R(G^{\E})\subseteq \R[[G^{\LE}]]$,
 is a subfield of $\T$. 
\item We regard $\T$ as an {\em ordered\/} subfield of the ordered Hahn field $\R[[G^{\LE}]]$. The map $f\mapsto \ex^f$ on $\Texp$ extends naturally to an isomorphism, called \textbf{exponentiation}, of the ordered additive group of $\T$ onto its multiplicative group $\T^{>}$ of positive elements; we denote it also by $f\mapsto \ex^f$.
\item There is given a natural automorphism $f\mapsto f{\uparrow}$ of the ordered field $\T$ over $\R$, to be thought of as $f(x) \mapsto f(\ex^x)$. Its $n$th iterate is denoted by $f\mapsto f{\uparrow}^n$. 
Its inverse is $f\mapsto f{\downarrow}$, to be thought of as $f(x) \mapsto f(\log x)$, and the $n$th iterate of this inverse is denoted by $f\mapsto f{\downarrow}_n$.
\item With $G^{\E,n}:= G^{\E}{\downarrow}_n$ we  have 
%$G^{\E}{\uparrow}\subseteq  G^{\E}$, and so applying $\downarrow$ %gives $G^{\E}\subseteq G^{\E}{\downarrow}$, which by iteration gives 
an increasing sequence $G^{\E}\subseteq G^{\E,1}\subseteq G^{\E,2}\subseteq \cdots \subseteq G^{\LE}$, and
\[G^{\LE}\ =\ \bigcup_n G^{\E,n}.\]
In particular, $\uparrow$ and $\downarrow$ restrict to automorphisms of the (ordered) monomial group $G^{\LE}$. 
\item The derivation $\der$ on $\Texp$ extends naturally to a derivation on $\T$, also denoted by $\der$, such that $\der(\ex^f)=\der(f)\ex^f$ for all $f\in \T$. As an ordered field equipped with this extended derivation, $\T$ is a Liouville closed $H$-field with constant field $\R$. 
\end{enumerate}

\medskip\noindent
Let $K$ be a subfield of $\T$. In view of (2) we can define $K$ to be {\bf closed under exponentiation\/} if $\ex^K\subseteq K$, and we define the {\bf closure of $K$ under exponentiation\/} to be the smallest subfield
$L\supseteq K$ of $\T$ that is closed under exponentiation; we denote this $L$ by $K^{\exp}$. (For subfields of $\Texp$ this agrees with the previous chapter.) Note that if $\der(K)\subseteq K$, then $\der(K^{\exp})\subseteq K^{\exp}$. Call a set $S\subseteq \T$ {\bf truncation closed\/} if it is truncation closed as a subset of $\R[[G^{\LE}]]$. We can now state one of our results; it is proved in the next section and does not involve the derivation:

\begin{proposition}\label{trexp} If the subfield $K\supseteq \R$ of $\T$ is truncation closed, then so is $K^{\exp}$.
\end{proposition}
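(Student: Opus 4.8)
The plan is to mimic the proof of Lemma~\ref{exptrunc} from Chapter~7, now carried out one level higher in the tower $G^{\LE}=\bigcup_n G^{\E,n}$. First I would reduce the problem to a Zorn's-lemma argument in the usual way: let $L\supseteq K$ be a maximal truncation closed subfield of $K^{\exp}$ (which exists since a union of a chain of truncation closed subfields is truncation closed), suppose $L\ne K^{\exp}$, and aim to produce a truncation closed subfield of $K^{\exp}$ strictly containing $L$, contradicting maximality. If $L$ is not closed under small exponentiation, then $L^{\smallexp}$ works: by Corollary~\ref{ta} (applied in the Hahn field $\R[[G^{\LE}]]$) the small exponential closure of a truncation closed subfield containing $\R$ is again truncation closed. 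So we may assume $L$ is closed under small exponentiation but not under full exponentiation.

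Next I would pick $a\in L$ with $\ex^{a}\notin L$ of minimal order type $o(a)$ of support. Writing $a=a|_{1}+r+\varepsilon$ with $r\in\R$ and $\varepsilon\in L^{\prec 1}$, we get $\ex^{a}=\ex^{a|_{1}}\cdot\ex^{r}\cdot\exp(\varepsilon)$ with $\ex^{r}\in\R\subseteq L$ and $\exp(\varepsilon)\in L$; since $a|_{1}$ is a truncation of $a$, minimality of $o(a)$ forces $a=a|_{1}$, i.e. $\supp a\subseteq (G^{\LE})^{\succ 1}$. The task then becomes: show $\ex^{a}$ lies in some truncation closed extension of $L$. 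The natural candidate is $L(\ex^{a})$, and as in the purely exponential case this will be truncation closed provided $\ex^{a}\in G^{\LE}$ (a monomial), by (i) and (ii) of Proposition~\ref{D}.

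So the crux is an analogue of Lemma~\ref{AM}(ii)--(iii) for $\T$: if $f\in\T$ with $\supp f\succ 1$, then $\ex^{f}\in G^{\LE}$. The hard part will be handling the logarithms, i.e. descending through the $\downarrow_n$-levels. Using $G^{\LE}=\bigcup_n G^{\E,n}$ and $G^{\E,n}=G^{\E}\downarrow_n$, I would take $n$ with $\supp(a\uparrow^{n})\subseteq G^{\E}$-level, so that $a\uparrow^{n}\in\Texp$ with $\supp(a\uparrow^{n})\succ 1$ (here I must check that $\uparrow$ preserves the relation ``support $\succ 1$'', which follows from $\uparrow$ being an automorphism of the ordered monomial group $G^{\LE}$ fixing $1$, hence mapping $(G^{\LE})^{\succ 1}$ onto itself). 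Then Lemma~\ref{AM} gives $\exp(a\uparrow^{n})\in G^{\E}$, and applying $\downarrow_n$ (which commutes with $\exp$ since $\downarrow$ is a field automorphism respecting exponentiation, $\ex^{f}\downarrow=\ex^{f\downarrow}$) yields $\ex^{a}=\exp(a\uparrow^{n})\downarrow_n\in G^{\E}\downarrow_n=G^{\E,n}\subseteq G^{\LE}$. With $\ex^{a}\in G^{\LE}$ in hand, $L(\ex^{a})$ is truncation closed and strictly contains $L$, completing the contradiction.

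One technical point deserving care: the reduction $a=a|_1$ uses that $\exp$ on $\T$ restricted to $L^{\prec 1}$ lands in $L$ because $L$ is closed under small exponentiation, and that $\exp$ is additive in the sense $\ex^{u+v}=\ex^{u}\ex^{v}$; both are part of fact~(2) about $\T$ in Section~\ref{intro}. I expect the only genuinely delicate verification to be the interplay of $\uparrow$, $\downarrow$, $\exp$, and the predicate ``$\supp f\succ 1$'' — in particular that $f\mapsto f\uparrow$ and $f\mapsto f\downarrow$ carry $\R[[(G^{\LE})^{\succ 1}]]$-type conditions correctly — but since $\uparrow,\downarrow$ are ordered-field automorphisms over $\R$ restricting to automorphisms of $G^{\LE}$ and respecting exponentiation, these all go through cleanly. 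Everything else is a routine transcription of the Chapter~7 argument.
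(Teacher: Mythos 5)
Your proposal is correct and follows essentially the same route as the paper: the paper's proof of Proposition~\ref{trexp} is exactly ``copy the proof of Lemma~\ref{exptrunc}'' (small exponential closure via Corollary~\ref{ta}, minimal $o(a)$, reduction to $a=a|_1$, then $L(\ex^a)$ truncation closed by Proposition~\ref{D}), using the fact that $\ex^a\in G^{\LE}$ when $\supp a\succ 1$. The only difference is that the paper quotes this last fact directly as the key identity $\log(G^{\LE})=\{f\in\T:\ \supp f\succ 1\}$ from the construction of $\T$, whereas you rederive it by conjugating with $\uparrow^n$ and $\downarrow_n$ and applying Lemma~\ref{AM} in $\Texp$; your derivation is sound, since $\uparrow$ and $\downarrow$ are automorphisms of the ordered monomial group $G^{\LE}$ respecting $\exp$.
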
 

\medskip\noindent
In the next section we also show that the property for sets $S\subseteq G^{\E}$ to split can be extended uniquely to sets $S\subseteq G^{\LE}$ in such a way that the following conditions are satisfied: \begin{enumerate}
\item[{(Sp1)}] for $S\subseteq G^{\E}$ to split as defined earlier agrees with $S$ to split as a subset of $G^{\LE}$; 
\item[{(Sp2)}] for $S\subseteq G^{\LE}$, $S$ splits iff $S\cap G^{\E,n}$ splits for all $n$;
\item[{(Sp3)}] for $S\subseteq G^{\LE}$, $S$ splits iff $S{\uparrow}$ splits. 
\end{enumerate}

\noindent
In the next section we also prove the following variant of Lemma~\ref{expsplit}: 

\begin{proposition}\label{trexpsplit} Suppose $K\supseteq \R$ is a truncation closed subfield of $\T$ such that $K\cap G^{\LE}$ splits.  Then $K^{\exp}\cap G^{\LE}$ splits. 
\end{proposition}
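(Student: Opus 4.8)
The plan is to imitate the proof of Lemma~\ref{expsplit}, but tracking the "splitting" property across the logarithmic layers of $\T$ rather than only inside $\Texp$. Recall that $G^{\LE}=\bigcup_n G^{\E,n}$ with $G^{\E,n}=G^{\E}{\downarrow}_n$, and that by (Sp2) a set $S\subseteq G^{\LE}$ splits iff $S\cap G^{\E,n}$ splits for every $n$, while by (Sp3) splitting is invariant under $\uparrow$ (equivalently $\downarrow$). So the structure of $K^{\exp}$ is governed by the chain of subfields $K^{\exp}\cap \R[[G^{\E,n}]]$, each of which, after applying $\uparrow^n$, lives inside $\Texp=\R[[G^{\E}]]$ where Lemma~\ref{expsplit} applies.

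First I would set up a Zorn's-lemma argument exactly as in Lemma~\ref{exptrunc} and Lemma~\ref{expsplit}: let $L\supseteq K$ be a maximal truncation closed subfield of $K^{\exp}$ with the extra property that $L\cap G^{\LE}$ splits; it suffices to show $L=K^{\exp}$, i.e.\ that if $L\ne K^{\exp}$ then there is a strictly larger such $L'$. If $L$ is not closed under small exponentiation, take $L':=L^{\smallexp}$: by Corollary~\ref{ta} (in the $\T$-version, via Lemma~\ref{tL16} applied inside a suitable Hahn field $\R[[G^{\LE}_L]]$) it is truncation closed, and since $\fM_{L'}=\fM_L$ small exponentiation does not enlarge the monomial group, so $L'\cap G^{\LE}$ still splits.

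Now assume $L$ is closed under small exponentiation. Pick $a\in L$ with $\ex^a\notin L$ and $o(a)$ minimal. Writing $a=a|_1 + r + \varepsilon$ with $r\in\R$, $\varepsilon\prec 1$, and using $\ex^{a|_1}\cdot\ex^r\cdot\exp(\varepsilon)$ together with minimality of $o(a)$, one gets $a=a|_1$, so $\supp a\subseteq (G^{\LE})^{\succ 1}$. Choose $n$ minimal with $a\in \R[[G^{\E,n}]]$; then $a{\uparrow}^n\in \Texp$ and $\supp(a{\uparrow}^n)\subseteq (G^{\E})^{\succ 1}$, so by Lemma~\ref{AM} $\ex^{a{\uparrow}^n}\in G^{\E}$, hence $\fm:=\ex^a=(\ex^{a{\uparrow}^n}){\downarrow}_n\in G^{\E,n}\subseteq G^{\LE}$. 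Thus $L':=L(\fm)$ is truncation closed by Proposition~\ref{D}(i),(ii), and $\fM_{L'}=\fM_L\cdot \fm^{\Z}$ is the subgroup of $G^{\LE}$ generated by $\fM_L$ and $\fm$. By the remarks in Section~\ref{neat} ("$\fN$ splits if $\fM_E$ and $\fG$ split"), it is enough to show $\{1,\fm\}$ splits as a subset of $G^{\LE}$; by (Sp2) and (Sp3) this reduces to showing $\{1,\ex^{a{\uparrow}^n}\}$ splits in $G^{\E}$.

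This last reduction is precisely the heart of the proof of Lemma~\ref{expsplit}, and I would reproduce that argument verbatim: decompose $b:=a{\uparrow}^n$ as $b=b_m+\cdots+b_0$ with $b_i\in A_i$, observe each partial sum is a truncation of $b$ hence (after transporting back) lies in a truncation closed field, and use a minimality-of-$n$ choice to force all $b_i=0$ for $i<m$, so that $\ex^b\in\fM_{m+1}$ and $\{1,\ex^b\}$ splits. The main obstacle I anticipate is bookkeeping: making sure the "minimal $n$" here is chosen compatibly with the "minimal $o(a)$" above so that the downward transfer of partial sums of $a$ (which are truncations of $a$, hence in $L$) really does land inside $L\cap\R[[G^{\E,n-1}]]$ and let us invoke Lemma~\ref{expsplit}'s inner induction; this requires that $\uparrow^n$ carries truncations of $a$ to truncations of $a{\uparrow}^n$, which follows because $\uparrow$ is an automorphism of the ordered monomial group $G^{\LE}$ and hence respects the relation $\trunceq$. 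Once that compatibility is in place, the rest is routine.
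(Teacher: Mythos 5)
Your overall plan (Zorn's lemma, the two cases, and in the second case transporting by ${\uparrow}^n$ and re-running the internal argument of Lemma~\ref{expsplit}) is genuinely different from the paper's proof and can be made to work, but the decisive last step has a gap as written. In Lemma~\ref{expsplit} the contradiction that forces $a_i=0$ for $i<n$ does not come from minimality of $o(a)$ alone: one must in addition choose $a$ so that its level in the \emph{exponential} filtration $\smallk_0\subseteq\smallk_1\subseteq\cdots$ of $\Texp$ is minimal (``there is no $b\in K\cap\smallk_{n-1}$ with $o(b)\le o(a)$ and $\ex^b\notin K$''). In your transfer the only auxiliary minimality you impose is on the \emph{logarithmic} depth, i.e.\ the least $n$ with $a\in\R[[G^{\E,n}]]$, and that is the wrong filtration: if $b_i\ne 0$ for some $i<m$ in $b=a{\uparrow}^n=b_m+\cdots+b_0$ with $b_i\in A_i$, the expsplit-style descent replaces $a$ by $a-(b_m{\downarrow}_n)$, which has the same (minimal) order type, exponential still outside $L$, strictly smaller exponential height $m-1$, but possibly the same logarithmic depth $n$; so no contradiction with your choices is obtained. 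The truncation-compatibility of $\uparrow$, which is what you offer as the resolution, is needed but is not the missing ingredient: what is missing is to build into the choice of $a$ the minimality of the exponential height of $a{\uparrow}^n$ inside $\Texp$ (for instance, among all $a$ with $\ex^a\notin L$ and $o(a)$ minimal, minimize the least $m$ with $a{\uparrow}^n\in\smallk_m$ for some admissible $n$). Your remark that the transported partial sums should land in $L\cap\R[[G^{\E,n-1}]]$ points to the same confusion: they stay in $L\cap\R[[G^{\E,n}]]$, and the descent runs along the images $\smallk_m{\downarrow}_n$ of the exponential levels, not along the logarithmic layers.

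For comparison, the paper's proof avoids re-proving anything: it sets $K_n:=K\cap\R[[\ell_n^{\R}]]^{\E}$, observes that $K_n^{\exp}$ is also the exponential closure of $K_n$ inside $\R[[\ell_n^{\R}]]^{\E}$, applies Lemma~\ref{expsplit} as a black box (via ${\uparrow}^n$) to conclude that $K_n^{\exp}\cap G^{\E,n}$ splits in $G^{\E,n}$, and then uses $K^{\exp}=\bigcup_n K_n^{\exp}$ together with (Sp2) and Lemma~\ref{sp3} to get the result. If you prefer to keep your maximality frame, the same black-box use of Lemma~\ref{expsplit} on the layer $(L\cap\R[[\ell_n^{\R}]]^{\E}){\uparrow}^n$, followed by Lemma~\ref{sp4} to adjoin that layer's exponential closure to $L$, sidesteps the need to reproduce the inner induction and with it the minimality bookkeeping.
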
 

\noindent
Any differential subfield of $\T$ containing $\R$ is an $H$-field, where the subfield is given the ordering induced by $\T$. Let $K\supseteq \R$ be a differential subfield of $\T$. We define $K^{\Li}$ to be the smallest real closed subfield of $\T$ containing $K$ that is closed under exponentiation and integration, where the latter means that for all $f\in K$ there exists $g\in K$ with $f=\der(g)$. Then $K^{\Li}$ is also a differential subfield of $\T$, and is Liouville closed as an $H$-field. In fact, $K^{\Li}$ is the smallest differential subfield of $\T$ that contains $K$ and is Liouville closed; we call $K^{\Li}$ the {\bf Liouville closure of $K$ in $\T$}. 
 
 We can now state our main result, proved in the last section: 

\begin{theorem}\label{thC} \label{THC} Let $K\supseteq \R$ be a truncation closed differential subfield of $\T$ such that $K\cap G^{\LE}$ splits.
Then $K^{\Li}$ is truncation closed and $K^{\Li}\cap G^{\LE}$ splits. 
\end{theorem}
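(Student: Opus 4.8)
The plan is to build $K^{\Li}$ as an increasing union of truncation-closed, splitting differential subfields of $\T$, using the machinery of Chapter 7 level by level after transporting everything down from a given finite ``logarithmic depth'' by the automorphism $\upper$. Concretely, write $\T = \bigcup_n \Texp\downer_n$ where $\Texp\downer_n$ is a copy of $\Texp$ sitting inside $\T$ with monomial group $G^{\E,n}=G^{\E}\downer_n$; each $\Texp\downer_n$ is a differential Hahn-field union of the kind considered in Chapter 7 (it is $\Texp$ with a ``twisted'' derivation $\fm\der$ or $\der\upper^n$ conjugated back, which is still of the form treated there, by the remark following Theorem~\ref{thmB}). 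The Liouville closure is obtained by interleaving three operations: closure under exponentiation, closure under the operators $(I_n-a\der_n)\inv$ (to solve $y'+by=c$, hence $z^\dagger=a$), and closure under integration; real closure is absorbed via Lemma~\ref{algtrunc}. So the first step is to set up these three closure operations carefully and verify that $K^{\Li}$ equals the smallest subfield of $\T$ closed under all three and containing $K$ — this is routine $H$-field algebra using that in a Liouville closed $H$-field every $y'+ay=b$ has a solution.

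Next I would run the Zorn's Lemma argument familiar from Theorems~\ref{thmA}, \ref{thmB}, and \ref{exptrunc}: let $F$ be a maximal truncation-closed differential subfield of $K^{\Li}$ that contains $K$, has $F\cap G^{\LE}$ splitting, and is real closed; suppose $F\neq K^{\Li}$. Then $F$ fails to be closed under at least one of the three operations, and I must produce a strictly larger subfield with the same three properties. If $F$ is not real closed, use Lemma~\ref{algtrunc} (its $\T$-version via $G^{\LE}=\bigcup_n G^{\E,n}$). If $F$ is not closed under exponentiation, use Proposition~\ref{trexp} together with Proposition~\ref{trexpsplit} to pass to $F^{\exp}$. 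The substantial cases are the two derivation-related ones, and here is where I invoke Chapter 7: fix $n$ large enough that the offending data ($a$, or the element $f$ to be integrated) lies in $\Texp\downer_n$; apply $\upper^n$ to move into genuine $\Texp$ (with its standard derivation up to the scaling $\fm\der$ permitted by the closing remark of Section~\ref{neat}); apply Theorem~\ref{thmB} (transseriality of $\Texp$ is Lemma~\ref{transexp}) to the image of $F$, which is truncation closed with splitting monomial group because $\upper$ preserves truncation-closedness and, by (Sp3), splitting; then pull back by $\downer_n$. This yields $F_\infty\supseteq F$ truncation closed with $\fM_{F_\infty}$ splitting (so $F_\infty\cap G^{\LE}$ splits by (Sp2)), strictly larger than $F$, contradicting maximality.

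The main obstacle I expect is the compatibility of truncation across the different copies $\Texp\downer_n$: truncation in $\T$ is truncation in the single Hahn field $\R[[G^{\LE}]]$, whereas Chapter 7 works inside $\Texp$ with coefficient field $\R$ and monomial group $G^{\E}$, and after conjugating by $\downer_n$ the coefficient field of $\Texp\downer_n$ as a Hahn field is again $\R$ but its monomial group is $G^{\E,n}\subseteq G^{\LE}$, which is a proper subgroup. So I need the analogue of Lemmas~\ref{trtr1} and \ref{trtr2} here: a set that is truncation closed as a subset of $\R[[G^{\LE}]]$ is truncation closed as a subset of $\R[[G^{\E,n}]]$ (this is the easy direction, since $G^{\E,n}$ is convex in $G^{\LE}$ — one checks convexity from $G^{\LE}=\bigcup_m G^{\E,m}$ and the ordering coming from the tower), and conversely a truncation-closed subfield of $\R[[G^{\E,n}]]$ containing $\R$, when we build $F_\infty$ inside $\Texp\downer_n$, stays truncation closed in $\R[[G^{\LE}]]$ because $\fM_{F_\infty}\subseteq G^{\E,n}$. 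The bookkeeping of which $\fM^{(i)}$ of the Chapter-7 set-up corresponds to which subgroup of $G^{\E,n}$ after conjugation, and verifying that the ``$a\prec \fM^{(n-1)}$'' side conditions translate correctly under $\upper^n$, is the delicate part; once that dictionary is fixed, Theorem~\ref{thmB} and the remark after it do all the real work, and the proof closes.
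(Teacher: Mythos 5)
Your overall strategy is the paper's own (maximality argument, separate cases for real closure, exponentiation, and the derivation, and a shift by $\upper^n$ so that Theorem~\ref{thmB} with the twisted derivation $\derdelta=\frac{1}{\fe_n}\der$ does the real work), but the gluing step in the derivation case has a genuine gap. Theorem~\ref{thmB} applies to differential subfields of $\Texp$, so after shifting you may only feed it $\bigl(F\cap\R[[\ell_n^{\R}]]^{\E}\bigr)\upper^n$, not ``the image of $F$'': a subfield of $\T$ need not lie in any single level $\R[[\ell_n^{\R}]]^{\E}$ (for instance $\R(\ell_m:m\in\N)$ does not, and in the paper one in fact arranges $\ell_m\in K\subseteq F$ for all $m$). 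Consequently the field you obtain after pulling back by $\downer_n$ is contained in $\R[[\ell_n^{\R}]]^{\E}$ and in general does \emph{not} contain $F$, so your conclusion ``$F_\infty\supseteq F$, contradicting maximality'' does not follow. What is missing is the compositum step: with $H:=F_\infty\downer_n$ one must pass to $F(H)\subseteq K^{\Li}$ and verify that it is again a truncation closed differential subfield with $F(H)\cap G^{\LE}$ splitting and $f\in\der F(H)$; this is exactly what Lemma~\ref{sp4} is used for in the paper, and it is not automatic.

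Second, in the integration case ``fix $n$ so that $f\in\Texp\downer_n$'' is not the right choice of $n$. The remark following Theorem~\ref{thmB} lets you integrate with respect to $\derdelta=\fm\der$ only those elements whose support avoids $\fm$, and only when $\fm\in\fM_F$; here $\fm=1/\fe_n$, i.e.\ $1/(\ell_1\cdots\ell_n)$ before shifting. So you must (a) take $n$ strictly above the level of $f$ so that $1/(\ell_1\cdots\ell_n)\notin\supp(f)$ (using that no element of $\R[[\ell_m^{\R}]]^{\E}$ with $m<n$ has this monomial in its support), and (b) ensure $1/(\ell_1\cdots\ell_n)$ lies in your field, which is why the paper first replaces $K$ by $K(\ell_0,\ell_1,\dots)$ and checks that this preserves truncation closedness and splitting. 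Neither point appears in your sketch, and without them $f$ may simply have no antiderivative inside $\R[[\ell_n^{\R}]]^{\E}$. A smaller slip: $G^{\E,n}$ is \emph{not} convex in $G^{\LE}$ (e.g.\ $1\prec\ell_1\prec x$ with $1,x\in G^{\E}$ but $\ell_1\notin G^{\E}$); the truncation compatibility you want is nevertheless true, for the simpler reason that both Hahn fields have coefficient field $\R$, so any $G^{\LE}$-truncation of an element supported in $G^{\E,n}$ is already a truncation at a monomial of its own support, or the element itself.
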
 

\noindent
For example, $K:=\R(x)$ is a truncation closed differential subfield of $\T$ with $K\cap G^{\LE}=x^{\Z}$. Thus
the Liouville closure $\R(x)^{\Li}$ of $\R(x)$ in $\T$ is truncation closed, and $\R(x)^{\Li}\cap G^{\LE}$ splits. 

\subsection*{Changing the derivation}
The derivation $\der$ that we use here is not the derivation $\frac{d}{dx}$ on $\T$ that is used systematically in \cite{ADH}, but it is closely related: $\der=x\cdot\frac{d}{dx}$ and we explain here why it doesn't matter which of these derivations we use. 
First, $\T$ with the derivation $\der$ is isomorphic via $\uparrow$ to $\T$ with the derivation $\frac{d}{dx}$, since for $f\in \T$ we have a chain rule for $f{\uparrow}$ 
thought of as $f(\ex^x)$:
 \[ \frac{df{\uparrow}}{dx}\ =\ \ex^x\cdot\big(\frac{df}{dx}\big){\uparrow}\ =\ \big(x\frac{df}{dx}\big){\uparrow}\ =\ \der(f){\uparrow}.\]
Next, $\uparrow$ preserves truncation: for $f,g\in \T$,
$$f\truncof g\ \Longleftrightarrow\ f{\uparrow}\truncof g{\uparrow}. $$
See the next section for these facts. Let $(\T,\frac{d}{dx})$ refer to
the $H$-field $\T$ with derivation $\frac{d}{dx}$ instead of
$\der$. Thus $(\T, \frac{d}{dx})$ is also Liouville closed. Let $K$ be a subfield of $\T$. If $x\in K$, then $K$ is a differential subfield of $\T$ iff $K$ is a differential subfield of $(\T, \frac{d}{dx})$. Note that if $K$ is a Liouville closed subfield of
$\T$, then we do have $x\in K$, and so $K$ is also a Liouville
closed subfield of $(\T, \frac{d}{dx})$; likewise with the roles of $\T$ and $(\T, \frac{d}{dx})$ interchanged. For differential subfields of $(\T,\frac{d}{dx})$ we define its Liouville closure in
$(\T,\frac{d}{dx})$ in the same way as for $\T$ (with $\frac{d}{dx}$ instead of $\der$). It should be clear now that Theorem \ref{thC} goes through with $(\T,\frac{d}{dx})$ instead of
$\T$, and with $K^{\Li}$ replaced by the smallest Liouville closed
subfield of $(\T,\frac{d}{dx})$ containing $K$.

\section{Constructing $\T$ and Splitting}\label{consT} 

\noindent
The introduction to this chapter described basic facts about $\T$, but not how $\T$ is obtained from $\Texp$. In this section we take care of that. Most of it
is taken from \cite[Appendix A]{ADH} to which we refer for proofs omitted here.  In this section we also extend the notion of splitting to subsets of $G^{\LE}$
and obtain Propositions~\ref{trexp} and ~\ref{trexpsplit} on the exponential closure of subfields of $\T$. 

We start by using the alternative notation $\R[[x^\R]]^{\E}$ for $\Texp$ to indicate $\R[[x^{\R}]]$ as
the first step in its construction and to suggest the presence of the formal variable $x$.
The idea is to 
replace $x$ here by $\log x$, $\log \log x$, and so on. Formally, we introduce distinct symbols $\ell_0, \ell_1,\ell_2,\ldots$, with $\ell_0$ standing for $x$, and 
$\ell_1$, $\ell_2, \dots$ to be interpreted later as $\log x$, $ \log \log x,\cdots$ so that
 for each $f(x) \in  \R[[x^\R]]^{\E}$ we have a corresponding $f(\ell_n)\in \R[[\ell_n^\R]]^{\E}$. More precisely, 
we take for each $n$  a copy $G^{\E,n}$ of the ordered abelian group $G^{\E}$ with an isomorphism
\[\fm\mapsto \fm\downer_n\ :\  G^{\E}\rightarrow G^{\E,n}\]
of ordered abelian groups, where $x^r\downer_n = \ell_n^r \in G^{\E,n}$ for $r\in \R$; by convention, $x:= x^1$ and $\ell_n:= \ell_n^1$.  For $n=0$ we take $G^{\E,0}=G^{\E}$ and let ${\downarrow}_0$ be the identity map.
 The above map $\fm \mapsto \fm{\downarrow}_n$ extends uniquely to a strongly additive $\R$-linear bijection
\[f\mapsto f\downer_n\ :\  \R[[G^{\E}]]\rightarrow \R[[G^{\E,n}]],\]
between Hahn spaces over $\R$. This map $f\mapsto f\downer_n$ is an isomorphism of ordered Hahn fields over $\R$. 
We denote the image of $\R[[x^\R]]^{\E}\subseteq \R[[G^{\E}]]$ under this map by $\R[[\ell_n^\R]]^{\E}$ and make the latter into an {\em exponential\/} ordered field so that 
\[f\mapsto f\downer_n\ :\  \R[[x^\R]]^{\E}\rightarrow \R[[\ell_n^\R]]^{\E}\]
is an isomorphism of exponential ordered fields. We denote the exponentiation on $\R[[\ell_n^\R]]^{\E}$ by $\exp$ as well. 
For $n=0$ we have of course $f\downer_0 = f$ for $f\in \R[[G^{\E}]]$.
Recall that $G^{\E}$ is the union of an increasing sequence $(G_m)_m$ of convex subgroups. Thus we have an increasing sequence $(G_m\downer_n)_m$ of convex subgroups  of $G^{\E,n}$ with $\bigcup_m G_m\downer_n = G^{\E,n}$ and  $\R[[\ell_n^\R]]^{\E}= \bigcup_m \R[[G_m\downer_n]]$.  

So far these are just notational conventions, but now a fact that goes beyond mere notation: for each $n$ there is a unique $\R$-linear embedding
$$ \R[[\ell_n^\R]]^{\E}\to \R[[\ell_{n+1}^\R]]^{\E}$$
of exponential ordered fields with the following properties: \begin{itemize}
\item it sends $\ell_n^r$ to $\exp(r\ell_{n+1})$ for all $r\in \R$;
\item for every $m$ it maps $G_m\downer_n$ into $G_{m+1}\downer_{n+1}$ and $\R[[G_m\downer_n]]$ into $\R[[G_{m+1}\downer_{n+1}]]$;
\item for every $m$ its restriction to a map $\R[[G_m\downer_n]]\to \R[[G_{m+1}\downer_{n+1}]]$ between Hahn spaces over $\R$ is strongly additive.
\end{itemize}
Identifying $\R[[\ell_n^\R]]^{\E}$ with its image in  $\R[[\ell_{n+1}^\R]]^{\E}$ under this map we have $\ell_n^r=\exp(r\ell_{n+1})$ for $r\in \R$ and $G_m\downer_n\subseteq G_{m+1}\downer_{n+1}$ for all $n$, and so we obtain a chain 
\[G^{\E}\ =\ G^{\E,0}\ \subseteq\ G^{\E,1}\ \subseteq\ G^{\E,2}\ \subseteq\  \cdots\]
of ordered groups and a corresponding chain
\[\Texp\ =\ \R[[\ell_0^\R]]^{\E}\ \subseteq\ \R[[\ell_1^\R]]^{\E}\ \subseteq\  \R[[\ell_2^\R]]^{\E}\ \subseteq\ \cdots,\]
of exponential ordered fields. 
 We set 
\[ G^{\LE}\ := \bigcup_n G^{\E,n},\qquad \T\  := \ \bigcup_n \R[[\ell_n^\R]]^{\E},\]
making $G^{\LE}$ into an ordered group extension of all $G^{\E,n}$, and $\T$ into an exponential ordered field extension of all $\R[[\ell_n^\R]]^{\E}$.
It also makes $\T$ a truncation closed subfield of $\R[[G^{\LE}]]$.
We refer to the elements of $\T$ as \textbf{transseries}, or $\LE$-series, and to the elements of $G^{\LE}$ as \textbf{transmonomials}, or $\LE$-monomials. 
Note that by virtue of this construction $\T$ and $\R[[G^{\LE}]]$ are real closed.

\subsection*{Exponentiation in $\T$}
We obtained $\T$ as an ordered exponential field, and denote its exponential map by $\exp$. For $f\in \T$ we also write $\ex^f$ in place of $\exp(f)$. We have $\exp(\T)=\T^{>}$. (For $\Texp$ we do not have $\exp(\Texp)=\Texp^{>}$.) The inverse of $\exp$ is denoted by $\log : \T^{>}\to \T$, so $\log(\ell_n)=\ell_{n+1}$
for all $n$.  For $f= c\fm(1-\epsilon)$ for $c\in \R^{>}$, $\fm\in G^{\LE}$, $\epsilon \in \T^{\prec 1}$,  
$$\log(f)\ =\ \log(\fm) + \log(c) - \sum_{i=1}^{\infty} \frac{\epsilon^i}{i}\qquad  \text{(with the usual real value of $\log c$)}.$$
A key fact is that $\log(G^{\LE})=\{f\in \T:\ \supp f\succ 1\}$. 
(This was used in Section 4.3.) At this point we have justified items (1) and (2) about $\T$ in the introduction to this chapter.   

\subsection*{Proof of Proposition~\ref{trexp}} This goes just as for $\Texp$: First, for a subfield $E$ of $\T$ its small exponential closure  (in $\R[[G^{\LE}]]$)
is contained in $\T$, and is truncation closed if $E\supseteq \R$ is truncation closed. Next, we can basically copy the proof of Lemma~\ref{exptrunc}, using  the fact
that $\ex^a\in G^{\LE}$ if $a\in \T$ satisfies $\supp a \succ 1$.

\subsection*{The upward shift operator} As to items (3) and (4) in the introduction to this chapter, we just refer to \cite[Appendix A]{ADH}
for the construction of the automorphism $f\mapsto f{\uparrow}$ 
(called the {\bf upward shift}) of the ordered field $\T$
such that (3) and (4) hold. The restriction of the $n$th iterate of its inverse $f\mapsto f{\downarrow}$ to $\R[[x^{\R}]]$ agrees with the map $f\mapsto f{\downarrow}_n: \R[[x^{\R}]]\to \R[[\ell_n^{\R}]]$ in the above construction of $\T$, so
the various ways we used the notation ${\downarrow}_n$ are in agreement. Thus we have an ordered field isomorphism $f\mapsto f{\downarrow}: \R[[G^{\E,n}]]\to \R[[G^{\E,n+1}]]$ induced by the isomorphism $$\fm\mapsto \fm{\downarrow}\ :\  G^{\E,n} \to G^{\E,n+1}$$
of ordered abelian groups. Hence for $f,g\in \R[[G^{\E,n}]]$ we have: $f\truncof g \Leftrightarrow f{\downarrow}\truncof g{\downarrow}$. Since this equivalence holds for any $n$, it holds for all $f,g\in \T$, which justifies a fact stated at the end of the introduction to this chapter.  

It is also important that the upward shift is not just an isomorphism of $\T$ as an ordered field, but also of the {\em exponential} ordered field $\T$, that is, $\exp(f{\uparrow})=\exp(f){\uparrow}$ for $f\in \T$.

\subsection*{Splitting}
Here we define what it means for a set $S\subseteq G^{\LE}$ to
split so as to satisfy conditions (Sp1), (Sp2), (Sp3) from the
introduction to this chapter. First we show in the next lemma that for subsets of $G^{\E}$ to split is invariant under certain operations. The
proof uses a key fact about the automorphism $\uparrow$, namely that $G_n{\uparrow}\subseteq G_{n+1}$ for all $n$.

\begin{lemma}\label{sp1} Let $S\subseteq G^{\E}$. Then \begin{enumerate}
\item[\rm{(i)}]  $S$ splits $\ \Longleftrightarrow\ $ $S{\uparrow}$ splits;
\item[\rm{(ii)}] $S$ splits $\ \Longrightarrow\ $  $S{\downarrow}\cap G^{\E}$ splits.
\end{enumerate}
\end{lemma}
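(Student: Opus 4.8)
The goal is to prove Lemma~\ref{sp1}: for $S\subseteq G^{\E}$, (i) $S$ splits iff $S{\uparrow}$ splits, and (ii) $S$ splits implies $S{\downarrow}\cap G^{\E}$ splits. The plan is to reduce everything to the way the monomial groups $G^{\E}$ and the exponential monomial groups $(\fM_n) = (G_n)$ behave under the upward shift $\uparrow$. Recall from the construction of $\Texp=\smallk_*$ in Section~\ref{METEXP} that $G^{\E}=\bigcup_n G_n$ with $G_n=\fM^{(n)}=\fM_0\cdots\fM_n$ and $\fM_n$ the group of ``level-$n$'' exponential monomials, and that ``$S$ splits'' means: whenever $\fm_0\cdots\fm_n\in S$ with $\fm_i\in\fM_i$, then each $\fm_i\in S$. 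The decisive input, already quoted in the excerpt, is that the automorphism $\uparrow$ of $G^{\E}$ satisfies $G_n{\uparrow}\subseteq G_{n+1}$; I would want the sharper fact that $\uparrow$ maps $\fM_n$ into $\fM_{n+1}$ and in fact (by the chain-rule/exp-compatibility of $\uparrow$, i.e. $\exp(f{\uparrow})=\exp(f){\uparrow}$) induces a level-shift which is compatible with the direct-product decomposition $\fM^{(n)}\cong\fM_0\times\cdots\times\fM_n$. Concretely: $x^r{\uparrow}=\ex^{rx}\in\fM_1$, and if $\fm=\exp(a)\in\fM_{n+1}$ with $a\in A_n$, then $\fm{\uparrow}=\exp(a{\uparrow})$ with $a{\uparrow}$ of level one higher, so $\fm{\uparrow}\in\fM_{n+2}$.

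For part (i), the plan is as follows. Since $\uparrow$ is a bijection of $G^{\E}$ with inverse $\downarrow$, and $S{\uparrow}{\downarrow}=S$, it suffices to prove one implication, say $S$ splits $\Rightarrow$ $S{\uparrow}$ splits; the converse then follows by applying that implication to $S{\uparrow}$ in place of $S$ together with $(S{\uparrow}){\downarrow}\cap G^{\E} = (S{\uparrow}){\downarrow} = S$ (note $S{\uparrow}\subseteq G^{\E}$ so its downward shift stays in $G^{\E}$). So assume $S$ splits. Take $\fn_0\cdots\fn_m\in S{\uparrow}$ with $\fn_i\in\fM_i$. I need each $\fn_i\in S{\uparrow}$. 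Write $\fn_0\cdots\fn_m=\fm{\uparrow}$ with $\fm\in S$. Here the key step: decompose $\fm=\fm_0\cdots\fm_k$ with $\fm_j\in\fM_j$ (so $\fm\in\fM^{(k)}$), and observe that $\uparrow$ applied to this product shifts each factor's level up by exactly one, i.e. $\fm{\uparrow}=(\fm_0{\uparrow})\cdots(\fm_k{\uparrow})$ with $\fm_j{\uparrow}$ of level $j+1$ — more precisely $\fm_0{\uparrow}\in\fM^{(1)}$ (since $x^r{\uparrow}=\ex^{rx}\in\fM_1$, i.e. level-$0$ monomials go to level-$1$) while $\fm_j{\uparrow}\in\fM_{j+1}$ for $j\ge 1$. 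Comparing with the decomposition $\fn_0\cdots\fn_m$ and using uniqueness of the direct-product representation in $\fM^{(m)}\cong\fM_0\times\cdots\times\fM_m$, I can match up the factors: $\fn_0$ corresponds to the $\fM_0$-part of $\fm_0{\uparrow}$ and $\fn_1$ to its $\fM_1$-part together with $\fm_1{\uparrow}$, etc. Then since $S$ splits, each $\fm_j\in S$, hence each $\fm_j{\uparrow}\in S{\uparrow}$; finally I need that the individual factors $\fn_i$ of $\fm_j{\uparrow}$ also lie in $S{\uparrow}$, which requires knowing $S{\uparrow}$ is itself split ``within'' $\fM^{(1)}$ at level $0$ — this is the one genuinely delicate bookkeeping point. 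The clean way around it is: show directly that $\uparrow$ carries the splitting condition for $S$ into the splitting condition for $S{\uparrow}$ by checking it on each level separately, using that $\uparrow$ restricted to $\fM_0=x^{\R}$ lands in $\fM_1$ (a single level) and $\uparrow$ restricted to $\fM_n$ ($n\ge1$) lands in $\fM_{n+1}$ (again a single level), so the level-shift is ``uniform'' and the product decomposition is preserved factor-by-factor.

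For part (ii), assume $S$ splits and take $\fm_0\cdots\fm_n\in S{\downarrow}\cap G^{\E}$ with $\fm_i\in\fM_i$; I want each $\fm_i\in S{\downarrow}\cap G^{\E}$. Applying $\uparrow$: $(\fm_0\cdots\fm_n){\uparrow}=(\fm_0{\uparrow})\cdots(\fm_n{\uparrow})\in S$, and by part (i) applied in the form ``$S$ splits'' directly (or by the level-shift analysis above run in the other direction), splitting of $S$ forces each $\fm_i{\uparrow}\in S$, whence $\fm_i\in S{\downarrow}$; and $\fm_i\in\fM_i\subseteq G^{\E}$, so $\fm_i\in S{\downarrow}\cap G^{\E}$, as desired. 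The only subtlety is that $\downarrow$ does not preserve $G^{\E}$ (it lands in $G^{\E,1}$), which is exactly why the statement intersects with $G^{\E}$; but since we are given $\fm_0\cdots\fm_n\in G^{\E}$ and each $\fm_i\in\fM_i\subseteq G^{\E}$ automatically, no issue arises.

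\textbf{Main obstacle.} The real work — and the only place that is not pure formalism — is verifying that the upward shift $\uparrow$ respects the level decomposition $G^{\E}=\bigcup_n G_n$ with $G_n/G_{n-1}\cong\fM_n$ in the precise sense that $\uparrow(\fM_n)\subseteq\fM_{n+1}$ and this is compatible with the direct-product structure, so that ``splitting'' is transported cleanly. This rests on $G_n{\uparrow}\subseteq G_{n+1}$ (quoted) plus the exponential-compatibility $\exp(f{\uparrow})=\exp(f){\uparrow}$ together with Lemma~\ref{AM}(iii) describing $\fM^{(n)}=x^{\R}\cdot\exp(A_0+\cdots+A_{n-1})$; one pushes $\uparrow$ through $\exp$ and uses that $\uparrow$ shifts the $A_i$ by one level. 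I would isolate this as a preliminary observation and then parts (i) and (ii) become short.
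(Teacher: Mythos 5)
Your proposal is correct and takes essentially the same route as the paper: everything reduces to the single fact that for $\fm=\fm_0\cdots\fm_n$ with $\fm_i\in \fM_i$ one has $\fm{\uparrow}=1\cdot\fm_0{\uparrow}\cdots\fm_n{\uparrow}$ with $1\in\fM_0$ and $\fm_i{\uparrow}\in\fM_{i+1}$ (a single level each), after which (i) and (ii) are factor-by-factor checks, so the ``delicate bookkeeping point'' you raise in (i) never arises — exactly as your own resolution concludes. One small slip: the converse of (i) does not follow by applying the forward implication to $S{\uparrow}$ (that only yields that $S{\uparrow}{\uparrow}$ splits); obtain it either by running the same single-level computation backwards using injectivity of $\uparrow$, or from (ii) applied to $S{\uparrow}$ together with $(S{\uparrow}){\downarrow}=S$.
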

\begin{proof} We use here notation from the last chapter for $\smallk_{*}=\Texp$, so $G_n= \fM^{(n)}=\fM_0\cdots \fM_n$.  For $\fm=\fm_0\cdots\fm_n$, with $\fm_i\in \fM_i$ for $i=0,\dots,n$, we have $\fm{\uparrow}=1\cdot\fm_0{\uparrow} \cdots \fm_n{\uparrow}$
with $1\in \fM_0$ and $\fm_i{\uparrow}\in \fM_{i+1}$ for $i=0,\dots,n$. Applying this to $\fm\in S$ yields (i).  As to (ii), assume $S$  splits, and let
$\fm\in S{\downarrow}\cap G^{\E}$ be as above. Then $\fm{\uparrow}\in S$, and so by the above, $\fm_i{\uparrow}\in S$ for $i=0,\dots,n$, 
hence $\fm_i\in S{\downarrow}\cap G^{\E}$ for those $i$. Thus $S{\downarrow}\cap G^{\E}$ splits. 
\end{proof} 

\noindent
Let $S\subseteq G^{\E,n}$. Then $S=R{\downarrow}_n$ for a unique
$R\subseteq G^{\E}$, namely $R=S{\uparrow}^n$. Let us say that
$S$ \textbf{splits in $G^{\E,n}$} if $R$ as a subset of $G^{\E}$ splits. Now we also have $S\subseteq G^{\E,n+1}$, and:

\begin{lemma}\label{sp2} $S$ splits in $G^{\E,n}$ iff $S$ splits in $G^{\E,n+1}$.
\end{lemma}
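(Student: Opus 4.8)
\textbf{Proof proposal for Lemma~\ref{sp2}.}

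The plan is to unwind the definition of ``splits in $G^{\E,n}$'' on both sides in terms of a single subset of $G^{\E}$ and reduce the claim to Lemma~\ref{sp1}. Write $S\subseteq G^{\E,n}$, and let $R\subseteq G^{\E}$ be the unique subset with $S=R{\downarrow}_n$; explicitly $R=S{\uparrow}^n$. By definition $S$ splits in $G^{\E,n}$ iff $R$ splits as a subset of $G^{\E}$. Now regard $S$ also as a subset of $G^{\E,n+1}$ (this uses the inclusion $G^{\E,n}\subseteq G^{\E,n+1}$ from the construction). Then $S=R'{\downarrow}_{n+1}$ for a unique $R'\subseteq G^{\E}$, and I first want to identify $R'$ in terms of $R$.

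The key step is the identification $R'=R{\uparrow}$. Indeed, tracing through the construction of $\T$, the inclusion $\R[[\ell_n^\R]]^{\E}\hookrightarrow\R[[\ell_{n+1}^\R]]^{\E}$ is compatible with the shift maps: for $\fm\in G^{\E}$, the element $\fm{\downarrow}_n\in G^{\E,n}$, when viewed inside $G^{\E,n+1}$, equals $(\fm{\uparrow}){\downarrow}_{n+1}$ — this is precisely the compatibility encoded by $\ell_n^r=\exp(r\ell_{n+1})$ together with the fact that ${\downarrow}_{n+1}\circ{\uparrow}$ restricted to $\R[[x^\R]]$ is the embedding into $\R[[\ell_{n+1}^\R]]^{\E}$, and by the relation ${\downarrow}_{n+1}={\downarrow}\circ{\downarrow}_n$ among the shift isomorphisms. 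So applying ${\downarrow}_{n+1}$ to $R{\uparrow}$ gives the same subset of $G^{\E,n+1}$ as applying ${\downarrow}_n$ to $R$, i.e.\ $R'=R{\uparrow}$. Granting this, ``$S$ splits in $G^{\E,n+1}$'' means ``$R{\uparrow}$ splits as a subset of $G^{\E}$'', and by Lemma~\ref{sp1}(i) this is equivalent to ``$R$ splits as a subset of $G^{\E}$'', which is ``$S$ splits in $G^{\E,n}$''. That closes the argument.

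The main obstacle I anticipate is bookkeeping with the three families of maps (${\downarrow}_n$, ${\uparrow}$, and the exponential-ordered-field embeddings $\R[[\ell_n^\R]]^{\E}\hookrightarrow\R[[\ell_{n+1}^\R]]^{\E}$) and verifying the compatibility relation $\fm{\downarrow}_n=(\fm{\uparrow}){\downarrow}_{n+1}$ inside $G^{\E,n+1}$ carefully from the construction in Section~\ref{consT} and \cite[Appendix A]{ADH}; once that single commuting-square identity is in hand, the rest is a one-line reduction to Lemma~\ref{sp1}(i). I would state that compatibility explicitly (it is essentially the statement that ${\downarrow}_{n}$ for varying $n$ fit together with ${\uparrow}$, which is also what justified the agreement of the notation ${\downarrow}_n$ noted just before this lemma), cite it, and then conclude.
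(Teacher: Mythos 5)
Your proposal is correct and is essentially the paper's own proof: the paper simply notes that $R{\uparrow}=S{\uparrow}^{n+1}$ (your identification $R'=R{\uparrow}$, which follows since ${\uparrow}^{n+1}={\uparrow}\circ{\uparrow}^{n}$ and the maps ${\downarrow}_n$ agree with the iterates of ${\downarrow}$) and then applies Lemma~\ref{sp1}(i). Your extra care with the compatibility $\fm{\downarrow}_n=(\fm{\uparrow}){\downarrow}_{n+1}$ is just a spelled-out version of the same one-line reduction.
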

\begin{proof} With $R$ as above, we have $R{\uparrow}=S{\uparrow}^{n+1}$. Apply Lemma~\ref{sp1}(i)  to $R$ in the role of $S$. 
\end{proof}

\noindent
Next we consider sets $S\subseteq G^{\LE}$, and we say that such a set $S$ {\bf splits} if $S\cap G^{\E,n}$ splits in $G^{\E,n}$ for all $n$. In order to show that
this notion has the properties (Sp1), (Sp2), (Sp3) stated in the introduction to this chapter, we need one more fact:

\begin{lemma}\label{sp3} Suppose $S\subseteq G^{\E,n+1}$ splits in $G^{\E,n+1}$. Then $S\cap G^{\E,n}$ splits in $G^{\E,n}$. 
\end{lemma}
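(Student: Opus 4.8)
The statement to prove is Lemma~\ref{sp3}: if $S\subseteq G^{\E,n+1}$ splits in $G^{\E,n+1}$, then $S\cap G^{\E,n}$ splits in $G^{\E,n}$. The plan is to transport everything back to $G^{\E}$ via the iterated upward shifts and reduce to the already-proved part (ii) of Lemma~\ref{sp1}. Write $R:=S{\uparrow}^{n+1}\subseteq G^{\E}$, so that by definition $S$ splitting in $G^{\E,n+1}$ means exactly that $R$ splits as a subset of $G^{\E}$. I want to understand $(S\cap G^{\E,n}){\uparrow}^n$ in terms of $R$.

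First I would record the key containment $G^{\E,n}\subseteq G^{\E,n+1}$ together with the identity $f\mapsto f{\uparrow}$ being the inverse of $f\mapsto f{\downarrow}$ and the fact that $G^{\E,n}= G^{\E}{\downarrow}_n$, $G^{\E,n+1}=G^{\E}{\downarrow}_{n+1}$, with ${\downarrow}_{n+1}={\downarrow}\circ{\downarrow}_n$ on $\R[[x^\R]]$, hence on the relevant monomial groups. Applying ${\uparrow}^{n}$ to the inclusion $S\cap G^{\E,n}\subseteq S$ gives
\[(S\cap G^{\E,n}){\uparrow}^{n}\ =\ (S{\uparrow}^{n})\cap (G^{\E,n}{\uparrow}^{n})\ =\ (S{\uparrow}^{n})\cap G^{\E},\]
since ${\uparrow}^n$ is an (order) automorphism of $G^{\LE}$ carrying $G^{\E,n}$ onto $G^{\E}$. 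Now $S{\uparrow}^{n}=(S{\uparrow}^{n+1}){\downarrow}=R{\downarrow}$, so the displayed set equals $R{\downarrow}\cap G^{\E}$. By definition, $S\cap G^{\E,n}$ splits in $G^{\E,n}$ precisely when $(S\cap G^{\E,n}){\uparrow}^{n}=R{\downarrow}\cap G^{\E}$ splits as a subset of $G^{\E}$. But $R$ splits (that is exactly the hypothesis on $S$), so Lemma~\ref{sp1}(ii) applied to $R$ in the role of $S$ yields that $R{\downarrow}\cap G^{\E}$ splits. This completes the argument.

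\textbf{Main obstacle.} The only delicate point is making sure the set-theoretic bookkeeping with ${\uparrow}^n$, ${\downarrow}$, and the intersections is airtight — specifically that ${\uparrow}^n$ really does map $G^{\E,n}$ bijectively onto $G^{\E}$ and $G^{\E,n+1}$ onto $G^{\E,1}$ (so that $R{\uparrow}=S{\uparrow}^{n+1}$ lives inside $G^{\E,1}$ and the recursion lines up), and that "splits in $G^{\E,m}$" is stable under the identifications. These are all immediate from the construction of $\T$ recalled in this section (the tower $G^{\E}=G^{\E,0}\subseteq G^{\E,1}\subseteq\cdots$ and the fact that ${\uparrow}$, ${\downarrow}$ restrict to automorphisms of $G^{\LE}$ with ${\downarrow}_n$ the $n$th iterate of ${\downarrow}$), together with Lemma~\ref{sp2} which already shows splitting in $G^{\E,n}$ is consistent across the tower; so there is no real mathematical difficulty here, just care with notation. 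I would keep the proof to essentially the three lines above.
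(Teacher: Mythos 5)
Your proof is correct and follows the same route as the paper: set $R:=S{\uparrow}^{n+1}$, invoke Lemma~\ref{sp1}(ii) to get that $R{\downarrow}\cap G^{\E}$ splits, and identify $R{\downarrow}\cap G^{\E}$ with $(S\cap G^{\E,n}){\uparrow}^{n}$. The only difference is that you spell out the verification of that identity (via ${\uparrow}^{n}$ being a bijection carrying $G^{\E,n}$ onto $G^{\E}$), which the paper leaves as a one-line remark.
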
 
\begin{proof} Let $R:= S{\uparrow}^{n+1}$. Then $R\subseteq G^{\E}$  splits, so $R{\downarrow}\cap G^{\E}$ splits by Lemma~\ref{sp1}(ii). Now use that $R{\downarrow}\cap G^{\E}=\big(S\cap G^{\E,n}\big){\uparrow}^n$. 
\end{proof}

\noindent
It is now routine to show that (Sp1), (Sp2), (Sp3) hold: for (Sp2), first observe as a consequence of Lemmas~\ref{sp2} and ~\ref{sp3} that for $S\subseteq G^{\E,n}$, $S$  splits iff $S$ splits in $G^{\E,n}$.  It is also easy to check that there is no other way to extend the notion of splitting for
subsets of $G^{\E}$ to subsets of $G^{\LE}$ so that (Sp1), (Sp2), (Sp3) hold. 

\medskip\noindent
We also extend some facts from Section~\ref{neat} to the present setting:

\begin{lemma}\label{sp4} Let $R\supseteq \R$ be a truncation closed subring of $\R[[G^{\LE}]]$ such that $R\cap G^{\LE}$ splits. Let $V$ be a truncation closed $\R$-linear subspace of $\R[[G^{\LE}]]$ such that $V\cap G^{\LE}$ splits. Then the $($truncation closed$)$ subring
$R[V]$ of $\R[[G^{\LE}]]$ has the property that $R[V] \cap G^{\LE}$ splits. The $($truncation closed$)$ fraction field $F$ of $R$ in $\R[[G^{\LE}]]$ has the property that $F\cap G^{\LE}$ splits. 
\end{lemma}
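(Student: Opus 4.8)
The two assertions are very close in spirit to Lemmas~\ref{split1} and~\ref{split2}, whose proofs rely on a single structural fact: every element of the support of a ring (resp.\ fraction field) generated by sets with known supports is a product (resp.\ a product of a monomial by an inverse of a monomial) of elements of those known supports. The plan is to rerun exactly those arguments, now recording the extra splitting information. So first I would recall that, since $R\supseteq\R$ is truncation closed, $R\cap G^{\LE}=\supp(R)$ (by the lemma that for a truncation closed $\R$-linear subspace $V$ of $\R[[G^{\LE}]]$ one has $\supp(V)=V\cap G^{\LE}$, applied to $V=R$), and similarly $V\cap G^{\LE}=\supp(V)$ and, later, $F\cap G^{\LE}=\supp(F)$. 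I would also note that $R[V]$ and $F$ are indeed truncation closed: $R[V]$ by Proposition~\ref{D}(i),(ii) (the ring generated by the truncation closed set $R\cup V$), and $F$ by Proposition~\ref{D}(i).

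For the first statement, let $\fm\in R[V]\cap G^{\LE}=\supp(R[V])$. Since $\supp(R[V])\subseteq\bigcup_k (\supp R)(\supp V)^k$, I can write $\fm=\fm_0\fn_1\cdots\fn_k$ with $\fm_0\in\supp R=R\cap G^{\LE}$ and $\fn_1,\dots,\fn_k\in\supp V=V\cap G^{\LE}$. Now I would invoke property (Sp2): to show $\fm$ splits I only need to show $\fm\cap$ (its factors) lie in $R[V]\cap G^{\LE}$ after intersecting with each $G^{\E,n}$, but more directly: splitting of a monomial $\fm=\fp_0\cdots\fp_\ell$ (factored through $\fM_0\times\cdots\times\fM_\ell$ at some stage $n$, i.e.\ working inside $G^{\E,n}$) means each $\fp_i\in R[V]\cap G^{\LE}$. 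Here the key point is that the product decomposition $\fm=\fm_0\fn_1\cdots\fn_k$ is compatible with the factorization into the $\fM_i$-components: the $i$-th component of $\fm$ is the product of the $i$-th components of $\fm_0,\fn_1,\dots,\fn_k$. Since $R\cap G^{\LE}$ splits, each $\fM_i$-component of $\fm_0$ lies in $R\cap G^{\LE}$; since $V\cap G^{\LE}$ splits, each $\fM_i$-component of each $\fn_j$ lies in $V\cap G^{\LE}$; hence each $\fM_i$-component of $\fm$ is a product of elements of $R\cap G^{\LE}$ and $V\cap G^{\LE}$, so lies in $R[V]\cap G^{\LE}$. (One must do this uniformly in $n$, i.e.\ for $\fm\in R[V]\cap G^{\E,n}$ one uses the $\fM_0\downer_{n'},\dots$ decomposition appropriate to the stage; this is exactly the bookkeeping that (Sp2) packages.) Thus $R[V]\cap G^{\LE}$ splits.

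For the fraction field $F$, the same scheme applies: $\supp(F)\subseteq (\supp R)(\supp R)^{-1}$, so each $\fm\in F\cap G^{\LE}=\supp F$ is of the form $\fm=\fp\fq^{-1}$ with $\fp,\fq\in R\cap G^{\LE}$; since $R\cap G^{\LE}$ is a group (being $\supp$ of a truncation closed subfield... actually $R$ is only a ring, but $\supp(R)$ generates the group $\fM_F$ and $\supp F=\fM_F$ is the subgroup generated), and since splitting of $R\cap G^{\LE}$ transfers componentwise to products and inverses, each $\fM_i$-component of $\fm$ is a product/quotient of $\fM_i$-components of $\fp,\fq$, all of which lie in $R\cap G^{\LE}\subseteq F\cap G^{\LE}$; hence $\fm$ splits. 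The main obstacle I anticipate is purely notational: making precise, uniformly in $n$, that the multi-stage factorization $G^{\E,n}\cong \prod_i \fM_i\downer_n$ is a group isomorphism so that ``taking $\fM_i$-components'' is a homomorphism commuting with products and inverses — once that is set up, both claims are immediate, and indeed this is why the lemma is flagged as routine. I would therefore keep the write-up short, citing (Sp2) to reduce to a fixed $G^{\E,n}$, then citing the group-isomorphism $\fM_0\downer_n\times\cdots\times\fM_m\downer_n\to G_m\downer_n$ and running the componentwise argument.
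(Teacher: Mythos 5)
Your proposal is correct and follows essentially the same route as the paper, which disposes of the lemma by ``routine reduction'' to Lemmas~\ref{split1} and~\ref{split2}: you simply unwind those proofs (support of $R[V]$ lies in $(\supp R)(\supp V)^k$, support of $F$ in monomial quotients from $\supp R$, then componentwise splitting via the group decomposition at each stage, with the $(\mathrm{Sp2})$-type bookkeeping handled by Lemmas~\ref{sp1}--\ref{sp3}). The only blemish is the momentary confusion about $R\cap G^{\LE}$ being a group, which you correct yourself and which the argument does not need, since $\supp R$ is closed under products because $R$ is a ring and the quotients of its components land in the field $F$.
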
 

\noindent
The proofs consist of routine reductions to Lemmas~\ref{split1} and ~\ref{split2}.  

\begin{lemma}\label{sp5} Let $E\supseteq \R$ be a truncation closed ordered subfield of the $($real closed$)$ Hahn field $\R[[G^{\LE}]]$ such that $E\cap G^{\LE}$ splits.  
Let $F$ be the real closure of $E$ in $\R[[G^{\LE}]]$. Then $F\cap G^{\LE}$ splits. 
\end{lemma}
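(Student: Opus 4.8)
The statement is the real-closure analogue, for subsets of $G^{\LE}$, of Lemma~\ref{algtrunc}; and just as that lemma reduced to Lemmas~\ref{actr}, \ref{split1}, \ref{split2}, the plan here is to reduce Lemma~\ref{sp5} to the already-proved Lemmas~\ref{sp4}, \ref{actr}, and the characterization of splitting via (Sp2). The key point is that $F$, the real closure of $E$ inside the real closed field $\R[[G^{\LE}]]$, coincides with the algebraic closure of $E$ in $\R[[G^{\LE}]]$, and we have the explicit description of that algebraic closure from Lemma~\ref{actr}: with $\fM_E := E\cap G^{\LE}$ and $\fM_E^{\div} := \{\fg\in G^{\LE}:\ \fg^n\in \fM_E \text{ for some } n\ge 1\}$, we have $F = E(\fM_E^{\div})^{\h}$, and moreover $\fM_F = \fM_E^{\div}$ because $F$ is truncation closed (by (iii), (iv) of Proposition~\ref{D}) so that $\fM_F=\supp(F)\cap G^{\LE}$, which equals $\fM_E^{\div}$ by the last paragraph of the proof of Lemma~\ref{actr}. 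So the whole problem collapses to: \emph{if $\fM_E\subseteq G^{\LE}$ splits, then its divisible hull $\fM_E^{\div}$ in $G^{\LE}$ splits.}

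To prove that, I would use (Sp2): $\fM_E^{\div}$ splits iff $\fM_E^{\div}\cap G^{\E,n}$ splits in $G^{\E,n}$ for every $n$. Fix $n$. Since $G^{\E,n}$ is convex in $G^{\LE}$ and the exponents in $G^{\LE}/G^{\E,n}$ are torsion-free (each $G^{\E,m}$ is divisible, being a copy of $G^{\E}$ which is divisible as noted in Section~\ref{METEXP}), an element $\fg\in G^{\LE}$ with $\fg^k\in \fM_E$ and $\fg^k\in G^{\E,n}$ for some $k\ge 1$ already has $\fg\in G^{\E,n}$; hence $\fM_E^{\div}\cap G^{\E,n} = (\fM_E\cap G^{\E,n})^{\div}$, the divisible hull computed inside $G^{\E,n}$. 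Transporting via the isomorphism $\fm\mapsto \fm{\uparrow}^n: G^{\E,n}\to G^{\E}$ (under which splitting-in-$G^{\E,n}$ is by definition splitting in $G^{\E}$ of the image), it suffices to show: if $S\subseteq G^{\E}$ splits, then its divisible hull $S^{\div}$ in $G^{\E}$ splits. This is exactly the kind of statement already implicit in Lemma~\ref{algtrunc}'s proof: write $G^{\E}=\fM_0\times\cdots$ in the factorization of Section~\ref{basicsetup}, and for $\fg=\fg_0\cdots\fg_m$ with $\fg_i\in \fM_i$, if $\fg^k\in S$ then $\fg^k=\fg_0^k\cdots\fg_m^k$ with $\fg_i^k\in\fM_i$, so each $\fg_i^k\in S$ since $S$ splits, whence $\fg_i\in S^{\div}$; thus $S^{\div}$ splits. (The factorization is legitimate because each $\fM_i$ is divisible, so $\fg_i^k\in\fM_i$ for all $i$.)

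Having established that $\fM_F=\fM_E^{\div}$ splits, I would then conclude $F\cap G^{\LE} = \fM_F$ splits directly — that \emph{is} the assertion of the lemma, since $F$ truncation closed gives $F\cap G^{\LE}=\fM_F$. Alternatively, to mirror the structure of Lemma~\ref{algtrunc} more closely, one can run through the intermediate fields: $E(\fM_E^{\div})$ is truncation closed by (i), (ii) of Proposition~\ref{D} and, by Lemma~\ref{sp4} applied with $V$ the $\R$-span of $\fM_E^{\div}\cup\{0\}$ (whose monomial set is $\fM_E^{\div}$, shown above to split) over the subring generated by $E$, has splitting monomial group; then $E(\fM_E^{\div})^{\h}$ has the same monomial group as $E(\fM_E^{\div})$ (the henselization is immediate, Lemma on henselizations), hence splits; and $F$ equals this henselization by Lemma~\ref{actr}.

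\textbf{Main obstacle.} The only real content is the compatibility of the divisible-hull operation with the convex filtration $(G^{\E,n})$ and with the direct-factor decomposition $\prod_i \fM_i$ — i.e.\ that $\fM_E^{\div}\cap G^{\E,n}=(\fM_E\cap G^{\E,n})^{\div}$ and that taking roots respects the factorization. Both rest on divisibility (equivalently torsion-freeness of the relevant quotients) of the groups $\fM_i$ and $G^{\E,m}$, which is recorded in Sections~\ref{basicsetup} and~\ref{METEXP}. Everything else is a bookkeeping reduction to results already in hand (Lemma~\ref{actr}, Lemma~\ref{sp4}, and the (Sp1)–(Sp3) machinery), so I expect no genuine difficulty beyond being careful that "splits" is always interpreted relative to the correct ambient group.
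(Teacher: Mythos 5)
Your proposal is correct and follows essentially the same route as the paper: the paper's proof is just the citation of Lemma~\ref{actr} giving $F\cap G^{\LE}=\{\fg\in G^{\LE}:\ \fg^n\in E\cap G^{\LE}\text{ for some }n\ge 1\}$, with the fact that the divisible hull of a splitting set splits left implicit (as it was in Lemma~\ref{algtrunc}), and your argument fills in exactly that verification via (Sp2) and the factorization $\fg=\fg_0\cdots\fg_m$. (A minor remark: in that step divisibility of the $\fM_i$ is not actually needed, since $\fg_i\in\fM_i$ already gives $\fg_i^k\in\fM_i$ because $\fM_i$ is a group.)
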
 
\begin{proof}  Use that by Lemma~\ref{actr} we have $F\cap G^{\LE}=\{\fg\in G^{\LE}:\ \fg^n\in E\cap G^{\LE}\}$. 
\end{proof}

\subsection*{Proof of Proposition~\ref{trexpsplit}} Let $K\supseteq \R$ be a truncation closed subfield of $\T$ such that $K\cap G^{\LE}$ splits. Our job is to show that $K^{\exp}\cap G^{\LE}$
splits. We set $K_n:= K\cap \R[[\ell_n^{\R}]]^{\E}$, so
$K_n$ is a truncation closed subfield of $\R[[G^{\E,n}]]$ such that
$K_n\cap G^{\E,n}$ splits. The exponential closure $K_n^{\exp}$ of 
$K_n$ in $\T$ is also its exponential closure in 
$\R[[\ell_n^{\R}]]^{\E}$, so $K_n^{\exp}\cap G^{\E,n}$ splits
in $G^{\E,n}$ by Lemma~\ref{expsplit}. Taking the union over all $n$ now gives the desired result. 

\subsection*{The derivation of $\T$}  We take this from \cite[Appendix A]{ADH}, but alert the reader that the derivation on $\T$ we denote here by
$\der$ equals the derivation $x\cdot \frac{d}{dx}$  where $\frac{d}{dx}$ is the derivation on $\T$ constructed in that appendix (and which, unfortunately for us, is denoted 
there by $\der$). Keeping that in mind we easily verify item (5) of the introduction to this chapter. Moreover, if $m\ge n$, then 
$$\der \R[[G_m{\downarrow}_n]]\ \subseteq\ 
\R[[G_m{\downarrow}_n]],$$ and the restriction of $\der$ to $\R[[G_m{\downarrow}_n]]$ is a strong operator on that Hahn field over $\R$. Fixing $n$ and taking the
union over all $m\ge n$ shows that $\R[[\ell_n^{\R}]]^{\E}$ is a differential subfield of $\T$. 

We define the elements $e_n\in G_n\subseteq \Texp$ by
$e_0 := x$ and $e_{n+1}:= \exp(e_n)$. An easy induction on $n$ gives $e_n=x{\uparrow}^n$. We also set 
$\fe_n:= e_0e_1\cdots e_{n-1}$ (so $\fe_0=1$ and $\fe_1=x$).

The interaction of $\der$ and $\uparrow$ is described in the  commutative diagram

\[\xymatrixcolsep{8pc}\xymatrix{
\T \ar[d]_{\upper^n} \ar[r]^{\der} & \T \ar[d]^{\ \upper^n} \\
\T \ar[r]^{\frac{1}{\fe_n}\der} & \T
}\]
which by restriction yields the commutative diagram
\[\xymatrixcolsep{8pc}\xymatrix{
\R[[\ell_n^{\R}]]^{\E} \ar[d]_{\upper^n} \ar[r]^{\der} & \R[[\ell_n^{\R}]]^{\E} \ar[d]^{\ \upper^n} \\
\Texp \ar[r]^{\frac{1}{\fe_n}\der} & \Texp
}\]
With this last diagram we routinely reduce questions about $\R[[\ell_n^{\R}]]^{\E}$ involving $\der$ to corresponding questions about $\Texp$.  
We are now finally ready for the proof of our main result. 

\section{Proof of Theorem~\ref{thC}} 

\noindent
Let $K\supseteq \R$ be a truncation closed differential subfield of $\T$ such that
$K\cap G^{\LE}$ splits. Our job is to show that then the Liouville
closure $K^{\Li}$ of $K$ in $\T$ is truncation closed, and that
$K^{\Li}\cap G^{\LE}$ splits. First we observe that $x\in K^{\Li}$, and thus the iterated logarithms
$\ell_n$ are also in $K^{\Li}$. 
Moreover, $L:=K(\ell_n:\ n=0,1,2,\dots)$ is still a truncation closed
differential subfield of $\T$ and $L\cap G^{\LE}$ splits, so replacing $K$ by $L$ we arrange that $\ell_n\in K$ for all $n$. We can also assume $K$ is not Liouville closed. For such $K$ it is enough to show: there exists a differential field extension $L\subseteq K^{\Li}$ of $K$ such that $K\ne L$, $L$ is truncation closed, and $L\cap G^{\LE}$ splits. We now treat separately the 
various cases that may occur because of the $H$-field $K$ not being Liouville closed:

\medskip\noindent
{\em Case 1:\ $K$ is not real closed.} Then we can take for $L$ the real closure of $K$ in $\T$,  by Lemma~\ref{sp5}.

\medskip\noindent
{\em Case 2:\ $K$ is not closed under exponentiation.} 
Then we can take for $L$ the exponential closure of $K$ in $\T$,
by Propositions~\ref{trexp} and ~\ref{trexpsplit}. 

\medskip\noindent
{\em Case 3:\ $f\in K$ is such that $f\notin \der(K)$.}
We begin by observing that $\fe_n{\downarrow}_n=\ell_1\cdots \ell_n$, and that if $m < n$, then no element of $\R[[\ell_m^{\R}]]^{\E}$ has $\frac{1}{\ell_1\cdots \ell_n}$ in its support. We set
$K_n:= K\cap \R[[\ell_n^{\R}]]^{\E}$, and take $n$ so high that
$f\in K_n$ and $\frac{1}{\ell_1\cdots \ell_n}\notin\supp(f)$. Note that $K_n$ is truncation closed and
$K_n\cap G^{\E,n}$ splits. Since $K$ contains all $\ell_m$, we also have
$\frac{1}{\ell_1\cdots \ell_n}\in K_n\cap G^{\E,n}$.  Thus $F:=K_n{\uparrow}^n$ is a truncation closed subfield of $\Texp$ such that $F\supseteq \R$, $f{\uparrow}^n\in F$, $\frac{1}{\fe_n}\in (F\cap G^{\E})\setminus \supp(f{\uparrow}^n)$, and 
$F\cap G^{\E}$ splits. We now use the second commutative diagram at the end of the previous section, and let $\derdelta:=\frac{1}{\fe_n}\der$ be the derivation on $\Texp$ corresponding to the bottom horizontal line of that diagram. 

Then Theorem \ref{thmB} and the remark following its proof yields a truncation closed differential subfield $F_{\infty}$ of $F^{\Li}$ such that
$f{\uparrow}^n\in \derdelta(F_{\infty})$ and $F_{\infty}\cap G^{\E}$ splits. Applying ${\downarrow}_n$ we obtain a truncation closed
differential subfield $H:= F_{\infty}{\downarrow}_n$ of $K_n^{\Li}\subseteq K^{\Li}$ such that $H\supseteq \R$, $f\in \der(H)$ and $H\cap G^{\E,n}$ splits. Then it follows from Lemma~\ref{sp4} that $K(H)$ is a truncation closed
differential subfield of $K^{\Li}$ such that 
$f\in \der K(H)$
and $K(H)\cap G^{\LE}$ splits. So $L:=K(H)$ does the job.  

\medskip\noindent
This concludes the proof of Theorem~\ref{thC}.
 
\section{Future Directions}

\noindent
In this section we discuss possible ways of extending the results in this dissertation. First, there are other natural extension procedures for differential subfields
$K\supseteq \R$ of $\T$  besides taking the Liouville closure in $\T$. For example, $\T$ is {\em linearly surjective}, and every $K$ as above has a {\em linearly surjective closure} in $\T$. (A differential field $E$ is said to be {\bf linearly surjective\/}  if every linear differential equation $y^{(n)} + a_1y^{(n-1)} + \cdots + a_ny=b$ with $n\ge 1$ and $a_1,\dots,a_n, b\in E$ has a solution in $E$.  For $K$ as above there is a smallest linearly surjective differential subfield of $\T$ containing $K$,  the
\textbf{linearly surjective closure of $K$ in $\T$}.) The obvious question is whether Theorem~\ref{thC} goes through for the
linearly surjective closure of $K$ in $\T$ instead of its Liouville closure  in $\T$.  Going beyond linear differential equations, the most powerful
elementary property of $\T$ is that it is {\em newtonian}. (We refer to \cite[Chapter 14]{ADH} for a precise definition.)  If $K$ as above has {\em asymptotic integration} (as defined in \cite[Section 9.1]{ADH}), then $K$ has a {\em newtonization}
in $\T$ (a smallest newtonian differential subfield of $\T$ that contains $K$), and one can ask if Theorem~\ref{thC} goes through for the newtonization
instead of the Liouville closure. 

Another natural direction: try to connect the material above to surreal numbers. We refer to \cite{ADHNo} for the description of a canonical embedding of 
$\T$ as an exponential ordered field into the exponential ordered field $\No$ of surreal numbers, with $x\in \T$ corresponding to $\omega\in \No$.  For subsets of $\No$, the notion of being {\em initial} plays a key role, and is similar to being truncation closed for subsets of Hahn fields. Elliot Kaplan has shown that the image of $\T$ in $\No$ under the canonical embedding is initial. This raises natural questions for truncation closed subfields $K\supseteq \R$ of $\T$:

\begin{itemize}
\item if $K\cap G^{\LE}$ splits, does it follow that the image of $K$ in $\No$ is initial? 
\item if the image of $K$ in $\No$ is initial, does it follow that $K\cap G^{\LE}$ splits?
\end{itemize}
%Positive answers to these questions would be strong evidence that we have chosen the right notion of splitting.

\end{document}